\documentclass[10pt]{amsart}
\usepackage{geometry}                
\geometry{letterpaper}                   
\usepackage{graphicx}
\usepackage{dsfont}
\usepackage{amssymb}
\usepackage{epstopdf}
\usepackage{amsmath}
\usepackage{fullpage}
\usepackage{enumerate}
\usepackage{color,subcaption}
\usepackage{amsthm}
\usepackage{placeins}
\usepackage{array}
\usepackage{bm}
\usepackage{blkarray}
\usepackage{multirow}
\usepackage{float}
\usepackage{morefloats}
\usepackage{pgfplots}
\usepackage{algorithm,algorithmic}
\usepackage[abs]{overpic}

\input{macros.sty}
\usepackage{tikz}
\usetikzlibrary{calc,positioning}

\usepackage{amsopn}
\DeclareGraphicsRule{.tif}{png}{.png}{`convert #1 `dirname #1`/`basename #1 .tif`.png}

\setcounter{tocdepth}{2}

\let\oldtocsection=\tocsection

\let\oldtocsubsection=\tocsubsection

\let\oldtocsubsubsection=\tocsubsubsection

\renewcommand{\tocsection}[2]{\hspace{0em}\oldtocsection{#1}{#2}\textbf}
\renewcommand{\tocsubsection}[2]{\hspace{1em}\oldtocsubsection{#1}{#2}}
\renewcommand{\tocsubsubsection}[2]{\hspace{2em}\oldtocsubsubsection{#1}{#2}}

\newcommand{\bigzero}{\mbox{\normalfont\Large\bfseries 0}}
\DeclareMathOperator*{\argmin}{arg\,min}

\newcommand{\dv}{\text{\rm div}}

\renewcommand{\o}{\text{\rm o}}

\renewcommand{\d}{\text{\rm d}}

\newcommand{\tr}{\text{\rm tr}}

\newcommand{\diam}{\text{\rm diam}}
\newcommand{\e}{\varepsilon}
\newcommand{\I}{\text{\rm I}}
\newcommand{\Id}{\text{\rm Id}}
\newcommand{\Lag}{\text{\rm Lag}}
\newcommand{\Vor}{\text{\rm Vor}}
\newcommand{\Ker}{\text{\rm Ker}}

\renewcommand{\span}{\text{\rm span}}

\newcommand{\narc}{n_{\text{\rm arc}}}
\newcommand{\bpsi}{{\bm\psi}}
\newcommand{\bxi}{{\bm\xi}}

\newcommand{\bs}{{\bm\s}}

\newcommand{\btheta}{{\bm\theta}}
\newcommand{\bvphi}{{\bm\varphi}}
\newcommand{\bzeta}{{\bm\zeta}}
\newcommand{\bnu}{{\bm\nu}}
\newcommand{\bbeta}{{\bm\beta}}
\newcommand{\blambda}{{\bm\lambda}}
\newcommand{\calP}{{\mathcal P}}
\newcommand{\calK}{{\mathcal K}}
\newcommand{\calL}{{\mathcal L}}
\newcommand{\calZ}{{\mathcal Z}}
\newcommand{\calC}{{\mathcal C}}
\newcommand{\calE}{{\mathcal E}}

\newcommand{\calN}{{\mathcal N}}
\newcommand{\calT}{{\mathcal T}}
\newcommand{\calW}{{\mathcal W}}
\newcommand{\Vol}{\text{\rm Vol}}

\newcommand{\Winfty}{W^{1,\infty}(\mathbb{R}^d,\mathbb{R}^d)}
\newcommand{\R}{{\mathbb R}}

\newcommand{\s}{\mathbf{s}}
\newcommand{\G}{\mathbf{G}}
\newcommand{\bQ}{\mathbf{Q}}
\newcommand{\bF}{\mathbf{F}}
\newcommand{\be}{\mathbf{e}}
\newcommand{\bp}{\mathbf{p}}
\newcommand{\h}{\mathbf{h}}

\newcommand{\f}{\mathbf{f}}
\newcommand{\p}{\mathbf{p}}
\newcommand{\bz}{\mathbf{0}}
\newcommand{\bt}{\mathbf{t}}
\newcommand{\m}{\mathbf{m}}
\renewcommand{\b}{\mathbf{b}}
\newcommand{\bVsp}{\mathbf{V}(\mathbf{s},\bm{\psi})}
\newcommand{\bLag}{\mathbf{Lag}(\mathbf{s},\bm{\psi})}

\newcommand{\bVor}{\mathbf{Vor}}
\newcommand{\bzero}{\bm{0}}
\newcommand{\n}{\mathbf{n}}
\newcommand{\q}{\mathbf{q}}
\renewcommand{\r}{\mathbf{r}}
\renewcommand{\c}{\mathbf{c}}
\newcommand{\x}{\mathbf{x}}
\newcommand{\y}{\mathbf{y}}
\renewcommand{\a}{\mathbf{a}}
\renewcommand{\u}{\mathbf{u}}
\renewcommand{\v}{\mathbf{v}}
\newcommand{\g}{\mathbf{g}}

\newcommand{\Per}{\text{\rm Per}}


\usepackage{soul,xcolor}
\setstcolor{violet}

\begin{document}
\newtheorem{theorem}{Theorem}[section]
\newtheorem{problem}{Problem}[section]
\newtheorem{remark}{Remark}[section]
\newtheorem{example}{Example}[section]
\newtheorem{definition}{Definition}[section]
\newtheorem{lemma}{Lemma}[section]
\newtheorem{corollary}{Corollary}[section]
\newtheorem{proposition}{Proposition}[section]
\newtheorem{propdef}{Definition--Proposition}[section]
\numberwithin{equation}{section}

\title{A Lagrangian shape and topology optimization framework based on semi-discrete optimal transport}
\author{
C. Dapogny\textsuperscript{1}, B. Levy\textsuperscript{2} and E. Oudet\textsuperscript{1}
}

\maketitle
\begin{center}
\emph{\textsuperscript{1} Univ. Grenoble Alpes, CNRS, Grenoble INP\footnote{Institute of Engineering Univ. Grenoble Alpes}, LJK, 38000 Grenoble, France},\\
\emph{\textsuperscript{2} Centre Inria de Saclay, Universit\'e Paris Saclay, Laboratoire de Math\'ematiques d’Orsay}.\\
\end{center}
 
\begin{abstract}
This article revolves around shape and topology optimization, in the applicative context where the objective and constraint functionals depend on the solution to a physical boundary value problem posed on the optimized domain. 
We introduce a novel numerical framework based on modern concepts from computational geometry, optimal transport and numerical analysis. Its pivotal feature is a representation of the optimized shape by the cells of an adapted version of a Laguerre diagram. 
Although such objects are originally described by a collection of seed points and weights, 
recent results from optimal transport theory suggest a more intuitive parametrization in terms of the seed points and measures of the associated cells.
The discrete polygonal mesh of the shape induced by this diagram is a convenient support for the deployment of the Virtual Element Method for the numerical solution of the boundary value problem characterizing the physics at play
and for the evaluation of the functionals of the domain involved in the optimization problem.  
The sensitivities of the latter are derived next; at first, we calculate their derivatives with respect to the positions of the vertices of the Laguerre diagram by shape calculus techniques; 
a suitable adjoint methodology is then developed to express them in terms of the seed points and cell measures of the diagram. 
The evolution of the shape through the optimization process is realized by first updating the design variables  according to these sensitivities 
and then reconstructing the diagram thanks to efficient algorithms from computational geometry.
Our shape and topology optimization strategy is versatile: it can be applied to a whole gammut of physical situations (such as thermal and structural mechanics) and optimal design settings (including single- and multi-phase problems). It is Lagrangian and body-fitted by essence, 
and it thereby benefits from all the assets of an explicit, meshed representation of the shape at each stage of the process. 
Yet, it naturally handles dramatic motions of the optimized shape, including topological changes, in a very robust fashion.
These features, among others, are illustrated by a series of 2d numerical examples.
\end{abstract} 

\bigskip
\bigskip
\hrule
\tableofcontents
\vspace{-0.5cm}
\hrule
\bigskip
\bigskip

\section{Introduction and related works}

\noindent Since the early implementations of optimal design,
the identification of a means to represent the shape $\Omega \subset \R^d$ ($d=2$ or $3$ in practice) 
which is amenable to all the operations of the numerical resolution has been a recurring issue -- especially when physical applications are addressed, where the behavior of the optimized shape is modeled by boundary value problems, such as the conductivity or the linear elasticity equations. 
It is indeed notoriously difficult to reconcile accurate simulations of these boundary value problems with a robust description of the evolution of $\Omega$ through the optimization process.
The historical ``Lagrangian'' approaches for shape optimization hinge on a (simplicial) mesh of $\Omega$ which is updated between the iterations of the workflow; 
while this representation naturally offers a convenient support for mechanical computations via the Finite Element Method,
it makes it difficult to track the deformations of $\Omega$ in a robust way, especially when they include topological changes \cite{mohammadi2010applied,pironneau1982optimal}. 
More recent solutions to carry out this thorny evolution operation resort to implicit, Eulerian representations of the shape $\Omega$. The most popular practice in this category is certainly to use the level set method \cite{allaire2004structural,osher2001level,sethian2000structural,wang2003level}. The latter brings into play a fixed mesh $\calT$ of a large ``hold-all'' domain $D$: the optimized design $\Omega \subset D$ is encoded into a so-called ``level set function'' $\phi : D \to \R$, discretized on $\calT$ in practice, which is negative (resp. positive) inside $\Omega$ (resp. inside $D \setminus \overline \Omega$). Arbitrary updates of $\Omega$ are accounted for via ``simple'' updates of $\phi$ -- for instance, small deformations of the boundary $\partial \Omega$ translate into an advection or a Hamilton-Jacobi equation for $\phi$.
Unfortunately, this flexibility has a price: since no mesh of the shape $\Omega$ is available, physical boundary value problems posed on $\Omega$ have to be replaced 
by approximate counterparts, posed on the total domain $D$, which are solved on the fixed mesh $\calT$. 
This practice often leverages a fictitious domain method, such as the ersatz material approximation in structural mechanics, see e.g. \cite{allaire2002shape}; in any event, it results in an inaccurate numerical solution. 
The main competing Eulerian strategy for optimal design, the so-called phase-field method, shares fairly similar spirit, assets and drawbacks \cite{blank2012phase,bourdin_chambolle_2003,takezawa2010shape}.  
On the extreme side of this line of thinking, density-based topology optimization methods \cite{bendsoe2013topology}, such as the popular Solid Isotropic Material with Penalization (SIMP) method in structural mechanics (see also \cite{borrvall2003topology} about artificial porosity methods in the context of fluid mechanics), downright relax the optimal design problem in terms of a ``blurred'' version $\rho$ of the characteristic function of the optimized design, defined on (the mesh $\calT$ of) $D$ and taking values in the interval $[0,1]$. Intuitively, $\rho$ encodes the ``local density'' of material: regions where $\rho(\x) \equiv 1$ (resp. $\rho(\x) \equiv 0$) are full (resp. devoid) of material and the intermediate values $\rho(\x) \in (0,1)$ represent regions filled by a microscopic arrangement of material and void in proportions $\rho(\x)$ and $(1-\rho(\x))$. The physical problem under scrutiny is approximated and reformulated to accommodate such a description of the design, notably by endowing regions where $\rho$ takes intermediate values with fictitious material properties.
Last but not least, and without entering into details, let us eventually mention that several of the aforementioned approaches have been combined, see e.g. \cite{allaire2011topology,allaire2013mesh,allaire2014shape} for a concurrent use of the level set method with simplicial remeshing algorithms, or \cite{christiansen2014topology} for a quite sophisticated Lagrangian method using remeshing. 
 
 The present article introduces a novel framework for shape and topology optimization which is based on concepts and techniques from computational geometry and optimal transport.
 This construction elaborates on the previous work \cite{levy2022partial} of one of the authors, taking place in the context of computational fluid dynamics, 
 and on its recent applications \cite{levy2024monge,nikakhtar2022optimal,nikakhtar2024displacement} in the field of cosmology, see also \cite{de2015power} about related ideas. 
The cornerstone of our strategy is a representation of the optimized shape $\Omega$ by the cells of a (modified version of a) Laguerre diagram, 
a generalization of the well-known notion of Voronoi diagram in computational geometry \cite{aurenhammer1987power,boots2009spatial}.
Although the natural definition of such objects involves a collection $\s = \left\{\s_1,\ldots,\s_N \right\}\in \R^{dN}$ of seed points and associated weights $\bpsi = \left\{ \psi_1,\ldots,\psi_N\right\} \in \R^N$, 
we leverage recent results from optimal transport theory to parametrize them by the seed points $\s$ and the measures $\bnu = \left\{\nu_1,\ldots,\nu_N\right\} \in \R^N$ of the associated cells.
In the present applications, the shape $\Omega$ is iteratively optimized with respect to an objective and constraint functions $J(\Omega)$ and $\G(\Omega)$. Their evaluation and that of their derivatives, which in turn determine the velocity field governing the evolution of $\Omega$ through the process, raise the need to solve one or several boundary value problems posed on $\Omega$; this is a major additional difficulty with respect to the aforementioned works \cite{levy2022partial,levy2024monge,nikakhtar2022optimal,nikakhtar2024displacement}, 
where heuristic simplifications made it possible to express the velocity field for $\Omega$ in terms of geometric quantities of the diagram only. 
In order to solve these boundary value problems, we take advantage of the tessellation of $\Omega$ into convex polytopes induced by its diagram representation; this offers a convenient support for the use of the Virtual Element Method -- a recent variant of the classical Finite Element Method adapted to polygonal meshes, see e.g. \cite{antonietti2022virtual}. 
The calculation of the sensitivities of the optimization criteria $J(\Omega)$ and $\G(\Omega)$ with respect to the seed points $\s$ and cell measures $\bnu$ of the diagram is realized in two steps:
we first apply shape calculus techniques to compute their sensitivities with respect to the vertices of the diagram and we next employ a suitable adjoint method to reformulate them in terms of $\s$ and $\bnu$.
A descent direction for the optimization problem is then inferred from this information, 
and the parameters $\s$ and $\bnu$ of the Laguerre diagram for $\Omega$ are updated; the new diagram associated to the resulting seed points and weights is generated, revealing the next iterate of the shape $\Omega$. 
This strategy is Lagrangian by nature since the evolution of $\Omega$ is realized by tracking the motion of the defining seed points; it consistently features an exact, body-fitted description of the shape $\Omega$. 
In the meantime, it leaves the room for a robust account of dramatic deformations between iterations, including topological changes: the deformations of (the mesh of) $\Omega$ are never tracked explicitly; rather, the shape is rediscovered after each iteration, once the defining Laguerre diagram has been calculated anew.

As reflected by the previous discussion, one originality of our approach is that it leverages recent concepts and techniques pertaining to quite different fields: 
shapes are represented via Laguerre diagrams, a notion from computational geometry and optimal transport; the Virtual Element Method from numerical analysis is used for the solution of the physical boundary value problems at play on polygonal meshes; eventually, fine techniques from shape and topology optimization are used to orchestrate the optimal design process. 

To the best of our knowledge, the present work is the first one using all these ingredients together even though, admittedly some of them have already been combined in the literature. Voronoi-like diagrams, polygonal meshes, and numerical methods dedicated to this type of structures 
have been considered in the perspective of optimal design or more generally interface motion tracking, as we now briefly overview, without exhaustivity. 
In \cite{galindo2010molecular}, the cells of a Voronoi diagram are used to represent the many individual entities of a granular medium. The collection is subjected to the viscous, elastic and contact forces between cells, which are modeled by a finite-dimensional system of ordinary differential equations.
The contributions \cite{springel2010pur,springel2011hydrodynamic} arise in the field of fluid dynamics; 
the 2d polygonal mesh induced by a Voronoi diagram is used as computational support for the finite volume calculations involved in the simulation of the flow, notably for the reconstruction of the fluxes at the interface between cells. Although this discretization is not fitted with the surface of the fluid, the defining seed points evolve according to the flow velocity.
A high-order version of this strategy was proposed in \cite{gaburro2020high}, see also \cite{despres2024lagrangian} for another contribution in this setting. 
Still in the context of fluid mechanics, the article \cite{de2015power} associates a Laguerre diagram to a collection of fluid particles; 
its weights are adjusted to take into account the compressibility or incompressibility of the fluid.
A simple, approximate numerical solver based on discrete differential operators allows to compute the velocity of the fluid.
This framework has been recently revisited in \cite{qu2022power}, where entropy-regularized transport plans are used to ease the calculation of the Laguerre cell attached to each particle. 
It can be adapted to the case of free surfaces, owing to a modified version of this entropy-regularized optimal transport formulation and of the Sinkhorn algorithm used to calculate the transport plan. 
Voronoi diagrams appear more systematically as the pivotal ingredient of the so-called Voronoi Implicit Interface Method introduced in \cite{saye2011voronoi,saye2012analysis}, 
see also \cite{zaitzeff2019voronoi} for a mathematical analysis in the context of multiphase motion driven by curvature.
Briefly, a partition of a fixed domain into multiple phases is represented via the Voronoi diagram generated by a set of seed objects (and not mere seed points). 
The evolution of these phases is achieved by first calculating the domains induced by the $\e$-offset of the collection of edges (in 2d) or faces (in 3d) of the diagram, 
then by realizing their evolution via the ``classical'' level set method. 
The updated collection of phases is retrieved as the Voronoi diagram of the resulting objects.
Beyond the field of computational physics, Voronoi diagrams have inspired popular formats for shapes in applicative disciplines such as computer graphics, see e.g. the article \cite{allain2015efficient} where the motion of a shape is tracked by introducing a clipped Voronoi tessellation of a template shape, whose cells are rigidly deformed to match a target.

Closer to the setting of the present article, the specific features of Voronoi diagrams have inspired various original representations of the shape in optimal design. 
The recent contribution \cite{feng2022cellular} is dedicated to trusses, i.e. networks of bars. 
The considered structures are represented by the edges of a Voronoi diagram contained in a fixed ``hold-all'' domain $D \subset \R^d$; 
when it comes to calculating their elastic behavior, a density function $\rho: D \to [0,1]$ is generated, featuring a continuous interpolation between the value $1$ inside the bars and that $0$ in void.
This function is parametrized by the seed points of the diagram, and the optimal design problem is solved in terms of these variables. 
A quite similar strategy is used in the article \cite{li2023explicit}, dealing with the optimization of foams.

The use of a polygonal computational mesh of the shape $\Omega$ or a larger hold-all domain $D$ is not extraneous to the field of shape and topology optimization, either. 
For instance, the contributions \cite{talischi2010polygonal,antonietti2017virtual,chi2020virtual} rely on a density topology optimization method, where the hold-all domain $D$ is equipped with a fixed polygonal mesh serving as the support for all the mechanical computations of the workflow, which are performed by means of the Virtual Element Method.
The added value of the use of the Virtual Element Method on a general polygonal mesh with respect to the more classical practice of the Finite Element Method on a simplicial mesh hinges on its reported superior stability properties. 
To the best of our knowledge, only a few recent contributions leverage the greater flexibility of general polygonal meshes over simplicial meshes to propose shape and topology optimization methods featuring a body-fitted discretization of the optimized shape. In \cite{nguyen2019level,nguyen2022novel}, the authors rely on the level set method; a level set function $\phi: D \to \R$ for the shape $\Omega$ is defined on a fixed hexahedral mesh of the computational domain $D$ and at each iteration, the $0$ isosurface of $\phi$ is explicitly discretized in this mesh, thus revealing a polygonal mesh of the shape. Mechanical computations are performed on this computational support for $\Omega$ thanks to a moving least-square approximation, which allows to calculate the shape derivative of the minimized function of the domain. 
A similar strategy is used in \cite{ferro2023level}, where a polygonal mesh of the computational domain $D$ is used and the level set function $\phi: D \to \R$ for the shape $\Omega$ is represented as a discontinuous Finite Element function. 
Again, at each iteration of the process, the $0$ level set of $\phi$ for the shape $\Omega$ is explicitly discretized in the mesh of $D$, which results in a new, non conforming polygonal mesh of the latter, 
on which Finite Element analyses can be conducted by a discontinuous Galerkin solver. The topological derivative of the minimized function of the domain is calculated and the level set function $\phi$ is updated from this information. Let us eventually mention the related, very recent work \cite{fernandes2024level} where a very fine simplicial mesh of $D$ is used to update the level set function, and a coarse polygonal mesh is generated when it comes to mechanical computations, whose elements are agglomerations of the simplices of the fine mesh.

To conclude this brief overview of related works, let us point out that, from the conceptual viewpoint, 
our shape and topology optimization framework shares similarities with that developed in \cite{laurain2021analysis,birgin2023sensitivity}. 
In there, a decomposition of the computational domain $D$ into $K$ phases is described by a so-called minimization diagram. The latter is attached to a collection $\left\{\phi_k \right\}_{k=1,\ldots,K}$ of scalar functions $\phi_k : D \to \R$, in such a way that
each phase $k=1,\ldots,K$ is made of those points $\x \in D$ where $\phi_k(\x)$ is minimum among $\phi_1(\x), \ldots, \phi_k(\x)$. 
The motion of the boundary between each phase is accounted for by a quite similar mechanism as in the context of the level set method.
We also mention the recent contribution \cite{birgin2024reconstruction} about the reconstruction of a potential in an inverse problem, which is piecewise constant with respect to the cells of a Voronoi diagram. 

The present article focuses on the exposition of our shape and topology optimization framework in two space dimensions, 
where its main ingredients can be conveniently described with a minimum level of technicality. 
There is no conceptual obstruction to the three-dimensional extension of this work, but only an increase in the implementation burden.
Indeed, the chief tools involved -- notably, the calculation of Laguerre diagrams and the Virtual Element Method -- are already available in this situation. 
To emphasize this fact, our presentation takes place in the general, $d$ dimensional case whenever it is possible without jeopardizing with clarity. 
 On a different note, as we have already noted, our strategy leverages concepts and algorithms pertaining to disciplines which seldom appear together in the literature: 
 computational geometry, via the handling of Laguerre diagrams, 
numerical analysis for the simulation of physical phenomena (and notably, the Virtual Element Method), and optimal design for the notions of shape and topological derivatives, the adjoint method, etc.
In order to ensure a relatively self-contained presentation, we provide a reasonable amount of background material about these ingredients, including proofs when they help intuition or when they are not readily available in the precise context of interest.
%
%
Our implementation essentially relies on open-source libraries, and we aim to make it available in a short future, together with associated tutorials.
We believe that the methods introduced in this work will find useful applications beyond shape optimization, e.g. in computational physics.
 
 The remainder of this article is organized as follows. 
The next \cref{sec.presso} sets the scene of the shape optimization problems analyzed in this article.
In particular, we introduce the main notation used throughout and recall a few basic notions about shape optimization.
\cref{sec.discLag} is devoted to the numerical representation of our shapes in terms of the seed points and cell measures of a Laguerre diagram;
 the general shape optimization workflow adopted in this context is then presented.
 The subsequent \cref{sec.geomcomp,sec.mechcomp,sec.deropt} are more in-depth presentations of the main operations involved therein. 
 \cref{sec.geomcomp} deals with the geometric computations attached to Laguerre diagrams involved in our framework, and notably their construction, cell regularization, resampling, etc. \cref{sec.mechcomp} sketches the implementation of the Virtual Element Method for the solution of the conductivity equation and  the linear elasticity system. Eventually, \cref{sec.deropt} details the calculation of the objective and constraint functionals featured in our shape optimization problem and their sensitivities with respect to the parameters of our shape representation, namely the seed points and cell measures of the representing diagram. 
 A series of numerical examples is presented and discussed in \cref{sec.num} to showcase the main features of our approaches. 
 In \cref{sec.concl}, we draw a few conclusions about this strategy and we outline natural perspectives for future work. 
 The article ends with a series of appendices, devoted to the proofs of some technical results which are not completely new, but are adaptations of existing results, 
 or known facts which are not easily found under the needed form in the literature. It also contains several useful implementation details.
 
\section{Presentation of the shape optimization problem}\label{sec.presso}

\noindent Our main purpose in this article is to optimize shapes, that is, bounded, Lipschitz domains $\Omega$ of $\R^d$, where $d=2,3$ in practice. 
We consider optimization problems of the general form:
\begin{equation}\label{eq.sopb} 
\tag{\textcolor{gray}{P}}
\min \limits_{\Omega} \: J(\Omega) \:\: \text{ s.t. } \G(\Omega) = 0,
\end{equation}
where  $J(\Omega)$ is an objective criterion,
and $\G(\Omega) := (G_1(\Omega),\ldots,G_p(\Omega))$ is a collection of $p$ scalar-valued equality constraint functionals. 

Elementary examples of shape functionals are the volume $\Vol(\Omega)$ and the perimeter  $\Per(\Omega)$, defined by:
\begin{equation}\label{eq.volper}
\Vol(\Omega) = \int_\Omega \:\d \x , \text{ and } \Per(\Omega) = \int_{\partial \Omega} \:\d s.
\end{equation}
More involved instances about $J(\Omega)$ and $\G(\Omega)$ depend on the physical behavior of $\Omega$ via a so-called state function $u_\Omega$, 
which is the solution to a boundary value problem posed on $\Omega$.

After setting a few notations about calculus in \cref{sec.notation}, we describe in \cref{sec.twophys} the applicative situations considered in the present article, namely those of conductive media and of elastic structures.
Eventually, in \cref{sec.sotuto}, we recall a few facts about differentiation with respect to the domain. 

\subsection{Notations}\label{sec.notation}


\noindent Throughout this article, vectors $\x$ in a Euclidean space $\R^N$ appear in bold face, and their components are denoted by $\x= (x_1,\ldots,x_N)$. Moreover, we rely on the following conventions:
\begin{itemize}
\item For $N \geq 1$, the Euclidean space $\R^N$ is equipped with its canonical basis $\left\{\be_i \right\}_{i=1,\ldots,N}$.
The attached Euclidean inner product is denoted by $\langle \cdot , \cdot \rangle$, and sometimes only with a $\cdot$, and the norm is denoted by $\lvert \cdot \lvert$. 
We also introduce the supremum norm:
$$ \lvert \x \lvert_\infty := \sup\limits_{i=1,\ldots,N} \lvert x_i \lvert, \quad \x \in \R^N.$$
\item The open ball in $\R^N$ with center $\x \in \R^N$ and radius $r >0$ is denoted by $B(\x,r)$.
\item Let $f : \R^N \to \R$ be a (smooth enough) scalar function and let $\x \in \R^N$;
we denote by $\frac{\partial f}{\partial \x}(\x)$ the derivative of $f$ at $\x$, i.e. $\frac{\partial f}{\partial \x}(\x): \R^N \to \R$ is the linear mapping giving rise to the expansion:
$$ f(\x + \widehat \x) = f(\x) + \frac{\partial f}{\partial \x}(\x)(\widehat \x) + \o(\widehat \x), \text{ where } \frac{\o(\widehat \x)}{|\widehat \x|} \to 0 \text{ as } \widehat \x \to 0.$$
The gradient $\nabla f(\x) \in \R^N$ of $f$ at $\x$ is the Riesz representative of this linear mapping, that is:
$$\forall \widehat \x \in \R^N, \quad \langle \nabla f(\x) , \widehat \x \rangle = \frac{\partial f}{\partial \x}(\x) (\widehat \x) .$$
When needed, we shall explicitly indicate in the operator $\nabla_\x$ the variables $\x$ involved in the derivative. 
\item Still considering a (smooth enough) scalar function $f: \R^N \to \R$, the Hessian matrix $\left[\nabla^2 f(\x)\right]$ of $f$ at $\x$ is the $N \times N$ matrix with entries:
$$ \left[\nabla^2 f(\x) \right]_{ij} = \frac{\partial^2 f}{\partial x_i\partial x_j}(\x), \quad i,j=1,\ldots,N. $$
\item For $M,N \geq 1$, the derivative of a smooth enough vector-valued function $\bpsi = (\psi_1,\ldots,\psi_M): \R^N \to \R^M$ is the linear mapping $\frac{\partial \bpsi}{\partial \x}(\x) : \R^N \to \R^M$ featured in the expansion
$$ \bpsi(\x + \widehat \x) = \bpsi(\x) + \frac{\partial \bpsi}{\partial \x}(\x)(\widehat \x) + \o(\widehat \x), \text{ where } \frac{\o(\widehat \x)}{|\widehat \x|} \to 0 \text{ as } \widehat \x \to 0.$$
The $M \times N$ matrix $\nabla \bpsi (\x)$ associated to this derivative -- sometimes also denoted by $\left[\nabla \bpsi(\x)\right]$ for clarity -- is that with entries:
$$ \left[ \nabla \bpsi(\x)\right]_{ij} = \frac{\partial \psi_i}{\partial x_j}(\x), \quad i=1,\ldots,M, \:\: j = 1,\ldots,N.$$
As a result, the following relation holds true:
$$ \forall \widehat \x \in \R^N, \quad \frac{\partial \bpsi}{\partial \x}(\x)(\widehat \x) = \left[\nabla\bpsi(\x) \right] \widehat \x,$$
where the right-hand side features the matrix-vector product between the $M \times N$ matrix $ \left[\nabla \bpsi(\x) \right]$ and the vector $\widehat \x \in \R^N$.
Note the (classical and hopefully harmless) ambiguity of these notations in the case $M =1$ where $\nabla \psi (\x)$ is a vector with size $N$ and $\left[ \nabla \bpsi(\x) \right]$ is the transpose $1 \times N$ matrix.
\end{itemize}
More specific notations will be introduced when needed, later in the article.

\subsection{Mathematical models for conductive media and elastic structures}\label{sec.twophys}
\noindent In this section, we introduce the two physical applications of our shape optimization problems. We refer to classical treaties about continuum mechanics such as \cite{temam2005mathematical} for further details, see also \cite{gould1994introduction} about linear elasticity.

\subsubsection{Optimal design of conductive media}\label{sec.conduc}

\noindent Let the shape $\Omega$ stand for an electric conductor in $\R^d$ ($d=2$ or $3$). 
It is filled by a material with smooth conductivity $\gamma \in \calC^\infty(\R^d)$, 
satisfying the following ellipticity condition: 
$$ \exists \:\: 0 <  \underline\gamma \leq \overline\gamma < \infty  \:\: \text{ s.t. } \forall \x \in \R^d, \quad \underline\gamma \leq \gamma(\x) \leq \overline\gamma. $$
The boundary $\partial \Omega$ is maintained at fixed potential $0$. 
Introducing a source $f \in \calC^\infty(\R^d)$, the voltage potential $u_\Omega$ inside $\Omega$ 
is the unique solution in $H^1_0(\Omega)$ to the following boundary value problem:
\begin{equation}\label{eq.conduc}
\tag{\textcolor{gray}{Cond}}
\left\{
\begin{array}{cl}
-\dv(\gamma \nabla u_\Omega) = f & \text{in } \Omega, \\ 
u_\Omega = 0 & \text{on } \partial \Omega.
\end{array}
\right.
\end{equation}
The associated variational formulation reads: 
\begin{equation}\label{eq.varfcond}
\text{Search for } u_\Omega \in H^1_0(\Omega)\:\:  \text{ s.t. }  \forall v \in H^1_0(\Omega), \quad \int_\Omega \gamma \nabla u_\Omega \cdot \nabla v \:\d \x = \int_\Omega f v \:\d \x.
 \end{equation}

In this context, a typical objective function measuring the performance of $\Omega$ is the compliance
\begin{equation}\label{eq.complianceConduc}
 C(\Omega) = \int_\Omega \gamma \lvert \nabla u_\Omega \lvert^2 \:\d \x = \int_\Omega f u_\Omega \:\d \x,
 \end{equation}
 which appraises the energy stored inside the shape $\Omega$, or equivalently, the work done by the source.   
One may also be interested in minimizing the first eigenvalue $\lambda_\Omega^1$ (resp. the $k^{\text{th}}$ eigenvalue $\lambda_\Omega^k$) of the operator $-\dv(\gamma\nabla\cdot)$ on $\Omega$ equipped with Dirichlet boundary conditions: $\lambda_\Omega^1$ is the smallest positive number $\lambda$ (resp. the $k^{\text{th}}$ smallest positive number) for which there exists a non trivial function $u \in H^1_0(\Omega)$ satisfying
\begin{equation}\label{eq.conducev}
\left\{
\begin{array}{cl}
-\dv(\gamma \nabla u) = \lambda u & \text{in } \Omega, \\ 
u = 0 & \text{on } \partial \Omega.
\end{array}
\right.
\end{equation}
From the physical viewpoint, $\lambda_\Omega^1$ accounts for the minimum energy of a potential $u$ with unit squared amplitude, as reflected by the classical Rayleigh quotient formula: 
\begin{equation}\label{eq.evlap}
 \lambda_\Omega^1 = \min\limits_{u \in H^1_0(\Omega), \atop u \neq 0} \frac{\displaystyle\int_\Omega \gamma \lvert\nabla u \lvert ^2 \:\d \x}{\displaystyle\int_\Omega u^2 \:\d \x}.
 \end{equation}

\begin{remark}\label{rem.2phaseconduc}
We shall also consider a two-phase variant of this setting, 
where the shape $\Omega$ represents one phase made of a material with conductivity $\gamma_1 \in \calC^\infty(\R^d)$ within a fixed domain $D$ made of a material with different conductivity $\gamma_0 \in \calC^\infty(\R^d)$. 
One then aims to optimize the repartition $D = \Omega \cup (D \setminus \overline \Omega)$ of both materials within $D$ with respect to, e.g. the compliance of the total device $D$; see \cref{sec.numconduc2phase} for a numerical example in this setting.
\end{remark}

\subsubsection{Optimization of the shape of elastic structures}\label{sec.elas}

\noindent In this section,
$\Omega \subset \R^d$ stands for a mechanical structure, whose boundary is divided into three disjoint pieces:
$$ \partial \Omega = \overline{\Gamma_D} \cup \overline{\Gamma_N} \cup \overline \Gamma.$$
In this decomposition,
\begin{itemize}
\item The shape $\Omega$ is clamped on $\Gamma_D$; 
\item Surface loads $\g: \Gamma_N \to \R^d$ are applied on $\Gamma_N$; 
\item The region $\Gamma$ is free of applied efforts. 
\end{itemize}
In practice, $\Gamma_D$ and $\Gamma_N$ are often imposed by the context, 
so that only the free boundary $\Gamma$ is subject to optimization. 
Introducing (smooth) body forces $\f: \R^d \to \R^d$ representing e.g. gravity, the displacement $\u_\Omega$ of $\Omega$ belongs to the  space 
$$H^1_{\Gamma_D}(\Omega)^d := \left\{ \u \in H^1(\Omega)^d ,\:\: \u = \bz \text{ on } \Gamma_D \right\}.$$
It is the unique solution in the latter to the linearized elasticity system:
\begin{equation}\label{eq.elas}
\tag{\textcolor{gray}{Elas}}
\left\{
\begin{array}{cl}
-\dv (Ae(\u_\Omega)) = \f & \text{in } \Omega, \\
\u_\Omega = \bz & \text{on } \Gamma_D, \\
Ae(\u_\Omega) \n = \g & \text{on } \Gamma_N, \\
Ae(\u_\Omega) \n = \bz & \text{on } \Gamma.
\end{array}
\right.
\end{equation}
Here, $e(\u) := \frac12(\nabla \u + \nabla \u^T)$ is the linearized strain tensor induced y a vector field $\u:  \Omega \to \R^d$. 
The material properties of the elastic medium are encoded in the Hooke's tensor $A$, which reads: 
$$\text{For all symmetric matrix } \xi \in \R^{d\times d}, \quad  A\xi = 2\mu \xi + \lambda\tr(\xi) \I,$$
where $\lambda >0$ and $\mu > 0$ are the Lam\'e coefficients of the material.
The boundary value problem \cref{eq.elas} rewrites, under variational form: 
\begin{equation}\label{eq.varfelas}
\text{Search for } \u_\Omega \in H^1_{\Gamma_D}(\Omega)^d \:\:  \text{ s.t. }\forall \v \in H^1_{\Gamma_D}(\Omega)^d, \:\:
 \int_\Omega Ae(\u_\Omega) : e(\v) \:\d \x =\int_\Omega \f \cdot \v \:\d \x + \int_{\Gamma_N} \g \cdot \v \:\d s.
\end{equation}

The shape optimization problem \cref{eq.sopb} under scrutiny in the present context of mechanical engineering may for instance consist in minimizing the compliance $J(\Omega) = C(\Omega)$ of the structure as a means to maximize its rigidity,
\begin{equation}\label{eq.complianceElas}
 C(\Omega) := \int_\Omega Ae(\u_\Omega): e(\u_\Omega) \:\d \x = \int_\Omega \f \cdot \u_\Omega \:\d \x + \int_{\Gamma_N} \g \cdot \u_\Omega \:\d s,
 \end{equation}
under a volume constraint $G(\Omega) = \Vol(\Omega) - V_T$, where $V_T$ is a volume target.

\begin{remark}
For simplicity of the presentation, we have assumed that all the data of the models of the previous and present sections (the domain $\Omega$, the forces $\f$, $\g$, etc.) are smooth. This framework is by no means the minimal one guaranteeing the well-posedness of these models, and the validity of our developments. 
\end{remark}

\subsection{Shape optimization in a nutshell}\label{sec.sotuto}

\noindent The mathematical analysis and numerical resolution of a shape optimization problem of the form \cref{eq.sopb} hinges on the sensitivities of the objective and constraint functions $J(\Omega)$ and $\G(\Omega)$ with respect to the domain $\Omega$ -- 
a notion which can be appraised in various ways. 
In this work, we mainly rely on the boundary variation method of Hadamard, see for instance \cite{allaire2007conception,henrot2018shape,murat1976controle,sokolowski1992introduction}. 
The latter is based on variations of a given shape $\Omega \subset \R^d$ of the form:
$$ \Omega_\btheta := (\Id + \btheta)(\Omega), \:\: \btheta \in \Winfty, \:\: || \btheta ||_{\Winfty} < 1.$$
Loosely speaking, $\Omega_\btheta$ is obtained by moving the points of $\Omega$ according to the ``small'' vector field $\btheta$, see \cref{fig.hadamard}. 

\begin{figure}[!ht]
\centering
\includegraphics[width=0.5\textwidth]{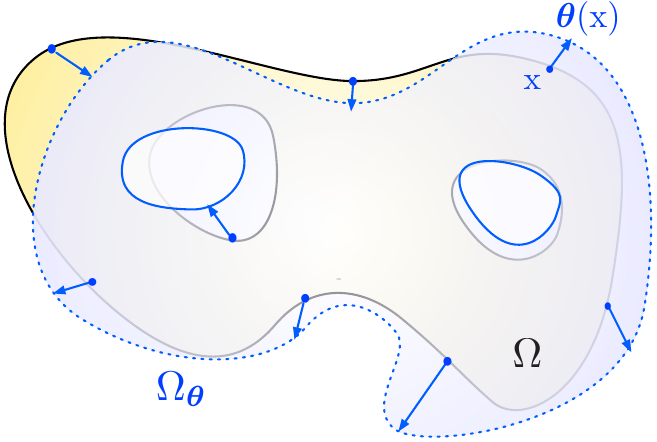}
\caption{\it Variation $\Omega_\btheta$ of a shape $\Omega$ in the sense of Hadamard's method.}
\label{fig.hadamard}
\end{figure}

One function $F(\Omega)$ of the domain is called shape differentiable at a particular shape $\Omega$ when the underlying mapping $\btheta \mapsto F(\Omega_\btheta)$, 
defined from a neighborhood of $\bz$ in $\Winfty$ into $\R$, is Fr\'echet differentiable at $\bz$. Its derivative $\btheta \mapsto F^\prime(\Omega)(\btheta)$ is called the shape derivative of $F$ at $\Omega$
and it satisfies the following expansion:
$$ F(\Omega_\btheta) = F(\Omega) + F^\prime(\Omega)(\btheta) + \o(\btheta), \text{ where } \frac{\o(\btheta)}{|| \btheta ||_{\Winfty}} \xrightarrow{\btheta \to \bz} 0.$$
Intuitively, a ``small'' perturbation of $\Omega$ following a ``descent direction'' $\btheta$, i.e. a deformation such that $F^\prime(\Omega)(\btheta) < 0$, for a short pseudo-time step $\tau >0$ produces a shape $\Omega_{\tau\btheta}$ with a ``better'' performance (i.e. a lower value), as measured in terms of $F$: 
$$F(\Omega_{\tau\btheta}) \approx F(\Omega) + \tau F^\prime(\Omega)(\btheta) < F(\Omega) .$$

Simple examples of shape derivatives are those of the volume and perimeter functionals defined in \cref{eq.volper}. 
Under mild assumptions that we omit for brevity, these read: 
\begin{equation}\label{eq.Volprime}
 \Vol^\prime(\Omega)(\btheta) =\int_\Omega \dv(\btheta) \:\d \x = \int_{\partial \Omega} \btheta\cdot \n \:\d s, \text{ and }  \Per^\prime(\Omega)(\btheta) = \int_{\partial \Omega} \kappa \: \btheta \cdot \n \:\d s,
 \end{equation}
where $\kappa : \partial \Omega \to \R$ is the mean curvature of $\partial \Omega$. 


The analysis of the ``physical'' functionals of interest in this article is more intricate, as they involve the solution $u_\Omega$ to a boundary value problem posed on $\Omega$, see e.g. \cref{eq.complianceConduc,eq.complianceElas}. 
Nevertheless, the classical adjoint method allows to calculate their shape derivatives in closed form, in terms of $u_\Omega$ and possibly of an adjoint state $p_\Omega$, solution to a boundary value problem similar to that for $u_\Omega$. 
Although it is non trivial, this technical issue is well-known in the literature, 
and we do not enter into details, referring to e.g. \cite{allaire2020survey,lions1971optimal}, and also \cite{plessix2006review} for a comprehensive introduction. 

\begin{remark}
The adjoint method will be deployed (and detailed) in \cref{sec.vertoseeds} below to achieve another purpose, that of converting a derivative with respect to the vertices of a Laguerre diagram into derivatives with respect to its seed points and cell measures.
\end{remark}

For further reference, let us note that the shape derivative of a generic shape functional $F(\Omega)$ has often two different, albeit equivalent structures: 
\begin{itemize}
\item A volume form
\begin{equation}\label{eq.Fpvol}
 F^\prime(\Omega)(\btheta) = \int_\Omega (\bt_\Omega \cdot \btheta + S_\Omega : \nabla \btheta )\:\d \x ,
 \end{equation}
featuring an integral over the domain $\Omega$ of the deformation $\btheta$ and its derivative, and some vector- and matrix-valued functions $\bt_\Omega : \to \R^d$ and $S_\Omega :\Omega \to \R^{d\times d}$ depending on the function $F(\Omega)$.
\item A surface form
\begin{equation}\label{eq.Fpsurf}
 F^\prime(\Omega)(\btheta) = \int_{\partial \Omega} v_\Omega \:\btheta \cdot \n \:\d s,
 \end{equation}
involving a scalar field $v_\Omega: \partial \Omega \to \R$ which depends on $F(\Omega)$.
\end{itemize}
The surface expression \cref{eq.Fpsurf} is often deemed more appealing: it indeed reflects the natural fact that the shape derivative of a ``regular enough'' objective function depends only on the normal component of the deformation $\btheta$ on $\partial \Omega$, see for instance \cite{delfour2011shapes,henrot2018shape} about the so-called Structure theorem for shape derivatives. Besides, it easily lends itself to the identification of a descent direction for $F(\Omega)$, as letting $\btheta = -v_\Omega \n$ on $\partial \Omega$ straightforwardly ensures that $F^\prime(\Omega)(\btheta) < 0$. 
However, the volume form \cref{eq.Fpvol} is usually more suitable for mathematical analysis, see e.g. \cite{hiptmair2015comparison,giacomini2017volumetric}. 

\begin{remark}\label{rem.subsetWinfty}
The deformations $\btheta$ considered in the practice of the method of Hadamard are often restricted to a subset of $\Winfty$, which is for instance made of vector fields with higher regularity, or vector fields vanishing on a fixed, non optimizable region of space.    
\end{remark}

\begin{remark}\label{rem.topder}
The notion of shape derivative is not the only means to understand differentiation with respect to the domain. 
The alternative concept of topological derivative appraises the sensitivity of a function $F(\Omega)$ with respect to the nucleation of a ``small'' hole inside $\Omega$. 
It relies on variations of $\Omega$ of the form:
$$ \Omega_{\x,r} :=  \Omega \setminus \overline{B(\x,r)}, \text{ where } \x \in \Omega \text{ and } r \ll 1.$$
The function $F(\Omega)$ is then said to have a topological derivative $\d_T F(\Omega)(\x)$ at $\x \in \Omega$ if the following expansion holds: 
$$ F(\Omega_{\x,r}) = F(\Omega) + r^d \d_T F(\Omega)(\x) + \o(r^d), \text{ where } \frac{\o(r^d)}{r^d} \xrightarrow{r\to0}0.$$
We refer to \cite{garreau2001topological,sokolowski1999topological} for the seminal contributions about topological derivatives and to \cite{amstutz2021introduction,novotny2012topological} for recent overviews.
\end{remark}

\section{Discretization and evolution of shapes using modified Laguerre diagrams}\label{sec.discLag}

\noindent This section details the representation of shapes adopted in our framework.
The first \cref{sec.Lagrep} introduces basic notions about Laguerre diagrams, in their original acceptation and in the modified version introduced in \cite{levy2022partial}.
In the next \cref{sec.Lagrefvol}, we explain how such objects can be conveniently parametrized by the seed points and the measures of their cells.
Finally, \cref{sec.Lagalgo} sketches our optimization strategy, whose most critical operations are more extensively described in the next parts of the article.

\subsection{Representation of shapes via Laguerre diagrams}\label{sec.Lagrep}

\noindent 
Let $D \subset \R^d$ be a fixed bounded and Lipschitz domain containing all the shapes of interest.
We start with the classical definitions of Voronoi and Laguerre diagrams, the latter being more general, weighted versions of the former; we refer to \cite{aurenhammer1987power,boots2009spatial} about these classical notions from computational geometry. 

\begin{definition}\label{def.classLag}
\noindent \begin{itemize}
\item Let $\s = \left\{ \s_i \right\}_{i=1,\ldots,N} \in \R^{dN}$ be a collection of seed points.
The Voronoi diagram $\bVor(\s)$ induced by $\s$ is the following decomposition of $\overline D$:
\begin{equation}\label{eq.defVoronoi}
\overline D = \bigcup\limits_{i=1}^N  \Vor_i(\s),
\end{equation}
where for $i=1,\ldots,N$, the Voronoi cell $\Vor_i(\s)$ is defined by: 
\begin{equation}\label{eq.defVori}
\Vor_i(\s) := \left\{ \x \in \overline D, \:\: | \x - \s_i |^2 \leq |\x -\s_j|^2,\:\: \forall j \neq i \right\}.
 \end{equation}
\item Let $\s = \left\{ \s_i \right\}_{i=1,\ldots,N} \in \R^{dN}$ be a collection of seed points and $\bpsi = \left\{\psi_i \right\}_{i=1,\ldots,N} \in \R^{N}$ be a set of associated weights.
The (classical) Laguerre diagram $\bLag$ attached to $\s$ and $\bpsi$ is the following decomposition of $\overline D$:
\begin{equation}\label{eq.defLaguerre}
\overline D = \bigcup\limits_{i=1}^N  \Lag_i(\s,\bpsi),
\end{equation}
where for $i=1,\ldots,N$, the Laguerre cell $\Lag_i(\s,\bpsi)$ reads: 
\begin{equation}\label{eq.defLagi}
\Lag_i(\s,\bpsi) := \left\{ \x \in \overline D, \:\: | \x - \s_i |^2 - \psi_i \leq |\x -\s_j|^2 - \psi_j ,\:\: \forall j \neq i \right\}.
 \end{equation}
 \end{itemize}
\end{definition}

The concept of Laguerre diagram paves the way to a way of representing a shape $\Omega \subset D$, as a subset of the cells of a diagram of the form \cref{eq.defLaguerre,eq.defLagi}, associated to suitably chosen seed points $\s$ and weights $\bpsi$:
\begin{equation}\label{eq.decompOmSubset} 
\overline\Omega =  \bigcup\limits_{i=1}^K  \Lag_i(\s,\bpsi), \text{ where } K < N,
\end{equation}
and the cells $\Lag_i(\s,\bpsi)$, $i=K+1,\ldots , N$ form a decomposition of the void phase $D \setminus \overline \Omega$. 
In addition to this option, we shall also rely on another point of view, introduced in \cite{levy2022partial}, which leverages a slightly modified version of the notion of Laguerre diagram. 

\begin{definition}\label{def.modLag}
Let $\Omega \subset D$ be a shape; two sets $\s = \left\{ \s_i \right\}_{i=1,\ldots,N} \in \R^{dN}$ and $\bpsi = \left\{\psi_i \right\}_{i=1,\ldots,N} \in \R^{N}$ of seed points and weights
 generate a (modified) Laguerre diagram $\bVsp$ for $\Omega$ if the following decomposition holds:
\begin{equation}\label{eq.decompOm}
\overline \Omega = \bigcup\limits_{i=1}^N V_i(\s,\bpsi),
\end{equation}
 where each closed cell $V_i(\s,\bpsi)$ is obtained by intersection of the corresponding Laguerre cell $\Lag_i(\s,\bpsi)$ in \cref{eq.defLagi} with the ball centered at $\s_i$ with radius $\psi_i^{1/2}$:
\begin{equation}\label{eq.defVipsi}
V_i(\s,\bpsi) = \Lag_i(\s,\bpsi)  \cap \overline{B(\s_i,\psi_i^{1/2})}.
 \end{equation}
 \end{definition}
 
 Both types of decomposition \cref{eq.decompOmSubset,eq.decompOm} will be used in the present work to represent a shape $\Omega \subset D$. The analyses of their mathematical properties are very similar, 
 and since ``classical'' diagrams of the form \cref{eq.defLaguerre,eq.defLagi} are more familiar in the literature (although not in the present shape and topology optimization context), we essentially focus our presentation on the modified diagrams of \cref{def.modLag}, pointing out the differences with their classical counterparts when relevant.  
\par\medskip 

The next statement draws the main geometric features of the representation \cref{eq.decompOm} and sets related notations used throughout the sequel, see \cref{fig.Vipsi}. 
For simplicity, it is only provided in the case of two space dimensions.
Its elementary proof is completely similar to that of the corresponding statement for ``classical'' Laguerre diagrams \cref{eq.defLaguerre}, about which we refer to e.g. \cite{aurenhammer1987power},
and it is omitted for brevity. 

\begin{definition}\label{propdef.Lag} 
Let $d=2$, and let $\Omega \subset D$ be a shape defined via the diagram \cref{eq.decompOm} associated to seed points $\s = \left\{ \s_i \right\}_{i=1,\ldots,N}$ and weights $\bpsi = \left\{\psi_i \right\}_{i=1,\ldots,N}$. 
The following facts hold true:
\noindent \begin{enumerate}[(i)]
\item The cells $V_i(\s,\bpsi)$ in \cref{eq.defVipsi} are closed, bounded  and convex subsets of $\R^2$. 
\item The intersection $V_i(\s,\bpsi) \cap V_j(\s,\bpsi)$ between two different cells $i \neq j$ is either empty, or a point or a (closed) straight segment $\be_{ij}$ orthogonal to $\s_i\s_j$.
\item When the intersection $V_i(\s,\bpsi) \cap V_j(\s,\bpsi)$ is a segment with positive length, $V_j(\s,\bpsi)$ is called a neighbor of $V_i(\s,\bpsi)$. 
The index set of the neighbors of the $i^{\text{th}}$ cell is denoted by $\calN_i \subset \left\{ 1,\ldots, i-1,i+1,\ldots,N \right\}$.
\item We denote by $\calE \subset \left\{1,\ldots,N \right\}^2$ the set of neighboring pairs of cells in the diagram \cref{eq.decompOm}, that is, $(i,j) \in \calE$ if $j \in \calN_i$ (and so $i \in \calN_j$).
\item For each $i=1,\ldots,N$, the boundary $\partial V_i(\s,\bpsi)$ can be decomposed as:
$$ \partial V_i(\s,\bpsi) = \left( \bigcup\limits_{j \in \calN_i} \be_{ij}  \right) \cup \calC_i \cup \calL_i,$$
where
\begin{itemize}
\item For $j \in \calN_i$, $\be_{ij} := \partial V_i(\s,\bpsi)  \cap \partial V_j(\s,\bpsi)$ is a line segment; 
\item $\calC_i = \partial \Omega \cap \partial V_i(\s,\bpsi) $ is a (possibly empty) collection of circular arcs; 
\item $\calL_i = \partial D \cap \partial V_i(\s,\bpsi) $ is a (possibly empty) collection of line segments. 
\end{itemize} 
\item The endpoints of the (straight or curvilinear) pieces of this representation are called the vertices of the diagram \cref{eq.decompOm}.
We denote by  $\q := \left\{ \q_j \right\}_{j=1,\ldots,M} \in \R^{2M}$ the collection of these $M$ vertices. 
\end{enumerate}
\end{definition}

\begin{figure}[!ht]
\centering
\includegraphics[width=0.7\textwidth]{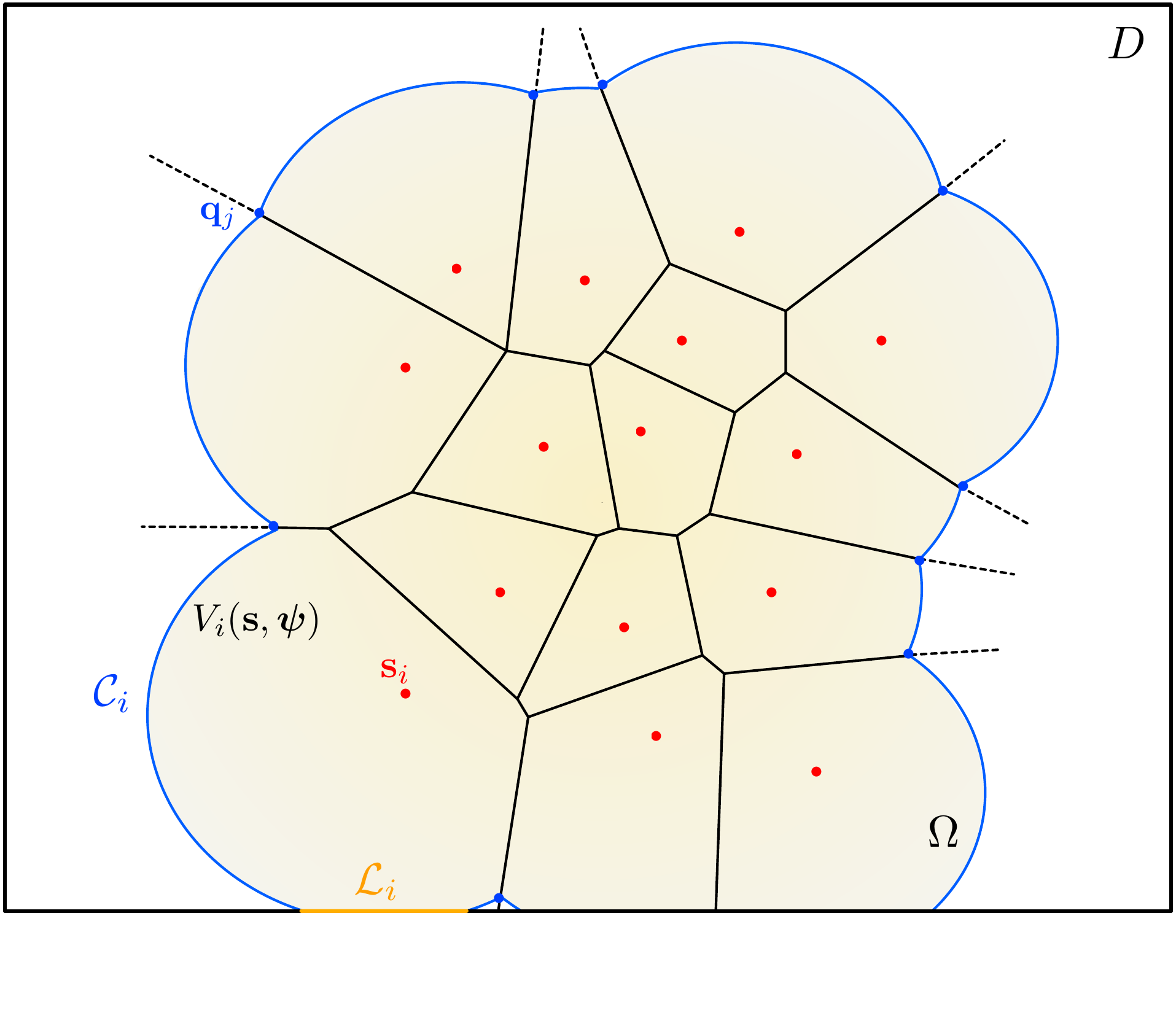}
\caption{\it Representation \cref{eq.decompOm} of the shape $\Omega \subset D$ as the modified Laguerre diagram \cref{eq.decompOm} associated to the seed points $\s = \left\{ \s_1,\ldots,\s_N \right\}$ and weights $\bpsi = \left\{ \psi_1,\ldots,\psi_N \right\}$.}
\label{fig.Vipsi}
\end{figure}

\begin{remark}\label{rem.outcells}
Although the seed points of a Voronoi diagram obviously belong to the interior of their respective cells, the defining seed points of a Laguerre diagram 
may lie far outside;
actually, they may even lie outside the computational domain $D$ although, for implementation stability purposes, we shall impose that they remain inside the latter, see \cref{sec.velext}.

To illustrate this point, let us consider a collection of seed points $\s = \left\{ \s_i \right\}_{i=1,\ldots,N} \in \R^{dN}$.
Let $\h$ be an arbitrary vector in $\R^d$; we introduce the collection of seed points $\widetilde{\s} = \left\{\s_i + \h \right\}_{i=1,\ldots,N}$ resulting from a common translation of $\s_1,\ldots,\s_N$ by $\h$ and the weight vector $\widetilde{\bpsi}$ given by $\widetilde{\psi}_i= 2 \s_i \cdot \h$, $i=1,\ldots,N$. Then, for any index $i=1,\ldots,N$ and any point $\x \in D$, a simple calculation reveals that:   
$$
\begin{array}{lcl}
\x \in \Lag_i(\widetilde{\s},\widetilde{\bpsi}) & \Leftrightarrow & \lvert \x - \h - \s_i \lvert^2 - 2 \s_i \cdot \h  \: \leq \: \lvert \x - \h - \s_j \lvert^2 - 2\s_j \cdot \h \quad \forall j \neq i \\[2mm]
& \Leftrightarrow & \lvert \x - \s_i \lvert^2 - 2\h\cdot(\x - \s_i) + \lvert \h \lvert ^2  - 2 \s_i \cdot \h  \: \leq \: \lvert \x - \s_j \lvert^2 - 2\h\cdot(\x - \s_j) + \lvert \h \lvert^2  - 2 \s_j \cdot \h \quad \forall j \neq i \\[2mm]  
& \Leftrightarrow & \lvert \x - \s_i \lvert ^2 \: \leq \: \lvert \x - \s_j \lvert ^2  \quad \forall j \neq i \\[2mm]  
& \Leftrightarrow & \x \in \Vor_i(\s).
\end{array}
$$
In other terms, the Voronoi tessellation $\mathbf{Vor}(\s)$ of $D$, whose seed points $\s_1,\ldots,\s_N$ belong to the interior of their respective cells, has the same cells as the Laguerre diagram $\mathbf{Lag}(\widetilde{\s},\widetilde{\bpsi})$ whose seed points lie arbitrarily far away.
\end{remark}

\begin{remark}\label{rem.s0}
Although, strictly speaking, a decomposition of the form \cref{eq.decompOm} is not a classical Laguerre diagram, it can be understood as the diagram associated to the collection of ``seed objects'' 
$\s \cup \left\{ \s_0 \right\}$ and weights $\bpsi \cup \left\{\psi_0\right\}$, where the additional object $\s_0$ is the whole domain $D$ and $\psi_0 = 0$, see \cite{levy2022partial}. 
A straightforward adaptation of the \cref{def.classLag} of a Laguerre diagram to allow seed objects (and not just seed points) shows indeed that the cells of this new diagram are exactly those $V_i(\s,\bpsi)$ in \cref{eq.defVipsi} for $i=1,\ldots,N$, with the following additional ``void cell'', associated to $\s_0$:
\begin{equation}\label{eq.V0}
V_0(\s,\bpsi) := \left\{ \x \in \overline D, \:\: \forall i = 1,\ldots, N, \quad 0 \leq |\x - \s_i |^2 - \psi_i \right\}.
\end{equation}
Hence, up to a small abuse of terminology, we shall refer to \cref{eq.decompOm} as a (modified) Laguerre diagram for $\Omega$.
\end{remark}

\begin{figure}
\begin{center}
\begin{tabular}{ccc}
\begin{minipage}{0.3\textwidth}
\begin{overpic}[width=1.0\textwidth]{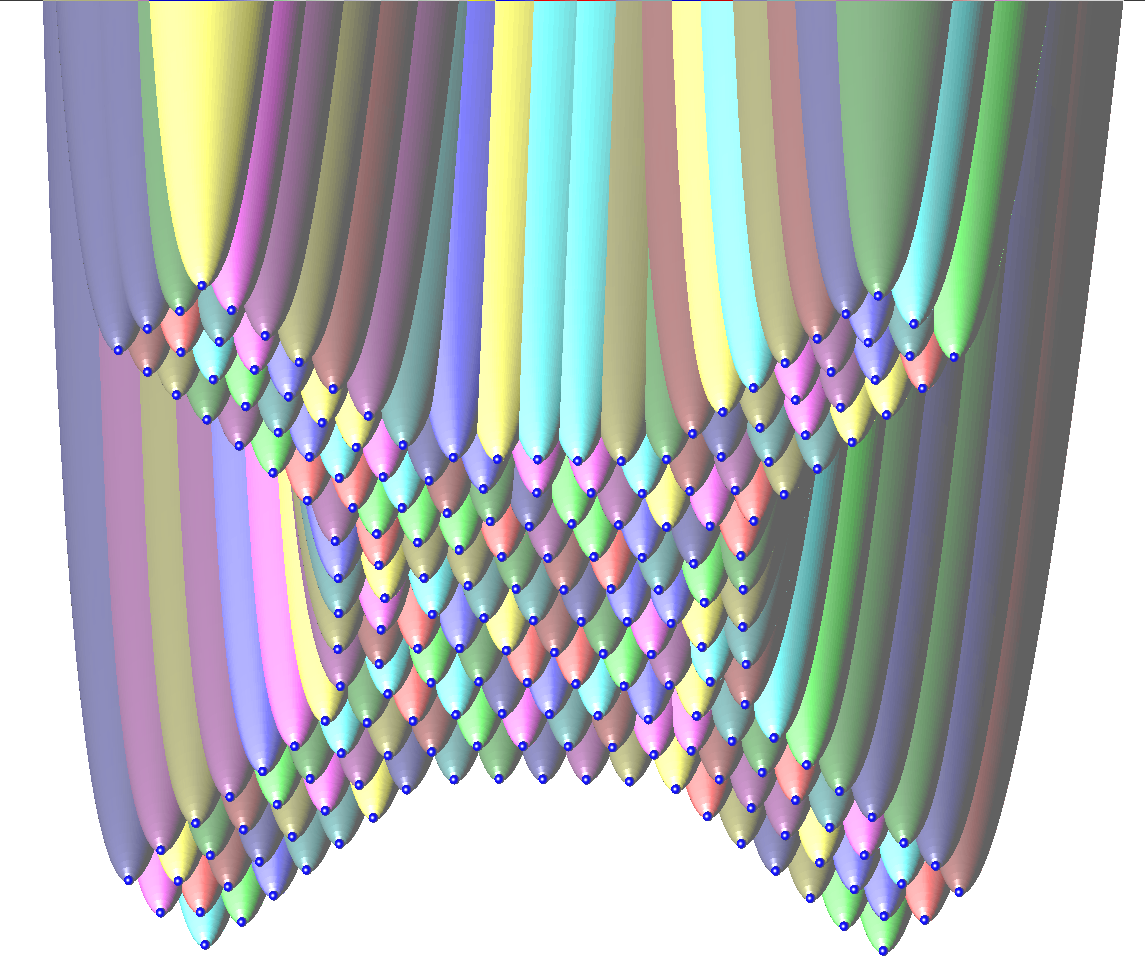}
\put(10,-3){\fcolorbox{black}{white}{a}}
\end{overpic}
\end{minipage}
 & 
 \begin{minipage}{0.3\textwidth}
\begin{overpic}[width=1.0\textwidth]{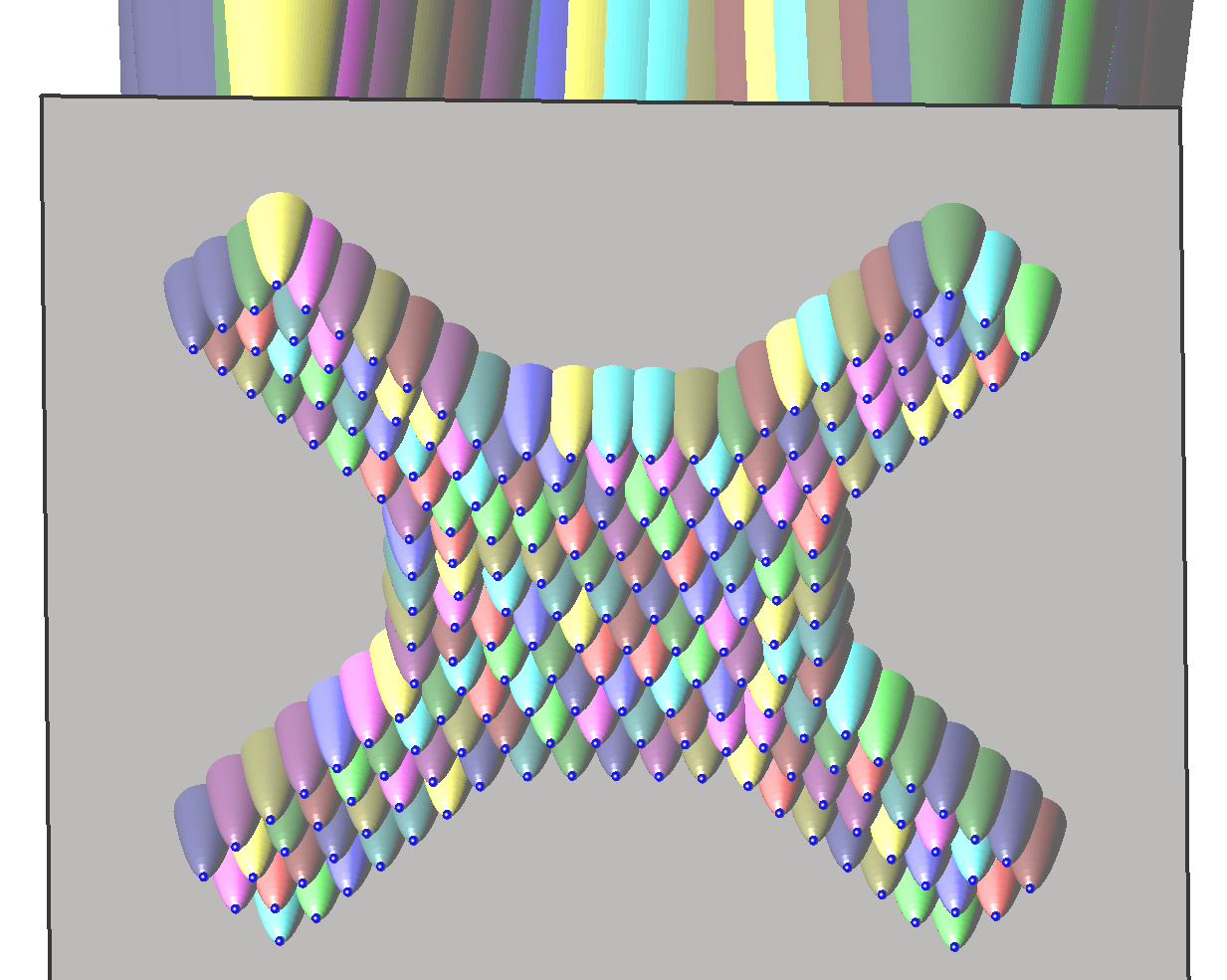}
\put(10,-3){\fcolorbox{black}{white}{b}}
\end{overpic}
\end{minipage} 
&
\begin{minipage}{0.3\textwidth}
\begin{overpic}[width=1.0\textwidth]{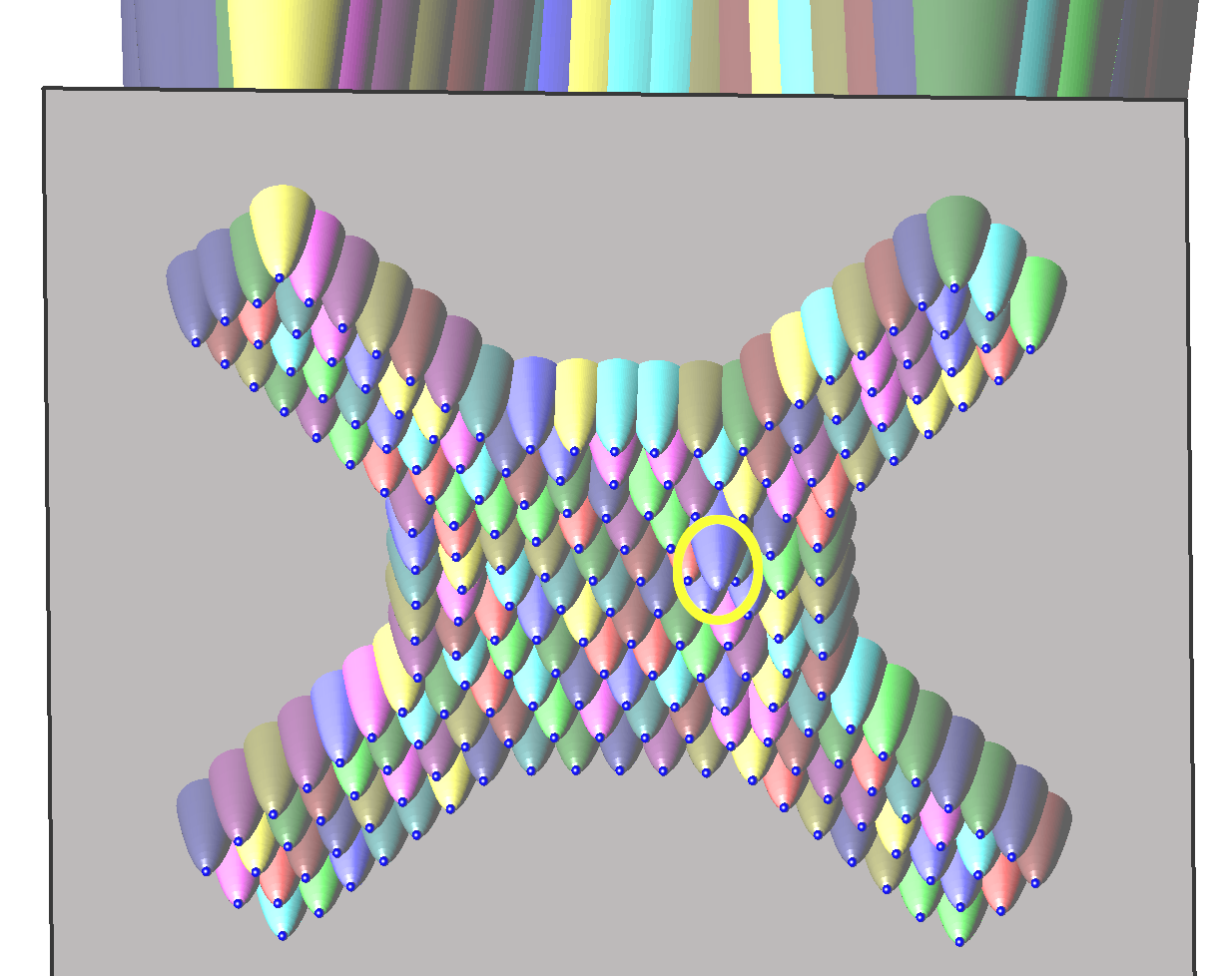}
\put(10,-3){\fcolorbox{black}{white}{c}}
\end{overpic}
\end{minipage} 
\\
\\
\\
\begin{minipage}{0.3\textwidth}
\begin{overpic}[width=1.0\textwidth]{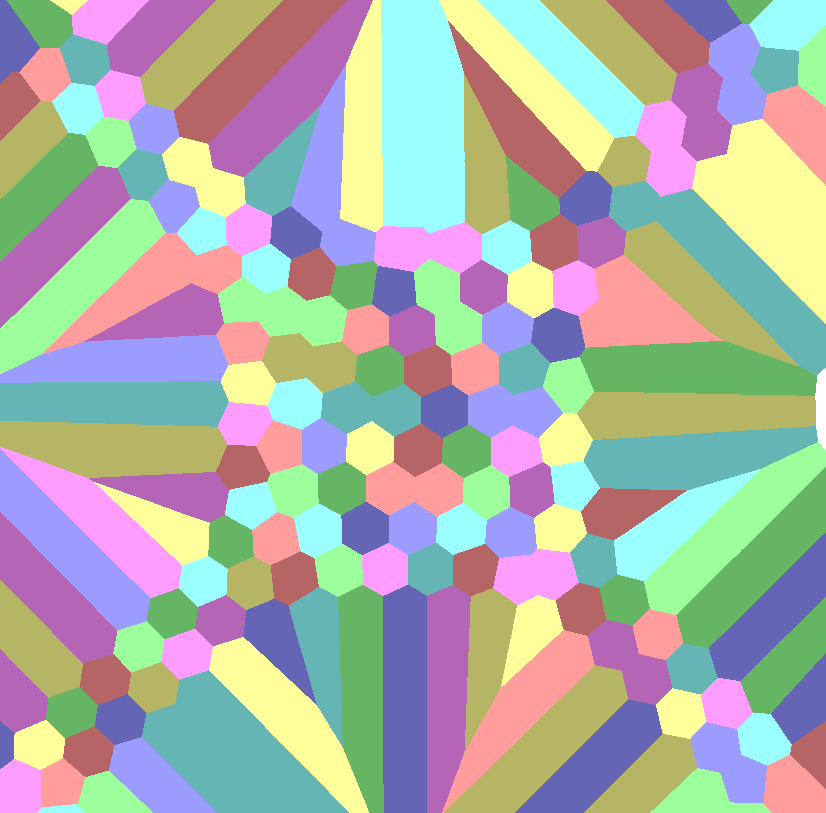}
\put(10,-3){\fcolorbox{black}{white}{d}}
\end{overpic}
\end{minipage} & 
\begin{minipage}{0.3\textwidth}
\begin{overpic}[width=1.0\textwidth]{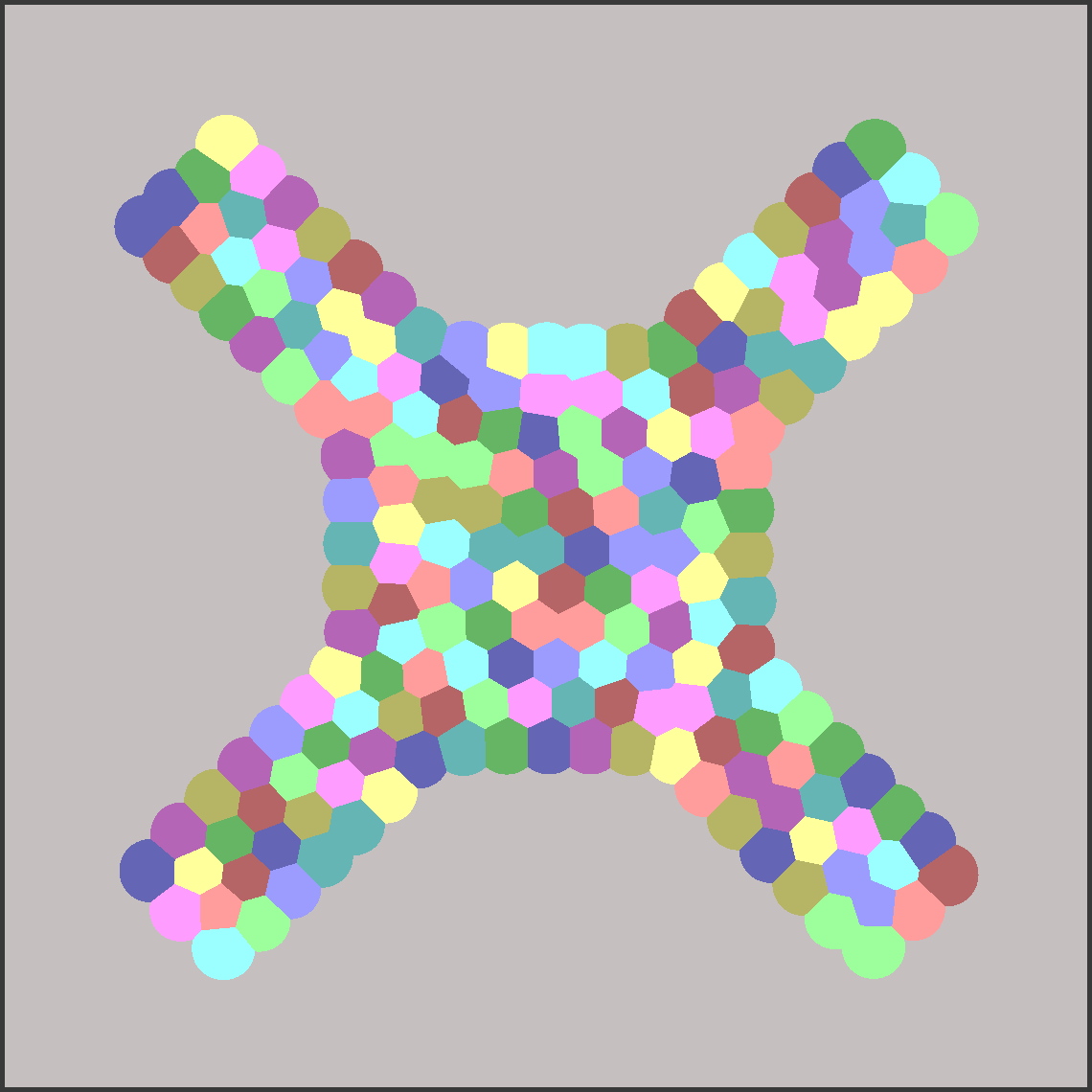}
\put(10,-3){\fcolorbox{black}{white}{e}}
\end{overpic}
\end{minipage}
& 
\begin{minipage}{0.3\textwidth}
\begin{overpic}[width=1.0\textwidth]{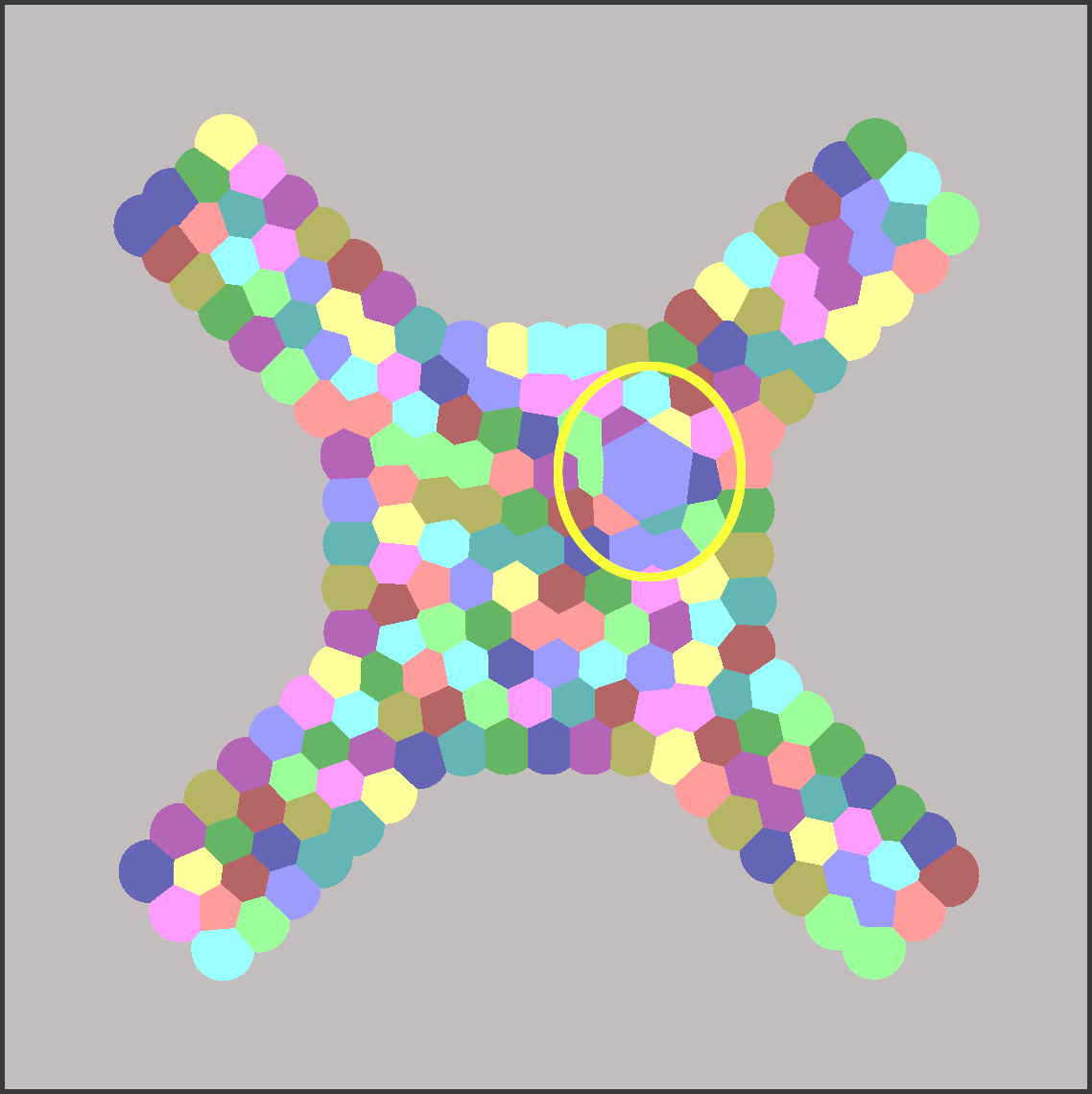}
\put(10,-3){\fcolorbox{black}{white}{f}}
\end{overpic}
\end{minipage}
 \end{tabular}
\end{center}
\caption{\it (a) Two-dimensional Voronoi diagrams (and ``classical'' Laguerre diagrams) can be described by collections of three-dimensional paraboloids with axes aligned with the third coordinate vector; (d) The 2d diagram is obtained by ``seeing this collection from below''; (b) Modified Laguerre diagrams are obtained by inserting the 3d lower half-space in the diagram, and (d) The intersections between the paraboloids and this half-space create circular arcs at the boundaries of the modified cells; (c) The weights $\psi_i$ of a
(classical or modified) Laguerre diagram correspond to shifts of the parabolas along the vertical axis; (f) This results in a change of the area of the cells.}
    \label{fig.minimization}
\end{figure}

An insightful geometric viewpoint about classical and modified Laguerre diagrams \cref{eq.defLaguerre,eq.defLagi} and \cref{eq.decompOm,eq.defVipsi} is to consider them as minimization diagrams of a family of functions, as we now briefly explain in the case $d=2$ for simplicity, see \cref{fig.minimization} for an illustration. 
The Voronoi diagram $\bVor(\s)$ of a set of seed points $\s \in \R^{2N}$ is generated by the 3d paraboloids given by the graphs of the functions $\x \mapsto p_i(\x) = \lvert \x - \s_i \lvert^2$, in the sense that:
$$ \text{For each } i=1,\ldots,N, \quad \Vor_i(\s) = \left\{ \x \in \overline D \text{ s.t. } i \in \argmin\limits_{j=1,\ldots,N} p_j(\x) \right\},$$
i.e. $\Vor_i(\s)$ is the set of points $\x \in \overline D$ where $p_i(\x)$ is minimal among $p_1(\x),\ldots,p_N(\x)$.
The ``classical'' Laguerre diagram $\bLag$ involving weights $\bpsi \in \R^N$ is obtained by ``shifting'' each of the aforementioned 3d paraboloids by the corresponding weight $\psi_i$ in the negative $\be_3$ direction.
Eventually, a corresponding representation of the modified diagram $\bVsp$ is obtained by adding the function $p_0(\x) = 0$ to the minimization diagram, i.e. by intersecting each of the 3d paraboloids with the lower half-space $\left\{ \x = (x_1,x_2,x_3) \in \R^3 \text{ s.t. }x_3 \leq 0 \right\}$.

\subsection{Parametrization of the diagram representation via the seed points and cell measures}\label{sec.Lagrefvol}

\noindent The effect of the weights $\bpsi$ on the shapes of the cells $V_i(\s,\bpsi)$ as encoded in \cref{eq.defVipsi} is implicit and difficult to handle. 
It turns out that the decomposition \cref{eq.decompOm} of $\Omega$ can be equivalently characterized by the seed points $\s \in \R^{dN}$ and the vector 
$$\bnu :=\left\{ \nu_i \right\}_{i=1,\ldots,N} \in \R^N, \quad \nu_i = \lvert V_i(\s,\bpsi)\lvert$$ 
gathering the measures of the cells. 
This section discusses this convenient alternative viewpoint which will be used throughout the following. 
It relies on optimal transport; for the convenience of the reader, we recall a few useful facts about this theory, 
referring to \cite{ levy2015numerical,levy2018notions,peyre2019computational,santambrogio2015optimal,villani2009optimal} for more exhaustive treatments.\par\medskip

Let $X$ stand for the compact set $\overline D$. 
For $\x,\y \in X$, we denote by $c(\x,\y) = \lvert \x - \y \lvert^2$ the quadratic ground cost on $X$, 
which intuitively represents the cost of moving one unit of mass from the position $\x$ to $\y$.
When $\mu$ and $\nu$ are positive measures on $X$ with equal mass $\mu(X) = \nu(X)$, the Monge formulation of the optimal transport problem of $\mu$ onto $\nu$ consists in the search for a mapping $T: X \to X$ realizing the transfer of the mass of $\mu$ onto $\nu$ at minimal cost, that is:
\begin{equation}\label{eq.OTMonge}
\tag{\textcolor{gray}{OT-M}}
 \min \left\{ \int_{X} c(\x, T(\x)) \:\d \mu(\x), \:\: T : X \to X \text{ is } (\mu, \nu)\text{-measurable with } T_\# \mu = \nu \right\},
 \end{equation}
where $T_\# \mu$ is the push-forward of the measure $\mu$ by $T$, i.e.
$$ \text{For all } \nu\text{-measurable subset } B \subset X, \quad T_\#\mu(B)= \mu(T^{-1}(B)).$$
Unfortunately, the problem \cref{eq.OTMonge} is often ill-posed; 
for instance, an admissible mapping $T$ in \cref{eq.OTMonge} may easily fail to exist. 
One therefore usually considers instead the so-called Kantorovic relaxation of \cref{eq.OTMonge}:
\begin{equation}\label{eq.OTKanto}
\tag{\textcolor{gray}{OT-K}}
 \min \left\{ \int_{X \times X} c(\x,\y) \:\d \pi(\x,\y), \:\: \pi \text{ is a positive measure on } X \times X \text{ s.t. } \pi_1 = \mu, \: \pi_2 = \nu \right\}.
 \end{equation}
 Here, $\pi_1$ and $\pi_2$ denote the first and second marginals of the measure $\pi$ on $X \times X$:
 $$ \text{For all } \pi_1, \pi_2\text{-measurable subsets } A \subset X, \: B\subset X,  \quad \pi_1(A) = \pi(A\times X) \:\:  \text{and }  \pi_2(B) = \pi(X\times B).$$
 In \cref{eq.OTKanto}, the minimization is realized over positive measures $\pi$ on $X\times X$ (also called transport plans or couplings) with marginals $\mu$ and $\nu$. Intuitively, when $A$ and $B$ are $\mu$- and $\nu$- measurable subsets of $X$, $\pi(A\times B)$ is the ``quantity of mass'' transferred from $A$ to $B$.
The problem \cref{eq.OTKanto} is often better behaved than \cref{eq.OTMonge} from the mathematical viewpoint; for instance, it is immediate to see that it is convex. \par\medskip

Let us specialize this abstract setting to our purpose. 
Let $\s = \left\{ \s_i \right\}_{i=1,\ldots,N} \in \R^{dN}$ be a collection of seed points, and let $\bpsi = \left\{ \psi_i \right\}_{i=1,\ldots,N}$ be a weight vector. 
According to \cref{rem.s0} we let $\psi_0 := 0$ be the weight for the seed object $\s_0 = D$. 
Likewise, when $\bnu = \left\{\nu_i \right\}_{i=1,\ldots,N}$ is a collection of volume targets such that 
$$\forall i =1,\ldots, N, \quad \nu_i \geq 0, \quad \text{ and } \quad \sum_{i=0}^N \nu_i  \leq |D|,$$
we denote by $\nu_0 := |D| -  \sum_{i=1}^N \nu_i$ the volume of the corresponding ``void phase'' $V_0(\s,\bpsi)$ in \cref{eq.V0}.  
We then aim to apply the aforementioned optimal transport theory to the situation where $\mu := \mathds{1}_D \:\d x$ is the restriction to $X$ of the $d$-dimensional Lebesgue measure and \begin{equation}\label{eq.nuOT}
\nu := \sum_{i=1}^N \nu_i \delta_{\s_i} +\nu_0 \frac{1}{|D|} \mathds{1}_D \:\d x
\end{equation}
is the discrete measure induced by the objects $\left\{\s_i \right\}_{i=1,\ldots,N} \cup \left\{\s_0 \right\}$. In this perspective,
we shall often invoke the following assumptions about $\s$ and $\bpsi$ to rule out very degenerate situations as far as the diagram $\bVsp$ is concerned: 
\begin{equation}\label{eq.Gen0}
\tag{\textcolor{gray}{G1}} \text{All the cells } V_i(\s,\bpsi) \text{ have positive Lebesgue measure, $i=0,1,\ldots,N$ (in particular, they are non empty).}
\end{equation}
\begin{equation}\label{eq.Gen1}
\tag{\textcolor{gray}{G2}} \text{No three distinct seed points } \s_i, \: \s_j, \: \s_k \text{ are aligned.}
\end{equation}
\begin{multline}\label{eq.Gen2}
\tag{\textcolor{gray}{G3}} \text{For each vertex } \q \text{ at the intersection between 2 neighboring cells } V_i(\s,\bpsi), \: V_j(\s,\bpsi), \:\: i,j=1,\ldots,N, \\ \text{ and the void phase } V_0(\s,\bpsi), \:\:  \q,\: \s_i \text{ and } \s_j \text{ are not aligned.}
\end{multline}
\begin{multline}\label{eq.Gen3}
\tag{\textcolor{gray}{G4}} \text{Each vertex } \q \text{ is at the intersection of at most 3 distinct cells }\\ 
V_i(\s,\bpsi), \: V_j(\s,\bpsi), V_k(\s,\bpsi), \:\: i,j,k=0,1,\ldots,N.
\end{multline}
\begin{multline}\label{eq.Gen4}
\tag{\textcolor{gray}{G5}} \text{The boundary } \partial D \text{ of the computational domain is smooth at each vertex } \q \text{ lying on } \partial D. \\  \text{Such a point is moreover at the intersection of at most 2 distinct cells } 
V_i(\s,\bpsi), \: V_j(\s,\bpsi), \:\: i,j=0,\ldots,N.
\end{multline}
\begin{multline}\label{eq.Gen5}
\tag{\textcolor{gray}{G6}} \text{For each vertex } \q \text{ at the intersection of } \partial D \text{ and 2 distinct cells } V_i(\s,\bpsi), \: V_j(\s,\bpsi), \: i,j=1,\ldots,N,\\ 
\text{the line } \partial V_i(\s,\bpsi) \cap \partial V_j(\s,\bpsi) \text{ is not orthogonal to } \n(\q).
\end{multline}
\begin{multline}\label{eq.Gen6}
\tag{\textcolor{gray}{G7}} \text{For each vertex } \q \text{ at the intersection between } \partial D, \text{ one cell } V_i(\s,\bpsi) \text{ and the void } V_0(\s,\bpsi),\\ 
\text{the line } \q\s_i \text{ is not orthogonal to } \n(\q).
\end{multline}

The desired description of shapes $\Omega$ in terms of the seed points $\s$ and cell measures $\bnu$ of an associated diagram \cref{eq.decompOm} is enabled by the following result. Its proof, which is an adaptation of those of Theorems 37 and 40 in \cite{merigot2021optimal}, 
is postponed to \cref{app.existunique}.

\begin{theorem}\label{prop.psi}
Let $D \subset \R^d$ be a bounded, Lipschitz domain and $X := \overline D$. 
Let $\s = \left\{\s_i\right\}_{i=1,\ldots,N} \in \R^{dN}$ be a set of seed points in $D$,
and let $\bnu =\left\{ \nu_i \right\}_{i=1,\ldots,N} \in \R^N$ be a vector of associated cell measures, satisfying: 
\begin{equation}\label{eq.assumnu}
\forall i =1,\ldots, N, \quad \nu_i > 0, \quad \text{ and } \quad \sum_{i=1}^N \nu_i  < |D|.
\end{equation}
The following statements hold true:
\begin{enumerate}[(i)]
\item There exists a unique weight vector $\bpsi^* \equiv \bpsi^*(\s,\bnu) \in \R^N$ such that:
\begin{equation}\label{eq.volPsi}
\forall i = 1,\ldots, N , \quad  |V_i (\s,\bpsi^*) |= \nu_i, \text{ and so } \left\lvert V_0 (\s,\bpsi^*) \right\lvert =  \nu_0.
\end{equation}
\item The vector $\bpsi^*(\s,\bnu)$ is one solution to the maximization problem
\begin{equation}\label{eq.maxK}
 \max \limits_{\bpsi \in \R^{N}} K(\s,\bnu,\bpsi), 
 \end{equation}
where the so-called Kantorovic functional $K : \R^{dN}_{\s} \times \R^N_{\bnu} \times \R^N_{\bpsi} \to \R$ is defined by:
\begin{equation}\label{eq.defK}
 K(\s,\bnu,\bpsi) := \sum\limits_{i=1}^N \int_{V_i(\s,\bpsi)} \left(|\x - \s_i |^2 - \psi_i  \right) \d \x + \sum\limits_{i=1}^N \nu_i \psi_i .
 \end{equation}
\item Assuming that the genericity conditions \cref{eq.Gen0,eq.Gen1,eq.Gen2,eq.Gen3,eq.Gen4,eq.Gen5,eq.Gen6} hold true, this solution is unique.
Moreover, $K$ is smooth on an open neighborhood of $(\s,\bnu, \bpsi^*(\s,\bnu))$ in $\R^{dN}_{\s} \times \R^N_{\bnu} \times \R^N_{\bpsi}$, and $\bpsi^*(\s,\bnu)$ is the unique solution to the equation:
\begin{equation}\label{eq.implicitPsi}
 \bF(\s, \bnu, \bpsi^*(\s,\bnu)) = \bzero, \text{ where } \bF: \R^{dN}_\s \times \R^N_\bnu \times \R^N_\bpsi \to \R^N \text{ is defined by } \bF(\s,\bnu,\bpsi) := \nabla_\bpsi K(\s,\bnu,\bpsi).
 \end{equation}
\item There exists a unique optimal transport mapping $T_{\bpsi}$ between the measures $\mu$ and $\nu$ in the Monge formulation \cref{eq.OTMonge}, which is given by:
\begin{equation}\label{eq.Tpsi}
\text{For a.e. } \x \in X, \quad T_{\bpsi}(\x) = \left\{
\begin{array}{cl}
\s_i & \text{if } \x \in V_i(\s,\bpsi^*(\s,\bnu)), \:\: i=1,\ldots,N,\\
\x & \text{otherwise}.
\end{array}
\right.
\end{equation}
\end{enumerate}
\end{theorem}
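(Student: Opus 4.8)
The plan is to read \cref{prop.psi} as the standard existence--uniqueness and duality theory for \emph{semi-discrete} optimal transport with quadratic cost, the only genuine twist being the void phase $V_0(\s,\bpsi)$. The first step I would take is to absorb the ball-truncation defining the modified cells \cref{eq.defVipsi} into a minimization-diagram picture by appending a zeroth seed. Setting $c_i(\x) := |\x - \s_i|^2$ for $i=1,\ldots,N$, together with $c_0 \equiv 0$ and the frozen weight $\psi_0 := 0$, the membership $\x \in \overline{B(\s_i,\psi_i^{1/2})}$ reads exactly $c_i(\x)-\psi_i \leq c_0(\x) - \psi_0$, so that $V_i(\s,\bpsi)$ is precisely the set where index $i$ realizes $\min_{0\leq j \leq N}(c_j(\x)-\psi_j)$ and $V_0(\s,\bpsi)$ of \cref{eq.V0} is the cell of this zeroth seed. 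Consequently the Kantorovic functional \cref{eq.defK} rewrites as
\[
K(\s,\bnu,\bpsi) = \int_D \min_{0 \leq j \leq N}\left(c_j(\x) - \psi_j\right)\d x + \sum_{i=1}^N \nu_i \psi_i ,
\]
which is, up to sign conventions, the dual objective for the transport of $\mu = \mathds{1}_D\,\d x$ onto $\nu$ in \cref{eq.nuOT}, with $\s_0 = D$ a cost-free target. This identity makes the classical machinery directly applicable and cleanly encodes the circular arcs $\calC_i$ of \cref{propdef.Lag}.

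From this expression, concavity of $\bpsi \mapsto K(\s,\bnu,\bpsi)$ is immediate, being an integral of pointwise minima of affine functions plus a linear term. I would then differentiate by the envelope theorem: the argmin selects cell $i$ exactly on $V_i(\s,\bpsi)$ and the interfaces carry zero Lebesgue measure, whence $\frac{\partial K}{\partial \psi_i} = \nu_i - |V_i(\s,\bpsi)|$, so that $\bF(\s,\bnu,\bpsi) = \nabla_\bpsi K = \bzero$ in \cref{eq.implicitPsi} is \emph{equivalent} to the volume-matching conditions \cref{eq.volPsi}. This already recasts (i) as the existence of a critical point of the concave $K$ and proves (ii). To secure existence of a maximizer I would prove coercivity from \cref{eq.assumnu}: as $\psi_i \to +\infty$ the cell invades a region of measure approaching $|D| > \nu_i$, so $|V_i| > \nu_i$ and $K$ decreases, while $\psi_i \to -\infty$ shrinks $V_i$ to measure $0 < \nu_i$ and again $K$ decreases; the slack $\nu_0 = |D| - \sum_i \nu_i > 0$ leaves room for the void. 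Concavity plus coercivity then yield a maximizer $\bpsi^*$ satisfying \cref{eq.volPsi}.

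For the uniqueness in (i) and (iii) I would show $K$ is \emph{strictly} concave via its Hessian: off the diagonal, $\frac{\partial^2 K}{\partial \psi_i \partial \psi_j} = \frac{\partial |V_i|}{\partial \psi_j}$ is, for neighbors $(i,j)\in\calE$, a positive multiple of $|\be_{ij}|/|\s_i - \s_j|$, and the diagonal is minus the corresponding row sum, so $-\nabla^2_\bpsi K$ is the Dirichlet (grounded) graph Laplacian of the adjacency graph of the diagram, the void node being grounded through $\psi_0 = 0$. Since the positive-measure cells tile the connected set $\overline D$, this graph is connected to the void node, the grounded Laplacian is positive definite, and $K$ is strictly concave — giving a unique maximizer and a unique solution of \cref{eq.implicitPsi}. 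Under \cref{eq.Gen0,eq.Gen1,eq.Gen2,eq.Gen3,eq.Gen4,eq.Gen5,eq.Gen6} the combinatorics of the diagram (vertices $\q$, adjacencies, boundary contacts) are locally constant, so the cell measures and hence $K$ depend smoothly on $(\s,\bnu,\bpsi)$ near $(\s,\bnu,\bpsi^*)$; combined with the nondegeneracy of $-\nabla^2_\bpsi K$, the implicit function theorem applied to $\bF = \bzero$ yields the smooth dependence $\bpsi^*(\s,\bnu)$ of (iii). Finally, for (iv) I would invoke Kantorovic duality: at $\bpsi^*$ the potentials $(\psi_i^*)$ and their $c$-transform $\x \mapsto \min_{0\leq j\leq N}(c_j(\x)-\psi_j^*)$ realize equality in \cref{eq.OTKanto}, strict convexity of the quadratic cost forces the optimal plan to be induced by a map, and matching the marginals through \cref{eq.volPsi} identifies that map with $T_{\bpsi}$ in \cref{eq.Tpsi}, the mass of $V_0$ being left in place at zero cost.

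I expect the genuinely delicate point to be the smoothness and Hessian nondegeneracy of (iii) rather than the soft existence argument: one must verify that \cref{eq.Gen1,eq.Gen2,eq.Gen3,eq.Gen4,eq.Gen5,eq.Gen6} are exactly what rules out the degenerate vertex configurations and boundary tangencies that would break differentiability of the cell-measure map or collapse an interface $\be_{ij}$ to zero length, while carefully tracking the extra second-order contributions of the circular arcs $\calC_i$ and of the contacts with $\partial D$. This is precisely where the argument departs from the purely polygonal classical Laguerre setting of \cite{merigot2021optimal}.
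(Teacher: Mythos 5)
Your overall architecture coincides with the paper's own proof: concavity of $\bpsi\mapsto K(\s,\bnu,\bpsi)$ as an integral of minima of affine functions plus a linear term, the identification $\nabla_\bpsi K=\bF$ so that the volume-matching conditions \cref{eq.volPsi} are exactly the first-order optimality conditions for \cref{eq.maxK}, nondegeneracy of the Hessian for uniqueness, and duality for $(iv)$. Two of your steps diverge in substance, and one of them is wrong as stated.

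For existence, you argue coercivity of $K$, whereas the paper maximizes the linear functional $\sum_i\psi_i$ over the compact set $\calK=\left\{\bpsi:\ |V_i(\s,\bpsi)|\le\nu_i\ \text{for all } i\right\}$ and shows, by monotonicity of $t\mapsto|V_j(\s,\bpsi+t\be_i)|$, that any maximizer saturates every constraint. Your coercivity sketch only tests the coordinate rays $\psi_i\to\pm\infty$; for a concave function this does not by itself bound the superlevel sets, and the delicate directions are precisely those like $\h=(1,\ldots,1)$, where the decay of $K$ comes only from the strict slack $\sum_i\nu_i<|D|$ against the void phase. The argument can be completed, but as written it is incomplete; the paper's compactness route sidesteps the issue.

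For uniqueness, your claim that $K$ is \emph{globally} strictly concave is false: wherever a cell $V_i(\s,\bpsi)$ is empty, $\partial K/\partial\psi_i=\nu_i$ is locally constant and the corresponding Hessian row vanishes, so $K$ is affine in $\psi_i$ there. What is true, and what the paper actually uses, is that at the maximizer $\bpsi^*$ and under \cref{eq.Gen0,eq.Gen1,eq.Gen2,eq.Gen3,eq.Gen4,eq.Gen5,eq.Gen6} the matrix $\left[\nabla_\bpsi\bF(\s,\bnu,\bpsi^*)\right]$ is negative definite -- your grounded-graph-Laplacian reading is the same fact as \cref{cor.irred}, the grounding term being $\sum_r\theta_{i,r}$ from the circular arcs -- and that, by concavity, the segment joining two maximizers consists of maximizers, so differentiating $\bF=\bzero$ along it at $\bpsi^*$ forces them to coincide. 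Your own Hessian computation supplies exactly the needed local nondegeneracy, so the repair is immediate, but the global strict concavity statement should be dropped. Finally, for $(iv)$ note that the dual potential here is the discontinuous function equal to $\psi_i^*$ at the atoms and $0$ elsewhere, which is not admissible in \cref{eq.OTD} as posed over $\calC(X)$; the paper spends most of its Step 4 building continuous approximations of it and controlling the duality gap by $O(\e)$. Your plan is sound if you invoke the finite-dimensional semi-discrete dual instead, but that substitution should be made explicit.
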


\begin{remark}\label{rem.suffconddiff}
The uniqueness statement $(iii)$ in the above theorem actually holds true under weaker assumptions than the collection \cref{eq.Gen0,eq.Gen1,eq.Gen2,eq.Gen3,eq.Gen4,eq.Gen5,eq.Gen6}. It only requires that the mapping $\bpsi \mapsto K(\s,\bnu,\bpsi)$ be differentiable, i.e. loosely speaking that the area $\lvert V_i(\s,\bpsi) \lvert$ of each cell be differentiable with respect to $\bpsi$, while \cref{eq.Gen0,eq.Gen1,eq.Gen2,eq.Gen3,eq.Gen4,eq.Gen5,eq.Gen6} imply that all the individual vertices of the cells of $\bVsp$ are differentiable with respect to $\bpsi$, see \cref{rem.diffverArea} below about this point. 
\end{remark}

\begin{remark}
The situation where $\sum_{i=1}^N \nu_i = |D|$ corresponds to the ``classical'' setting where the cell $V_0(\s,\bpsi)$ is empty and the diagram $\bVsp$ coincides with the Laguerre diagram $\bLag$. 
Then, there still exists a collection of weights $\bpsi$ fulfilling the conclusions of \cref{prop.psi}, 
but the latter is only unique up to the addition of a common factor to all the weights $\psi_i$, $i=1,\ldots,N$ \cite{merigot2021optimal}. 
\end{remark}

\begin{remark}\label{rem.linearOT}
Our representation \cref{eq.decompOm} of shapes and their deformations echoes to the framework of ``linearized optimal transport'', recently proposed in \cite{delalande2021quantitative,merigot2020quantitative,wang2013linear}. Briefly, the latter advocates to parametrize measures $\nu$ on $X$ by the unique optimal transport mapping between a fixed reference measure $\mu$ on $X$ and $\nu$, i.e. the solution to \cref{eq.OTMonge}. By comparison, our parametrization \cref{eq.decompOm} of shapes $\Omega \subset D$ boils down to considering the particular class \cref{eq.Tpsi} of optimal transport mapping between the uniform measure $\mu = \frac{1}{\lvert D\lvert} \mathds{1}_D \:\d \x$ on $X$ and measures $\nu$ of the form \cref{eq.nuOT}. 
\end{remark}

\subsection{General sketch of the Laguerre diagram-based shape and topology optimization method}\label{sec.Lagalgo}

\noindent This section outlines our algorithmic strategy for the resolution of a generic shape and topology optimization problem of the form \cref{eq.sopb}, 
in which the objective and constraint functionals $J(\Omega)$ and $\G(\Omega)$ depend on the optimized shape $\Omega$ via the solution $u_\Omega$ to a boundary value problem posed on $\Omega$, as in the examples of \cref{sec.twophys}.
The method is summarized in \cref{algo.lagevol}; its main steps are illustrated on \cref{fig.illusalgo} and they are more thoroughly described in the next sections of the article.

\begin{algorithm}[ht]
\caption{Laguerre diagram based shape and topology optimization algorithm for \cref{eq.sopb}.}
\label{algo.lagevol}
\begin{algorithmic}[0]
\STATE \textbf{Input:} Initial shape $\Omega^0$ characterized via \cref{eq.decompOm,eq.defVipsi} by the datum of:
\begin{itemize}
\item The collection of seed points $\s^0 = \left\{\s^0_1,\ldots,\s_N^0 \right\}$; 
\item The vector $\bnu^0 :=\left\{\nu_1^0, \ldots,\nu_N^0 \right\}$ gathering the measures of the cells.
\end{itemize}
\FOR{$n=0,...,$ until convergence}
\STATE \begin{enumerate}
\item Calculate the unique weight vector $\bpsi^n \in \R^N$ such that:
$$\forall i =1,\ldots,N, \quad \lvert V_i(\s^n,\bpsi^n) \lvert = \nu_i^n. $$ 
\item (Optionally) Modify this diagram by resampling and Lloyd regularization. 
\item Construct a polygonal mesh $\calT^n$ for $\Omega^n$ from the diagram $\mathbf{V}(\s^n,\bpsi^n)$. 
\item Calculate the solutions $u_{\Omega^n}$, $p_{\Omega^n}$ to the state and adjoint equations on the mesh $\calT^n$.
\item Calculate a descent direction $(\h^n,\widehat\bnu^n)$ for the problem \cref{eq.sopb}, that is:
\begin{itemize} 
\item A collection $\h^n = \left\{ \h_i^n \right\}_{i=1,\ldots,N} \in \R^{dN}$ of vectors attached to the seed points $\s^n$;
\item An update vector $ \widehat{\bnu}^n \in \R^N$ for the measures. 
\end{itemize}
\item Update the seed points and measure vectors $\s^n$ and $\bnu^n$ as:
$$ \s_i^{n+1} =\s_i^n + \tau^n \h_i^n, \text{ and } \bnu^{n+1} = \bnu^n + \rho^n  \widehat{\bnu}^n,$$
where $\tau^n > 0$ and $\rho^n > 0$ are suitable descent steps.
\end{enumerate}
\ENDFOR
\RETURN Seed points $\s^n$ and measures $\bnu^n$ representing the optimized shape $\Omega^n$ via \cref{eq.decompOm,eq.defVipsi}.
\end{algorithmic}
\end{algorithm}

\begin{figure}[!ht]
\centering
\begin{tabular}{ccc}
\begin{minipage}{0.49\textwidth}
\begin{overpic}[width=1.0\textwidth]{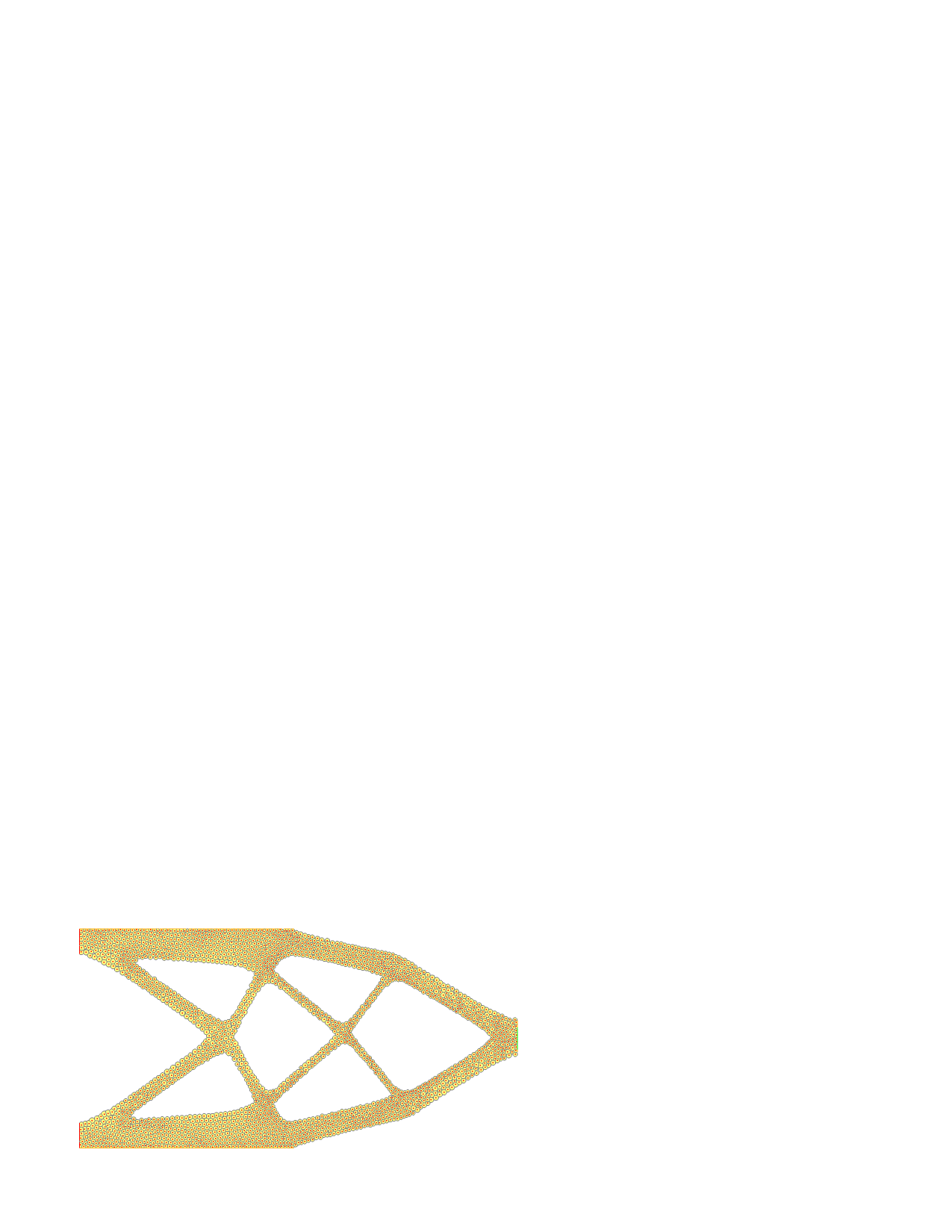}
\put(0,-3){\fcolorbox{black}{white}{a}}
\end{overpic}
\end{minipage}
 & 
 \begin{minipage}{0.49\textwidth}
\begin{overpic}[width=1.0\textwidth]{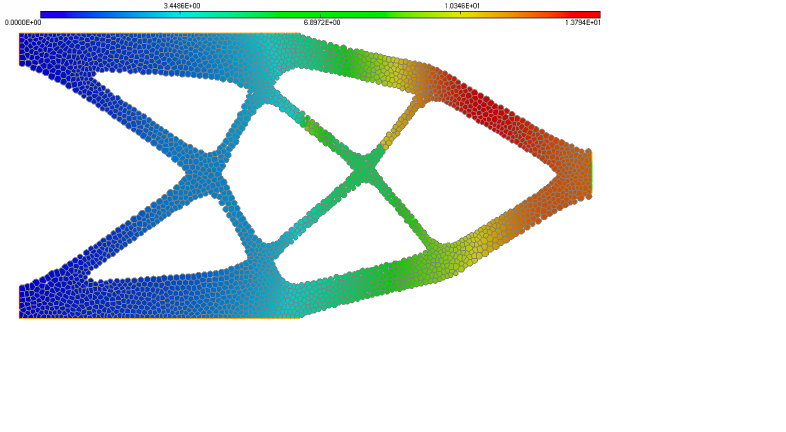}
\put(0,-3){\fcolorbox{black}{white}{b}}
\end{overpic}
\end{minipage} 
\\
\\
\begin{minipage}{0.49\textwidth}
\begin{overpic}[width=1.0\textwidth]{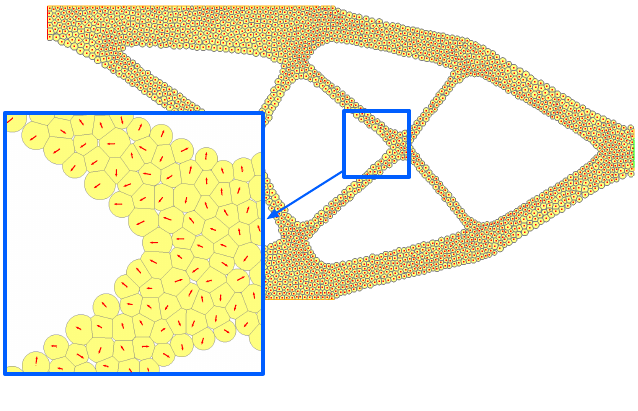}
\put(0,-3){\fcolorbox{black}{white}{c}}
\end{overpic}
\end{minipage} & 
\begin{minipage}{0.49\textwidth}
\vspace{-0.5cm}
\begin{overpic}[width=1.0\textwidth]{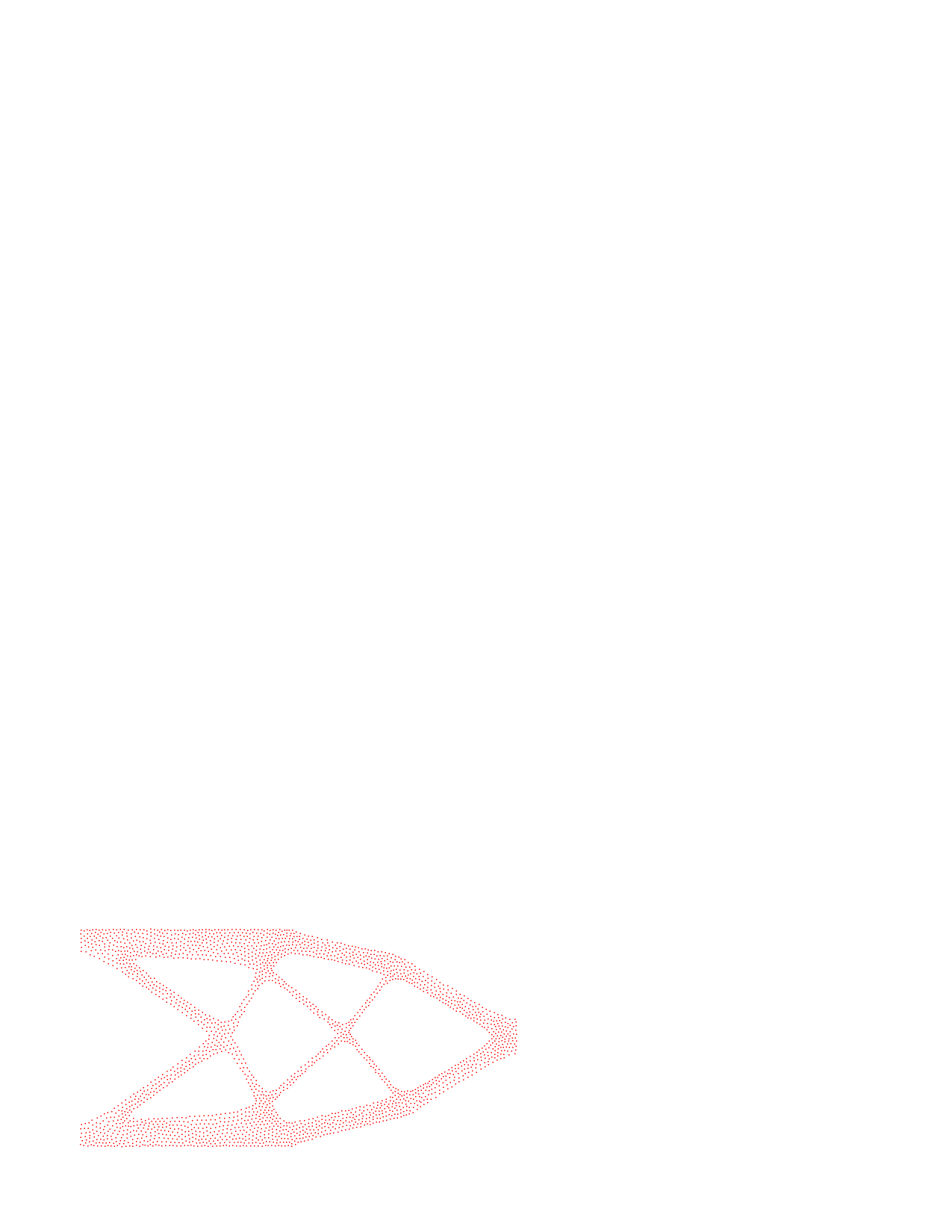}
\put(0,-3){\fcolorbox{black}{white}{d}}
\end{overpic}
\end{minipage}
 \end{tabular}
 \par\medskip
 \begin{center}
 \begin{minipage}{0.49\textwidth}
\begin{overpic}[width=1.0\textwidth]{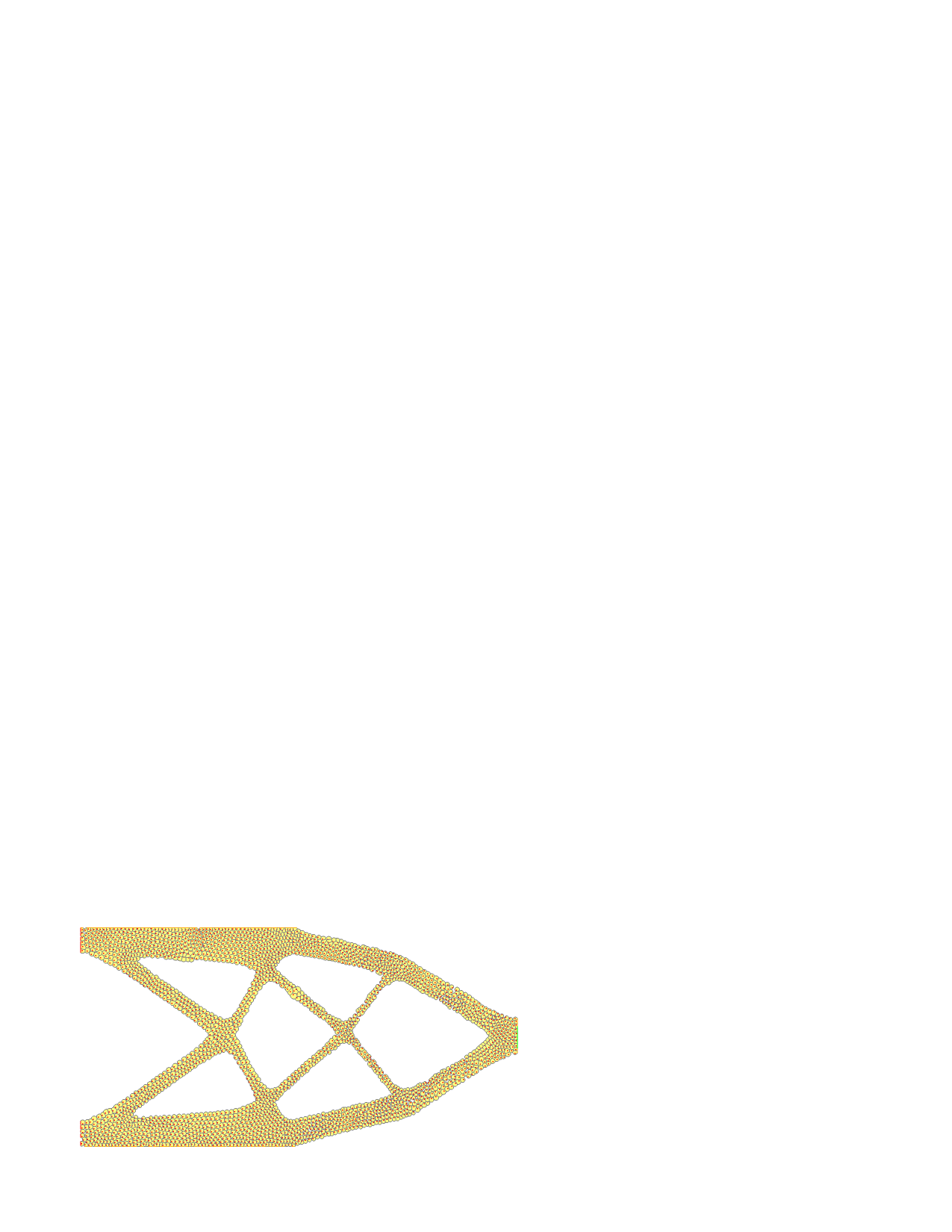}
\put(0,-3){\fcolorbox{black}{white}{e}}
\end{overpic}
\end{minipage} 
\end{center}
\caption{\it Illustration of the main steps of \cref{algo.lagevol}; (a) At the beginning of each iteration, the weight vector $\bpsi^n$ is computed, and the diagram $\mathbf{V}(\s^n,\bpsi^n)$ for $\Omega^n$ is constructed (the seed points are depicted in red); (b) The state equation for $u_{\Omega^n}$ is solved by the Virtual Element Method on the polygonal mesh $\calT^n$ of $\Omega^n$ induced by $\mathbf{V}(\s^n,\bpsi^n)$; (c) A descent direction is calculated as a vector field $\h^n$ for the seed points and an update $\bnu^n$ for the cell measures; (d) The seed points $\s^{n+1}$ of the new diagram for $\Omega^{n+1}$ are obtained by moving those in $\s^n$ along $\h^n$; the cell measures $\bnu^{n+1}$ are obtained likewise; (e) The new weight vector $\bpsi^{n+1}$ and the diagram $\mathbf{V}(\s^{n+1},\bpsi^{n+1})$ for $\Omega^{n+1}$ are computed.}
\label{fig.illusalgo}
\end{figure}

At each iteration $n = 0, \ldots$, we label with an $^n$ superscript the actual instances of the various objects at stake.
The shape $\Omega^n \subset D$ is consistently described by the seed points $\s^n \in \R^{dN}$ and the vector $\bnu^n \in \R^N$ of the cell measures of a diagram $\mathbf{V}(\s^n, \bpsi^n)$ of the form \cref{eq.decompOm,eq.defVipsi}. 
The iteration starts with the calculation of the unique weight vector $\bpsi^n \in \R^N$ such that the measure of each cell $V_i(\s^n,\bpsi^n)$ equals $\nu_i$, see \cref{sec.seedstoLag}. 
The diagram associated to these data is computed, as described in \cref{sec.seedstoLag}.
A number of post-processing operations are conducted on the latter, such as regularization, resampling, etc., see \cref{sec.lloyd,sec.resample}. 
Note that these operations may alter the number $N$ of seed points -- a detail which is omitted in the present sketch for simplicity.
A polygonal mesh $\calT^n$ of $\Omega^n$ is then constructed from this diagram, see again \cref{sec.geomcomp}. 
Then, the solution $u_{\Omega^n}$ to the mechanical system of interest (and that $p_{\Omega^n}$ to the adjoint system) is calculated on the mesh $\calT^n$; this task relies on the Virtuel Element Method, and its treatment is described in \cref{sec.mechcomp}. 
The derivatives of the objective and constraint functions $J(\Omega)$ and $\G(\Omega)$ are evaluated, at first with respect to the vertices $\q^n$ of the polygonal mesh $\calT^n$ -- see \cref{sec.derver} -- then with respect to the seed points and cell measures $\s^n$ and $\bnu^n$ defining the representation \cref{eq.decompOm} of $\Omega^n$, see \cref{sec.vertoseeds}. 
A descent direction  for the considered optimization problem is inferred thanks to a constrained optimization algorithm under the form of update vectors for seeds and cell measures, see \cref{sec.velext,sec.nullspace}. 
This procedure is iterated until convergence.

\section{Geometric computations on $\Omega$}\label{sec.geomcomp}

\noindent  
This section provides a few details about the non trivial operations from algorithmic geometry involved in \cref{algo.lagevol}.
In \cref{sec.seedstoLag}, we discuss the calculation of the unique weight vector $\bpsi^*(\s,\bnu)$ supplied by \cref{prop.psi}, guaranteeing that the diagram $\mathbf{V}(\s,\bpsi^*(\s,\bnu))$ complies with the measure constraints \cref{eq.volPsi}; we notably describe the computation of this diagram. In \cref{sec.lloyd}, we present a variant of the well-known Lloyd's algorithm aimed at improving the aspect of the cells of the diagram $\bVsp$, in the perspective of realizing accurate mechanical computations. Eventually, in \cref{sec.resample}, we sketch simple numerical recipes to adjust the local density of cells and remove ``small'' components disconnected from the main structure in the course of the iterative process. 

\subsection{Calculation of the diagram $\mathbf{V}(\mathbf{s},\mathbf{\nu})$ associated to seed points $\mathbf{s}$ and cell measures $\mathbf{\nu}$}\label{sec.seedstoLag}

\noindent Throughout this section, $\s \in \R^{dN}$ and $\bnu \in \R^N$ are collections of seed points and weights satisfying the assumptions of \cref{prop.psi}; notably, they satisfy \cref{eq.Gen0,eq.Gen1,eq.Gen2,eq.Gen3,eq.Gen4,eq.Gen5,eq.Gen6}. We describe the calculation of the unique weight vector $\bpsi^*(\s,\bnu)$ guaranteeing that each cell $i=1,\ldots,N$ of the modified diagram $\mathbf{V}(\s,\bpsi^*(\s,\bnu))$ has measure $\nu_i$, together with the practical construction of this diagram. 
Our strategy is based on \cref{prop.psi}, whereby $\bpsi^*(\s,\bnu) \in \R^N$ is characterized as the unique solution to the non linear equation
\cref{eq.implicitPsi}, that we rewrite below for convenience: 
\begin{equation}\label{eq.implicitPsi2}
 \bF(\s, \bnu, \bpsi^*(\s,\bnu)) = 0, \text{ where } \bF(\s,\bnu,\bpsi) := \nabla_\bpsi K(\s,\bnu,\bpsi).
 \end{equation}
This feature paves the way to a Newton-Raphson algorithm \cite{DBLP:journals/corr/KitagawaMT16}, based on the differentiability of $\bF(\s,\bnu,\cdot)$, and thus on the second order differentiability of the Kantorovic functional $K(\s,\bnu,\cdot)$ with respect to weights $\bpsi$, which hold true since $\s$ and $\bnu$ are generic in the sense of \cref{eq.Gen0,eq.Gen1,eq.Gen2,eq.Gen3,eq.Gen4,eq.Gen5,eq.Gen6}. 

This numerical strategy is sketched in \cref{algo.Newton} and it is illustrated in \cref{fig.POT_KMT}. Briefly, the algorithm starts with a suitable initial guess $\bpsi^0$. At each iteration $n=0,\ldots$, whose corresponding objects are labelled with an $^n$ superscript, we first construct the diagram $\mathbf{V}(\s,\bpsi^n)$. The entries of the gradient $\nabla_{\bpsi}K(\s,\bnu,\bpsi^n) \in \R^N$ and Hessian matrix $[\nabla^2_{\bpsi} K(\s,\bnu,\bpsi^n)] \in \R^{N\times N}$ of the Kantorovic functional are then calculated from this geometric support. The update step $\bp^n \in \R^N$ for the weight vector is then obtained by the solution of the following Newton linear system with size $N$:
\begin{equation}\label{eq.Newtonpn}
\left[\nabla_{\bpsi} \bF(\s,\bnu,\bpsi^n) \right] \p^n = -\bF (\s,\bnu,\bpsi^n).
\end{equation}
Eventually, a descent parameter $\alpha^n >0$ is chosen to update the weight vector $\bpsi^n$ into the next iterate $\bpsi^{n+1}$. We provide a little more details about the main steps of this process in the next subsections.

\begin{algorithm}[!ht]
\caption{Computation of the diagram $\bVsp$ associated to given seed points $\s$ and cell measures $\bnu$}
\label{algo.Newton}
\begin{algorithmic}[0]
\STATE \textbf{Inputs:}
\begin{itemize}
\item Fixed collection $\s= \left\{\s_1,\ldots,\s_N\right\}$ of seed points.
\item Fixed collection $\bnu= \left\{\nu_1,\ldots,\nu_N\right\}$ of cell measures.
\item Initial weight vector $\bpsi^0 = \{ \psi_1^0, \ldots, \psi_N^0\}$ such that for all $i=1,\ldots,N$, $V_i(\bs,\bpsi^0) \neq \emptyset$.
\item Tolerance parameter $\e_{\text{Newt}}$ about the fulfillment of \cref{eq.implicitPsi2}.
\end{itemize}
\FOR{$n=0,...,$ until convergence}
\STATE 
\begin{enumerate}
\item Compute the Laguerre diagram $\mathbf{V}(\s,\bpsi^n)$ 
\item Calculate the gradient $\bF(\s,\bnu,\bpsi^n) = \nabla_{\bpsi} K(\s,\bnu,\bpsi^n)$ of the functional $K(\s,\bnu,\cdot)$ at $\bpsi^n$.
\item \textbf{if} {$\lvert \bF(\s,\bnu,\bpsi^n) \lvert_\infty < \e_{\text{Newt}} $} \textbf{then exit loop}
\item Calculate the $N \times N$ Hessian matrix $[\nabla_{\bpsi}\bF(\s,\bnu,\bpsi^n)] = [\nabla^2_{\bpsi} K (\s,\bnu,\bpsi^n)]$ of $K(\s,\bnu,\cdot)$ at $\bpsi^n$.
\item Calculate the solution $\bp^n \in \R^N$ to the linear system \cref{eq.Newtonpn}.
\item Select a suitable descent parameter $\alpha^n >0$.
\item Update the weight vector as: $\bpsi^{n+1} = \bpsi^n + \alpha^n \bp^n$.
\end{enumerate}
\ENDFOR
\RETURN Diagram $\bVsp$ satisfying $\lvert V_i(\s,\bpsi^n)\lvert = \nu_i$, for all $i=1,\ldots,N$, up to precision $\e_{\text{Newt}}$.
\end{algorithmic}
\end{algorithm}

\begin{figure}[!ht]
\begin{center}
\begin{minipage}{\textwidth}
\begin{overpic}[width=1.0\textwidth]{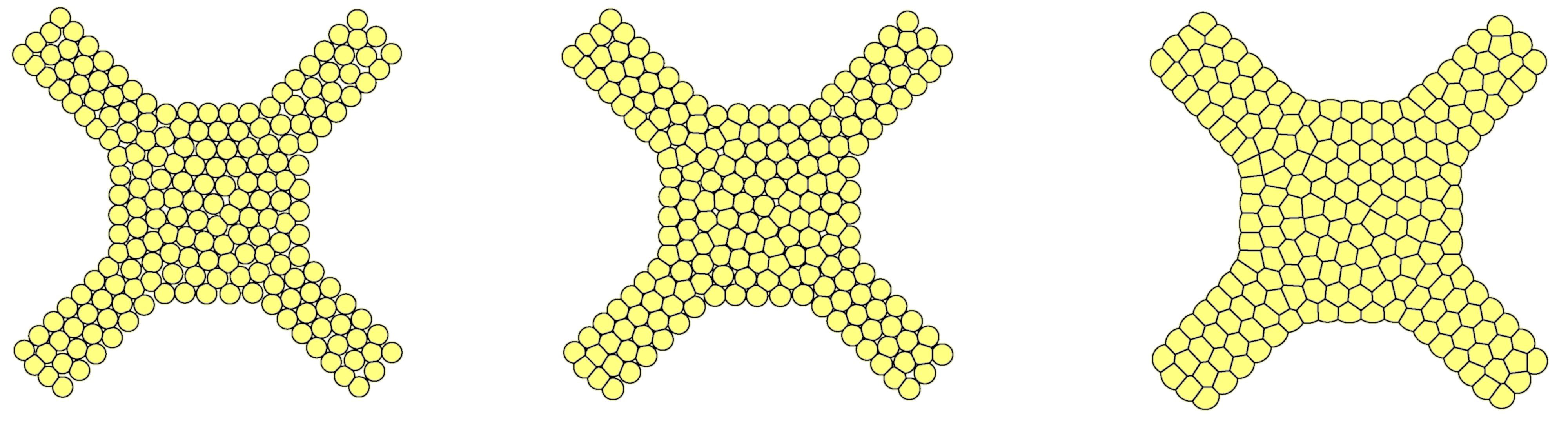}
\put(50,0){\fcolorbox{black}{white}{n=0}}
\put(215,0){\fcolorbox{black}{white}{n=3}}
\put(390,0){\fcolorbox{black}{white}{n=5}}
\end{overpic}
\end{minipage}
\end{center}
\vspace{5mm}
\begin{center}
\begin{minipage}{\textwidth}
\begin{overpic}[width=1.0\textwidth]{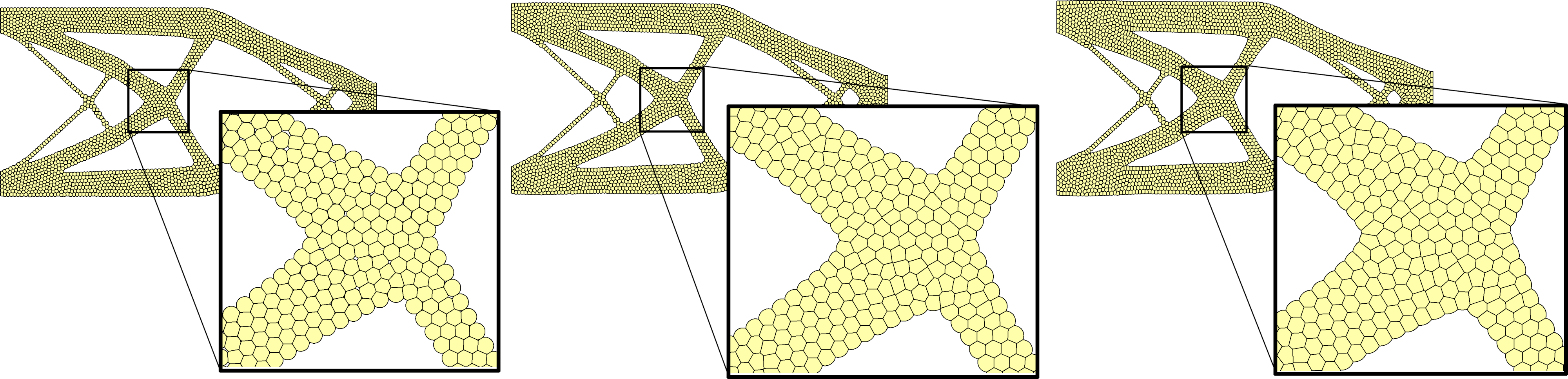}
\put(10,40){\fcolorbox{black}{white}{n=0}}
\put(175,40){\fcolorbox{black}{white}{n=4}}
\put(330,35){\fcolorbox{black}{white}{n=7}}
\end{overpic}
\end{minipage}
\end{center}


\caption{\it Three different Newton iterations in the calculation of the modified diagram $\bVsp$ in \cref{sec.seedstoLag}. The weight vector $\bpsi$ is initialized with a value $\bpsi^0$ ensuring that none of the cells $V_i(\s,\bpsi^0)$ is empty, and it is iteratively updated by Newton iterations. This procedure appraises contacts between neighboring cells, and at convergence, $\bpsi^n$ is such that the measure of each cell $\lvert V_i(\bs, \bpsi^n)\lvert$ corresponds to the prescribed value $\nu_i$, up to the precision parameter $\e_{\text{Newt}}$.}
\label{fig.POT_KMT}
\end{figure}

\subsubsection{Initialization}\label{sec.Laginit}

\noindent According to \cref{prop.psi}, the initial weight vector $\bpsi^0$ must be such that all the cells $V_i(\s,\bpsi^0)$, $i=0,1,\ldots,N$ (i.e. including the ``void'' phase $V_0(\s,\bpsi^0)$ given by \cref{eq.V0}) are non empty. In particular, this property is meant to ensure that the Kantorovic mapping $K(\s,\bnu,\cdot)$ is differentiable at $\bpsi^0$, see \cref{rem.suffconddiff}.

One sufficient condition for this to hold is to select a common positive value $\overline\psi > 0$ for all the $\psi^0_i$, $i=1,\ldots,N$. Indeed, the classical Laguerre diagram $\mathbf{Lag}(\s,\bpsi^0)$ then coincides with the Voronoi diagram $\mathbf{Vor}(\s)$ associated to the seed points $\s$, where each cell $\Vor_i(\s)$ contains at least its associated seed point $\s_i$. In turn, each modified cell $V_i(\s,\bpsi^0) = \Lag_i(\s,\bpsi^0) \cap \overline{B(\s_i,\psi_i^{1/2})}$ is non empty, $i=1,\ldots,N$, and since the common value $\overline\psi$ is arbitrary, it is easy to guarantee that $V_0(\s,\bpsi^0)$ is also non empty. In practice, we choose $\bpsi^0 = \left\{1, 1, \ldots, 1\right\}$ as initial weight vector. 

If all the target measures $\nu_i$ are equal to a common value $\overline\nu$, a simple heuristic suggests an initialization procedure with ``better'' practical behavior. Intuitively, if a seed point $\s_i$ lies ``sufficiently far away'' from all the other seeds $\s_j$, $j \neq i$,
then the classical Laguerre cell $\Lag_i(\bs, \bpsi)$ contains the ball $\overline{B(\s_i,\psi_i^{1/2})}$; thus the modified cell $V_i(\s,\bpsi)$ coincides with this ball. Going further, it is easy to show that if the seed points $\s_i$ are ``well separated" from one another in the sense that: 
\begin{equation}\label{eq.sepprop}
\lvert \s_j - \s_i \lvert > \sqrt{\frac{\nu_i}{\pi}} + \sqrt{\frac{\nu_j}{\pi}}\:\: \text{ in 2d, and }\:\: \lvert \s_j - \s_i \lvert  > \left(\frac{3 \nu_i}{4\pi}\right)^{\frac13} + \left(\frac{3 \nu_j}{ 4\pi}\right)^{\frac13} \:\:\text{ in 3d,}
\end{equation}
then the optimal transport problem \cref{eq.implicitPsi} is trivial, its solution being given by: 
\begin{equation}\label{eq.heurinit} 
\psi_i = \left\{
\begin{array}{cl}
\frac{\overline\nu}{\pi} &\text{if }d = 2, \\[0.2em]
\left(\frac{3 \overline\nu}{ 4 \pi}\right)^{\frac23} & \text{if } d = 3,
\end{array}
\right.
\quad i=1,\ldots,N,
\end{equation}
and the balls $\overline{B(\s_i,\psi_i^{1/2})}$ are disjoint from one another.

In general, the data $\s$ and $\bnu$ do not satisfy the separation property \cref{eq.sepprop}, and the cells of the diagram $\bVsp$ ``interact'' through their common boundaries, see \cref{fig.POT_KMT}. Then, several iterations of \cref{algo.Newton} are needed to achieve convergence (typically 5 to 10), but in practice, the heuristic initialization \cref{eq.heurinit} saves a couple of Newton iterations.

Last but not least, when the target measures $\nu_i$ are different, the choice \cref{eq.heurinit} still proves quite efficient when used with the average value $\overline\nu := \frac{1}{N}\sum_{i=1}^N \nu_i$.

\subsubsection{Computation of Laguerre diagrams}\label{sec.compLaguerre}

\noindent This section briefly describes the construction of the modified diagram $\bVsp$ associated to given seed points $\s \in \R^{dN} $ and weights $\bpsi \in \R^N$. We consider the 2d case $d=2$ for simplicity, and refer to \cite{levy2022partial} and the bibliography therein for further details, including the treatment of the 3d case. The task under scrutiny is decomposed into two steps.\par\medskip

\noindent \textit{Step 1: We compute the classical Laguerre diagram $\bLag$ associated to $\s$ and $\bpsi$.}

Several algorithms are available to achieve this task, whose outcome is illustrated on \cref{fig.VoroDelau} (a). In the present article, we rely on the iterative insertion algorithm of Bowyer and Watson proposed in \cite{DBLP:journals/cj/Bowyer81,journals/cj/Watson81}, see also \cite{frey2007mesh} -- a strategy originally intended to construct Voronoi diagrams (or their dual, Delaunay triangulations), which can handle more general Laguerre diagrams up to simple adaptations, see \cite{aurenhammer1987power}. 

The Bowyer-Watson algorithm proceeds by iterative insertion of the seed points $\s_1,\ldots,\s_N$. Each stage $n=1,\ldots,N$ of the process starts with the datum of
the Laguerre diagram $\mathbf{Lag}(\widehat{\s}^{n-1},\widehat{\bpsi}^{n-1})$ associated to the subcollections $\widehat{\s}^{n-1} := \left\{ \s_1,\ldots,\s_{n-1}\right\}$ and $\widehat{\bpsi}^{n-1} := \left\{\psi_1,\ldots,\psi_{n-1} \right\}$ of seed points and weights, with an obvious adaptation of this setting in the case $n=1$. This diagram is stored under the dual form of a so-called regular triangulation $\calT^{n-1}$ of $D$, see \cref{fig.VoroDelau} (b): the vertices of $\calT^{n-1}$ are exactly $\s_1,\ldots,\s_{n-1}$ and its triangles are those connecting the 3-uples of seed points $\left\{\s_i, \s_j, \s_k \right\} \subset \widehat{\s}^{n-1}$ (with disjoint indices $i,j,k$) whose associated cells share a common vertex $\q$. Working with this dual structure is convenient since all dual cells (i.e. the triangles of $\calT^{n-1}$) have the same number of vertices, whereas storing directly the Laguerre cells of $\mathbf{Lag}(\widehat{\s}^{n-1},\widehat{\bpsi}^{n-1})$ would require a more complicated data structure, since the latter have different numbers of vertices. 

\begin{figure}[!ht]
\begin{center}
\begin{minipage}{\textwidth}
\begin{overpic}[width=1.0\textwidth]{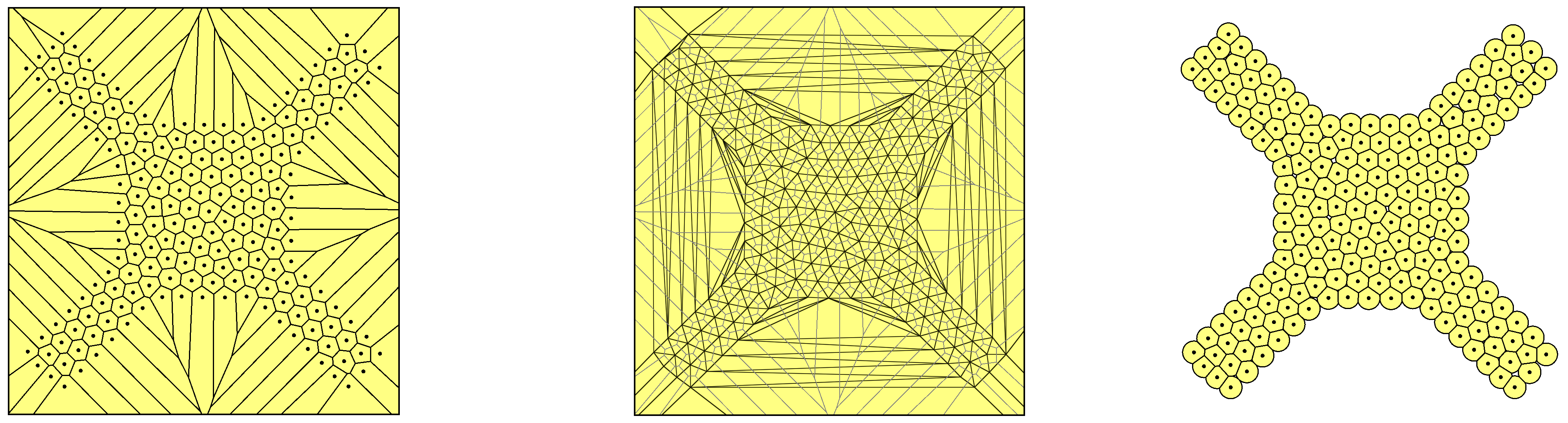}
\put(10,0){\fcolorbox{black}{white}{a}}
\put(200,0){\fcolorbox{black}{white}{b}}
\put(380,0){\fcolorbox{black}{white}{c}}
\end{overpic}
\end{minipage} 
\end{center}
\caption{\it Computation of the modified Laguerre diagram $\bVsp$ in \cref{sec.compLaguerre}: (a) ``Classical'' Laguerre diagram $\bLag$; (b) Associated dual regular triangulation used to store $\bLag$ internally; (c) Modified diagram $\bVsp$ obtained from $\bLag$ after clipping each cell $\Lag_i(\s,\bpsi)$ by the ball with center $\s_i$ and radius $\sqrt{\psi_i}$.}
\label{fig.VoroDelau}
\end{figure}

The algorithm then inserts the seed $\s_n$ into $\mathbf{Lag}(\widehat{\s}^{n-1},\widehat{\bpsi}^{n-1})$, see \cref{fig.BowyerWatson}.
\begin{figure}[!ht]
    \centering
    \begin{minipage}{\textwidth}
    \hspace{30mm}
    \begin{overpic}[height=45mm]{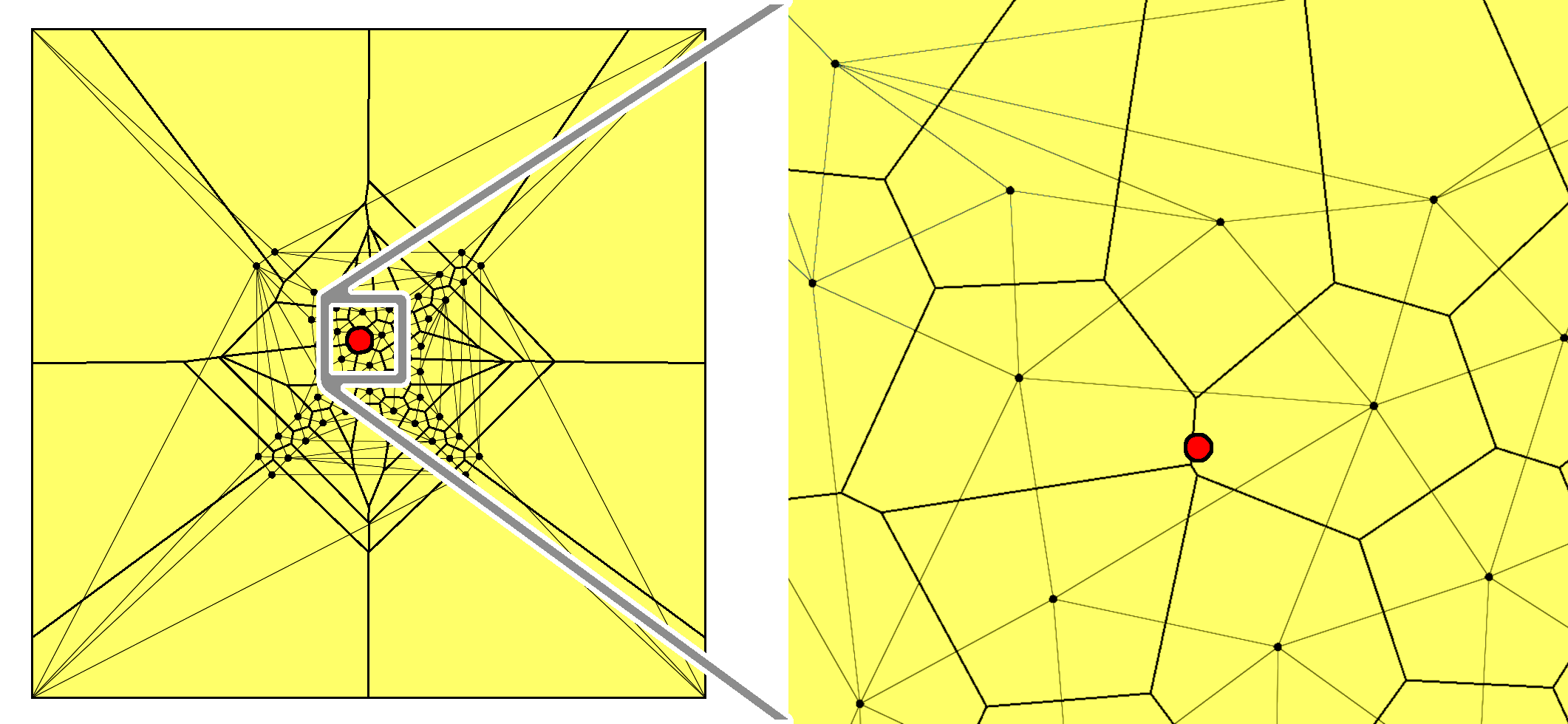}
        \put(150,0){\fcolorbox{black}{white}{a}}
    \end{overpic}
    \end{minipage}
    \\[5mm]
    \begin{minipage}{\textwidth}
    \hspace{15mm}
    \begin{overpic}[height=45mm]{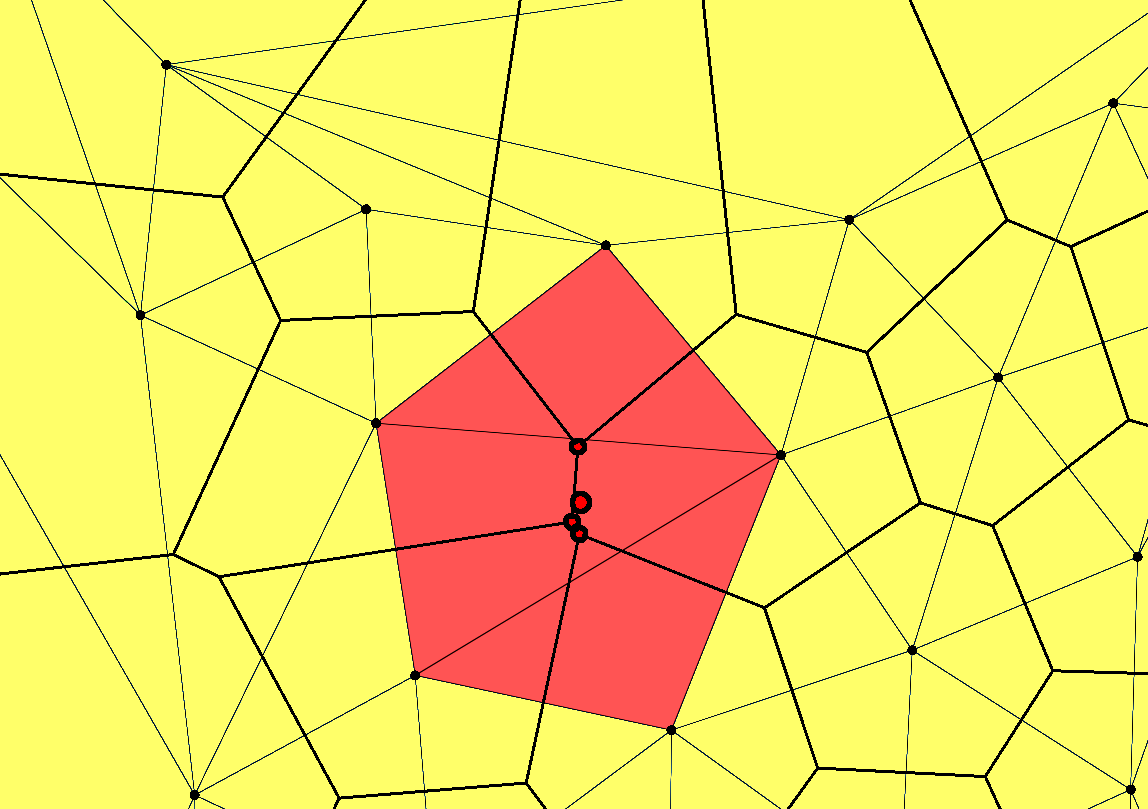}
        \put(10,0){\fcolorbox{black}{white}{b}}
    \end{overpic}
    \hspace{5mm}
    \begin{overpic}[height=45mm]{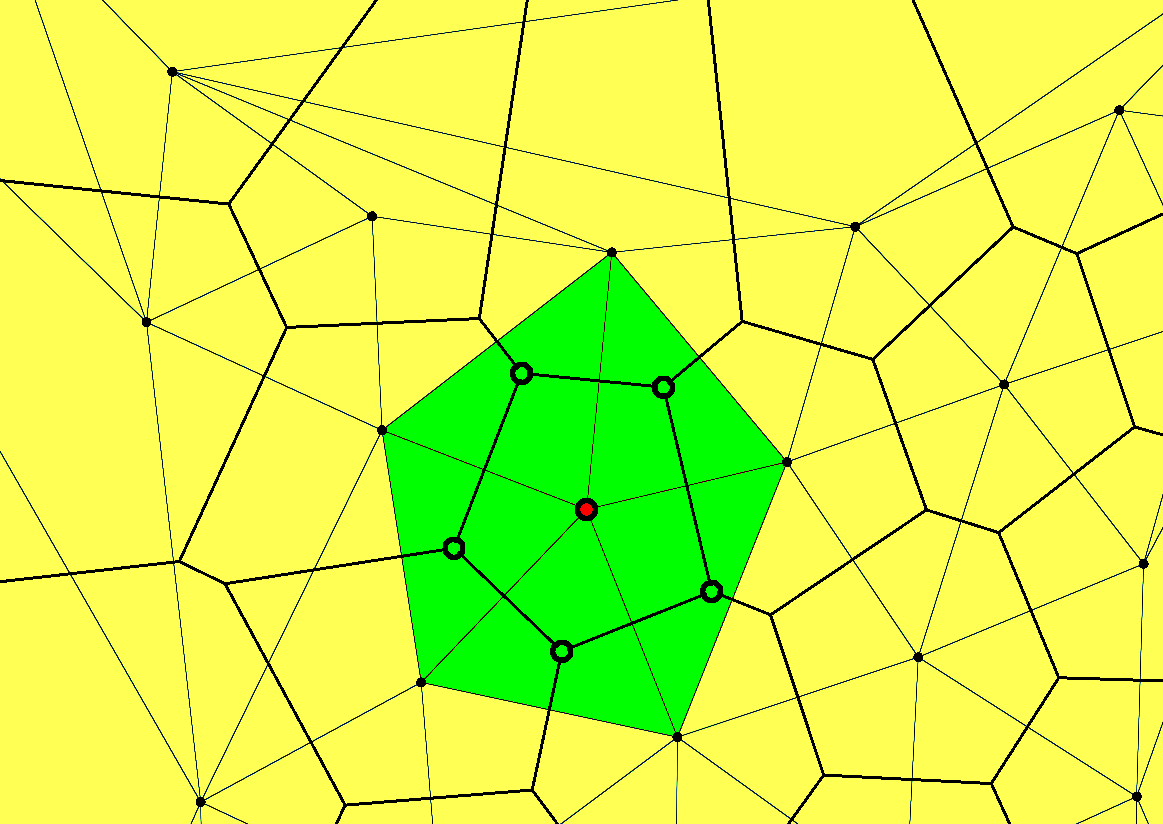}
        \put(10,0){\fcolorbox{black}{white}{c}}
    \end{overpic}    
    \end{minipage} 
     \caption{\it Iterative construction of the classical Laguerre diagram $\bLag$ with the Bowyer-Watson algorithm in \cref{sec.compLaguerre}; (a) Insertion of a new seed $\s_n$ (in red) into the diagram $\mathbf{Lag}(\widehat{\s}^{n-1},\widehat{\bpsi}^{n-1})$ under construction; (b) Some of the vertices of the current diagram (in red) cannot exist in the new diagram because they lie inside the Laguerre cell of the new seed $\s_n$; (c) The cavity formed by the associated dual triangles (in red) is removed and the vertices of the new Laguerre cell are dual to triangles connecting the new seed to those at the border of the cavity (in green).}
     \label{fig.BowyerWatson}
\end{figure}
To achieve this, one first identifies the vertices $\q$ of $\mathbf{Lag}(\widehat{\s}^{n-1},\widehat{\bpsi}^{n-1})$ that are in conflict with the presence of $\s_n$, in the sense that they cannot exist in the updated diagram $\mathbf{Lag}(\widehat{\s}^{n},\widehat{\bpsi}^{n})$ because they lie inside the Laguerre cell $\Lag_n(\widehat{\s}^{n},\widehat{\bpsi}^{n})$ of the new seed point $\s_n$ in the latter.
Such a conflicting vertex $\q$, defined by the intersection of the three Laguerre cells attached to the seeds $\s_i$, $\s_j$, $\s_k$, is characterized by the following relation:
\begin{equation}\label{eq.incircle}
 | \q - \s_n | - \psi_n < R_{ijk}, \text{ where } 
 R_{ijk} := | \q - \s_i |^2 - \psi_i =
 | \q - \s_j |^2 - \psi_j =
 | \q - \s_k |^2 - \psi_k.
\end{equation}
The equalities featured in \cref{eq.incircle} yield a linear system characterizing the coordinates of $\q$, see also \cref{app.verseeds}. By solving the latter with the classical Cramer's formulas, substituting the resulting values into the above condition and symmetrizing the determinant by row manipulations, \cref{eq.incircle} rewrites:
\begin{equation}\label{eq.incircle2}
    C(\s_i, \s_j, \s_k, \s_n, \psi_i, \psi_j, \psi_k, \psi_n) := 
    \left|
        \begin{array}{cccc}
        s_{i,1} & s_{i,2} & s_{i,1}^2 +s_{i,2}^2 - \psi_i & 1 \\[2mm]
        s_{j,1} & s_{j,2} & s_{j,1}^2 +s_{j,2}^2 - \psi_j & 1 \\[2mm]
        s_{k,1} & s_{k,2} & s_{k,1}^2 +s_{k,2}^2 - \psi_k & 1 \\[2mm]
        s_{n,1} & s_{n,2} & s_{n,1}^2 +s_{n,2}^2 - \psi_n & 1        
        \end{array}
    \right| > 0.
\end{equation}
One then constructs the cavity $\calC^{n-1}$, defined as the collection of the dual triangles in $\calT^{n-1}$ associated to the vertices in conflict with $\s_n$, see the red triangles in \cref{fig.BowyerWatson} (b). With a small abuse of notations, we use the same notation for this collection of triangles and the closed subset of $\overline D$ formed by their reunion.
It can be shown that $\calC^{n-1}$ is a connected set containing $\s_n$. Hence the identification of $\calC^{n-1}$ starts by finding one triangle $T \in \calT^{n-1}$ containing $\s_n$, and then proceeds by propagation through the neighbors of $T$ while they are in conflict with $\s_n$, in the sense that \cref{eq.incircle2} holds, see \cref{fig.BowyerWatson} (b).
Finally, the triangles in the cavity $\calC^{n-1}$ are removed from the dual triangulation $\calT^{n-1}$, and the new triangles dual to the vertices of the new Laguerre cell $\Lag_n(\widehat{\s}^n,\widehat{\bpsi}^n)$ are constructed. Each of them connects $\s_n$ with an edge of the boundary of $\calC^{n-1}$, see \cref{fig.BowyerWatson} (c). This procedure is iterated until all the seed points in $\s$ have been inserted. \par\medskip

The Bowyer-Watson algorithm crucially hinges on geometric predicates, that is, procedures taking combinatorial decisions based on the fulfillment of inequalities between the coordinates of some geometric objects. Notably, the conflict test \cref{eq.incircle2} is based on a predicate; moreover, the aforementioned search for a dual triangle in $\calC^{n-1}$ containing the inserted seed point $\s_n$ requires another predicate, measuring the relative orientation of three points. 
The implementation of both predicates requires a particular attention, in order to overcome the lack of precision of standard computer arithmetics. They amount to evaluate the sign of a polynomial function evaluated at the coordinates of the seed points $\s_i$ and weights $\psi_i$. Such a quantity can be exactly computed at a reasonable computational cost thanks to a combination of different techniques, see \cite{DBLP:conf/compgeom/Shewchuk96,DBLP:journals/cad/Levy16} and references therein. 

The above Bowyer-Watson procedure also rests on the assumption that the seed points $\s$ and the weights $\bpsi$ are in a generic configuration (see again \cref{prop.psi} and \cref{rem.suffconddiff}). In practice, this assumption guarantees that the determinant in the characterization \cref{eq.incircle2} of the dual triangles in the cavity $\calC^{n-1}$ does not vanish -- a situation where $\s_n$ would lie on the boundary of this cavity, making the procedure invalid. However, ``pathologic'' configurations where this determinant vanishes happen in practice, and they require a specific treatment. For instance, if $\bpsi = \bz$ (and then $\bLag$ is the Voronoi diagram $\mathbf{Vor}(\s)$), the predicate \cref{eq.incircle2} evaluates whether the added seed point $\s_n$ lies inside the disk circumscribed to the triangle with vertices
$\s_i$, $\s_j$, $\s_k$; the determinant in \cref{eq.incircle2} then equals zero if the four seed points
$\s_i, \s_j, \s_k, \s_n$ are cocyclic. In this case, two triangulations of the cavity $\calC^{n-1}$ based on $\s_n$ are possible, and one would need to consistently choose among them. 

One possibility to treat situations where the determinant $C(\s_i, \s_j, \s_k, \s_n, \psi_i, \psi_j, \psi_k, \psi_n)$ vanishes is to evade from singular configurations thanks to the symbolic perturbation approach \cite{DBLP:journals/tog/EdelsbrunnerM90}: we conceptually perturb each weight $\psi_i$ as $\psi_i + \e^i$, where $\e \ll 1$ is a very small parameter, and we consider the expansion of the associated determinant in terms of the powers of $\e$:
\begin{multline}
    C(\s_i, \s_j, \s_k, \s_l, \psi_i + \e^i, \psi_j + \e^j, \psi_k + \e^k, \psi_n + \e^n) = 
    \left|
        \begin{array}{cccc}
        s_{i,1} & s_{i,2} & s_{i,1}^2 +s_{i,2}^2 - \psi_i - \e^i & 1 \\[2mm]
        s_{j,1} & s_{j,2} & s_{j,1}^2 +s_{j,2}^2 - \psi_j - \e^j & 1 \\[2mm]
        s_{k,1} & s_{k,2} & s_{k,1}^2 +s_{k,2}^2 - \psi_k - \e^k & 1 \\[2mm]
        s_{n,1} & s_{n,2} & s_{n,1}^2 +s_{n,2}^2 - \psi_n - \e^n& 1     
        \end{array}
    \right| \\[1em]
    \quad = C(\s_i, \s_j, \s_k, \s_l, \psi_i, \psi_j, \psi_k, \psi_n)
      - \e^i \left|
       \begin{array}{ccc}
           s_{j,1} & s_{j,2} & 1 \\
           s_{k,1} & s_{k,2} & 1 \\
           s_{n,1} & s_{n,2} & 1 \\
       \end{array}
     \right|
     + \e^j \left|
       \begin{array}{ccc}
           s_{i,1} & s_{i,2} & 1 \\
           s_{k,1} & s_{k,2} & 1 \\
           s_{n,1} & s_{n,2} & 1 \\
       \end{array}
     \right|\\[1em]
     - \e^k \left|
       \begin{array}{ccc}
           s_{i,1} & s_{i,2} & 1 \\
           s_{j,1} & s_{j,2} & 1 \\
           s_{n,1} & s_{n,2} & 1 \\
       \end{array}
     \right|
     + \e^n \left|
       \begin{array}{ccc}
           s_{i,1} & s_{i,2} & 1 \\
           s_{j,1} & s_{j,2} & 1 \\
           s_{k,1} & s_{k,2} & 1 \\
       \end{array}
     \right|,
\label{eq.incircle_SOS}
\end{multline}
where the second equality follows from the multilinearity of the determinant. In the last expression of the above right-hand side, the factor of $\e^n$ does not vanish since according to \cref{eq.Gen1} the seed points $\s_i, \s_j, \s_k$ are not aligned; hence, using the sign of the leading term in the above expansion in powers of $\e$ results in a modified predicate which never returns $0$ and takes consistent combinatorial decisions in the presence of degeneracies.
\par\medskip

\noindent \textit{Step 2: Computation of $\bVsp$ by truncation of $\bLag$.}

For each $i=1,\ldots,N$, the $i^{\text{th}}$ cell $V_i(\s,\bpsi)$ of the modified diagram $\bVsp$ is obtained by computing the intersection between its classical counterpart $\Lag_i(\s,\bpsi)$ with the ball centered at $\s_i$ with radius $\sqrt{\psi_i}$, as depicted in \cref{fig.VoroDelau} (c). According to \cref{propdef.Lag}, the boundary of $V_i(\s,\bpsi)$ is composed of line segments -- some of them pertaining to the boundary of the computational domain $D$, possibly bearing particular references inherited from the latter -- and, occasionally, circular arcs, see \cref{fig.Vipsi,fig.POT_KMT}.

As we shall see more precisely in the next \cref{sec.mechcomp}, the mechanical computations 
involved in the evaluation of the shape functionals of our optimization problem \cref{eq.sopb} rely on a discretization of $\Omega$ made of convex polytopes. 
The latter is constructed via the procedure described in \cite{levy2010p}:
a number $\narc$ is chosen, and each circular arc is discretized into a polygonal line made of $\narc$ line segments and $(\narc+1)$ vertices.
The calculation of the positions of these vertices from the datum of the seed points $\s$ is detailed in \cref{app.verseeds}. 


\subsubsection{Computation of the gradient and of the Hessian matrix of the Kantorovic functional}

\noindent In Steps 2 and 4 of \cref{algo.Newton}, the diagram $\mathbf{V}(\s,\bpsi^n)$ is used to evaluate the entries of the gradient $\bF(\s,\bnu,\bpsi^n)=\nabla_{\bpsi} K(\s,\bnu,\bpsi^n)$ and of the Hessian matrix $\nabla_{\bpsi}\bF(\s,\bnu,\bpsi^n) =\nabla^2_{\bpsi} K(\s,\bnu,\bpsi^n)$ of the Kantorovich functional $K(\s,\bnu,\cdot)$ at $\bpsi^n$. 

These components depend on the measures of
the cells $|V_i(\s,\bpsi^n)|$ of the diagram $\mathbf{V}(\s,\bpsi^n)$ and on the lengths of their edges $\be_{ij}$.
Their expressions have been calculated in e.g. \cite{DBLP:journals/corr/KitagawaMT16,levy2018notions}; 
for the sake of completeness, they are recalled in \cref{app.derF}, where an intuitive proof is provided under simplifying assumptions.

\subsubsection{Solution mechanism for the linear systems}

\noindent
The calculation of the Newton step $\p^n$ in Step 5 of \cref{algo.Newton} is based on the solution of the following linear system with size $N \times N$: 
\begin{equation}\label{eq.Newteqssys} \left[ \nabla_{\bpsi}\bF(\s,\bnu,\bpsi^n)\right] \bp^n = - \bF(\s,\bnu,\bpsi^n).
\end{equation}
When two-dimensional applications are concerned, featuring a relatively small number $N$ of seed points (say, $N < 10^5$),
direct methods such as Gaussian elimination or $LDL^T$ factorization are available. 

Larger configurations raise the need to use an iterative solver, that takes advantage of the sparse nature of the matrix involved in \cref{eq.Newteqssys}, see again its expression in \cref{app.derF}. The Conjugate Gradient algorithm \cite{Hestenes1952} combined with the simple Jacobi preconditionner (which involves division by the diagonal coefficients) is an efficient candidate in this perspective. 

When very large problems are considered (where the number $N$ of seed points exceeds $10^6$), yet another strategy is handful. The latter is based on the fact that the linear system \cref{eq.Newteqssys} corresponds to the discretization of a Poisson equation with $\mathbb{P}_1$ finite elements. This calls for a multigrid method, that can solve such a Poisson system in linear time. In our case, the mesh characterizing this discretization is irregular and we recommend the use of an algebraic multigrid method \cite{Demidov2019}. Roughly speaking, such a multi-grid method creates a hierarchy of operators by merging nodes in the graph that corresponds to the non-zero entries of the sparse matrix.
This technique made it possible to solve 3d problems featuring up to $10^8$ points in our previous works \cite{levy2024monge,levy2024largescalesemidiscreteoptimaltransport}.

\subsubsection{Stopping criterion}

\noindent 
The components of the gradient $\bF(\s,\bnu,\bpsi^n) = \nabla_{\bpsi}K(\s,\bnu,\bpsi^n)$ of the Kantorovic functional express the difference between the target cell measures $\nu_i$ and the measures $|V_i(\s,\bpsi^n)|$ of the cells of the actual version $\mathbf{V}(\s,\bpsi^n)$ of the diagram. 
Hence, the supremum norm of this gradient appraises the largest error on the fulfillment of the cell measure constraint. This geometric interpretation of $\bF(\s,\bnu,\bpsi^n)$ suggests a natural convergence criterion for \cref{algo.Newton}: the Newton procedure is terminated as soon as the largest measure error is smaller than 1 \% of the smallest measure prescription, i.e. when the following inequality is satisfied:
$$  \lvert \bF(\s,\bnu,\bpsi^n)\lvert_\infty < \e_{\text{Newt}}, \text{ where } \e_{\text{Newt}} = 0.01 \left(\min\limits_{i=1,\ldots,N} \nu_i \right). $$

\subsubsection{Determination of the descent parameter}

\noindent The selection of a suitable descent parameter $\alpha^n$ for the update of the weight vector $\bpsi$ in Step 6 of \cref{algo.Newton} deserves a particular attention. Indeed, the invertibility of the Hessian matrix $\bF(\s,\bnu,\bpsi^n)$ of the Kantorovic functional involved in the linear system \cref{eq.Newteqssys} for the Newton step $\bp^n$ crucially requires that none of the cells $V_i(\s,\bpsi^n)$ is empty, $n=0,\ldots,N$, see again the expressions in \cref{app.derF}. Hence, the choice of $\alpha^n$ must guarantee that no cell $V_i(\s,\bpsi^{n+1})$ in the updated diagram $\mathbf{V}(\s,\bpsi^{n+1})$ is empty. 

To ensure this property, we rely on the Kitagawa-M\'erigot-Thibert strategy, described in \cref{algo.KMT} and analyzed in \cite{DBLP:journals/corr/KitagawaMT16}: starting from the Newton step $\alpha^{n,0} = 1$, the descent parameter is halved until 
the magnitude of the gradient $\bF(\s,\bnu,\bpsi^n)$ has sufficiently decreased while 
the size of the smallest cell in $\mathbf{V}(\s,\bpsi^{n+1})$ is larger than a certain threshold $\nu_{\text{min}}$; in practice, we take:
\begin{equation}
\label{eq.numin}
\nu_{\text{min}} = \frac{1}{2}\min\left(\min\limits_{i=0,1,\ldots,N}|V_i(\s,\bm{0})| , \min\limits_{i=0,1,\ldots,N} \nu_i \right).
\end{equation}
Using the so-computed descent parameter, the convergence of the Newton algorithm \cref{algo.Newton} is proven, see \cite{DBLP:journals/corr/KitagawaMT16} for the details. 

\begin{algorithm}[!ht]
\caption{Determination of a suitable descent parameter for the update of the weight vector}
\label{algo.KMT}
\begin{algorithmic}[0]
\STATE \textbf{Input:} The current Newton iterate $n$ in \cref{algo.Newton}, characterized by the datum of:
\begin{itemize}
    \item The collection of seed points $\bs = \{s_1,\ldots,s_N\}$;
    \item The current weight vector $\bpsi^n$;
    \item The current Newton step vector $\bp^n$;
    \item The initial guess about the descent parameter $\alpha^{n,0}=1$.
\end{itemize}
\FOR{$k=0,...,$ until convergence}
\STATE
\begin{enumerate}
\item \textbf{if} $\min\limits_{i=0,\ldots,N} | V_i(\bpsi^n + \alpha^{n,k} \bp^n) | >\nu_{\text{min}}$ \mbox{and} $\lvert \bF(\s,\bnu,\bpsi^n + \alpha^{n,k} \bp^n) \lvert \leq (1 - \frac{\alpha^{n,k} }{2}) \lvert \bF(\s,\bnu,\bpsi^n) \lvert$ \textbf{exit loop}
\item Update the descent parameter as: $\alpha^{n,k+1} = \alpha^{n,k} / 2$. 
\item Compute the new diagram $\mathbf{V}(\s,\bpsi^n + \alpha^{n,k+1} \bp^n)$
\end{enumerate}
\ENDFOR
\RETURN Descent parameter $\alpha^n := \alpha^{n,k}$.
\end{algorithmic}
\end{algorithm}

\subsection{Smoothing of a diagram by a variant of Lloyd's algorithm}\label{sec.lloyd}

\noindent As we shall describe more extensively in \cref{sec.mechcomp}, the polygonal mesh induced by the diagram $\bVsp$ representing the shape $\Omega$ in our optimal design \cref{algo.lagevol} is used, in particular, as the numerical support for the solution of boundary value problems posed on $\Omega$, by means of the Virtual Element Method. The accuracy of the latter, like that of any Galerkin-based solution strategy, is tightly related to the ``quality'' of this polygonal mesh; intuitively, this notion appraises how close its polygons are from being regular \cite{sorgente2022role,sorgente2024mesh}.  Unfortunately, \cref{algo.lagevol} does not offer any guarantee about the quality of the cells of the diagrams $\mathbf{V}(\s^n,\bpsi^n)$ at play, and it is necessary to periodically stop the process, say every $3$, $4$ iterations, to improve their quality.

One possibility to achieve this goal borrows from the famous Lloyd's algorithm for Voronoi diagrams \cite{lloyd1982least}. Roughly speaking, the latter produces a ``well-shaped'' centroidal Voronoi tessellation (i.e. where the seed point of each cell coincides with its centroid) from an arbitrary initial diagram $\bVor(\s)$; it proceeds within a series of iterations, by replacing the seed point $\s_i$ of each cell $i=1,\ldots,N$ by the centroid $\c_i$ of the latter, and then computing the new diagram $\bVor(\c)$.
Often, a relaxed version of this procedure is used, whereby each seed point $\s_i$ is moved towards $\c_i$ for a short pseudo-time step $\alpha \in (0,1)$. 

In our strategy, we rely on a natural extension of this procedure to the present context of Laguerre diagrams in which the measure of each cell is imposed.
This was recently proposed in \cite{de2012blue,de2015power,xin2016centroidal}, see also \cite{bourne2015centroidal};  it is sketched in \cref{algo.lloydcapa}.  

\begin{algorithm}[!ht]
\caption{Smoothing of a diagram with constrained cell measures by a variant of Lloyd's algorithm}
\label{algo.lloydcapa}
\begin{algorithmic}[0]
\STATE \textbf{Inputs:} 
\begin{itemize}
\item Diagram $\mathbf{V}(\s^0,\bpsi^0)$ induced by:
\begin{itemize}
 \item[-] An initial collection of seed points $\s^0 \in \R^N$; 
\item[-] A given vector $\bnu \in \R^N$ of cell measures;
\item[-] The unique weight vector $\bpsi^0 \in \R^N$ such that each cell $V_i(\s^0,\bpsi^0)$ has measure $\nu_i$. 
\end{itemize}
\item Relaxation parameter $\alpha \in (0,1)$.
\end{itemize}
\FOR{$n=0,...,$ until convergence}
\STATE
\begin{enumerate}
\item Calculate the collection $\mathbf{c}^n := \left\{ \mathbf{c}^n_i \right\}_{i=1,\ldots,N}$ of the centroids $\mathbf{c}^n_i \in \R^d$ of the cells $V_i(\s^n,\bpsi^n)$.
\item Update the seed points as: $\s^{n+1} = (1-\alpha) \s^n + \alpha\mathbf{c}^n$.
\item Calculate the weight vector $\bpsi^{n+1}$ guaranteeing that $\lvert V_i(\s^{n+1},\bpsi^{n+1}) \lvert = \nu_i$ for $i=1,\ldots,N$. 
\item Compute the new diagram $\mathbf{V}(\s^{n+1},\bpsi^{n+1})$.
\end{enumerate}
\ENDFOR
\RETURN Diagram $\mathbf{V}(\s^n,\bpsi^n)$ where each cell  $V_i(\s^n,\bpsi^n)$ is ``well-shaped'' and has measure $\nu_i$
\end{algorithmic}
\end{algorithm}


\subsection{Additional operations on diagrams}\label{sec.resample}

\noindent In this section, we outline two geometric operations on diagrams of the form \cref{eq.decompOm,eq.defVipsi} that significantly improve the efficiency of the shape optimization \cref{algo.Newton}.
\subsubsection{Resampling of a diagram}

\noindent As will be exemplified in the numerical examples of \cref{sec.num}, the optimization of the shape $\Omega^n$ generally entails significant changes in its volume through the iterations $n=0,\ldots$ of the process; it is often relevant to dynamically adjust the number $N$ of cells of the defining diagram $\mathbf{V}(\s^n,\bpsi^n)$ according to these changes. 

To achieve this, we periodically carry out a simple resampling procedure, at the end of every one iteration over, say, $3$ or $4$ of \cref{algo.lagevol}: a desired average value $\overline{\nu}$ for the measures of the cells of the diagram of the shape $\Omega^n$ is given, and we infer the suitable number $N^n$ of cells in the latter via the following relation: 
$$ N^n = \frac{\Vol(\Omega^n)}{\overline\nu}.$$
We then add or delete seed points from the collection $\s^n$ to attain this number, according to the following rules:
\begin{itemize}
\item Seed points $\s_i$ are added inside the regions of $\Omega^n$ lying far from the boundary of $\Omega^n$ (so that this entity is unchanged in the process), where the local cell measures are largest. The measures $\nu_i$ of the corresponding added cells are set to $\overline \nu$, and the (larger) measures of the neighboring cells are decreased so that the volume of $\Omega^n$ is unaltered by this operation.
\item The deleted seed points $\s_i$ lie far from $\partial \Omega^n$, and they correspond to cells $V_i(\s^n,\bpsi^n)$ with ``small'' measures. Their mass is redistributed to the neighboring cells, so that $\Vol(\Omega^n)$ is unchanged in the process. 
\end{itemize}

\subsubsection{Elimination of material ``islands'' disconnected from the main structure}\label{sec.islands}

\noindent As described in \cref{sec.compLaguerre}, some of the edges of the cell in the diagrams $\mathbf{V}(\s^n,\bpsi^n)$ produced by \cref{algo.lagevol} may bear particular labels, inherited from the boundary $\partial D$ of the computational domain. These notably serve to identify the regions of $\partial \Omega^n$ bearing particular boundary conditions in the physical problem at play. For instance, in the settings of \cref{sec.conduc,sec.elas}, they characterize the regions $\Gamma_D$ and $\Gamma_N$ supporting homogeneous Dirichlet and inhomogeneous Neumann boundary conditions. 
In practice, it may be desirable to remove the components of $\Omega^n$ that are not connected to any cell bearing such a label. 
For instance, in the situation of \cref{sec.elas}, 
the regions of $\Omega^n$ that are not connected to $\Gamma_D$ cause the problem \cref{eq.elas} to be ill-posed, 
since the elastic displacement $\u_{\Omega^n}$ is only characterized by this problem up to a rigid-body motion inside each such material ``islands''.

The elimination of these regions is realized thanks to a simple algorithmic procedure, based on the connectivity of $\mathbf{V}(\s^n,\bpsi^n)$. We start by storing into a pile all the indices $i \in \left\{1,\ldots,N\right\}$ belonging to cells having one edge bearing the desired label. We then travel the cells of the diagram by propagating through the neighbors of the elements in the pile, and we thus obtain all the indices of the connected component of $\Omega^n$ attached to the label at stake.
We eventually discard all the cells (i.e. the associated seed points) in $\bVsp$ that have not been visited in this process.

\section{Mechanical computations on $\Omega$ via the Virtual Element Method}\label{sec.mechcomp}

\noindent In this section, we describe -- in 2d for simplicity -- the resolution of physical boundary value problems of the form \cref{eq.conduc,eq.elas} on a polygonal mesh $\calT$ of the shape $\Omega$.
Different numerical methods are available to achieve this goal. 
Although they are slightly unusual in the physical contexts at stake in this article, let us notably mention finite volume methods -- see \cite{Cebula2014} for the treatment of heat conduction problems, or \cite{jasak2000application} for linear elasticity --, discontinuous Galerkin methods \cite{cangiani2014hp}, or the recent Network Element Method \cite{coatleven2021principles,coatleven2023network}.
Here, we rely on the Virtual Element Method, which is an elegant variant of the well-known Finite Element Method adapted to the solution of boundary value problems on arbitrary polygonal meshes, see e.g. \cite{antonietti2022virtual,beirao2014hitchhiker,da2013virtual,gain2014virtual,sutton2017virtual}, or \cite{dassi2023vem++} for a recent open-source implementation.

After setting notations in \cref{sec.notVEM}, we present this method in \cref{sec.VEMlap} in the context of the 2d conductivity equation \cref{eq.conduc}, where its salient features can be exposed in a relatively non technical manner. 
This presentation is not intended to be minimal; rather, it prepares the ground for the treatment of the linear elasticity system which is discussed next in \cref{sec.VEMelas}. 
Practical implementation details are deferred to \cref{app.VEM}. 

\subsection{Notations}\label{sec.notVEM}

\noindent Throughout this section, $\Omega$ is a two-dimensional shape equipped with a polygonal mesh $\calT$. 
The latter is composed of $N$ (closed) elements $E_i$, $i=1,\ldots,N$, and 
$M$ vertices, denoted by $\q_j$, $j=1,\ldots,M$. 
Each polygon $E \in \calT$ is characterized by its $n^E$ vertices, which form a subset $\left\{\q^E_{j}\right\}_{j=1, \ldots, n^E}$ of $\left\{ \q_j \right\}_{j=1,\ldots,M}$; 
these are numbered in a counterclockwise fashion and we set $\q^E_0 := \q^E_{n^E}$ and $\q_{n^E+1}^E := \q_1^E$, see \cref{fig.eltE} (a). We also introduce the following notation:
\begin{itemize}
\item The diameter of $E$ is denoted by $h^E := \sup_{x,y \in E} \lvert x - y\lvert$. 
\item For $i=1,\ldots,n^E-1$, $\be_i$ is the edge with endpoints $\q^E_i$ and $\q^E_{i+1}$; $\be_0 = \be_{n^E}$ stands for the edge between $\q^E_{n^E}$ and $\q^E_1$. 
\item For $i=0,\ldots,n^E$, we denote by $\n_{\be_i}$ the unit normal vector to $\be_i$, pointing outward $E$.
\item For $i=1,\ldots,n^E$, we denote by $\hat \be_i$ the edge between $\q^E_{i-1}$ and $\q^E_{i+1}$, 
and by $\n_{\hat \be_i}$ the unit normal vector to $\hat \be_i$, oriented so that $\langle \n_{\hat {\be}_i} \, \n_{\be_j}\rangle \geq 0$ for $j\in \left\{i-1,i\right\}$. 
A simple calculation shows that
\begin{equation}\label{eq.nehat}
\lvert \be_{i-1} \lvert \n_{\be_{i-1}} + \lvert \be_i \lvert \n_{\be_i} = \lvert \hat \be_i  \lvert \n_{\hat \be_i}. 
\end{equation}
\item For any continuous function $v : E \to \R$, we denote by
$$ \bar v = \frac{1}{n^E} \sum\limits_{j=1}^{n^E} v(\q^E_j)$$
the average of $v$ over the vertices of $E$; the same notation is used when $v$ is replaced by a vector field or a tensor field on $E$.
\item Accordingly, $\overline{\q^E} = (\overline{q^E_1} , \overline{q^E_2})$ is the average position of the vertices of $E$, i.e. 
$$ \overline{\q^E} = \frac{1}{n^E} \sum\limits_{j=1}^{n^E} \q^E_j;$$
note that $\overline{\q^E}$ differs from the centroid of $E$. 
\item For any integrable function $v : E \to \R$, we denote the average of $v$ over $E$ by 
$$ \langle v \rangle = \frac{1}{|E|} \int_E v(\x) \:\d \x;$$
 the same notation is used when $v$ is replaced by a vector field or a tensor field on $E$.
\end{itemize}

\begin{figure}[!ht]
\centering
\begin{tabular}{cc}
\begin{minipage}{0.44\textwidth}
\begin{overpic}[width=1.0\textwidth]{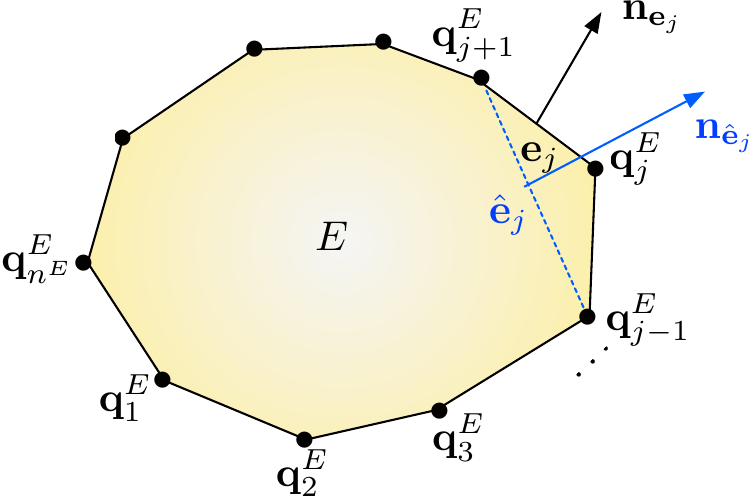}
\put(0,3){\fcolorbox{black}{white}{$a$}}
\end{overpic}
\end{minipage}
 & 
 \begin{minipage}{0.57\textwidth}
\begin{overpic}[width=1.0\textwidth]{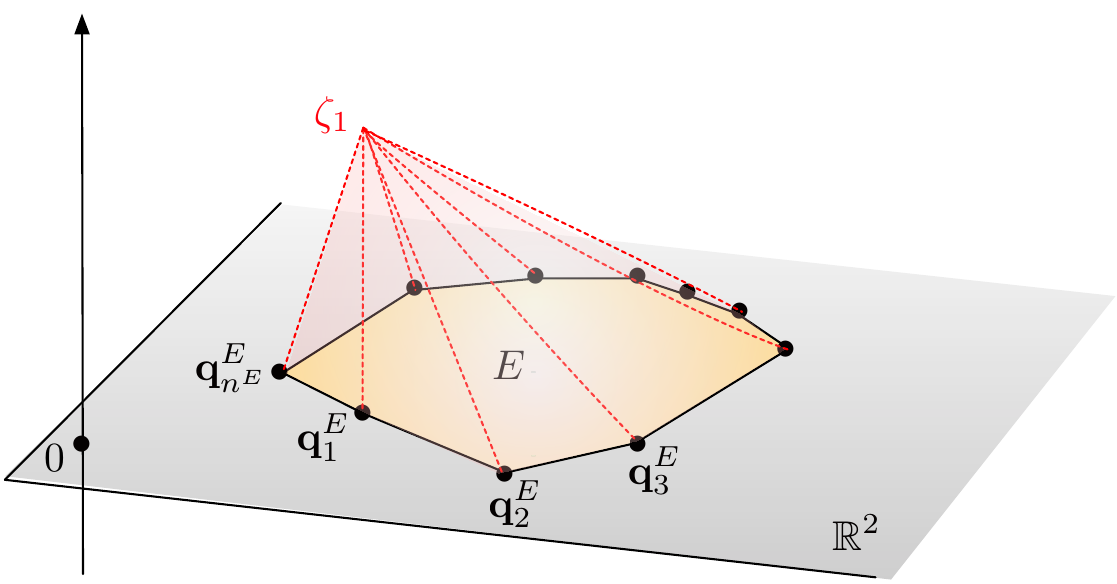}
\put(0,3){\fcolorbox{black}{white}{$b$}}
\end{overpic}
\end{minipage}
\end{tabular}
\caption{\it (a) One polygonal element $E \in \calT$; (b) Graph of the basis function $\zeta_1$, attached to vertex $\q^E_1$ (see \cref{sec.localspaceVEMlap}).} 
\label{fig.eltE}
\end{figure}

\subsection{The virtual element method for the 2d conductivity equation}\label{sec.VEMlap}

\noindent This section deals with the implementation of the Virtual Element Method for the solution of the conductivity equation equipped with homogeneous Dirichlet boundary conditions: 
\begin{equation}\label{eq.lapVEM}
\tag{\textcolor{gray}{Cond}}
\left\{
\begin{array}{cl}
- \dv(\gamma \nabla u) = f & \text{in } \Omega, \\
u = 0 & \text{on } \partial \Omega, 
\end{array}
\right.
\end{equation}
where the sought function $u \in H^1_0(\Omega)$ is denoted without reference to the (fixed) domain $\Omega$ in this section, 
the right-hand side $f \in L^2(\Omega)$ stands for a source term, and the conductivity $\gamma$ is assumed to be a positive constant. 
Our presentation relies on the pedagogic article \cite{sutton2017virtual}, see also \cite{beirao2014hitchhiker}. We provide a general overview of the method in \cref{sec.genpresVEMlap} before focusing on the definition of the local space of discrete functions in \cref{sec.localspaceVEMlap}.

\subsubsection{General presentation of the method}\label{sec.genpresVEMlap}

\noindent 
Like the Finite Element Method (see e.g. \cite{ciarlet2002finite}), 
the Virtual Element Method leverages the variational formulation \cref{eq.varfcond} of \cref{eq.lapVEM}, which reads: 
\begin{equation}\label{eq.varflap}
 \text{Search for } u \in V \text{ s.t. } \forall v \in V, \quad a(u,v) = \ell(v).
 \end{equation}
Here, the functional space $V$ is $H^1_0(\Omega)$, and we have posed:
\begin{equation}\label{eq.aelllap}
\forall u,v \in V, \quad a(u,v) = \gamma \int_\Omega \nabla u \cdot \nabla v \:\d \x \:\text{ and }\: \ell(v) = \int_\Omega f v \:\d \x.
\end{equation}
The discretization of \cref{eq.varflap} relies on a finite-dimensional subspace $\calW_{\calT}$ of $V$;
the numerical approximation $u_{\calT} \in \calW_{\calT}$ of $u$ is sought as the solution to the problem:
\begin{equation}\label{eq.discVEMformulation}
 \text{Search for } u_{\calT} \in \calW_{\calT} \text{ s.t. } \forall v \in \calW_{\calT}, \quad a(u_{\calT},v) = \ell(v). 
 \end{equation}
The space $\calW_{\calT}$ is defined by:
\begin{equation}\label{eq.defcalWcalT}
 \calW_{\calT} = \left\{ u \in \calC(\overline\Omega) \text{ and } u \lvert_E \:\in\: \calW(E)  \text{ for all } E \in \calT\right\},
 \end{equation}
where for each element $E \in \calT$, $\calW(E)$ is a local space of functions on $E$ whose definition is the topic of the next \cref{sec.localspaceVEMlap}. 
For the moment, let us solely mention that the dimension of $\calW_{\calT}$ coincides with the number $M$ of vertices in the mesh $\calT$, 
and that the values of functions $u \in \calW_{\calT}$ at the vertices $\q_j$, $j=1,\ldots,M$ are the degrees of freedom of the discretization. 
We introduce a basis $\left\{ \varphi_k \right\}_{k = 1,\ldots,M}$ of $\calW_{\calT}$ as follows:
for $k=1,\ldots,M$, the function $\varphi_k \in \calW_{\calT}$ is attached to the vertex $\q_k$ in the sense that: 
\begin{equation}\label{eq.vphikql} 
\forall l = 1,\ldots,M, \quad\varphi_k(\q_l) = \left\{
\begin{array}{cl}
1 & \text{if } k =l , \\
0 & \text{otherwise}.
\end{array}
\right.
\end{equation}

By writing the decomposition $u_{\calT} = \sum_{l=1}^{M} u_l \varphi_l$ of the sought function $u_{\calT}$ on this basis, and using $v = \varphi_k$ as test function in \cref{eq.discVEMformulation} for $k=1,\ldots,M$, 
the problem \cref{eq.varflap,eq.aelllap} boils down to the following $M \times M$ linear system:
\begin{equation}\label{eq.FEVEM}
 K_{\calT} U_{\calT} = F_{\calT}, 
 \end{equation}
where $U_{\calT} := (u_1,\ldots,u_{M}) \in \R^{M}$, and the entries of the stiffness matrix $K_{\calT} \in \R^{M \times M}$ and force vector $F_{\calT} \in \R^{M}$ are given by:
\begin{equation}\label{eq.KTgen}
K_{\calT,kl} = a(\varphi_l,\varphi_k) , \text{ and } F_{\calT,k} = \ell(\varphi_k), \quad k,l=1,\ldots,M.
\end{equation}
The integrals defining these quantities are sums of contributions from each element $E \in \calT$, i.e.
\begin{multline}\label{eq.defaE}
 a(\varphi_l,\varphi_k) = \sum\limits_{E \in \calT} a^E(\varphi_l,\varphi_k) , \text{ where } a^E(u,v) := \gamma\int_E \nabla u \cdot \nabla v \:\d \x, \text{ and } \\
 F_{\calT,k} = \sum\limits_{E \in \calT}\ell^E(\varphi_k), \text{ where } \ell^E(v) := \int_E f v \:\d \x.
 \end{multline}
Hence, the assembly of \cref{eq.FEVEM} is realized by calculating the local, element-wise contributions $a^E(\varphi_l,\varphi_k)$, $\ell^E(\varphi_k)$ to the entries $ K_{\calT,kl} $ of the global stiffness matrix 
and $F_{\calT,k}$ of the global force vector, respectively. 
 These computations are discussed in the next \cref{sec.localspaceVEMlap,app.VEM}. 

\subsubsection{The local space $\calW(E)$ attached to an element $E \in \calT$}\label{sec.localspaceVEMlap}

\noindent Throughout this section, $E$ is a given polygon in $\calT$;
we define $\calW(E)$ as the space of functions $u : E \to \R$ satisfying the following properties:
\begin{itemize}
\item The restriction $u\vert_{\be} : \be \to \R$ of $u$ to any edge $\be$ of $E$ is affine; 
\item The differential operator of \cref{eq.lapVEM} cancels $u$, i.e. $-\dv(\gamma \nabla u) = -\gamma \Delta u = 0$ in $E$.
\end{itemize}
Obviously, the functions $u \in \calW(E)$ are uniquely determined by their values at the vertices $\q_j^E$ of $E$, $j=1,\ldots,n^E$;
in particular, $\calW(E)$ has dimension $n^E$ and it contains the space $\calP(E)$ of affine functions on $E$:
$$ \calP(E) := \left\{ u(\x) = a + \langle \textbf{b} , \x\rangle, \:\: a \in \R ,\: \: \textbf{b} \in \R^2\right\}.$$
We then define a basis $\left\{ \zeta_i \right\}_{i=1,\ldots,n^E}$ of $\calW(E)$ as follows: for $i=1,\ldots,n^E$, $\zeta_i$ is the unique function in $\calW(E)$ such that:
\begin{equation}\label{eq.defzetai}
 \zeta_i(\q_j^E) =
 \left\{
 \begin{array}{cl}
 1 & \text{if } i = j, \\
 0 & \text{otherwise}.
 \end{array}
 \right.
 \end{equation} 
 Observe that the dependency of the functions $\zeta_i$ on the actual element $E$ is omitted for notational simplicity.
 
 \begin{remark}
\noindent \begin{itemize}
\item  Several definitions of $\calW(E)$ are actually possible, all of them leading to discrete functions $u \in \calW_{\calT}$ whose degrees of freedom are their values at the vertices of $\calT$.
In fact, a careful selection of $\calW(E)$ may significantly ease the numerical implementation,
see for instance \cref{sec.L2proj} for an alternative definition of the behavior of functions $u \in \calW(E)$ inside $E$ which lends itself to an easier evaluation of integrals of the form $\int_E uv \:\d x$, $u,v \in \calW(E)$, or the appendix in \cite{gain2014virtual}  for another definition which is better adapted to the evaluation of quadrature formulas in the 3d elasticity setting.
\item Higher-order versions of $\calW(E)$, featuring higher degree polynomials, could also be considered, leading to higher-order approximations $u_{\calT}$ of $u$. 
\end{itemize}
\end{remark}
 
 With these notations, the global basis functions $\varphi_k$ defined in \cref{eq.vphikql} can be expressed in terms of the local functions $\zeta_i$; more precisely:
 $$\forall k = 1,\ldots,M, \:\: E\in \calT,\quad \varphi_k\lvert_E =  \left\{
 \begin{array}{cl}
 \zeta_i& \text{if there exists } i \in \left\{1,\ldots,n^E\right\} \text{ s.t. } \q_i^E = \q_k, \\
 0& \text{otherwise}. \\
 \end{array}\right.
 $$ 
 As a result, the computation of the entries of the stiffness matrix $K_{\calT}$ and force vector $F_{\calT}$ in \cref{eq.defaE} boils down to that of the local building blocks
 \begin{equation}\label{eq.KEij}
 K^E_{ij} := a^E(\zeta_i, \zeta_j) \text{ and } F^E_i :=\ell^E(\zeta_i), \quad i,j=1,\ldots,n^E.
 \end{equation}
Unfortunately, no explicit expression of the $\zeta_i$ is available to realize these computations; the key idea of the Virtual Element Method is to take advantage of the definition of $\calW(E)$ to approximate them with a computation which uses the sole property \cref{eq.defzetai}.
To achieve this, let us introduce: 
\begin{itemize} 
\item The subspace $\calW_R(E) \subset \calP(E)$ of constant functions on $E$:
$$ \calW_R(E) := \span \left\{m_1 \right\}, \text{ where } m_1(\x) = 1.$$
\item The subspace $\calW_C(E) \subset \calP(E)$ of functions with zero mean value over the vertices of $E$:
$$ \calW_C(E) := \left\{  u  \in \calP(E), \:\: \overline u= 0 \right\},$$
which is spanned by the functions
$$ m_2(\x) = x_1 - \overline{q_1^E}, \text{ and } m_3(\x) = x_2 - \overline{q^E_2}.$$
\end{itemize}
It is easily seen that $\calP(E)$ is the direct sum of $\calW_R(E)$ and $\calW_C(E)$:
$$ \calP(E) = \calW_R(E) \oplus \calW_C(E).$$
We next introduce projection operators $\pi_R$ and $\pi_C$ onto $\calW_R(E)$ and $\calW_C(E)$, respectively:
for any (smooth enough) function $v : E \to \R$, $\pi_R v \in \calW_R(E)$ and $\pi_C v \in \calW_C(E)$ are respectively defined by:
$$ \pi_R v(\x) = \overline v , \text{ and } \pi_C v(\x) = \langle \nabla v \rangle \cdot (\x - \overline{\q^E}).$$
With these notations, the following relations are easily verified:
$$ \forall c \in \calW_C(E), \:\: \pi_R c = 0, \:\: \pi_C c = c \text{ and }  \forall r \in \calW_R(E), \:\: \pi_C r = 0, \:\: \pi_R r = r.$$
We finally define the projection $\pi_{\calP} : \calW(E) \to \calP(E)$ onto affine functions by:
\begin{equation}\label{eq.piPlap}
 \pi_{\calP} v = \pi_{C} v + \pi_R v.
 \end{equation}

The following lemma draws elementary, albeit crucial properties of these objects.
\begin{lemma}\label{lem.NRJorthosca}
The following facts hold true: 
\begin{enumerate}[(i)]
\item The operators $\pi_R$ and $\pi_C$ are the ``orthogonal projections'' from $\calW(E)$ onto $\calW_R(E)$ and $\calW_C(E)$, respectively, in the sense of the bilinear form $a^E(\cdot,\cdot)$: 
$$ \forall r \in \calW_R(E), \:\: c \in \calW_C(E), \:\: v \in \calW(E), \quad a^E(r,v) = 0 \text{ and } a^E (c , v - \pi_C v) = 0.$$
\item The following decomposition of $a^E(\cdot,\cdot)$ holds true:
\begin{equation}\label{eq.decomposaE}
 \forall u,v \in \calW(E), \quad a^E(u,v) =  a^E(\pi_C u, \pi_C v ) + a^E(u-\pi_{\calP} u , v-\pi_{\calP} v).
 \end{equation}
\end{enumerate}
\end{lemma}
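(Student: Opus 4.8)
The plan is to derive statement $(ii)$ from statement $(i)$, after establishing $(i)$ by direct computation that hinges on two facts: constant functions are annihilated by $a^E(\cdot,\cdot)$, and the projection $\pi_C$ is built precisely so as to reproduce the mean gradient $\langle \nabla v \rangle$ of $v$ over $E$.

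For the first relation in $(i)$, I would observe that any $r \in \calW_R(E)$ is constant, so $\nabla r = 0$ and hence $a^E(r,v) = \gamma \int_E \nabla r \cdot \nabla v \, \d \x = 0$ for every $v \in \calW(E)$; by the symmetry of $a^E$ this also shows that constants contribute nothing when they appear as the second argument. For the second relation, I would use that any $c \in \calW_C(E)$ is affine, so $\nabla c$ is a constant vector that factors out of the integral: for any $w$, $a^E(c,w) = \gamma \, \nabla c \cdot \int_E \nabla w \, \d \x = \gamma |E| \, \nabla c \cdot \langle \nabla w \rangle$, where I simply invoke the definition $\int_E \nabla w \, \d \x = |E| \langle \nabla w \rangle$ of the mean value over $E$. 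Taking $w = v - \pi_C v$ reduces the orthogonality $a^E(c, v - \pi_C v) = 0$ to the identity $\langle \nabla \pi_C v \rangle = \langle \nabla v \rangle$. Since $\pi_C v(\x) = \langle \nabla v \rangle \cdot (\x - \overline{\q^E})$ has constant gradient equal to $\langle \nabla v \rangle$, this identity is immediate, and $(i)$ follows. This is the conceptual core of the lemma; everything else is bookkeeping.

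To obtain $(ii)$, I would write $u = \pi_C u + \pi_R u + (u - \pi_{\calP} u)$ and likewise for $v$, recalling $\pi_{\calP} = \pi_C + \pi_R$, and expand $a^E(u,v)$ by bilinearity into nine terms. Every term carrying a factor $\pi_R u$ or $\pi_R v$ vanishes by the first part of $(i)$ (constants have zero gradient), leaving the four products built from $\{\pi_C u, u - \pi_{\calP} u\}$ and $\{\pi_C v, v - \pi_{\calP} v\}$. The two cross terms vanish by the second part of $(i)$: writing $v - \pi_{\calP} v = (v - \pi_C v) - \pi_R v$, the relation $a^E(\pi_C u, v - \pi_C v) = 0$ together with $a^E(\pi_C u, \pi_R v) = 0$ gives $a^E(\pi_C u, v - \pi_{\calP} v) = 0$, and symmetrically for $a^E(u - \pi_{\calP} u, \pi_C v)$ (here one uses that $\pi_C v \in \calW_C(E)$, since $\overline{\pi_C v} = 0$). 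What survives is exactly $a^E(\pi_C u, \pi_C v) + a^E(u - \pi_{\calP} u, v - \pi_{\calP} v)$, which is $(ii)$.

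I do not expect a substantive obstacle: the whole statement is a consequence of the defining property of $\pi_C$ (reproduction of the mean gradient), the annihilation of constants by $a^E$, and the symmetry and bilinearity of $a^E$. The only point requiring mild care is the consistent handling of the residual $\pi_R$ contributions hidden inside the terms $v - \pi_{\calP} v$, so that the orthogonality relations of $(i)$ can be applied cleanly; this is purely a matter of splitting $v - \pi_{\calP} v = (v - \pi_C v) - \pi_R v$ before invoking $(i)$.
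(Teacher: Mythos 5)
Your proof is correct and follows essentially the same route as the paper's: part $(i)$ by noting that constants have zero gradient and that $\nabla(\pi_C v)$ is the constant $\langle \nabla v\rangle$, and part $(ii)$ by decomposing $u$ and $v$ via $\pi_{\calP}=\pi_C+\pi_R$ and killing all cross terms with the orthogonality relations of $(i)$. The only difference is presentational — you expand all nine terms and track the splitting $v-\pi_{\calP}v=(v-\pi_C v)-\pi_R v$ explicitly, whereas the paper first discards the $\pi_R$ contributions and then applies $(i)$ in one line — but the substance is identical.
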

\begin{proof}
$(i)$ The first relation is obvious since functions in $\calW_R(E)$ are constant on $E$. 
The second one arises from the fact that the gradient $\nabla c$ of any function $c \in \calW_C(E)$ is constant over $E$: 
$$
 a^E(c, v - \pi_C v) = \gamma \nabla c \cdot \left(\int_E (\nabla v - \nabla (\pi_C v))\:\d \x \right)
= \gamma \nabla c \cdot \Big( |E| \langle \nabla v \rangle - |E| \langle \nabla v \rangle \Big) 
= 0.
$$
 
\noindent $(ii)$ A straightforward calculation based on the identities in $(i)$ yields:
$$\begin{array}{>{\displaystyle}cc>{\displaystyle}l}
a^E(u,v) &=& a^E(\pi_R u + \pi_C u + (u-\pi_{\calP} v) , \pi_R v + \pi_C v + (v-\pi_{\calP} v)) \\
&=& a^E(\pi_C u + (u-\pi_{\calP} v) , \pi_C v + (v-\pi_{\calP} v)) \\
&=&  a^E(\pi_C u, \pi_C v ) + a^E(u-\pi_{\calP} v , v-\pi_{\calP} v).
\end{array} $$
\end{proof}

Let us now note that \cref{eq.decomposaE} gives rise to a decomposition of the local stiffness matrix $K^E \in \R^{n^E \times n^E}$ in \cref{eq.KEij} into two terms:
$$ K^E = P^E + S^E.$$
Here, the matrices $P^E$ and $S^E \in \R^{n^E \times n^E}$ are defined by:
$$ \forall i,j=1,\ldots, n^E, \quad P^{E}_{ij} = a^E(\pi_C\zeta_i, \pi_C \zeta_j) \text{ and } S^E_{ij} = a^E(\zeta_i-\pi_{\calP}\zeta_i,\zeta_j-\pi_{\calP}\zeta_j).$$
The entries of the matrix $P^E$ can be calculated exactly; indeed, the projection $\pi_C u$ of a function $u\in \calW(E)$ has coefficients over the basis $\left\{ m_\alpha \right\}_{\alpha=1,2,3}$ 
that can be calculated solely from the values of $u$ at the vertices of $E$, see \cref{app.VEMlap} about this fact. 

On the other hand, the second term $a^E(u-\pi_{\calP} u , v-\pi_{\calP} v)$ in the right-hand side of \cref{eq.decomposaE} (associated to $S^E$) appraises the behavior of $a^E(\cdot,\cdot)$
on those functions in $\calW(E)$ that are not affine. It is not easily computed, and it is therefore replaced by another bilinear form $\widetilde{s}^E$ (and matrix $\widetilde{S}^E$) which can be calculated 
just from the values of $u$ and $v$ at the vertices of $E$ (see \cref{app.VEMlap} about this calculation):
$$ \widetilde{s}^E(u-\pi_{\calP} u , v-\pi_{\calP} v), \text{ where } \widetilde{s}^E (w,z) := \alpha^E \sum\limits_{i=1}^{n^E} w(\q_i^E) z(\q_i^E),$$
 and $\alpha^E$ is a constant which is chosen so that $\widetilde{s}^E$ scale as $a^E$ upon refinement of the mesh $\calT$;
in this simple case, we take $\alpha^E=1$.

This replacement gives rise to the modified bilinear form $\widetilde{a}^E(\cdot,\cdot)$ given by:
$$\forall u,v \in \calW(E), \quad \widetilde{a}^E(u,v) = a^E(\pi_{\calC} u, \pi_{\calC} v) +  \widetilde{s}^E(u-\pi_{\calP} u , v-\pi_{\calP} v).$$
It guarantees the so-called polynomial consistency of $\widetilde{a}^E$ with $a^E$ (or ``patch test'' in engineering): 
$\widetilde a^E(u,v)$ and $a^E(u,v)$ coincide when at least one of the functions $u$ or $v$ belongs to $\calP(E)$. Indeed, 
$$\begin{array}{>{\displaystyle}cc>{\displaystyle}l}
 \forall p \in \calP(E), \:\: v \in \calW(E), \quad \widetilde{a}^E(p,v) &=& a^E(\pi_C p,\pi_C v) \\
 &=& a^E(\pi_C p, v),\\
 &=& a^E(p,v).
 \end{array}$$
This method can be proved to be convergent, see \cite{da2013virtual}. Intuitively, it behaves at least as well as if only linear polynomials were the only basis functions on each element.

\begin{remark}\label{rem.vemev}
The strategy illustrated in this section can be adapted to the solution of eigenvalue problems of the form \cref{eq.conducev}, see
\cite{gardini2018virtual}. The main additional operation required to achieve this purpose is the assembly of the mass matrix, which is described in \cref{sec.L2proj}. 
\end{remark}

\subsection{Resolution of the linear elasticity system with the Virtual Element Method}\label{sec.VEMelas}

\noindent In this section, whose presentation is based on \cite{berbatov2021guide,gain2014virtual},
 we briefly describe how the strategy presented in the previous \cref{sec.VEMlap} can by adapted to deal with the solution of the 2d linear elasticity system. To emphasize the similarities between the treatments of both situations, we use the same notations for corresponding quantities when no confusion is possible.
 
Let us recall the setting of \cref{sec.elas}: the boundary of the shape $\Omega$ is decomposed into three disjoint regions $\Gamma_D$, $\Gamma_N$, $\Gamma$, 
 where $\Gamma_D$ is a clamping area, $\Gamma_N$ is the support of external loads $\g: \Gamma_N \to \R^2$ and $\Gamma$ is effort-free. 
 Assuming the presence of body forces $\f : \Omega \to \R^2$, the elastic displacement $\u : \Omega \to \R^2$ is the unique solution in $H^1_{\Gamma_D}(\Omega)^2$ to the boundary value problem:  
\begin{equation}\label{eq.VEMelassys}
\tag{\textcolor{gray}{Elas}}
\left\{
\begin{array}{cl}
-\dv(Ae(\u)) = \f & \text{in } \Omega, \\
\u = \bzero & \text{on } \Gamma_D, \\
Ae(\u)\n = \g & \text{on } \Gamma_N, \\
Ae(\u)\n = \bzero & \text{on } \Gamma.
\end{array}
\right. 
\end{equation}
The associated variational formulation reads, see \cref{eq.varfelas}: 
\begin{multline*}
 \text{Search for } \u \in V \text{ s.t. } \forall \v \in V, \quad a(\u,\v) = \ell(\v), \text{ where } V = H^1_{\Gamma_D}(\Omega)^2, \\
 a(\u,\v) = \int_\Omega A e(\u): e(\v) \:\d \x, \text{ and } \ell(\v) = \int_\Omega \f \cdot \v \:\d \x + \int_{\Gamma_N} \g \cdot \v \:\d s.
 \end{multline*}

As described in \cref{sec.genpresVEMlap}, the cornerstone of the development of the Virtual Element Method
lies in the definition of the local space of discrete functions $\calW(E)$ and the assembly of the local stiffness matrix $K^E \in \R^{2n^E \times 2n^E}$ attached to a given polygon $E$ in $\calT$. The local space $\calW(E)$ reads:
$$ \calW(E) = \Big\{ \u: E \to \R^2, \:\: \u \text{ is affine on each edge of } \partial E \text{ and } -\dv(Ae(\u)) = 0 \Big\}.$$
Again, an element $\u \in \calW(E)$ is uniquely determined by its values at the vertices $\q_j^E$, $j=1,\ldots,n^E$ of $E$ and 
$\calW(E)$ has dimension $2n^E$. 
Besides, $\calW(E)$  contains the space $\calP(E)$ of affine functions, given by:
$$ \calP(E) := \left\{ \u(\x) = \a + B \x, \:\: \a \in \R^2, \: \: B \in \R^{2\times 2}\right\}.$$
Let us introduce the natural basis $\left\{\bzeta_j\right\}_{j=1,\ldots,2n^E}$ of $\calW(E)$, defined by, for $i=1,\ldots,n^E$:
\begin{equation}\label{eq.defvphi1}
 \bzeta_{2i-1}(\q_i^E) = \left(\begin{array}{c}
1\\
0
\end{array}
\right), \text{ and }   \bzeta_{2i-1}(\q_j^E) = \left(\begin{array}{c}
0\\
0
\end{array}
\right)\text{ for } j \neq i;
\end{equation}
\begin{equation}\label{eq.defvphi2}
 \bzeta_{2i}(\q_i^E) =\left(\begin{array}{c}
0\\
1
\end{array}
\right), \text{ and }   \bzeta_{2i}(\q_j^E) = \left(\begin{array}{c}
0\\
0
\end{array}
\right) \text{ for } j \neq i. 
\end{equation} 
In a similar state of mind as in \cref{sec.VEMlap}, among the elements in $\calP(E)$, we shall distinguish the rigid-body motions $\u \in \calW_R(E)$, whose strain $e(\u)$ equals $0$, 
and the fields $\u \in \calW_C(E)$ with constant strain $e(\u)$; precisely:
\begin{itemize}
\item The subspace $\calW_R(E) \subset \calP(E)$ of rigid-body motions is defined by
$$ \calW_R(E) := \left\{ \a + M (\x-\bar \q^E), \:\: \a \in \R^2, \:\: M \in \R^{2\times 2}, \:\: M^T = -M \right\};$$
it is spanned by the three vector fields:
\begin{equation}\label{eq.defri}
\r_1 (\x) = \left(\begin{array}{c}
1\\
0
\end{array}
\right), \quad \r_2 (\x) = \left(\begin{array}{c}
0\\
1
\end{array}
\right) \text{ and } \r_3 (\x) = \left(\begin{array}{c}
-(x_2-\overline{q_2^E})\\
x_1 - \overline{q_1^E}
\end{array}
\right).
\end{equation}
\item The subspace $\calW_C(E) \subset \calP(E)$ of vector fields with constant strain is defined by
$$ \calW_C(E) := \left\{ S(\x-\overline{\q^E}), \:\: S \in \R^{2\times 2}, \:\: S^T = S \right\};$$
it is spanned by the three vector fields: 
\begin{equation}\label{eq.defci}
 \c_1 (\x) = \left(\begin{array}{c}
x_1 - \overline{q^E_1}\\
0
\end{array}
\right), \quad \c_2 (\x) = \left(\begin{array}{c}
0\\
x_2 - \overline{q^E_2}
\end{array}
\right) \text{ and } \c_3 (\x) = \left(\begin{array}{c}
x_2 - \overline{q_2^E}\\
x_1 - \overline{q_1^E}
\end{array}
\right).
\end{equation}
\end{itemize}
Again, it is easily verified that $\calP(E)$ is the direct sum of $\calW_R(E)$ and $\calW_C(E)$:
$$ \calP(E) = \calW_R(E) \oplus \calW_C(E).$$
We next define the projections $\pi_R$ and $\pi_C$ from $\calW(E)$ onto $\calW_R(E)$ and $\calW_C(E)$, respectively. 
\begin{itemize}
\item For any (smooth enough) vector field $\u : E \to \R^2$, $\pi_R \u \in \calW_R(E)$ is defined by:
$$ \pi_R \u(\x) = \bar \u + \langle \omega(\u) \rangle
 \left(\begin{array}{c}
-(x_2 - \overline{q^E_2})\\
x_1 - \overline{q^E_1}
\end{array}
\right)
, \quad \x \in E;$$
where $\omega(\u) := \frac{1}{2}\left( \frac{\partial u_2}{\partial x_1} - \frac{\partial u_1}{\partial x_2}\right)$ is (half) the scalar curl of  $\u$.
\item For any (smooth enough) vector field $\u : E \to \R^2$, $\pi_C \u \in \calW_C(E)$ is defined by:
 $$ \pi_C \u(\x) = \langle e(\u) \rangle (\x-\overline{\q^E}).$$
\end{itemize}
The following relations are then easily verified:
$$ \forall \r \in \calW_R(E), \:\: \pi_C \r = \bzero, \:\: \pi_R \r = \r, \text{ and }  \forall \c \in \calW_C(E), \:\: \pi_R \c = \bzero, \:\: \pi_C \c = \c.$$
From these data, we define the projection operator $\pi_{\calP}$ onto affine functions in $\calP(E)$ by:
\begin{equation}\label{eq.piP}
 \pi_{\calP} \u = \pi_{C} \u + \pi_R \u.
 \end{equation}
The next lemma is the counterpart of \cref{lem.NRJorthosca} in the present linear elasticity setting; its very similar proof is omitted for brevity.
\begin{lemma}\label{lem.NRJorthoelas}
The following facts hold true:
\noindent \begin{enumerate}[(i)]
\item The operators $\pi_R$ and $\pi_C$ are the ``orthogonal projections'' from $\calW(E)$ onto $\calW_R(E)$ and $\calW_C(E)$, respectively, in the sense of the bilinear form $a^E(\cdot,\cdot)$: 
$$ \forall \r \in \calW_R(E), \:\: \c \in \calW_C(E), \:\: \v \in \calW(E), \quad a^E(\r,\v) = 0 \text{ and } a^E (\c , \v - \pi_C \v) = 0.$$
\item The following decomposition of $a^E(\cdot,\cdot)$ holds:
\begin{equation}\label{eq.decomposaEelas}
 \forall \u,\v \in \calW(E), \quad a^E(\u,\v) =  a^E(\pi_C \u, \pi_C \v ) + a^E(\u-\pi_{\calP} \u , \v-\pi_{\calP} \v).
 \end{equation}
\end{enumerate}
\end{lemma}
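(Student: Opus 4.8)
The plan is to transpose the argument used for \cref{lem.NRJorthosca} almost verbatim, substituting the scalar quadratic form $\gamma\int_E \nabla u\cdot\nabla v\,\d\x$ with the elasticity form $a^E(\u,\v)=\int_E Ae(\u):e(\v)\,\d\x$, the gradient with the strain $e(\cdot)$, and the splitting $\calP(E)=\calW_R(E)\oplus\calW_C(E)$ of the scalar setting with its present analogue into rigid-body motions and constant-strain fields. Throughout I would freely use the elementary identities recorded just above the statement, namely $\pi_R\r=\r$, $\pi_C\r=\bzero$ for $\r\in\calW_R(E)$ and $\pi_C\c=\c$, $\pi_R\c=\bzero$ for $\c\in\calW_C(E)$, together with the facts that $e(\pi_C\u)=\langle e(\u)\rangle$ is constant on $E$ and $e(\pi_R\u)=0$ (the latter because $\pi_R\u$ is a rigid-body motion, so its symmetrized gradient vanishes).

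For part $(i)$, the first relation is immediate: any $\r\in\calW_R(E)$ satisfies $e(\r)=0$, whence $a^E(\r,\v)=\int_E Ae(\r):e(\v)\,\d\x=0$ for every $\v\in\calW(E)$. For the second relation I would exploit that $e(\c)$, and therefore $Ae(\c)$, is constant over $E$: pulling this constant tensor out of the integral gives
\[ a^E(\c,\v-\pi_C\v)=Ae(\c):\int_E e(\v-\pi_C\v)\,\d\x=Ae(\c):\bigl(|E|\,\langle e(\v)\rangle-|E|\,e(\pi_C\v)\bigr), \]
and the bracket vanishes because $e(\pi_C\v)=\langle e(\v)\rangle$. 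This is the only computation with genuine content.

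For part $(ii)$, I would write each argument through the threefold splitting $\u=\pi_R\u+\pi_C\u+(\u-\pi_{\calP}\u)$ (and likewise for $\v$), recalling $\pi_{\calP}=\pi_C+\pi_R$, then expand $a^E(\u,\v)$ by bilinearity. Every term carrying a $\pi_R$ factor dies by the first relation of $(i)$ and the symmetry of $a^E$, leaving $a^E\bigl(\pi_C\u+(\u-\pi_{\calP}\u),\,\pi_C\v+(\v-\pi_{\calP}\v)\bigr)$. It then remains to kill the two cross terms. To treat $a^E(\pi_C\u,\v-\pi_{\calP}\v)$ I would rewrite $\v-\pi_{\calP}\v=(\v-\pi_C\v)-\pi_R\v$, so that it splits as $a^E(\pi_C\u,\v-\pi_C\v)-a^E(\pi_C\u,\pi_R\v)$; the first summand vanishes by the second relation of $(i)$ applied with $\c=\pi_C\u$, and the second by the first relation of $(i)$ together with the symmetry $a^E(\pi_C\u,\pi_R\v)=a^E(\pi_R\v,\pi_C\u)$. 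The remaining cross term $a^E(\u-\pi_{\calP}\u,\pi_C\v)$ is disposed of identically after invoking symmetry, and one is left exactly with \cref{eq.decomposaEelas}.

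The main obstacle, such as it is, is purely organizational: one must keep careful track of which of the two orthogonality relations of $(i)$, and which use of the symmetry of $a^E$, eliminates each cross term, since, unlike in the scalar case, $\calW_R(E)$ and $\calW_C(E)$ are each three-dimensional and $\pi_R$ involves the rotational part $\langle\omega(\u)\rangle$ in addition to the mean $\overline{\u}$. No analytic ingredient beyond those of \cref{lem.NRJorthosca} is needed; the symmetry and bilinearity of $a^E$, and the constancy of $e$ on $\calP(E)$, are all that the proof consumes.
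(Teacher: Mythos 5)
Your proof is correct and follows exactly the route the paper intends: the authors omit the proof of this lemma precisely because it is the verbatim transposition of the proof of \cref{lem.NRJorthosca}, with $\nabla$ replaced by $e(\cdot)$, constants replaced by rigid-body motions (for which $e(\r)=0$), and the key computation being that $Ae(\c)$ is constant on $E$ while $e(\pi_C\v)=\langle e(\v)\rangle$, which is what you do. Your bookkeeping of the cross terms in part $(ii)$ is careful and complete; nothing is missing.
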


Let us now discuss the calculation of the local stiffness matrix 
$$K^E \in \R^{2n^E \times 2n^E}, \quad K^E_{ij} = a(\bzeta_i,\bzeta_j), \:\: i,j=1,\ldots,2n^E.$$ 
Like in \cref{sec.localspaceVEMlap}, \cref{eq.decomposaEelas} induces a convenient decomposition $K^E$ into two blocks:
$$ K^E = P^E + S^E, \quad P^E, S^E \in \R^{2n^E \times 2n^E}.$$
Here,
\begin{itemize}
\item The first term in the right-hand side of \cref{eq.decomposaEelas} gives rise to the matrix $P^E \in \R^{2n^E \times 2n^E}$ defined by: 
$$ \forall i,j=1,\ldots, 2n^E, \quad P^E_{ij} = a^E(\pi_C\bzeta_i, \pi_C \bzeta_j),$$
which roughly speaking corresponds to the block of $K^E$ over $\calP(E)$.
The entries of $P^E$ can be calculated exactly, since the coefficients of the projection $\pi_C \u$ of a vector field $\u \in \calW(E)$ in the basis $\left\{ \c_\alpha \right\}_{\alpha=1,2,3}$ can be calculated from the values of $\u$ at the vertices of $E$, see \cref{app.VEMelas}.
\item  The second term in the right-hand side of \cref{eq.decomposaEelas} appraises the behavior of the bilinear form $a^E(\cdot,\cdot)$
over those functions in $\calW(E)$ that are not affine. Again, this second block is replaced by a suitable quantity which can be calculated 
just from the values of $\u$ and $\v$ at the vertices of $E$.
\end{itemize}
As the result of these considerations, we replaced the local bilinear form $a^E(\cdot , \cdot)$ with 
$$ \widetilde{a}^E(\u,\v) = a^E(\pi_C \u , \pi _C \v) + \widetilde{s}^E(\u-\pi_{\calP}\u, \v- \pi_{\calP} \v), \text{ where } \widetilde{s}^E (\u,\v) = \alpha^E \sum\limits_{i=1}^n \u(\q_i) \cdot \v(\q_i),$$
 and the coefficient $\alpha^E$ depends on the element $E$ so that $\widetilde{s}^E$ scale as $a^E$ upon refinement of the mesh $\calT$.
The implementation details of this methodology are presented in \cref{app.VEMelas}. 

\section{Sensitivity of a shape functional with respect to seed points and volume fractions}\label{sec.deropt}

\noindent In this section, we return to our main purpose of solving the shape and topology optimization problem
\begin{equation}\label{eq.sopbs6} 
\tag{\textcolor{gray}{P}}
\min \limits_{\Omega} J(\Omega) \:\: \text{ s.t. } \G(\Omega) = 0,
\end{equation}
where $J(\Omega)$ and $\G(\Omega)$ depend on $\Omega$ via the solution $u_\Omega$ to a boundary value problem such as \cref{eq.conduc} or \cref{eq.elas}.
We describe how the shape derivative of a general objective function $J(\Omega)$ (or similarly, that of a general constraint function $\G(\Omega)$) can be expressed in terms of the defining variables of a diagram $\bVsp$ representing $\Omega$ in the sense that \cref{eq.decompOm} holds. 
In this perspective, in \cref{sec.derver}, we first deal with the discretization of $J(\Omega)$ into a function $J(\q)$ of the vertices $\q \in \R^{dM}$ of the diagram, 
and we expose how the theoretical expression of the shape derivative $J^\prime(\Omega)(\btheta)$ allows to calculate the gradient $\nabla_{\q} J(\q)$ of this discrete function.
As we have mentioned, $J(\q)$ (resp. $\G(q)$) in turn induces a function $\widetilde{J}(\s,\bnu)$ (resp. $\widetilde{G}(\s,\bnu)$) of the seed points $\s \in \R^{dN}$ and cell measures $\bnu \in \R^N$ of the diagram $\bVsp$.
This gives rise to our discrete shape and topology optimization problem:
\begin{equation}\label{eq.sopbdisc} 
\tag{\textcolor{gray}{P-disc}}
\min \limits_{(\s, \bnu)} \widetilde J(\s,\bnu) \:\: \text{ s.t. } \widetilde{\G}(\s,\bnu) = 0.
\end{equation}
We explain in \cref{sec.vertoseeds} how the sensitivities $\nabla_{\s} \widetilde J(\s,\bnu)$ and $\nabla_{\bnu} \widetilde J(\s,\bnu)$ of $J(\Omega)$ with respect to the parameters $(\s,\bnu)$ of the diagram for $\Omega$ can be inferred from the gradient $\nabla_\q J(\q)$.
In \cref{sec.velext}, we recall the Hilbertian trick, which allows among other things to smooth these sensitivities in a consistent manner. This procedure is also a key algorithmic component of the constrained optimization algorithm used to tackle \cref{eq.sopbs6}, that we finally describe in \cref{sec.nullspace}.
\par\medskip

\noindent \textbf{Notation.} In this section, the shape $\Omega \subset \R^d$ is described by a modified diagram $\bVsp$ of the form \cref{eq.decompOm,eq.defVipsi}. We consider a generic function $J(\Omega)$ of $\Omega$, standing for either the objective or a constraint in \cref{eq.sopbs6}. 
To simplify the presentation and without loss of generality, we shall sometimes assume that the space dimension $d$ equals $2$,
and that $J(\Omega)$ is of the form
\begin{equation}\label{eq.JOmgen}
 J(\Omega) = \int_\Omega j(u_\Omega) \:\d \x,
\end{equation}
where $j : \R \to \R$ is a smooth function, satisfying adequate growth conditions, and $u_\Omega$ is the solution to the conductivity equation \cref{eq.conduc}.  
As we have hinted at, $J(\Omega)$ induces several discrete versions, one depending on the vertices $\q$ of the diagram, that we shall denote by $J(\q)$ with a small abuse of notation, and in turn, another function $\widetilde{J}(\s,\bnu)$, depending on the seed points $\s$ and the cell measures $\bnu$. 

\subsection{Discretization and differentiation of $J(\Omega)$ in terms of the vertices of a diagram for $\Omega$}\label{sec.derver}

\noindent In this section, we discuss the discretization of the shape functional $J(\Omega)$ as a function $J(\q)$ of the vertices $\q \in \R^{dM}$ of the diagram 
$\bVsp$ representing $\Omega$ as \cref{eq.decompOm} and the calculation of the gradient $\nabla_\q J(\q)$.

As we have mentioned, in our applications, $J(\Omega)$ depends on $\Omega$ via the solution $u_\Omega$ 
to a boundary value problem posed on $\Omega$, see e.g. \cref{eq.JOmgen}. 
The latter is solved along the lines of \cref{sec.mechcomp} by applying the Virtual Element Method on the polygonal mesh $\calT$ resulting from the discretization of the diagram $\bVsp$, see \cref{sec.geomcomp}. The discrete solution is a function of the vertices $\q$ of the mesh $\calT$, and it gives rise in turn to a discrete objective function $J(\q)$ once the formula \cref{eq.JOmgen} is discretized by suitable quadrature formulas. 
In the present work, we rely on very basic expressions, which are exemplified in the forthcoming discussion. 
More elaborate rules are available in the context of general polygonal elements \cite{chin2021scaled,sommariva2009gauss}; 
for brevity, we do not emphasize on this classical, albeit technical issue.

The calculation of the gradient $\nabla_\q J(\q)$ is a slightly more delicate issue. It falls under the ``discretize-then-optimize'' setting, in which  
the derivatives of the discrete quantities of interest are used \cite{pironneau1982optimal}; several methods can be used to achieve this goal. 
One convenient practice relies on automatic differentiation, see e.g. \cite{mohammadi2010applied} for an introduction in the shape optimization context. In a nutshell, the gradient of the mapping $\q \mapsto J(\q)$ can be calculated concurrently with its evaluation by leveraging adequate data structures and by implementing all the constituent operations of the evaluation pipeline (including the assembly and inversion of the stiffness matrix involved in calculations by the Virtual Element Method) together with their derivatives. 
This elegant framework is unfortunately very stringent, and it is not always compatible with realistic applications where the mechanical solver is used in a black-box fashion. 

When automatic differentiation of $J(\q)$ is not available, the computation of $\nabla_\q J(\q)$ can be realized ``by hand''.
In fact, the derivatives of the aforementioned building blocks of the evaluation of $\q \mapsto J(\q)$ can be calculated explicitly, in principle. 
Even though this strategy could be adopted in our context, it is still very intrusive as it requires a complete knowledge of the mechanical solver, notably.\par\medskip

For this reason, we consider yet another option, borrowing from the ``optimize-then-discretize'' paradigm in optimal design, which mainly relies on the continuous formula for the shape derivative $J^\prime(\Omega)(\btheta)$.
More precisely, we leverage an elegant remark from \cite{berggren2009unified}: the sensitivity $\frac{\partial J}{\partial \q_i}(\q)(\h_i)$ 
of $J(\q)$ to a perturbation of the $i^{\text{th}}$ vertex $\q_i$ in the direction $\h_i \in \R^d$ can be approximated by evaluating (a discretization of) the volume form of the continuous shape derivative $\btheta \mapsto J^\prime(\Omega)(\btheta)$ with the particular deformation field $\btheta = \varphi_i \h_i $, where $\varphi_i$ is the unique function in the discrete space $\calW_{\calT}$ given by \cref{eq.defcalWcalT} taking the value $1$ at vertex $i$ and $0$ elsewhere, see \cref{eq.vphikql}: 
$$ 
\frac{\partial J}{\partial \q_i}(\q)(\h_i) = J^\prime(\Omega)(\varphi_i \h_i), \text{ where } \varphi_i \in \calW_{\calT} \text{ is defined by: } \varphi_i(\q_j) = \left\{
\begin{array}{cl}
1 & \text{if } i = j, \\
0 & \text{otherwise.}
\end{array}
\right.
$$
In a nutshell, this practice is legitimated by the fact that the theoretical formula for the volume form of $J^\prime(\Omega)(\btheta)$ can be obtained by the exact same adjoint-based trail as that for the discrete derivative $\frac{\partial J}{\partial \q_i}(\q)(\h_i)$.
We refer to e.g. \cite{glowinski1998shape} and $\S 2.9$ in \cite{gunzburger2002perspectives} for related discussions about the (non) commutation of discretization and differentiation in optimal design. 

In order to describe this possibility more precisely, let us assume that the shape derivative $J^\prime(\Omega)(\btheta)$ is available under the following volume form \cref{eq.Fpvol}:
$$  J^\prime(\Omega)(\btheta) = \int_\Omega (\bt_\Omega \cdot \btheta + S_\Omega : \nabla \btheta )\:\d \x , \text{ for some } \bt_\Omega : \Omega \to \R^d \text{ and } S_\Omega: \Omega \to \R^{d\times d},$$
and let us suppose for a moment that the fields $\bt_\Omega$ and $S_\Omega$ are given approximate, discrete counterparts $\bt_{\calT}$ and $S_{\calT}$ that are constant in restriction to each element $E \in\calT$: 
\begin{equation}\label{eq.JpcalT}
J^\prime(\Omega)(\btheta) \approx J^\prime_{\calT}(\btheta) := \int_\Omega (\bt_{\calT} \cdot \btheta + S_{\calT} : \nabla \btheta )\:\d \x.
\end{equation}
The gradient of $J(\q)$ with respect to the coordinates of the $i^{\text{th}}$ vertex of the mesh is then approximated as:   
$$ \nabla_{\q_i} J (\q) \approx \left( 
\begin{array}{c}
J^\prime_{\calT}(\btheta_i^1)\\
\vdots \\
J^\prime_{\calT}(\btheta_i^d)
\end{array} \right),$$
where for $k=1,\ldots,d$, $\btheta_i^k \in \calW_{\calT}^d$ is the unique vector-valued discrete function such that:
\begin{equation}\label{eq.btheta01}
\btheta_i^k(\q_j) = \be_k  \text{ if } i=j \text{ and } \bz \text{ otherwise}.
\end{equation}

The aforementioned approximations of $\bt_\Omega$, $ S_\Omega$ by $\bt_{\calT}$ and $S_{\calT}$ can be realized in various ways. For instance one could use the simple formulas:
$$ \bt_{\calT}\lvert_E = \frac{1}{n^E}\sum\limits_{j=1}^{n^E} \bt_\Omega(\q_j^E), \text{ and } S_{\calT}\lvert_E  = \frac{1}{n^E} \sum\limits_{j=1}^{n^E} S_\Omega(\q_j^E),$$
where we have used the notations of \cref{sec.notVEM}. 

When $S_\Omega$ involves the gradient of the state function $u_\Omega$, say for instance
$ S_\Omega = \gamma \nabla u_\Omega \otimes \nabla u_\Omega$, one could also make the approximation: 
$$ S_{\calT}\lvert_E  = \gamma_E \nabla (\pi_{\calP}u) \otimes \nabla (\pi_{\calP} u), \text{ where } \gamma_E := \frac{1}{n^E}\sum\limits_{j=1}^{n^E} \gamma(\q_j^E),$$ 
and $\pi_{\calP}$ is the projector over first-order polynomial functions in $\calP(E)$, defined in \cref{eq.piP}. 

In any event, the integrals featured in the quantities $J^\prime_{\calT}(\btheta_i^k)$ can be naturally decomposed as sums of integrals running over the elements $E$ of the mesh $\calT$:
 $$ J_{\calT}^\prime(\btheta_i^k) = \sum\limits_{E\in \calT} \Big(  \bt_{\calT}\lvert_E \cdot \bt_E(\btheta_i^k)   + S_{\calT} \lvert_E : S_E(\btheta_i^k) \Big),$$
 where the vectors $\bt_E(\btheta_i^k)$ and matrices $S_E(\btheta_i^k)$ are defined by:
$$ \bt_E(\btheta_i^k) = \int_E \btheta_i^k \:\d \x \text{ and of the matrices } S_E(\btheta_i^k) = \int_E \nabla \btheta_i^k \:\d \x.$$
 These basic ingredients can be calculated or approximated in  explicit terms of the physical characteristics of the element $E$. 
 For instance, in the case where the space dimension $d$ equals $2$, we approximate $\bt_E(\theta_i^1)$ and $\bt_E(\theta_i^2)$ as: 
 $$ \bt_E(\btheta_i^1) \approx \frac{1}{n^E} \lvert E \lvert \left( 
 \begin{array}{c}
 1\\
 0
 \end{array}
 \right) \text{ and }  \bt_E(\btheta_i^2) \approx \frac{1}{n^E} \lvert E \lvert \left( 
 \begin{array}{c}
 0\\
 1
 \end{array}
 \right) .$$
As far as the matrices $ S_E(\btheta_i^1) $, $ S_E(\btheta_i^2)$ are concerned, similar integrations by parts to those used in \cref{app.VEMlap} yield: 
 $$ S_E(\btheta_i^1) = \frac12 \left(
\begin{array}{cc} 
\lvert \widehat{\be_j} \lvert (\n_{\widehat{\be_j}})_1 &  \lvert \widehat{\be_j} \lvert (\n_{\widehat{\be_j}})_2 \\
0 & 0
\end{array}
\right), \text{ and } S_E(\btheta_i^2) = \frac12 \left(
\begin{array}{cc} 
0 & 0\\
\lvert \widehat{\be_j} \lvert (\n_{\widehat{\be_j}})_1 &  \lvert \widehat{\be_j} \lvert (\n_{\widehat{\be_j}})_2
\end{array}
\right).$$

\begin{remark}
In principle, the continuous shape derivative $J^\prime(\Omega)(\btheta)$ depends only
on the values of the deformation $\btheta$ on the boundary $\partial \Omega$, see \cref{sec.sotuto} about this general feature. 
On the contrary, it is not exactly true that the approximation $J^\prime_{\calT}(\btheta_i^k)$ in \cref{eq.JpcalT} vanishes when the deformation $\btheta_i^k$ is associated to an internal vertex $\q_i$. This is a manifestation of the lack of commutation between the operations of discretization and differentiation in optimal design, see again \cite{glowinski1998shape} and $\S 2.9$ in \cite{gunzburger2002perspectives}.
However, we expect (and indeed verify) that if the approximation \cref{eq.JpcalT} is ``good enough'', the entries of $\nabla_\q J(\q)$ attached to internal vertices should be ``very small'' when compared to those related to boundary vertices. For this reason, it is enough to compute the quantities $J^\prime_{\calT}(\btheta_j^k)$ for the indices $j=1,\ldots,M$ of the boundary vertices of $\Omega$.
\end{remark} 

\subsection{Calculation of the derivative of a functional with respect to the positions of the seeds and the cell measures}\label{sec.vertoseeds}

\noindent 
We have seen in \cref{sec.derver} how a general function of the domain $J(\Omega)$ 
can be discretized into a function $J(\q)$ depending on the positions $\q \in \R^{dM}$ of the vertices of the mesh $\calT$, and how to calculate the gradient $\nabla_{\q} J(\q)$.
In turn, $\q$ depends on the seed points $\s$ and the weights $\bpsi$ of the diagram $\bVsp$ of $\Omega$, and, again, on the seed points $\s$ and the cell measures $\bnu$, which are the design variables of our discrete version of the shape and topology optimization problem \cref{eq.sopbs6}, as explained in \cref{sec.Lagalgo}.
This raises the need to express the derivative of a general function $J(\q)$ of the vertices $\q$ of the diagram $\bVsp$ in terms of $\s$ and $\bnu$.\par\medskip 

To formulate this issue rigorously, let us write the relation between the seeds points $\s \in \R^{dN}$, the weights $\bpsi \in \R^N$ and the vertices $\q$ of the diagram $\bVsp$ under the abstract form:
\begin{equation}\label{eq.qQsp}
 \q = \bQ(\s,\bpsi^*(\s,\bnu)),
\end{equation}
where $\bQ:\R^{dN}_{\s} \times \R^N_{\bpsi} \to \R^{dM}$ produces the collection of vertices of the mesh $\calT$ 
induced by $\bVsp$, and, as in \cref{sec.Lagrefvol}, $\bpsi^*(\s,\bnu)$ is the unique weight vector such that each cell $V_i(\s,\bpsi^*(\s,\bnu))$ has measure $\nu_i$, $i=1,\ldots,N$. We then wish to calculate the derivatives of the functional $\widetilde J: \R^{dN}_\s \times \R^N_\bnu \to \R$ defined by:
\begin{equation}\label{eq.JtJQ}
 \widetilde J(\s,\bnu) := J(\bQ(\s,\bpsi^*(\s,\bnu)).
 \end{equation}
This is the purpose of the next result. 

\begin{proposition}\label{prop.derJqs}
Let $J: \R^{dM} \to \R$ be a differentiable function. 
Then the composite function $\widetilde J : \R^{dN}_\s \times \R^N_\bnu \to \R$ in \cref{eq.JtJQ} is differentiable almost everywhere, and its derivatives read:
\begin{equation}\label{eq.NsJt}
 \nabla_\s \widetilde J(\s,\bnu) = \Big[ \nabla_\s \bQ(s,\bpsi^*(\s,\bnu))\Big]^T \nabla_\q J(\q)  +  \Big[\nabla_\s \bF(\s, \bnu,\bpsi^*(\s,\bnu)) \Big]^T \p(\s,\bnu), 
 \end{equation}
 and 
\begin{equation}\label{eq.NnuJt}
 \nabla_\bnu  \widetilde{J}(\s,\bnu) = -\p(\s,\bnu).
 \end{equation}
Here, the function $\bF: \R^{dN}_\s \times \R^N_\bnu \times \R^N_\bpsi \to \R^N$ is the derivative $\nabla_\bpsi K(\s,\bnu,\bpsi)$, see \cref{eq.implicitPsi}; 
the adjoint vector $\p(\s,\bnu) \in \R^N$ is defined as the solution to the $N \times N$ system:
\begin{equation}\label{eq.adjQs}
 \Big[\nabla_{\bpsi} \bF(\s, \bnu, \bpsi^*(\s,\bnu))\Big] ^T  \p(\s,\bnu) = -  \Big[\nabla_\bpsi \bQ  (s,\bpsi^*(\s,\bnu))\Big]^T  \nabla_\q J(\q).
\end{equation}
In these formulas, 
\begin{itemize}
\item The gradient $\nabla_\q J(\q)$ is a vector in $\R^{dM}$, whose expression is assumed to be known.
\item The matrices $\left[\nabla_\s \bQ(\s,\bpsi^*(\s,\bnu))\right] $ and $\left[\nabla_\bpsi \bQ(\s,\bpsi^*(\s,\bnu))\right] $  have respective sizes $dM \times dN$ and $dM \times N$; 
their explicit expressions (in 2d) are the subject of \cref{app.verseeds}.
\item The matrix $ \left[\nabla_{\bpsi} \bF(\s, \bnu,\bpsi^*(\s,\bnu))\right] ^T$ has size $N \times N$, and its calculation (in 2d) is provided in \cref{app.derF}. 
\end{itemize}
\end{proposition}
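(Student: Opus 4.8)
The plan is to treat $\widetilde J$ as the composition $\widetilde J(\s,\bnu) = J\big(\bQ(\s,\bpsi^*(\s,\bnu))\big)$ displayed in \cref{eq.JtJQ} and to differentiate it by the chain rule, handling the implicitly defined factor $\bpsi^*(\s,\bnu)$ through the implicit function theorem applied to the optimality relation $\bF(\s,\bnu,\bpsi^*(\s,\bnu)) = \bzero$ of \cref{eq.implicitPsi}. The role of the adjoint vector $\p(\s,\bnu)$ introduced in \cref{eq.adjQs} will be, as usual, to package the expensive factor $\left[\nabla_\bpsi \bF\right]^{-1}$ so that it is solved for only once, rather than column by column against every component of $\nabla_\q J$.

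First I would settle the regularity. Here \cref{prop.psi}$(iii)$ guarantees that, under the genericity conditions \cref{eq.Gen0,eq.Gen1,eq.Gen2,eq.Gen3,eq.Gen4,eq.Gen5,eq.Gen6}, the functional $K$ is smooth near $(\s,\bnu,\bpsi^*(\s,\bnu))$, that $\bpsi^*$ is the unique solution of $\bF = \bzero$, and that the Hessian $\left[\nabla_\bpsi \bF\right] = \left[\nabla^2_\bpsi K\right]$ is invertible (this is exactly the nondegeneracy exploited by the Newton scheme of \cref{sec.seedstoLag}). The implicit function theorem then yields that $\bpsi^*$ is differentiable there, with
\begin{equation*}
\left[\nabla_\s \bpsi^*\right] = -\left[\nabla_\bpsi \bF\right]^{-1}\left[\nabla_\s \bF\right], \qquad \left[\nabla_\bnu \bpsi^*\right] = -\left[\nabla_\bpsi \bF\right]^{-1}\left[\nabla_\bnu \bF\right].
\end{equation*}
Since the vertex map $\bQ$ of \cref{eq.qQsp} is smooth on each open region of $(\s,\bpsi)$-space where the combinatorial structure of the diagram is fixed (the coordinates of each vertex being rational in $\s,\bpsi$ through the Cramer solve of the linear system behind \cref{eq.incircle}, cf. \cref{app.verseeds}), and since the genericity conditions fail only on a set of measure zero, the composite $\widetilde J$ is differentiable almost everywhere, as claimed.

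Next I would compute the two gradients by the chain rule. Differentiating $\q = \bQ(\s,\bpsi^*(\s,\bnu))$ gives $\left[\nabla_\s \q\right] = \left[\nabla_\s \bQ\right] + \left[\nabla_\bpsi \bQ\right]\left[\nabla_\s \bpsi^*\right]$ and $\left[\nabla_\bnu \q\right] = \left[\nabla_\bpsi \bQ\right]\left[\nabla_\bnu \bpsi^*\right]$, the latter because $\bQ$ carries no direct dependence on $\bnu$. Transposing and contracting against $\nabla_\q J$, then inserting the two expressions for $\nabla_\s\bpsi^*$ and $\nabla_\bnu\bpsi^*$ above, one obtains $\nabla_\s \widetilde J$ and $\nabla_\bnu \widetilde J$ in which the common factor $-\left[\nabla_\bpsi\bF\right]^{-T}\left[\nabla_\bpsi \bQ\right]^T \nabla_\q J$ appears. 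Recognising this factor as the solution $\p(\s,\bnu)$ of the adjoint system \cref{eq.adjQs} immediately turns the $\s$-formula into \cref{eq.NsJt}; the $\bnu$-formula reduces to \cref{eq.NnuJt} once $\left[\nabla_\bnu \bF\right]$ is evaluated.

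That last evaluation is the one genuinely structural step, and I expect it to be the crux. Because $\bnu$ enters the Kantorovic functional $K$ of \cref{eq.defK} only through the linear term $\sum_i \nu_i\psi_i$, one has $F_i = \partial K/\partial\psi_i = \nu_i - |V_i(\s,\bpsi)|$: the explicit volume term survives while the domain-variation (boundary) terms cancel, since across each interface $\be_{ij}$ the integrand $|\x-\s_i|^2 - \psi_i$ is continuous with the two adjacent contributions carrying opposite orientation, and on each free circular arc $\calC_i$ the integrand vanishes identically (there $|\x-\s_i|^2 = \psi_i$). Consequently $\left[\nabla_\bnu \bF\right] = \Id$, so $\left[\nabla_\bnu \bpsi^*\right] = -\left[\nabla_\bpsi\bF\right]^{-1}$, and substituting this into the chain-rule expression for $\nabla_\bnu\widetilde J$ and comparing with the adjoint equation \cref{eq.adjQs} produces \cref{eq.NnuJt}. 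The main obstacle is therefore not the adjoint bookkeeping, which is routine, but rather this envelope-type cancellation together with the careful tracking of the piecewise-smooth, combinatorics-dependent nature of $\bQ$ that is needed to legitimise the chain rule away from the null set of degenerate configurations.
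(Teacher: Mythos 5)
Your argument is correct and is essentially the paper's own proof: regularity of $\bQ$ and $\bpsi^*$ from the genericity conditions and the implicit function theorem, the chain rule, and the adjoint vector $\p$ to absorb the factor $\left[\nabla_\bpsi\bF\right]^{-T}\left[\nabla_\bpsi\bQ\right]^{T}\nabla_\q J$ --- the only cosmetic difference being that you write the implicit-function-theorem inverses explicitly, while the paper introduces $\p$ first and eliminates $\left[\nabla_\s\bpsi^*\right]$ through the differentiated identity $\left[\nabla_\s\bF\right]+\left[\nabla_\bpsi\bF\right]\left[\nabla_\s\bpsi^*\right]=0$. One caveat: your (correct) observation that $F_i=\nu_i-\lvert V_i(\s,\bpsi)\rvert$ gives $\nabla_\bnu\bF=\Id$ in fact leads to $\nabla_\bnu\widetilde J=+\p(\s,\bnu)$, so the minus sign in \cref{eq.NnuJt} requires the opposite sign convention for $\bF$; this tension is already present in the paper's own proof and does not reflect a gap in your reasoning.
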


\begin{proof}
At first, \cref{prop.derver} below states that the mapping $\bQ$ is differentiable at any point $(\s,\bpsi) \in \R^{dN} \times \R^N$ satisfying the assumptions of \cref{prop.psi}. These points form an open subset of $\R^{dN} \times \R^N$ whose complement has null Lebesgue measure.
Moreover, for any such point $(\s,\bnu) \in \R^{dN} \times \R^N$, we have seen that $\bpsi^*(\s,\bnu)$ is the unique solution to the equation \cref{eq.implicitPsi}. By the implicit function theorem and the regularity of $\bF$ expressed in \cref{prop.psi}, it follows that $\bpsi^*$ is differentiable at such a point $(\s,\bnu)$, and
by composition, $\widetilde J$ is differentiable at a.e. point $(\s, \bnu)$ in $\R^{dN} \times \R^N$.\par\medskip

\noindent \textit{Let us now prove \cref{eq.NsJt}.} At any point $(\s,\bnu)$ where $\widetilde J$ is differentiable, the chain rule shows that: 
$$\forall \h \in \R^{dN}, \quad \frac{\partial \widetilde J}{\partial \s}(\s,\bnu)(\h) = \frac{\partial J}{\partial \q}(\q) \left( \frac{\partial \bQ}{\partial \s}(\s,\bpsi^*(\s,\bnu))(\h) + \frac{\partial \bQ}{\partial \bpsi}(\s,\bpsi^*(\s,\bnu))\left(\frac{\partial \bpsi^*}{\partial \s}(\s,\bnu)(\h)\right)\right),$$
where we have set $\q := \bQ(\s,\bpsi^*(\s,\bnu))$. This rewrites, in terms of gradients and matrices:
\begin{equation}\label{eq.nablajt1}
\nabla_\s \widetilde J(\s,\bnu) = \Big(\Big[ \nabla_\s \bQ(\s,\bpsi^*(\s,\bnu))\Big]^T + \Big[ \nabla_\s \bpsi^*(\s,\bnu) \Big] ^T\Big[\nabla_\bpsi \bQ  (\s,\bpsi^*(\s,\bnu))\Big]^T \Big) \nabla_\q J(\q).
\end{equation}
All the terms are explicit in this formula, except for the matrix $\left[\nabla_\s \bpsi^*(\s,\bnu)\right]$. 
The calculation of the latter is difficult, 
since the function $\s \mapsto \bpsi^*(\s,\bnu)$ is only known implicitly, through the relation \cref{eq.implicitPsi}.
To overcome this difficulty, we use the classical adjoint technique for the differentiation of a quantity depending on the solution to a parametrized implicit equation, 
see again \cite{lions1971optimal,plessix2006review}.
Differentiation with respect to $\s$ in \cref{eq.implicitPsi} yields:
\begin{equation}\label{eq.diffFpsi}
\underbrace{ \Big[\nabla_\s \bF(\s,\bnu, \bpsi^*(\s,\bnu)) \Big]}_{N \times dN \text{ matrix}} + \underbrace{ \Big[\nabla_{\bpsi} \bF(\s, \bnu,\bpsi^*(\s,\bnu)) \Big]}_{N \times N \text{ matrix}} \underbrace{ \Big[\nabla_\s \bpsi^*(\s,\bnu)\Big]}_{N \times dN \text{ matrix}} = 0.
 \end{equation}
 
We then introduce the adjoint state $\p(\s,\bnu) \in \R^N$, defined as the solution to the $N\times N$ linear system:
$$ \Big[\nabla_{\bpsi} \bF(\s,\bnu, \bpsi^*(\s,\bnu))\Big] ^T  \p(\s,\bnu) = -  \Big[\nabla_\bpsi \bQ  (\s,\bpsi^*(\s,\bnu))\Big]^T  \nabla J_\q(\q).$$
Injecting this identity into \cref{eq.nablajt1}, we obtain: 
$$ \nabla_\s \widetilde J(\s,\bnu) = \Big[ \nabla_\s \bQ(\s,\bpsi^*(\s,\bnu))\Big]^T \nabla_{\q} J(\q)  -  \Big[ \nabla_\s \bpsi^*(\s,\bnu) \Big] ^T \Big[\nabla_{\bpsi} \bF(\s, \bnu,\bpsi^*(\s,\bnu))\Big] ^T \p(\s,\bnu) .$$
Eventually, using \cref{eq.diffFpsi} to transform the last expression in the above right-hand side, we obtain:
$$ \nabla_\s \widetilde J(\s,\bnu) = \Big[ \nabla_\s \bQ(\s,\bpsi^*(\s,\bnu))\Big]^T \nabla_{\q} J(\q)  +  \Big[\nabla_\s \bF(\s, \bpsi^*(\s,\bnu)) \Big]^T \p(\s,\bnu),$$
which is the desired formula.\par\medskip

\noindent \textit{Let us then consider the second formula \cref{eq.NnuJt}.}
Again, a simple use of the chain rule yields:
\begin{equation}\label{eq.NJnuproof}
\begin{array}{ccl}
\nabla_{\bnu} \widetilde{J}(\s,\bnu) &=& \nabla_{\bnu} \Big(J(\bQ(\s,\bpsi^*(\s,\bnu))) \Big) \\
&=& \Big[\nabla_\bnu \bpsi^*(\s,\bnu)\Big]^T \Big[\nabla_\bpsi \bQ(\s,\bpsi^*(\s,\bnu))\Big]^T \nabla_\q J(\q),
\end{array}
\end{equation}
where $\q := \bQ(\s,\bpsi^*(\s,\bnu))$.
As in the proof of \cref{eq.NsJt}, we rely on the adjoint method to eliminate the difficult term $\left[\nabla_\bnu \bpsi^*(\s,\bnu)\right]^T$ from this expression.
Recalling the defining system \cref{eq.adjQs} for $\p(\s,\bnu) \in \R^N$, we obtain immediately:
$$
\begin{array}{>{\displaystyle}cc>{\displaystyle}l}
\nabla_{\bnu} \widetilde{J}(\s,\bnu)   &=& -  \Big[\nabla_\bnu \bpsi^*(\s,\bnu)\Big]^T \Big[ \nabla_\bpsi \bF(\s,\bnu,\bpsi^*(\s,\bnu)) \Big]^T \p(\s,\bnu) \\
 &=& - \p(\s,\bnu),
\end{array}
$$
which is the desired expression.
\end{proof}

\begin{remark}
When the optimized shape $\Omega$ is only one phase within a large computational domain $D$ 
and is represented as a subcollection of the cells of a ``true'' Laguerre diagram $\bLag$ of $D$, of the form \cref{eq.defLaguerre,eq.defLagi},
similar considerations show that formula \cref{eq.NnuJt} reads instead:
$$ \nabla_\bnu  \widetilde{J}(\s,\bnu) = -\left( \p(\s,\bnu) - \frac{1}{N} \sum\limits_{i=1}^N p_i(\s,\bnu)\right) .$$
\end{remark}

\begin{remark}
The geometric calculations needed in the present section for the expressions of the matrices $\Big[ \nabla_{\s} \bF(\s,\bnu,\bpsi)\Big]$, $\Big[ \nabla_{\bpsi} \bF(\s,\bnu,\bpsi)\Big]$, $\Big[ \nabla_{\s} \bQ(\s,\bnu,\bpsi)\Big]$ and $\Big[ \nabla_{\bpsi} \bQ(\s,\bnu,\bpsi)\Big]$ are provided in \cref{app.verseeds,app.derF}. They need only be implemented once and for all.
In particular, this implementation is independent of the particular objective function. 
\end{remark}
 
\subsection{The Hilbertian method}\label{sec.velext}

\noindent We have described in the previous \cref{sec.derver,sec.vertoseeds} how to calculate the sensitivities $\nabla_\s \widetilde J(\s,\bnu)$ and $\nabla_\bnu \widetilde J(\s,\bnu)$ of a function $J$, 
depending on the domain $\Omega$ via the seeds $\s \in \R^{dN}$ and the cell measures $\bnu \in \R^N$ of a diagram $\bVsp$ of the form \cref{eq.decompOm}.
Unfortunately, a direct use of these derivatives in the solution of our discrete shape and topology optimization problem \cref{eq.sopbdisc} often proves awkward in practice:
on the one hand, they may show very irregular variations in space (i.e. they may have very different components along the indices $i=1,\ldots,N$ of neighboring cells), thus causing numerical instabilities. On the other hand, in multiple applications,
it is desirable to handle sensitivities that take into account certain requirements, for instance that some of their components associated to cells located in non optimizable regions of space should vanish. 
In this section, we explain how the so-called Hilbertian method allows to achieve both purposes with rigorous guarantees that the resulting modified sensitivities retain descent properties. The application in shape optimization of this classical procedure in the context of gradient flows is exposed in e.g. \cite{allaire2020survey,burger2003framework,de2006velocity,mohammadi2010applied}.

\subsubsection{The general strategy}

\noindent
Let us describe the use of the Hilbertian method in the illustrative perspective of regularizing the gradient $\nabla_\s \widetilde{J}(\s,\bnu)$ of a discrete objective function $\widetilde J(\s,\bnu)$ at a given point $(\s,\bnu) \in \R^{dN} \times \R^N$. \par\medskip

To this end, let us recall that $\nabla_\s \widetilde{J}(\s,\bnu)$ is obtained by identifying the derivative $\h \mapsto \frac{\partial \widetilde J}{\partial \s}(\s,\bnu)(\h)$ 
with an element in the space $\R^{dN}$ via the following relation:
$$ \forall \h \in \R^{dN}, \quad  \frac{\partial \widetilde J}{\partial \s}(\s,\bnu)(\h) = \left\langle \nabla_\s \widetilde{J}(\s,\bnu), \h \right\rangle.$$
The main idea of the Hilbertian method is to introduce a subspace $V$ of the space $\R^{dN}$ of seed points, 
equipped with an inner product $a_\s(\cdot,\cdot)$ different from the canonical one $\langle \cdot, \cdot \rangle$,
and to search for the gradient $\h_J \in V$ of $ \widetilde{J}(\cdot,\bnu)$ with respect to this inner product. In other terms, we solve the following variational problem:
\begin{equation}\label{eq.defhJ}
 \text{Search for } \h_J \in V \text{ s.t. for all } \h \in V, \quad a_\s(\h_J,\h) = \frac{\partial \widetilde J}{\partial \s}(\s,\bnu)(\h).
 \end{equation}
Depending on the nature of the space $V$ and its inner product $a_\s(\cdot,\cdot)$, the gradient $\h_J$ may for instance be more regular than its counterpart $\nabla_\s \widetilde{J}(\s,\bnu)$ associated to the canonical inner product, some of its components may vanish, etc. 
 
Our choices of $V$ and $a_\s(\cdot,\cdot)$ in the present context rely on the smoothing effect of the discrete Laplace operator acting on mass--spring networks, 
see for instance \cite{san2011designing} or  \cite{nealen2006physically} for reviews about the role of such discrete Laplace operators in the field of computer graphics.
We take $V = \R^{dN}$ and search for the regularized gradient of $\widetilde{J}(\cdot,\bnu)$ at $\s$ as the solution $\h_J := \left\{ \h_{J,i} \right\}_{i=1,\ldots,N} \in \R^{dN}$ to the following minimization problem: 
\begin{equation}\label{eq.minEHilbert}
 \min\limits_{\h \in \R^{dN}} E(\h) , \text{ where } E(\h) := \frac{\alpha^2}{2} \sum\limits_{(i,j) \in \calE \atop j > i} \lvert \h_i - \h_j \lvert^2  + \frac{1}{2} \sum\limits_{i=1}^N \lvert \h_i \lvert^2 - \sum\limits_{i=1}^N \left\langle \nabla_{\s_i} \widetilde{J}(\s,\bnu) , \h_i \right\rangle,
 \end{equation}
 where $\alpha$ is a user-defined parameter encoding a regularization length scale. Intuitively, the solution $\h_J$ to \cref{eq.minEHilbert} is a version of $\nabla_{\s} \widetilde{J}(\s,\bnu)$ whose spatial variations are averaged over a region with radius $\alpha$.
The strictly convex optimization problem \cref{eq.minEHilbert} has a unique solution, which is characterized by the following first-order optimality condition: 
$$\text{For }\: i=1,\ldots,N,\quad \alpha^2 \sum\limits_{j \in \calN_i} (\h_i - \h_j) + \h_i   =  \nabla_{\s_i} \widetilde{J}(\s,\bnu).$$
This rewrites as a $2N \times 2N$ matrix system:
$$ (\alpha^2 A_\s + \I) \h  = \nabla_\s \widetilde{J}(\s,\bnu),$$
where the matrix $A_\s \in \R^{2N \times 2N}$ is given by the entries:
\begin{multline}\label{eq.As} 
A_{\s,2(i-1)+1,2(j-1)+1} = A_{\s,2(i-1)+2,2(j-1)+2}  = \left\{ 
\begin{array}{cl}
\# \calN_i & \text{if } j=i, \\
-1 & \text{if } j \in \calN_i, \\
0 & \text{otherwise},
\end{array}
\right.
\text{ and }  \\
A_{\s,2(i-1)+1,2(j-1)+2} = A_{\s,2(i-1)+2,2(j-1)+1} = 0, \quad i,j=1,\ldots,N.
\end{multline}
The inner product $a_\s(\cdot,\cdot)$ on $V= \R^{dN}$ associated to this strategy is:
\begin{equation}\label{eq.defas}
\forall \:\h^1,\h^2 \in V, \quad a_\s(\h^1,\h^2) = \Big\langle (\alpha^2 A_\s + \I) \h^1, \h^2 \Big\rangle.
\end{equation}

\begin{remark}
A similar strategy can be applied to impose smooth spatial variations of the gradient $\nabla_{\bnu} \widetilde J(\s,\bnu)$ of $\widetilde J$ with respect to the measures of cells. One then relies on the choice $V=\R^N$, equipped with the inner product:
\begin{equation}\label{eq.defanu}
a_{\bnu}(\bnu^1,\bnu^2) = \Big\langle A_\nu \bnu^1, \bnu^2 \Big\rangle , \text{ where } A_{\bnu,ij} =  \left\{ 
\begin{array}{cl}
\# \calN_i & \text{if } j=i, \\
-1 & \text{if } j \in \calN_i, \\
0 & \text{otherwise},
\end{array}
\right.
\end{equation}
\end{remark}

\subsubsection{Imposing that seeds stay inside the computational domain}

\noindent The Hilbertian method naturally allows to enforce certain constraints on the regularized sensitivity $\h_J$ of $J$ with respect to the seed points $\s$ in \cref{eq.defhJ}. For instance $\h_J$ can be imposed to have null components over the indices $i$ inside a given subset $I \subset \left\{ 1,\ldots,N \right\}$ of the seed points by simply taking the following Hilbert space $V$ in \cref{eq.defhJ}:
$$ V := \left\{ \h = (\h_1,\ldots,h_N) \in \R^{dN} \text{ s.t. } \h_i = \bz \text{ for } i \in I \right\}.$$
In this section, we exemplify how the seed points $\s$ of the diagram $\bVsp$ featured in \cref{algo.lagevol} can be constrained  to stay inside the computational domain $D$ during the optimization process thanks to this practice, see also \cref{rem.outcells} about this point.\par\medskip

Let $(\s,\bnu) \in \R^{dN} \times \R^N$ be a given stage of the process of \cref{algo.lagevol}, and let $\bpsi = \bpsi^*(\s,\bnu)$. 
To simplify the presentation, we assume that the seed points of the diagram $\bVsp$ are numbered in such a way that the first $B \leq N$ ones are those adjacent to the external boundary $\partial D$.
For notational simplicity and without loss of generality, we also assume that for each $i=1,\ldots,B$ at most one edge of each cell $V_i(\s,\bpsi)$ belongs to the external boundary $\partial D$, and we denote by $\n_i$ the unit normal vector to this edge, pointing outward $D$. 
We then search for a gradient $\h$ of $\widetilde J$ at $(\s,\bnu)$ with the property that, for each index $i=1,\ldots,B$, the normal component $\h_i \cdot \n_i$ of the displacement $\h_i$ should vanish:
\begin{equation}\label{eq.hdotnB}
 \forall i =1,\ldots,B, \quad \h_i \cdot \n_i = 0. 
 \end{equation}
One possibility to enforce this requirement is to proceed by exact penalization, i.e. to take $V = \R^{dN}$ and replace the inner product \cref{eq.defas} with: 
\begin{equation*}\label{eq.defasTGV}
a_\s(\h^1,\h^2) = \Big\langle (\alpha^2 A_\s + \I) \h^1, \h^2 \Big\rangle + \frac{1}{\e} \sum\limits_{i=1}^B(\h^1_i \cdot \n_i)(\h^2_i \cdot \n_i),
\end{equation*}
where $A_\s \in \R^{2N \times 2N}$ is the matrix defined in \cref{eq.As} and $\e$ is a ``sufficiently small'' parameter. 

Another strategy consists in setting:
$$V = \left\{\h \in \R^{dN}, \:\: \h_i \cdot \n_i = 0 \text{ for } i=1,\ldots,B \right\}, \text{ with the inner product \cref{eq.defas}}. $$ 
The identification problem \cref{eq.defhJ} for the gradient $\h_J$ amounts to solving the optimization problem \cref{eq.minEHilbert} under the additional constraint \cref{eq.hdotnB}. 
This is again a convex optimization problem, whose first-order optimality conditions read: 
$$\exists \blambda \in \R^B \text{ s.t. } \left\{
\begin{array}{>{\displaystyle}ccc>{\displaystyle}l}
\alpha^2 \sum\limits_{j \in \calN_i} (\h_i - \h_j) + \h_i   + \lambda_i \n_i &=&  \nabla_{\s_i} \widetilde{J}(\s) & \text{for } i=1,\ldots,B, \\
\alpha^2 \sum\limits_{j  \in \calN_i} (\h_i - \h_j) + \h_i   &= &  \nabla_{\s_i} \widetilde{J}(\s) & \text{for } i=B+1,\ldots,N, \\
\h_i \cdot \n_i &=&  0 & \text{for } i =1,\ldots,B. 
\end{array}
\right. $$
The search for the pair $(\h ,\blambda)\in \R^{2N} \times \R^B$ rewrites as the following $(2N + B) \times (2N+B)$ matrix system:
$$ \left( 
\begin{array}{c|c}
\:\: \: \alpha^2 A_\s + \I \:\:\: & \:\:\: L \:\:\:   \\[0.4em]
\hline \\[-0.6em]
L^T & \bigzero
\end{array}
\right) \left( 
\begin{array}{c}
\h \\[0.4em]
\hline \\[-0.6em]
\blambda
\end{array}
\right)
 = \left( 
\begin{array}{c}
\nabla_\s \widetilde{J}(\s) \\[0.4em]
\hline \\[-0.6em]
\bigzero
\end{array}
\right),
$$
where the matrix $L \in \R^{2N \times B}$ is defined by:
\begin{multline*}
L_{2(i-1)+1,j} = 
\left\{ 
\begin{array}{cl}
n_{i,1} & \text{if } j = i, \\
0 & \text{otherwise},
\end{array} 
\right.  \text{ and } \\
L_{2(i-1)+2,i} = 
\left\{ 
\begin{array}{cl}
n_{i,2} & \text{if } j = i, \\
0 & \text{otherwise},
\end{array} 
\right. 
n_{i,2} \text{ for } i=1,\ldots,N, \:\: j=1,\ldots,B.
\end{multline*}
%

\subsection{Exploitation of these derivatives with a constrained optimization algorithm}\label{sec.nullspace}

\noindent In this section, we describe how the derivatives of the objective and constraint functions $J(\Omega)$, $\G(\Omega)$ with respect to the seeds $\s$ and measures $\bnu$ 
calculated in the previous \cref{sec.derver,sec.vertoseeds,sec.velext} are exploited to solve our discrete constrained optimization problem:
\begin{equation}\label{eq.sopbdiscs64} 
\tag{\textcolor{gray}{P-disc}}
\min \limits_{(\s, \bnu)} \widetilde J(\s,\bnu) \:\: \text{ s.t. } \widetilde{\G}(\s,\bnu) = 0.
\end{equation}
This task relies on a constrained optimization algorithm developed in one of our previous works \cite{feppon2020null} which is sketched briefly in \cref{sec.NSdesc}.
We then describe its application to our particular context in \cref{sec.NSapp}.  

\subsubsection{The null-space optimization algorithm in a nutshell}\label{sec.NSdesc}

\noindent Let $V$ be a Hilbert space, equipped with the inner product $a(\cdot,\cdot)$. 
We consider an optimization problem of the form: 
\begin{equation}\label{eq.NSoptpb}
\min\limits_{h \in V} \: J(h) \text{ s.t. } \G(h) = 0,
\end{equation}
where $J : V \to \R$ is a smooth objective function and $\G : V \to \R^p$ accounts for a collection of $p$ smooth equality constraints.
We denote by $\theta_J$, $\theta_{G_i} \in V$, $i=1,\ldots,p$ the gradients of $J$ and $G_i$ at a particular given value $h$ of the optimized variable, that is: 
$$ \forall \zeta \in V, \quad J^\prime(h)(\zeta) = a(\theta_J, \zeta), \text{ and } G_i^\prime(h)(\zeta) = a(\theta_{G_i}, \zeta). $$

Let $h \in V$ stand for an arbitrary stage of the solution process for \cref{eq.NSoptpb}. We search for an update of $h$ of the form $h + \Delta t \xi$, where $\Delta t >0$ is a suitable descent step, and the direction $\xi \in V$ is of the form:
\begin{equation}\label{eq.defxiNS}
\xi = - (\alpha_J \xi_J + \alpha_G \xi_G).
\end{equation}
The two contributions $\xi_J$ and $\xi_G \in V$ are orthogonal, and they are characterized by the following requirements: 
\begin{itemize}
\item The null-space step $\xi_J$ is the best descent direction for $J(h)$ which leaves the values of the constraint functional $\G(h)$ unaltered at first order. Formally, $\xi_J$ is the projection of the gradient $\theta_J$ of $J(h)$ onto the null space $\Ker(\G^\prime(h)) = \span\left\{ \theta_{G_1}, \ldots,\theta_{G_p} \right\}^\perp$ of the constraints. It is of the form 
\begin{equation}\label{eq.defxiJ}
 \xi_J = \theta_J + \sum\limits_{j=1}^p \lambda_j \theta_{G_j} \text{ for some } \blambda \in \R^p.
 \end{equation}
The requirement that $a(\xi_J, \theta_{G_j}) = 0$ for $j=1,\ldots,p$ shows that the vector $\blambda \in \R^p$ is the solution to the following $p \times p$ system:
\begin{multline}\label{eq.defSNS}
 S \blambda = \bm{b}, \text{ where } S \in \R^{p\times p} \text{ and } \b \in \R^p \text{ are given by the entries:} \\ 
 S_{ij} = a(\theta_{G_i}, \theta_{G_j}), \text{ and } b_i = -a(\theta_J, \theta_{G_i}).
 \end{multline}
\item The range space step $\xi_G$ aims to decrease the violation of constraints. It belongs to $\Ker(\G^\prime(h))^\perp = \span \left\{ \theta_{G_i} \right\}_{i=1,\ldots,p}$, and so it can be expressed as
\begin{equation}\label{eq.defxiG}
 \xi_G = \sum\limits_{j=1}^p \beta_j \theta_{G_j}, \text{ for some coefficient vector } \bbeta \in \R^p.
 \end{equation}
The components $\beta_i$, $i=1,\ldots,p$ are determined by imposing the following decrease in the value of $G$ in the course of one iteration:
$$ \G(h + \Delta t \xi) = (1-\Delta t \alpha_G) \G(h),$$ 
which we approximate by
$$ \G^\prime(h)(\xi_G) \approx \G(h).$$
This requirement is discretized as:
\begin{equation}\label{eq.defc}
 S \bbeta = \bm{c}, \text{ where } S \in \R^{p\times p} \text{ is the matrix \cref{eq.defSNS} and } \\
  \bm{c} \in \R^p \text{ has entries } c_i = G_i(h), \:\: i=1,\ldots,p.
  \end{equation}
\end{itemize}

As far as the choice of the descent step $\Delta t$ is concerned, we rely on a merit function: the update $h + \Delta t \xi$ of $h$ should result in a decrease in the value of the function
$$ M(h) := \alpha_{J} \Big( J(h) - \blambda \cdot \G(h) \Big) + \frac{\alpha_{G}}{2} S^{-1}\G(h) \cdot \G(h), $$
whose negative gradient is $\xi$ by construction.
\subsubsection{Application to the context of interest}\label{sec.NSapp}

\noindent Let us now apply the general optimization strategy of \cref{sec.NSdesc} to our discrete shape and topology optimization problem \cref{eq.sopbdiscs64}.
Since the optimized variables $\s \in \R^{dN}$ and $\bnu \in \R^N$ are of very independent natures, we treat optimization with respect to each of them separately, via an alternating descent strategy: 
\begin{itemize}
\item When it comes to updating the seed points $\s$, we apply the Hilbertian method of \cref{sec.NSdesc} with the space $V = \R^{dN}$ and the inner product $a_\s (\cdot,\cdot)$ defined in \cref{eq.defas}.
The gradients $\btheta_{J,\s}$, $\btheta_{G_i,\s}$, $i=1,\ldots,p$, are calculated by solving the identification problem \cref{eq.defhJ} from the derivatives of $\s \mapsto \widetilde J(\s,\bnu)$ and $\s \mapsto \widetilde{\G}(\s,\bnu)$ supplied by \cref{prop.derJqs}. 
The coefficients $\blambda_\s \in \R^p$ and $\bbeta_\s \in \R^p$  involved in the expressions \cref{eq.defxiJ,eq.defxiG} of the steps $\bxi_{J,\s}$ and $\bxi_{G,\s}$ are then easily calculated from these data, see \cref{eq.defSNS,eq.defc},
and the resulting descent direction $\bxi_\s \in \R^{dN}$ for the seed points $\s$ reads:
\begin{equation*}
 \bxi_\s  = -\alpha_{J,\s} \bxi_{J,\s} - \alpha_{G,\s} \bxi_{G,\s}, \text{ where }
 \bxi_{J,\s} := \btheta_{J,\s} - \sum\limits_{i=1}^p\lambda_{\s,i} \btheta_{G_i,\s} \text{ and } \bxi_{G,\s} = \sum\limits_{i=1}^p \beta_{\s,i} \btheta_{G_i,\s},
 \end{equation*}
 with the weights $\alpha_{J,\s}$ and $\alpha_{G,\s}$ for the relative contributions of the null space and range space steps to the descent direction.
\item When it comes to updating the vector $\bnu$, we use the space $V = \R^{N}$ and the inner product $a_\bnu (\cdot,\cdot)$ in \cref{eq.defanu}. Similar considerations yield the following descent direction $\bxi_\bnu \in \R^{N}$ for the measures $\bnu$:
\begin{equation*}
 \bxi_\bnu  = -\alpha_{J,\bnu} \bxi_{J,\bnu} - \alpha_{G,\bnu} \bxi_{G,\bnu}, \text{ where }
 \bxi_{J,\bnu} := \btheta_{J,\bnu} - \sum\limits_{i=1}^p\lambda_{\bnu,i} \btheta_{G_i,\bnu} \text{ and } \bxi_{G,\bnu} = \sum\limits_{i=1}^p \beta_{\bnu,i} \btheta_{G_i,\bnu},
 \end{equation*}
where again $\blambda_{\bnu},\bbeta_{\bnu} \in \R^p$ are calculated via \cref{eq.defSNS,eq.defc}, and $\alpha_{J,\bnu}$ and $\alpha_{G,\bnu}$ are suitable weights.
\end{itemize}

Moreover, as suggested in \cite{feppon2020null}, instead of considering fixed weights $\alpha_{J\s}$, $\alpha_{G,\s}$ for the contributions of the null space and range space steps, we consistently select these coefficients as:
$$ \alpha_{J,\s} = \frac{ A_{J,\s} \texttt{h}_{\texttt{av}}}{| \bxi_{J,\s} |_\infty} \text{ and }  \alpha_{G,\s} = \frac{A_{G,\s} \texttt{h}_{\texttt{av}}}{| \bxi_{G,\s} |_\infty}$$
where $A_{J,\s}$, and $A_{G,\s} \in [0,1]$ are fixed parameters controlling the decrease rates of the objective function and the violation of the constraint, and
$\texttt{h}_{\texttt{av}}$ is the typical distance between two neighboring seed points. 
This allows to bound the maximum displacement of a seed induced by the update $\bxi_\s$ in this step by $(A_{J,\s} + A_{G,\s}) \texttt{h}_{\texttt{av}}$. 
Likewise, we take
$$ \alpha_{J,\bnu} = \frac{A_{J,\bnu} \texttt{V}_{\texttt{av}}}{| \bxi_{J,\bnu} |_\infty}, \text{ and }  \alpha_{G,\bnu} = \frac{A_{G,\bnu} \texttt{V}_{\texttt{av}}}{| \bxi_{V,\bnu} |_\infty},$$
where $A_{J,\bnu}$, $A_{G,\bnu} \in [0,1]$ and $\texttt{V}_{\texttt{av}}$ is the average volume of cells. Doing so ensures that the maximum change in a local volume fraction induced in the update of $\bnu$ by $\xi_\bnu$ is $(A_{J,\bnu} + A_{G,\bnu}) \texttt{V}_{\texttt{av}}$.

\begin{remark}\label{rem.volcst}
In the optimization problems featured in the examples of the next \cref{sec.num}, only $p=1$ equality constraint is imposed, which is related to the volume of the shape $\Omega$ as: $G(\Omega) = \Vol(\Omega) - V_T$.
In such a situation, the foregoing formulas conveniently simplify into: 
$$ \btheta_{G,\s} = 0, \text{ and } \btheta_{G,\bnu} = \left(
\begin{array}{c}
1\\
\vdots\\
1
\end{array}
\right) \in \R^N,$$
so that the optimization with respect to the seed points $\s$ is unconstrained, 
and the coefficients featured in the above scheme \cref{eq.defxiJ,eq.defxiG,eq.defc} read
$$S = a_\bnu( \btheta_{G,\bnu},\btheta_{G,\bnu} ) = N, \quad \lambda = \frac{1}{N} a_{\bnu}( \btheta_{J,\bnu},\btheta_{G,\bnu} ) \text{ and } \beta = \frac{1}{N}  \Big(\widetilde{\Vol}(\s,\bnu) - V_T \Big).$$
Note that if the volume constraint is satisfied at the current stage $(\s,\bnu)$ of the optimization process, then $\xi_G =0$. In particular, if the vector $\bnu$ of cell measures satisfies $\sum_{i=1}^N \nu_i = V_T$, the update \cref{eq.defxiNS} of the null-space optimization algorithm for \cref{eq.NSoptpb} coincides (up to a positive coefficient) with that of the unconstrained gradient algorithm for the minimization of $\widetilde{J}(\cdot,\bnu)$.
\end{remark}

\section{Numerical examples}\label{sec.num}

\noindent In this section, we present various 2d numerical examples to illustrate the main features of our shape optimization \cref{algo.lagevol}. 
After starting in \cref{sec.nummcf} with a ``simple'' geometric optimization problem, we revisit in \cref{sec.exev} the 
classical subject of optimization of the eigenvalues of the Laplace operator with our new framework. 
We then turn to more concrete physical applications in the contexts of thermal devices in \cref{sec.numconduc2phase} and of mechanical structures in \cref{sec.numcanti,sec.exbr,sec.stbr}. 

\subsection{Optimization of the perimeter of shapes}\label{sec.nummcf}

\noindent Our first example deals with a well-known shape optimization problem, featuring only geometric functionals of the domain, i.e. no physical boundary value problem is involved. 
We minimize the perimeter of the shape $\Omega \subset \R^2$ under an equality constraint on its volume: 
\begin{equation}\label{eq.pbmcf}
\min\limits_{\Omega} \: \Per(\Omega) \text{ s.t. } \Vol(\Omega) = V_T,
\end{equation}
where $\Per(\Omega)$ and $\Vol(\Omega)$ are defined by \cref{eq.volper} and $V_T$ is a volume target.

Among the numerous motivations for \cref{eq.pbmcf}, let us mention that the flow induced by its solution from an initial shape $\Omega^0$ is the well-known mean curvature flow, which proceeds by steadily blurring its sharp features (i.e. its bumps and creases with high mean curvature) while preserving its total volume, see e.g. \cite{mantegazza2011lecture} about the latter.

From the mathematical viewpoint, such problems have been extensively considered in the literature, see e.g. \cite{osserman1978isoperimetric,polya1951isoperimetric}. 
In particular, according to the famous isoperimetric inequality, the unique global minimizers to \cref{eq.pbmcf} are 2d disks with radius $\sqrt{V_T/\pi}$.

\begin{remark}
In this example, the discrete versions $\Vol(\q)$ and $\Per(\q)$ of the volume and perimeter $\Vol(\Omega)$ and $\Per(\Omega)$ of the shape $\Omega$ as well as their derivatives can be explicitly calculated from the polygonal mesh $\calT$ induced by the diagram $\bVsp$ representing $\Omega$. Indeed, simple calculations show that:
$$ \Vol(\q) = \sum\limits_{E \in \calT} \lvert E \lvert,$$
where the measure of an element $E$ with vertices $\q_1^E,\ldots,\q_n^E$ can be calculated by the following formula, where we use the notations of \cref{sec.notVEM}: 
$$ \lvert E \lvert = \frac{1}{2} \sum\limits_{j=1}^n \lvert \hat \be_j \lvert \n_{\hat \be_j} \cdot \q_j^E.$$
Taking derivatives, then summing over the elements $E \in \calT$, an elementary calculation shows that the gradient of the volume functional equals:
$$
 \nabla_{\q_j} \Vol(\q) = \left\{
\begin{array}{cl}
\frac{1}{2} \lvert \hat \be_j \lvert \n_{\hat \be_j}& \text{if } \q_j \text{ is a vertex on the boundary } \partial\Omega \\
\bzero & \text{otherwise}.
\end{array}
\right.
$$ 
Likewise,
assuming for notational simplicity that the vertices $\q_j$ are enumerated in such a way that the border $\partial \Omega$ is discretized by the $n_b < M$ first vertices $\q_1,\ldots,\q_{n_b} = \q_0$ of the collection $\q$ and the associated line segments $\q_{j-1}\q_{j}$, $j=1,\ldots,n_b$, 
one has
$$ \Per(\q) = \sum\limits_{j=1}^{n_b} |\q_j \q_{j+1}|, $$
and so
$$ \nabla_{\q_j} \Per(\q) = \left\{
\begin{array}{cl}
\frac{\q_j - \q_{j-1}}{|\q_j - \q_{j-1}|} + \frac{\q_j - \q_{j+1}}{|\q_j - \q_{j+1}|}  & \text{if } j=1,\ldots,n_b, \\
\bzero & \text{otherwise}.
\end{array}
\right.$$
\end{remark}

We apply the ``free boundary'' version of our shape and topology optimization \cref{algo.lagevol} to the solution of this problem. The initial shape $\Omega^0$ is represented by a diagram $\mathbf{V}(\s^0,\bpsi^0)$ made of a random collection of $N=500$ cells with equal measures $\overline{\nu} := V_T/N$. Throughout the optimization process, we impose that the cells $V_i(\s^n,\bpsi^n)$, $i=1,\ldots,N$, satisfy this measure constraint, so that our optimization procedure becomes constraint free, as noted in \cref{rem.volcst}.
A few snapshots of the solution process are depicted on \cref{fig.MC}. As expected, the optimized shape is close to a disk. Remarkably, the shape undergoes drastic topological changes during the process: our algorithm is able to detect that the only means to decrease the total perimeter of the shape $\Omega$ by  moving the defining seed points $\s$ without altering the measures $\bnu$ of the cells is to aggregate the cells. This operation -- and the underlying shape sensitivity of the objective functional -- is completely different from that of a more classical boundary variation algorithm as in e.g. \cite{allaire2004structural,allaire2014shape,pironneau1982optimal}, whereby each individual cell would shrink, regardless of the others, without capturing this collective behavior. We shall retrieve this trend in the mechanical example of \cref{sec.numcanti}.
 
\begin{figure}[!ht]
\centering
\begin{tabular}{ccc}
\begin{minipage}{0.3\textwidth}
\begin{overpic}[width=1.0\textwidth]{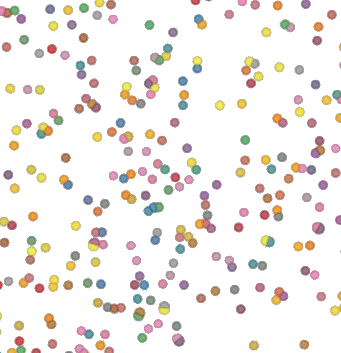}
\put(0,3){\fcolorbox{black}{white}{$n=0$}}
\end{overpic}
\end{minipage}
 & 
 \begin{minipage}{0.3\textwidth}
\begin{overpic}[width=1.0\textwidth]{figures//WBKG/MCim-8.png}
\put(0,3){\fcolorbox{black}{white}{$n=10$}}
\end{overpic}
\end{minipage} 
&
\begin{minipage}{0.3\textwidth}
\begin{overpic}[width=1.0\textwidth]{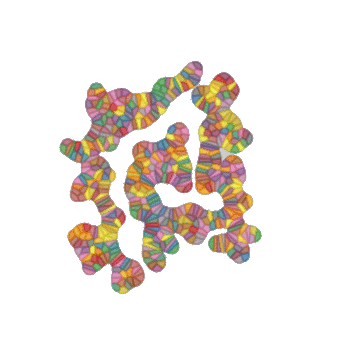}
\put(0,3){\fcolorbox{black}{white}{$n=25$}}
\end{overpic}
\end{minipage}
 \\
 \\
 \begin{minipage}{0.3\textwidth}
\begin{overpic}[width=1.0\textwidth]{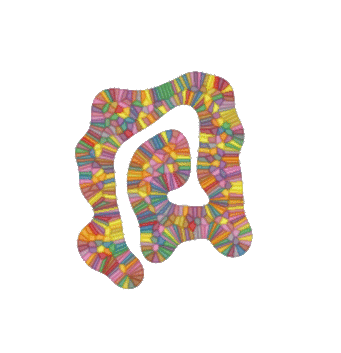}
\put(0,3){\fcolorbox{black}{white}{$n=100$}}
\end{overpic}
\end{minipage}
 & 
 \begin{minipage}{0.3\textwidth}
\begin{overpic}[width=1.0\textwidth]{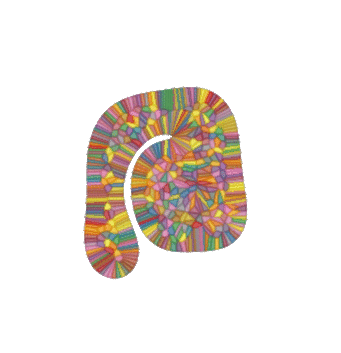}
\put(0,3){\fcolorbox{black}{white}{$n=220$}}
\end{overpic}
\end{minipage}
&
 \begin{minipage}{0.3\textwidth}
\begin{overpic}[width=1.0\textwidth]{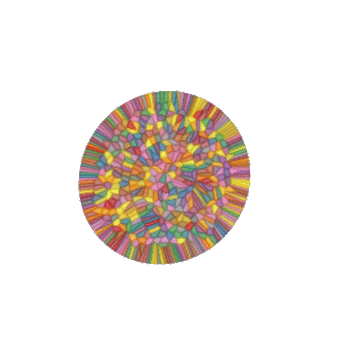}
\put(0,3){\fcolorbox{black}{white}{$n=260$}}
\end{overpic}
\end{minipage}

\end{tabular}
\caption{\it A few intermediate shapes $\Omega^n$ produced in the course of the solution of the perimeter minimization problem \cref{eq.pbmcf} in \cref{sec.nummcf}. The cells $V_i(\s^n,\bpsi^n)$ of the diagram, $i=1,\ldots,N$, are represented with different colors to help vizualization.}
\label{fig.MC}
\end{figure}

\subsection{Optimization of the eigenvalues of the Dirichlet Laplace operator}\label{sec.exev}

\noindent Our first example involving boundary value problems concerns the minimization of some of the eigenvalues of the Laplace operator equipped with Dirichlet boundary conditions. We consider the problem:
\begin{equation}\label{eq.sopbevnum}
\min \limits_\Omega \:\lambda_\Omega^{(k)} \text{ s.t. } \Vol (\Omega) = V_T,
\end{equation}
where we recall from \cref{sec.conduc} that $\lambda_\Omega^{(k)}$ is the $k^{\text{th}}$ lowest real number for which there exists a non trivial function $u \in H^1_0(\Omega)$ such that
$$ -\Delta u = \lambda_\Omega^{(k)} u, \text{ with the normalization } \int_\Omega u_\Omega^2 \:\d x = 1.$$
These eigenvalues arise in multiple applications, such as acoustics (where they are related to the properties of sound propagation), thermal conduction (where they encode the dissipation rate of a peak of temperature over time), and structure mechanics (where they represent the vibration modes of a thin membrane), to name a few. Spectral shape optimization problems, of the form \cref{eq.sopbevnum}, have been extensively studied in the literature, from both theoretical \cite{belhachmi2006shape,grebenkov2013geometrical,henrot2006extremum} and numerical \cite{antunes2012numerical,oudet2004numerical} viewpoints.

We address the numerical solution of the problem \cref{eq.sopbevnum} thanks to the ``free boundary'' strategy described in \cref{sec.Lagalgo}. The shape $\Omega$ is described by a diagram $\bVsp$ attached to collections $\s$ and $\bpsi$ of $N$ seed points and weights, respectively. As in the previous \cref{sec.nummcf}, the measures of the cells are consistently set to the common value $\overline{\nu} := V_T / N$, so that the solution of \cref{eq.sopbevnum} boils down to the unconstrained minimization of $\lambda_\Omega^{(k)}$ with respect to the positions $\s$ of the seed points, see again \cref{rem.volcst}.
On a different note, the sensitivity of the discrete version of $\lambda_\Omega^{(k)}$ with respect to the vertices $\q$ of $\bVsp$ is calculated by automatic differentiation before being interpreted in terms of the positions $\s$ of the seed points, see \cref{sec.derver,sec.vertoseeds}.\par\medskip


In this context, we conduct two numerical experiments.
At first, we address the minimization of the first Dirichlet eigenvalue, i.e. $k=1$ in \cref{eq.sopbevnum}. It is well-known since the seminal conjecture of Lord Rayleigh \cite{rayleigh1896theory} that disks with radius $\sqrt{V_T/\pi}$ are the unique minimizers of \cref{eq.sopbevnum}, see \cite{bucur2015free} for a rigorous proof of the associated Faber-Krahn inequality. 
The initial shape $\Omega^0$ is depicted in \cref{fig.eig1}. It is discretized with $N = 1000$ cells with equal measures $\overline{\nu} = V_T/N$. We apply $200$ iterations of our optimization \cref{algo.lagevol} to solve this problem; a few intermediate shapes are depicted on \cref{fig.eig1,fig.ueig1}, and the associated convergence history is reported in \cref{fig.histeig} (a); as expected, the optimized shape is a disk. 
Note that disconnected structures naturally appear during the optimization process. While they are harmless to the resolution, we explicitly enforce the connectedness of the shape at each iteration by removing and randomly resampling the cells that are disconnected from the main region.

\begin{figure}[!ht]
\centering
\begin{tabular}{ccc}
\begin{minipage}{0.3\textwidth}
\begin{overpic}[width=1.0\textwidth]{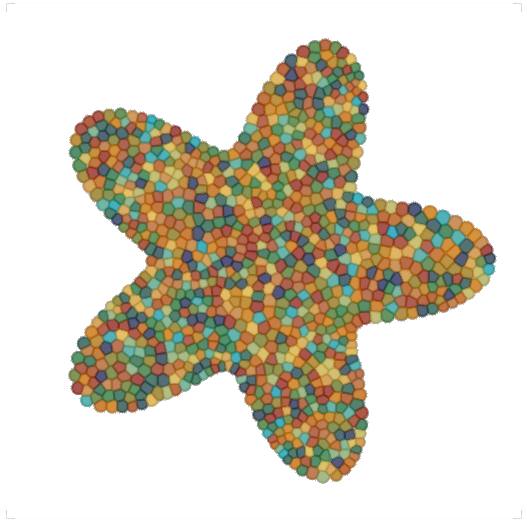}
\put(0,3){\fcolorbox{black}{white}{$n=0$}}
\end{overpic}
\end{minipage}
 & 
 \begin{minipage}{0.3\textwidth}
\begin{overpic}[width=1.0\textwidth]{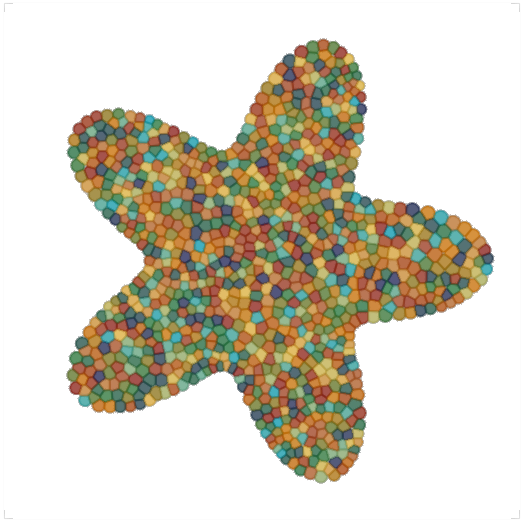}
\put(0,3){\fcolorbox{black}{white}{$n=20$}}
\end{overpic}
\end{minipage} 
&
\begin{minipage}{0.3\textwidth}
\begin{overpic}[width=1.0\textwidth]{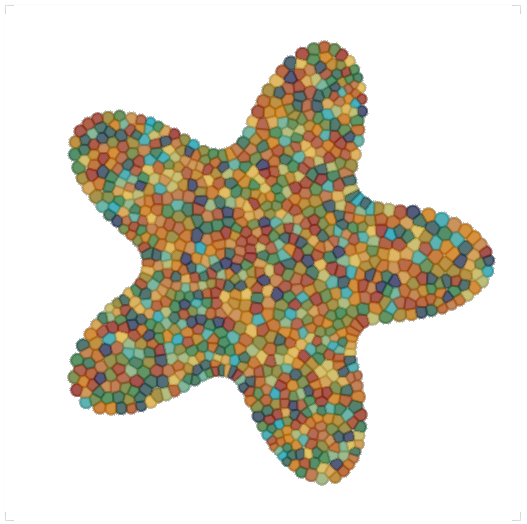}
\put(0,3){\fcolorbox{black}{white}{$n=50$}}
\end{overpic}
\end{minipage}
 \\ \\
 \begin{minipage}{0.3\textwidth}
\begin{overpic}[width=1.0\textwidth]{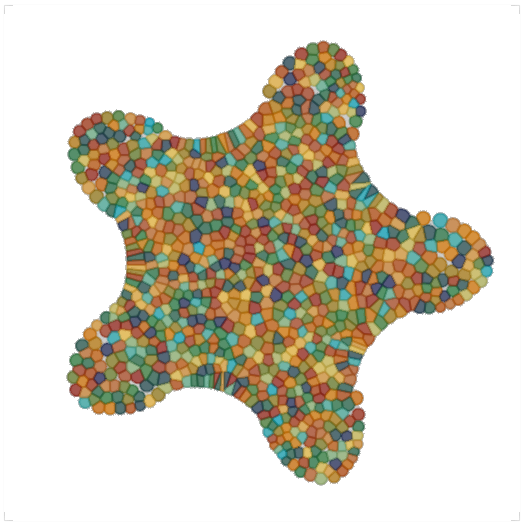}
\put(0,3){\fcolorbox{black}{white}{$n=100$}}
\end{overpic}
\end{minipage}
 & 
 \begin{minipage}{0.3\textwidth}
\begin{overpic}[width=1.0\textwidth]{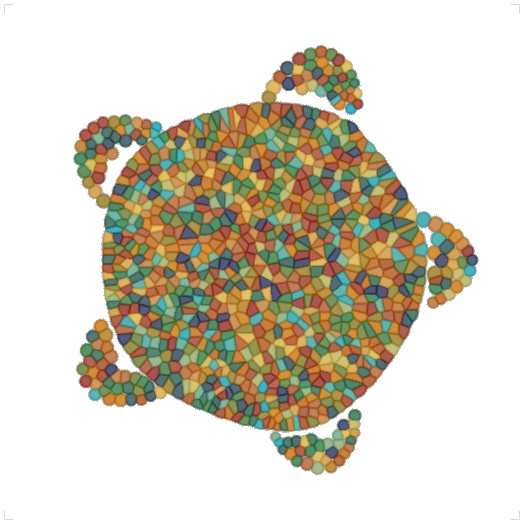}
\put(0,3){\fcolorbox{black}{white}{$n=150$}}
\end{overpic}
\end{minipage}
&
 \begin{minipage}{0.3\textwidth}
\begin{overpic}[width=1.0\textwidth]{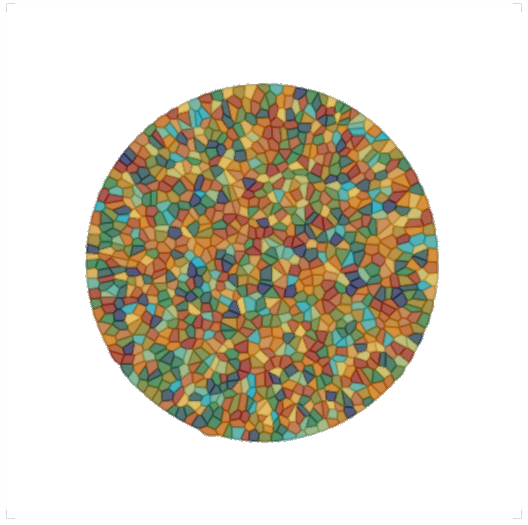}
\put(0,3){\fcolorbox{black}{white}{$n=200$ (final)}}
\end{overpic}
\end{minipage}

\end{tabular}
\caption{\it Intermediate shapes $\Omega^n$ produced in the course of the minimization of the first eigenvalue of the Dirichlet Laplace operator in \cref{sec.exev}.}
\label{fig.eig1}
\end{figure}

\begin{figure}[!ht]
\centering
\begin{tabular}{cccc}
\begin{minipage}{0.3\textwidth}
\begin{overpic}[width=1.0\textwidth]{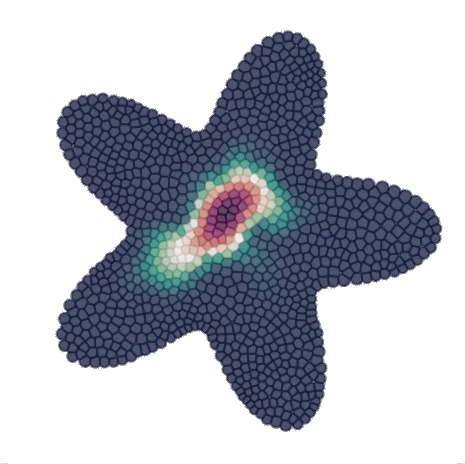}
\put(0,10){\fcolorbox{black}{white}{$n=0$}}
\end{overpic}
\end{minipage}
 & 
 \begin{minipage}{0.3\textwidth}
\begin{overpic}[width=1.0\textwidth]{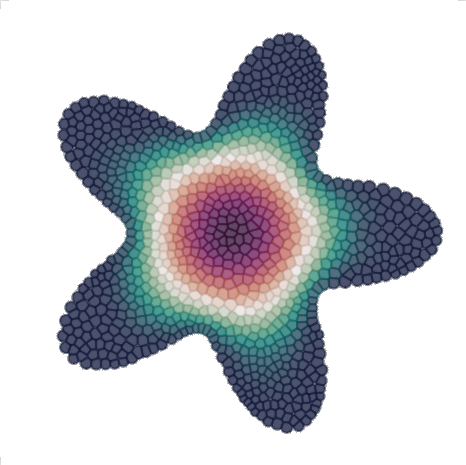}
\put(0,10){\fcolorbox{black}{white}{$n=20$}}
\end{overpic}
\end{minipage} 
&
\begin{minipage}{0.3\textwidth}
\begin{overpic}[width=1.0\textwidth]{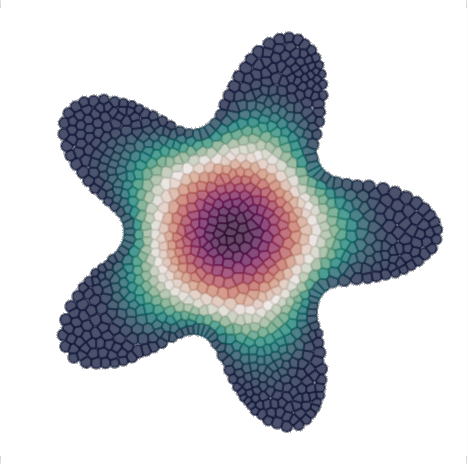}
\put(0,10){\fcolorbox{black}{white}{$n=50$}}
\end{overpic}
\end{minipage}
 &
\begin{minipage}{0.03\textwidth}
 \includegraphics[width=\textwidth]{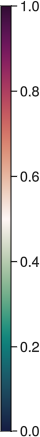}
\end{minipage}
 \\ & & & \\
 \begin{minipage}{0.3\textwidth}
\begin{overpic}[width=1.0\textwidth]{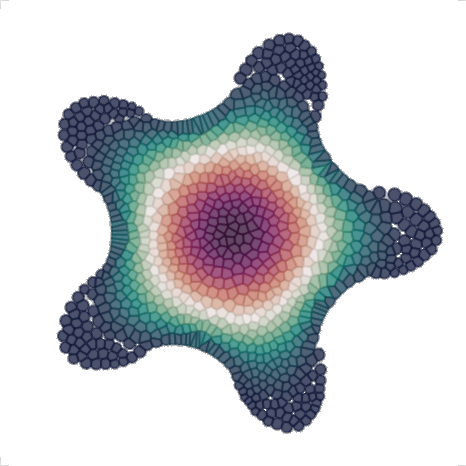}
\put(0,10){\fcolorbox{black}{white}{$n=100$}}
\end{overpic}
\end{minipage}
 & 
 \begin{minipage}{0.3\textwidth}
\begin{overpic}[width=1.0\textwidth]{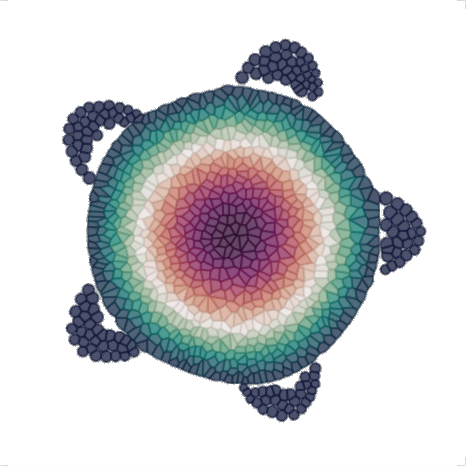}
\put(0,10){\fcolorbox{black}{white}{$n=150$}}
\end{overpic}
\end{minipage}
&
 \begin{minipage}{0.3\textwidth}
\begin{overpic}[width=1.0\textwidth]{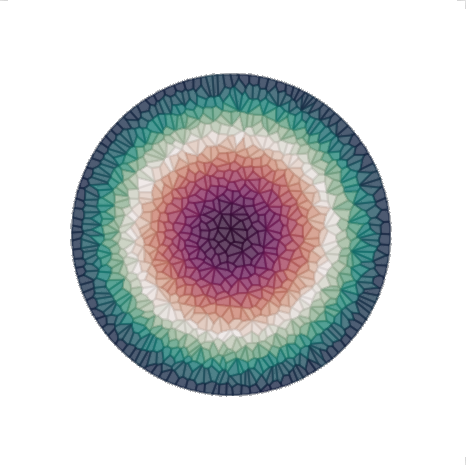}
\put(0,10){\fcolorbox{black}{white}{$n=200$}}
\end{overpic}
\end{minipage} 
\end{tabular}
\caption{\it Graph of the first eigenfunction of the Dirichlet Laplace operator at various iterations of the minimization process of the first Dirichlet eigenvalue in \cref{sec.exev}.}
\label{fig.ueig1}
\end{figure}

We next turn to a more challenging problem of the form \cref{eq.sopbevnum}, featuring the $5^{\text{th}}$ Dirichlet eigenvalue, i.e. $k=5$ in \cref{eq.sopbevnum}. The fifth eigenmode is indeed suspected to be the lowest order eigenvalue that is minimized by a shape which is neither a disk, nor a reunion of disks. Moreover, it is expected that, at the optimum, this eigenvalue is multiple, leading to a non differentiable behavior of this function of the domain. 
Starting from an initial shape $\Omega^0$ made of $N=1000$ cells with equal measures, we apply $1000$ iterations of our shape and topology optimization \cref{algo.lagevol} to the solution of this problem. A few intermediate shapes $\Omega^n$ arising in the course of the process are depicted on \cref{fig.eig5}, and the associated convergence history is reported in \cref{fig.histeig} (b). The resulting optimized shape is qualitatively very similar to the candidate for minimizer evidenced in \cite{oudet2004numerical}. 

\begin{figure}[!ht]
\centering
\begin{tabular}{cccc}
\begin{minipage}{0.3\textwidth}
\begin{overpic}[width=1.0\textwidth]{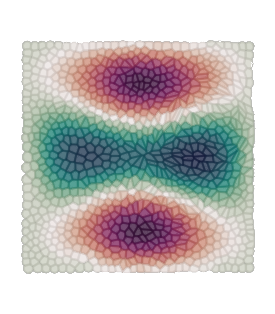}
\put(0,10){\fcolorbox{black}{white}{$n=0$}}
\end{overpic}
\end{minipage}
 & 
 \begin{minipage}{0.3\textwidth}
\begin{overpic}[width=1.0\textwidth]{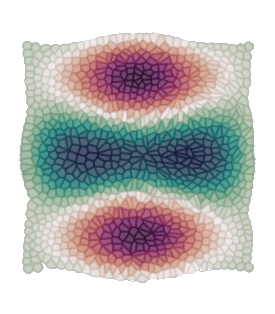}
\put(0,10){\fcolorbox{black}{white}{$n=20$}}
\end{overpic}
\end{minipage} 
&
\begin{minipage}{0.3\textwidth}
\begin{overpic}[width=1.0\textwidth]{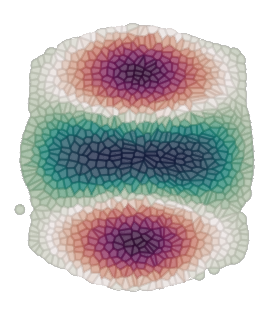}
\put(0,10){\fcolorbox{black}{white}{$n=50$}}
\end{overpic}
\end{minipage}
&
\begin{minipage}{0.03\textwidth}
 \includegraphics[width=\textwidth]{figures/WBKG/fig_2u_colorbar.png}
\end{minipage}

\\ 
 \begin{minipage}{0.3\textwidth}
\begin{overpic}[width=1.0\textwidth]{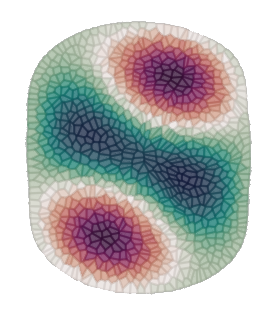}
\put(0,10){\fcolorbox{black}{white}{$n=500$}}
\end{overpic}
\end{minipage}
 & 
 \begin{minipage}{0.3\textwidth}
\begin{overpic}[width=1.0\textwidth]{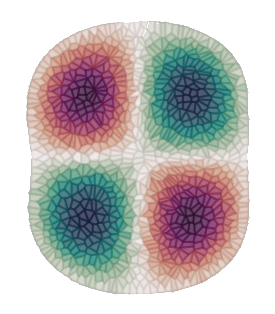}
\put(0,10){\fcolorbox{black}{white}{$n=700$}}
\end{overpic}
\end{minipage}
&
 \begin{minipage}{0.3\textwidth}
\begin{overpic}[width=1.0\textwidth]{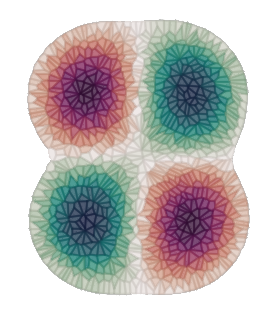}
\put(0,10){\fcolorbox{black}{white}{$n=1000$ (final)}}
\end{overpic}
\end{minipage}
\end{tabular}
\caption{\it Intermediate shapes arising in the course of the minimization of the $5^{\text{th}}$ Dirichlet eigenvalue $\lambda_\Omega^{(5)}$ in \cref{sec.exev}; the colors refer to the values of an associated eigenfunction.}
\label{fig.eig5}
\end{figure}

\begin{figure}[!ht]
\centering
\begin{tabular}{cc}
 \begin{minipage}{0.49\textwidth}
\begin{overpic}[width=1.0\textwidth]{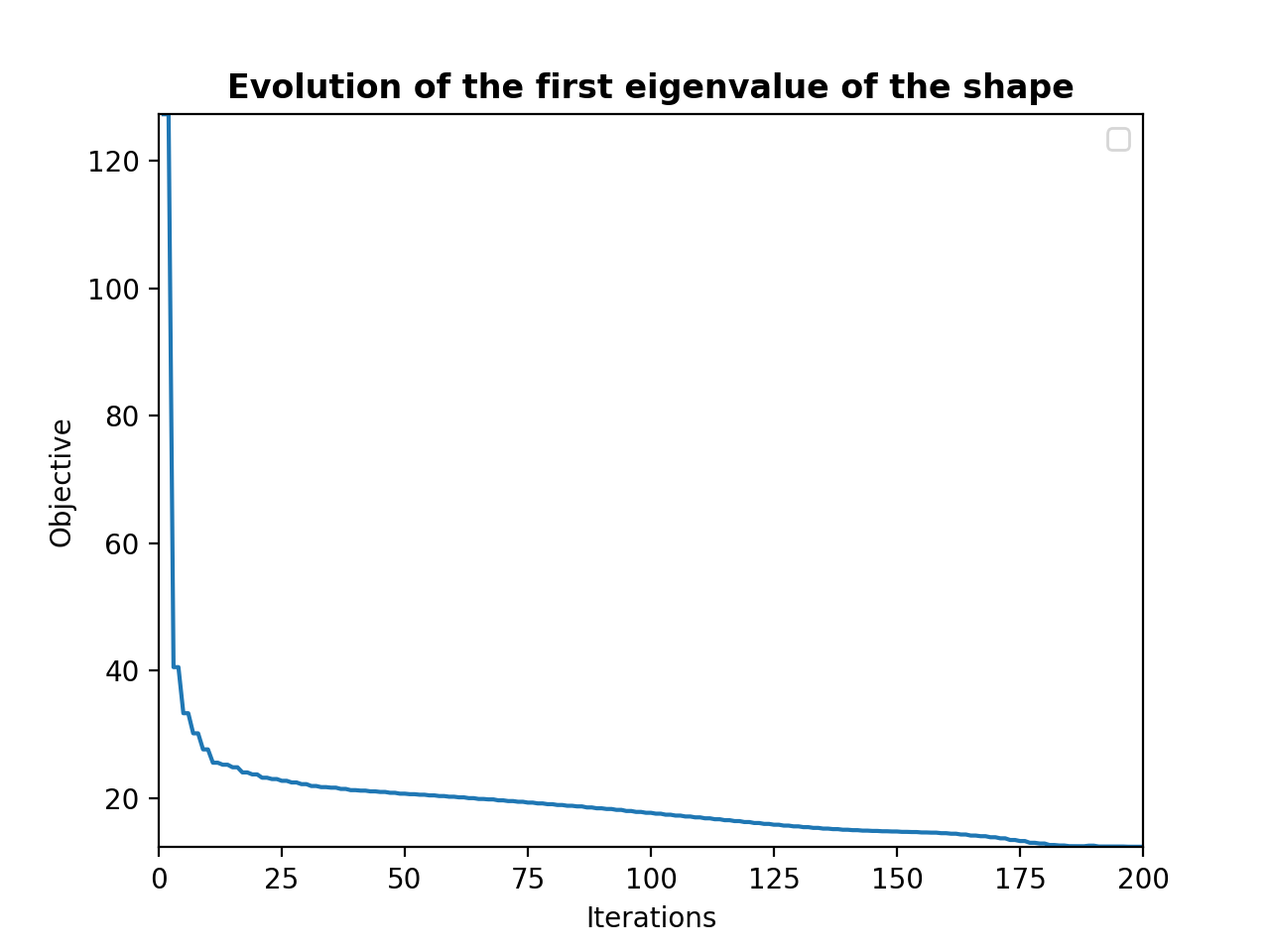}
\put(0,-3){\fcolorbox{black}{white}{a}}
\end{overpic}
\end{minipage} 
&
\begin{minipage}{0.49\textwidth}
\begin{overpic}[width=1.0\textwidth]{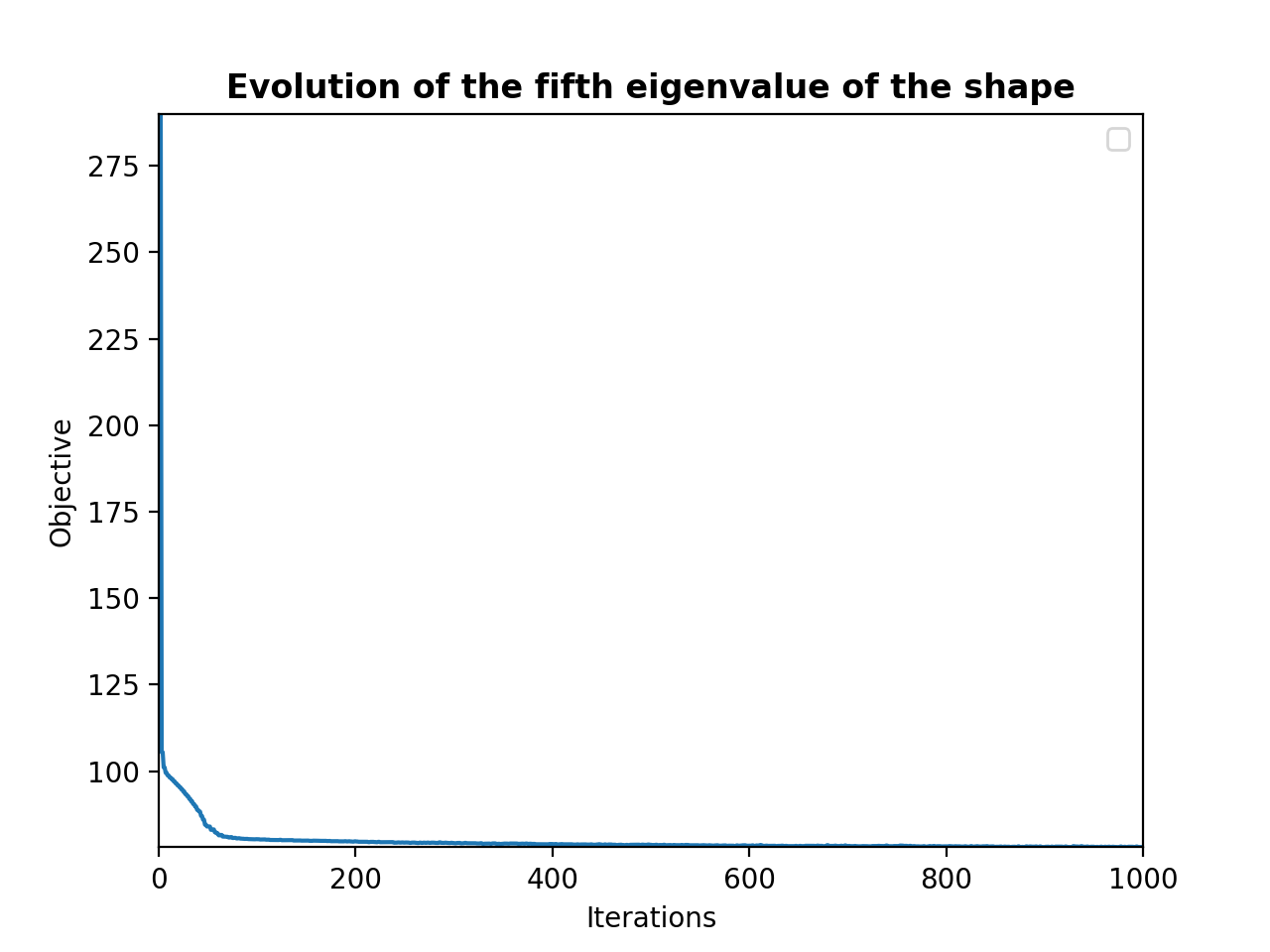}
\put(0,-3){\fcolorbox{black}{white}{b}}
\end{overpic}
\end{minipage}
\end{tabular}
\caption{\it (a) Evolution of the first eigenvalue $\lambda_\Omega^{(1)}$ of the shape in the course of the first experiment of \cref{sec.exev}; (b) Evolution of the fifth eigenvalue in the course of the second experiment.}
\label{fig.histeig}
\end{figure}

\subsection{Shape optimization in two-phase conductivity}\label{sec.numconduc2phase}

\noindent From this section on, we deal with shape optimization problems which are more directly related to mechanical applications. We first consider the setting of the conductivity equation presented in \cref{sec.conduc}, and notably its two-phase version, evoked in \cref{rem.2phaseconduc}. 

The situation under scrutiny is that depicted in \cref{fig.setex} (a): inside a fixed computational domain $D$ with size $1 \times 1$,
the shape $\Omega \subset D$ accounts for one phase $\Omega_1 := \Omega$ occupied by a material with high conductivity $\gamma_1 = 10$, 
the complementary phase $\Omega_0 := D \setminus \overline{\Omega}$ being filled by a material with low conductivity $\gamma_0 = 1$. 
\begin{figure}[!ht]
\centering
\begin{tabular}{cc}
\begin{minipage}{0.5\textwidth}
\vspace{-0.4cm}
\centering
\begin{overpic}[width=0.7\textwidth]{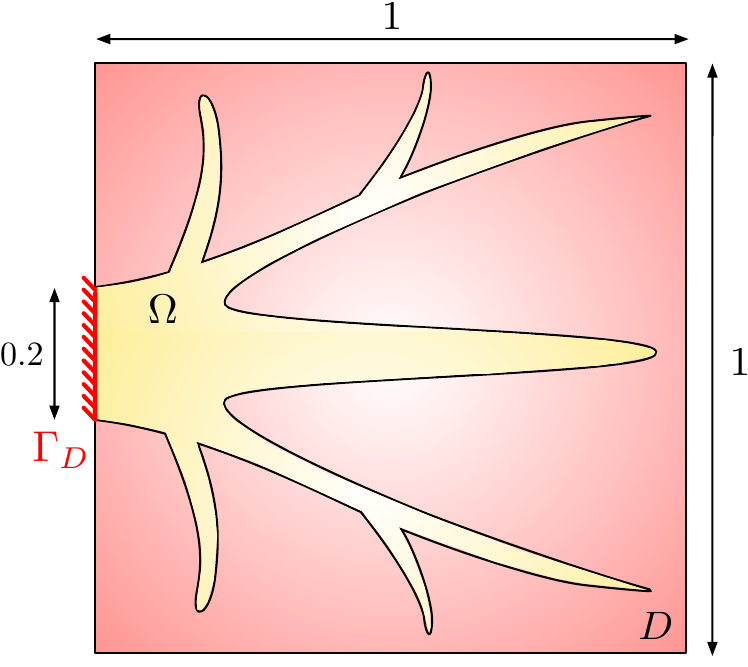}
\put(0,3){\fcolorbox{black}{white}{$a$}}
\end{overpic}
\end{minipage}
 & 
 \begin{minipage}{0.5\textwidth}
\begin{overpic}[width=1.0\textwidth]{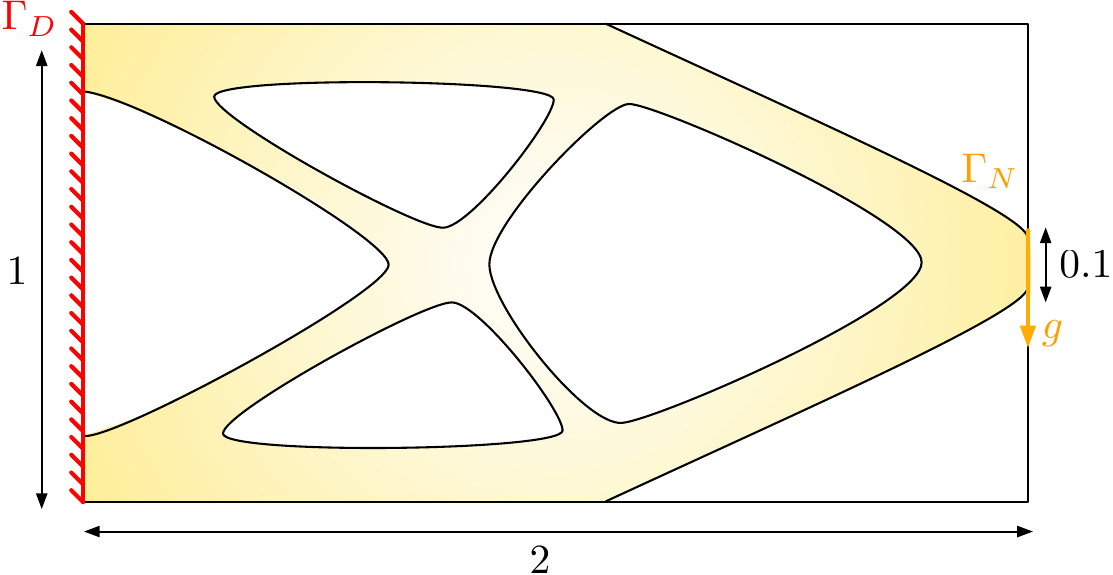}
\put(0,3){\fcolorbox{black}{white}{$b$}}
\end{overpic}
\end{minipage} \\
\\
\begin{minipage}{0.5\textwidth}
\centering
\begin{overpic}[width=0.8\textwidth]{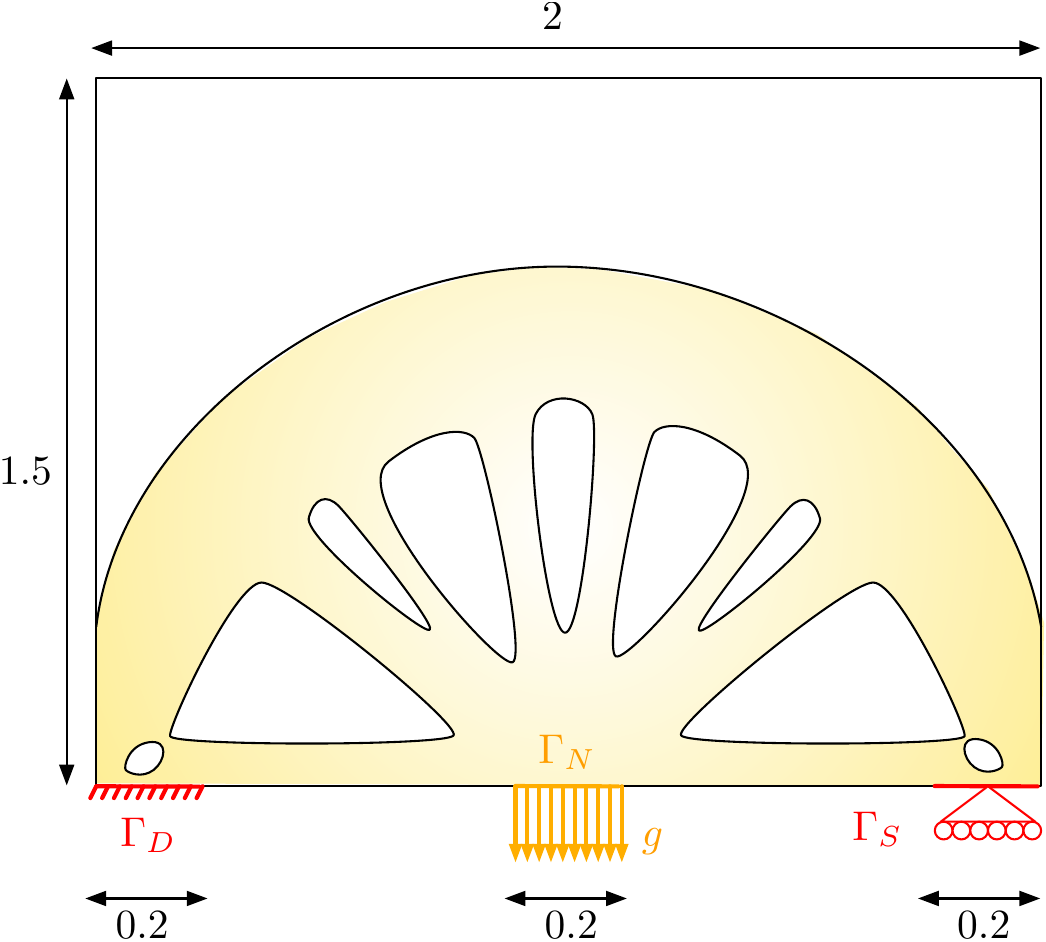}
\put(0,3){\fcolorbox{black}{white}{$c$}}
\end{overpic}
\end{minipage}
 & 
 \begin{minipage}{0.5\textwidth}
 \centering
\begin{overpic}[width=0.65\textwidth]{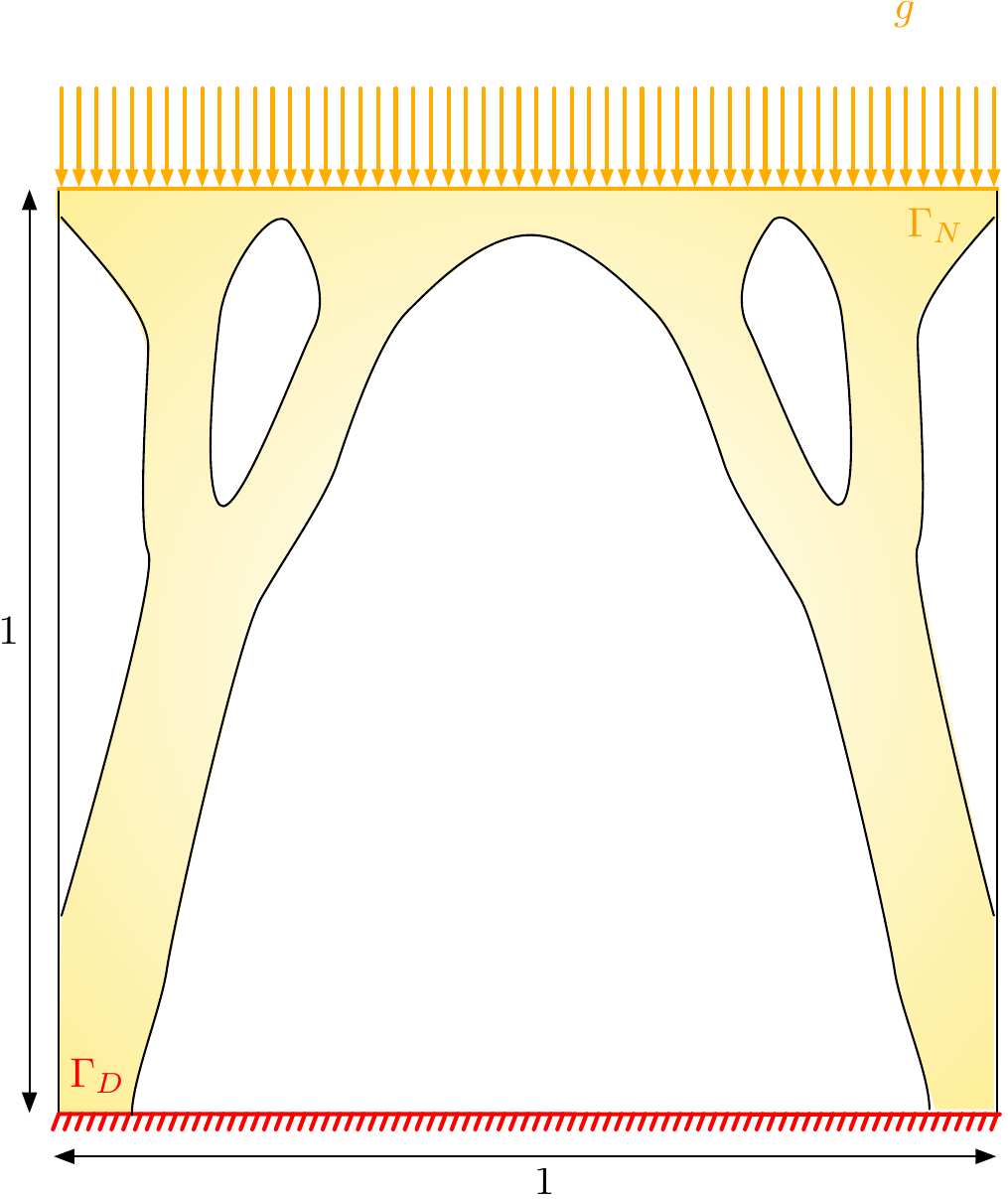}
\put(0,3){\fcolorbox{black}{white}{$d$}}
\end{overpic}
\end{minipage}
\end{tabular}
\caption{\it Settings of the physical numerical examples of \cref{sec.num}: (a) The heat diffuser test case of \cref{sec.numconduc2phase}; (b) The elastic cantilever example of \cref{sec.numcanti}; (c) The optimal bridge example of \cref{sec.exbr}; (d) The self-sustaining bridge of \cref{sec.stbr}.}
\label{fig.setex}
\end{figure}
The temperature $u_\Omega$ within $D$ is set to $0$ on a region $\Gamma_D \subset \partial D$ located at the left-hand side of $\partial D$, while the remaining part $\Gamma_N := \partial D \setminus \overline{\Gamma_D}$ is insulated from the outside. Assuming the presence of a constant source $f \equiv 1$ within $D$, $u_\Omega$ is the solution to the following two-phase conductivity problem:
\begin{equation}\label{eq.2phaseconducnum}
\left\{
\begin{array}{cl}
-\dv(\gamma_\Omega \nabla u_\Omega) = f & \text{in } D, \\
u_\Omega = 0 & \text{on } \Gamma_D, \\
\gamma_\Omega \frac{\partial u_\Omega}{\partial n} = 0 & \text{on } \Gamma_N, 
\end{array}
\right. 
\text{ where } \gamma_\Omega(\x) = \left\{
\begin{array}{cl}
\gamma_1 & \text{if } \x \in \Omega_1, \\
\gamma_0 & \text{otherwise.}
\end{array}
\right.
\end{equation}

We aim to minimize the mean temperature $T(\Omega)$ within $D$, i.e. we solve:
\begin{equation}\label{eq.pbminTOm}
\min\limits_{\Omega \subset D} \: T(\Omega) \text{ s.t. } \Vol(\Omega) = V_T,
\end{equation}
where $V_T$ is a volume target, and we have defined:
$$ T(\Omega) = \frac{1}{\Vol(D)} \int_D u_\Omega \:\d \x.$$

In this context, the shape $\Omega$ is represented via a classical Laguerre diagram $\bLag$ of $D$: complementary subcollections of cells of the latter account for $\Omega$ and $D \setminus \overline\Omega$, in the sense that \cref{eq.decompOmSubset} holds true.
Moreover, contrary to our practice in the previous \cref{sec.nummcf,sec.exev}, we rely on the non intrusive approach of \cref{sec.derver} to calculate the derivative of $T(\Omega)$ with respect to the seed points and weights of the representing diagram. In this perspective, we recall in the following proposition the expression of the shape derivative of $T(\Omega)$; since the result is fairly classical in the literature, we omit the proof for brevity, see e.g. \cite{allaire2020survey}.

\begin{proposition}\label{prop.sdthermics}
The functional $T(\Omega)$ is shape differentiable at any shape $\Omega \subset D$, and its derivatives reads for any deformation field $\btheta$ vanishing on $\overline{\Gamma_D}$:
$$ T^\prime(\Omega)(\btheta) = \frac{1}{\Vol(D)} \int_D \dv(\btheta) u_\Omega \:\d \x + \int_D \gamma_\Omega (\dv\btheta \I - \nabla \btheta - \nabla \btheta^T) \nabla u_\Omega \cdot \nabla p_\Omega \:\d \x - \int_D \dv(f\btheta) p_\Omega \:\d \x,$$
where the adjoint state $p_\Omega$ is the unique $H^1(D)$ solution to the boundary value problem:
$$
\left\{
\begin{array}{cl}
-\dv(\gamma_\Omega \nabla p_\Omega) = -\frac{1}{\Vol(D)} & \text{in } D, \\
p_\Omega = 0 & \text{on } \Gamma_D, \\
\gamma_\Omega \frac{\partial p_\Omega}{\partial n} = 0 & \text{on } \Gamma_N.
\end{array}
\right. 
$$
\end{proposition}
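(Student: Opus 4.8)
The plan is to compute $T'(\Omega)(\btheta)$ by transporting the problem back to the fixed reference configuration and then applying Céa's Lagrangian method. Let $\Phi_\btheta := \Id + \btheta$, where $\btheta \in \Winfty$ is an admissible deformation: it vanishes on $\overline{\Gamma_D}$ and is taken tangent to $\partial D$, so that $\Phi_\btheta(D) = D$ for $\lvert\lvert\btheta\rvert\rvert_{\Winfty}$ small and the function space $V := \left\{ v \in H^1(D) : v = 0 \text{ on } \Gamma_D\right\}$ is left invariant. The crucial point specific to this two-phase setting is that the deformation acts on the conductivity by $\gamma_{\Omega_\btheta} = \gamma_\Omega \circ \Phi_\btheta^{-1}$; hence, after the change of variables $\x = \Phi_\btheta(\y)$, the (discontinuous) coefficient $\gamma_\Omega$ becomes frozen and all the dependence on $\btheta$ is carried by smooth, explicit geometric factors.

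Carrying out this change of variables on the variational formulation of \cref{eq.2phaseconducnum}, the transported state $\widehat u := u_{\Omega_\btheta} \circ \Phi_\btheta \in V$ satisfies
\begin{equation*}
\forall \widehat v \in V, \quad \int_D \gamma_\Omega \, A_\btheta \nabla \widehat u \cdot \nabla \widehat v \:\d \y = \int_D (f \circ \Phi_\btheta)\det(\nabla \Phi_\btheta) \, \widehat v \:\d \y,
\end{equation*}
where $A_\btheta := \det(\nabla \Phi_\btheta)(\nabla \Phi_\btheta)^{-1}(\nabla \Phi_\btheta)^{-T}$, and the objective reads $T(\Omega_\btheta) = \frac{1}{\Vol(D)}\int_D \widehat u \, \det(\nabla \Phi_\btheta)\:\d \y$. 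I would then introduce the Lagrangian $\calL : \Winfty \times V \times V \to \R$,
\begin{equation*}
\calL(\btheta, \widehat u, \widehat p) := \frac{1}{\Vol(D)}\int_D \widehat u \, \det(\nabla \Phi_\btheta) \:\d \y + \int_D \gamma_\Omega \, A_\btheta \nabla \widehat u \cdot \nabla \widehat p \:\d \y - \int_D (f\circ \Phi_\btheta)\det(\nabla\Phi_\btheta)\,\widehat p \:\d \y,
\end{equation*}
so that $\calL(\btheta, \widehat u(\btheta), \widehat p) = T(\Omega_\btheta)$ for every $\widehat p \in V$ whenever $\widehat u(\btheta)$ solves the transported state equation (the last two terms then cancel).

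Céa's method then yields $T'(\Omega)(\btheta) = \partial_\btheta \calL(\bz, u_\Omega, p_\Omega)$, provided $p_\Omega$ is chosen so as to cancel $\partial_{\widehat u}\calL(\bz, u_\Omega, \cdot)$. Writing out this stationarity condition gives $\int_D \gamma_\Omega \nabla w \cdot \nabla p_\Omega \:\d \x = -\frac{1}{\Vol(D)}\int_D w \:\d \x$ for all $w \in V$, which is precisely the weak form of the announced adjoint problem (homogeneous Dirichlet on $\Gamma_D$, natural Neumann on $\Gamma_N$). It then remains to evaluate the partial derivative at $\btheta = \bz$, using the elementary expansions $\det(\nabla\Phi_\btheta) = 1 + \dv\btheta + \o(\btheta)$, $A_\btheta = \I + (\dv\btheta\,\I - \nabla\btheta - \nabla\btheta^T) + \o(\btheta)$, and $(f\circ\Phi_\btheta)\det(\nabla\Phi_\btheta) = f + \dv(f\btheta) + \o(\btheta)$; substituting these and retaining only the explicit $\btheta$-dependence reproduces verbatim the three integrals of the stated formula.

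The main point requiring care is the rigorous justification that $\btheta \mapsto \widehat u(\btheta) \in V$ is Fréchet differentiable near $\bz$, which legitimizes the Lagrangian bookkeeping. This follows from the implicit function theorem applied to the transported state equation: the associated bilinear form depends smoothly on $\btheta$ through $A_\btheta$, and is uniformly coercive for $\lvert\lvert\btheta\rvert\rvert_{\Winfty}$ small, since $\gamma_\Omega \geq \gamma_0 > 0$ and $A_\btheta \to \I$. Notably, the limited (merely $H^1$) global regularity of $u_\Omega$ and $p_\Omega$ caused by the jump of $\gamma_\Omega$ across the interface $\partial\Omega$ poses no difficulty here, precisely because transporting to the reference configuration freezes this interface; only well-posedness and $H^1$ bounds for $u_\Omega$ and $p_\Omega$ are needed, which is what makes this volume form of $T'(\Omega)$ the natural object to establish.
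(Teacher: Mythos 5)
Your proposal is correct; the paper itself omits the proof of \cref{prop.sdthermics}, deferring to the classical literature (e.g.\ \cite{allaire2020survey}), and your argument --- pulling the transmission problem back to the reference configuration so that the discontinuous coefficient $\gamma_\Omega$ is frozen, then applying C\'ea's Lagrangian method with the expansions of $\det(\nabla\Phi_\btheta)$ and $A_\btheta$ --- is precisely the standard derivation being referenced, and it reproduces both the stated adjoint problem and the three volume integrals. Your closing remark on why the volume form is the right object here (only $H^1$ bounds on $u_\Omega$, $p_\Omega$ are needed despite the interface jump) is also the correct justification, consistent with the paper's own preference for volume-form shape derivatives in \cref{eq.Fpvol}.
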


\begin{remark}\label{rem.selfadj}
Since in our context the source $f$ is identically equal to $1$, we immediately see that the adjoint state $p_\Omega$ equals $p_\Omega = -\frac{1}{\Vol(D)} u_\Omega$. This self-adjoint property of the problem \cref{eq.pbminTOm} conveniently allows to avoid the solution of an extra boundary value problem at each stage of the optimization process.  
\end{remark}

We perform 200 iterations of our shape and topology optimization \cref{algo.lagevol} to solve this problem. On average, the considered diagrams contain about 3,500 cells, and the total computation takes about 45 min on a regular \texttt{Macbook Pro} laptop with 2 GHz Quad-Core Intel Core i5 and 16 Gb of memory.
A few snapshots of the optimization path are presented in \cref{fig.conduc2phaseres}, 
and the associated convergence histories are reported in \cref{fig.conduc2phasehisto}.
Again, the shape undergoes dramatic deformations in the course of the process, while being consistently equipped with an explicit discretization, which is updated in a Lagrangian manner.

\begin{figure}[!ht]
\centering
\begin{tabular}{ccc}
\begin{minipage}{0.33\textwidth}
\begin{overpic}[width=1.0\textwidth]{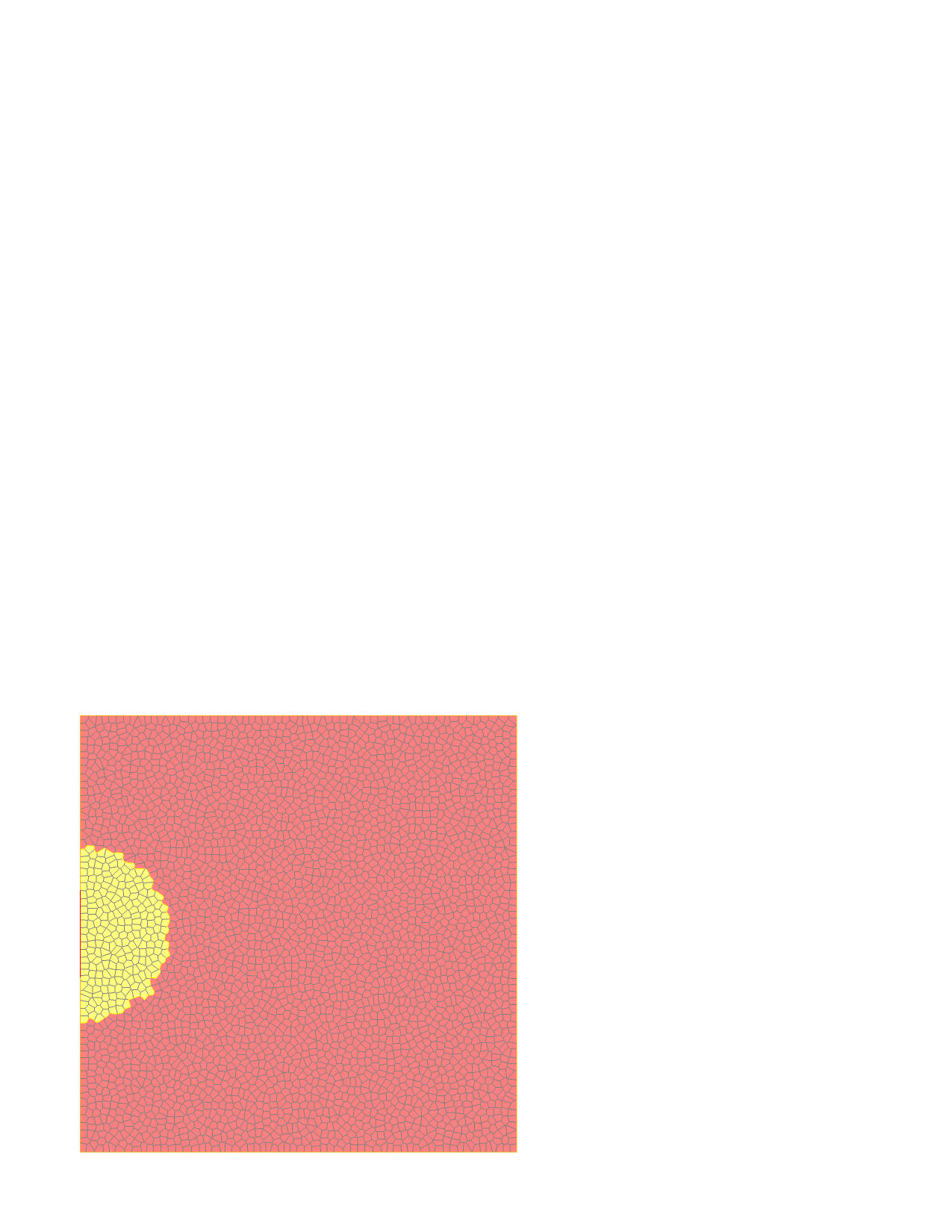}
\put(0,3){\fcolorbox{black}{white}{$n=0$}}
\end{overpic}
\end{minipage}
 & 
 \begin{minipage}{0.33\textwidth}
\begin{overpic}[width=1.0\textwidth]{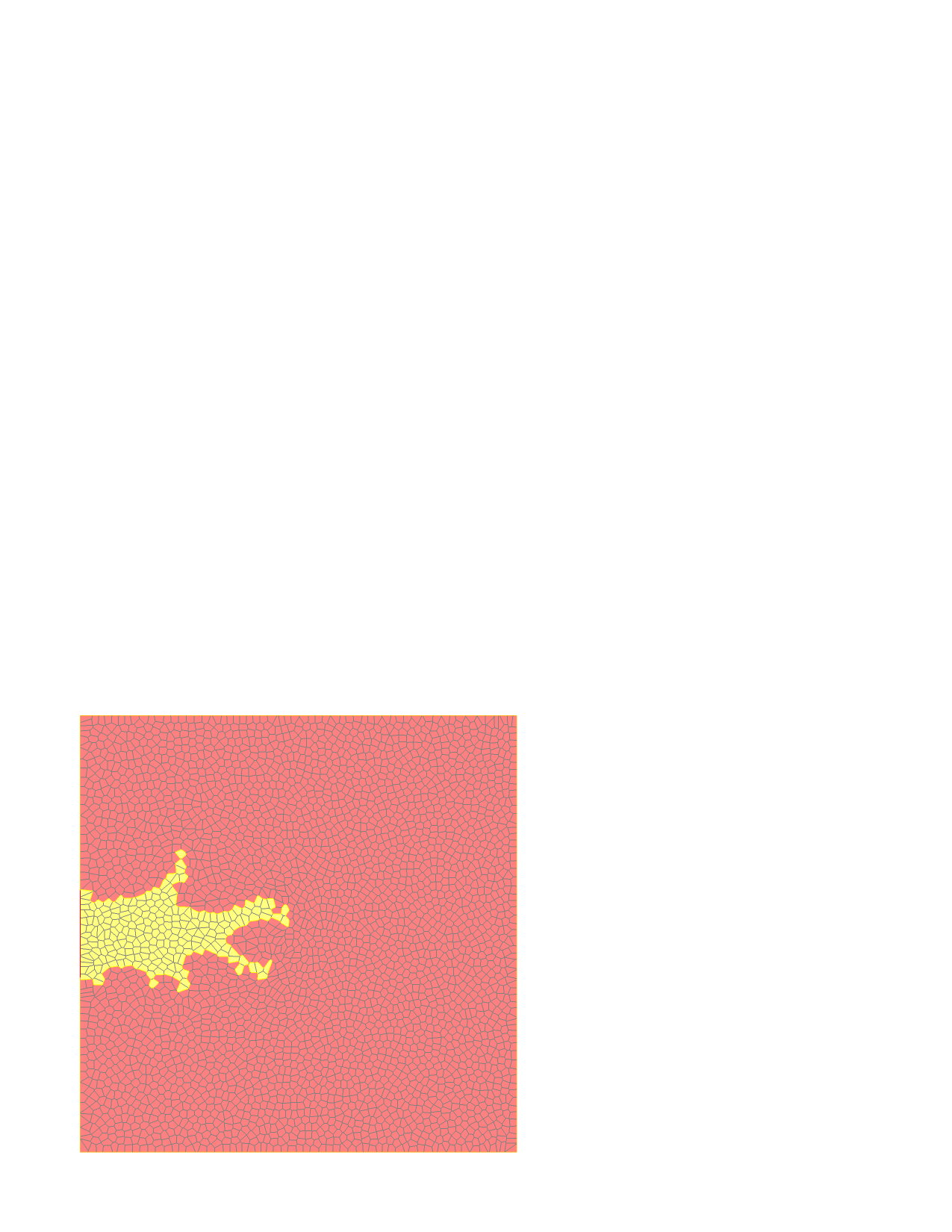}
\put(0,3){\fcolorbox{black}{white}{$n=12$}}
\end{overpic}
\end{minipage} &
\begin{minipage}{0.33\textwidth}
\begin{overpic}[width=1.0\textwidth]{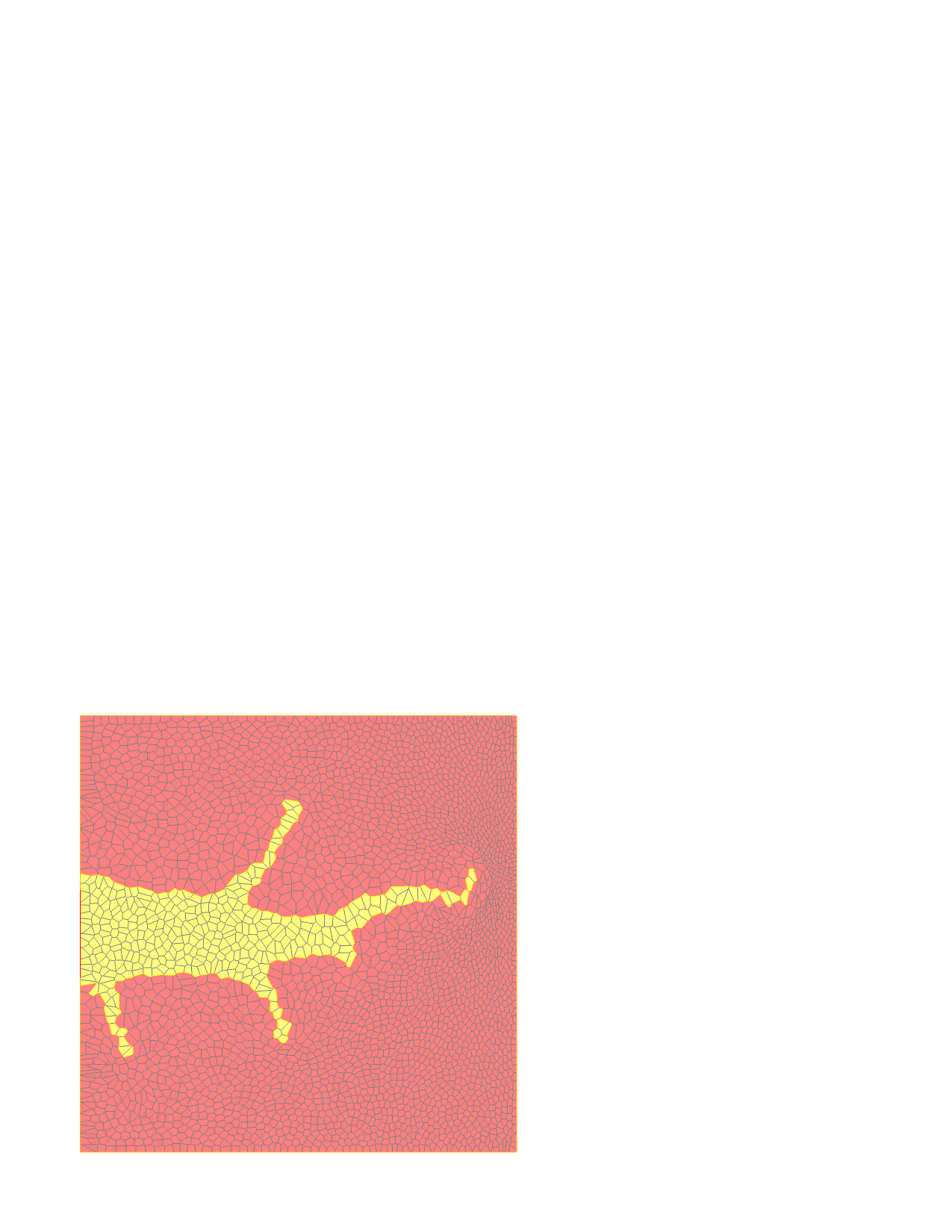}
\put(0,3){\fcolorbox{black}{white}{$n=29$}}
\end{overpic}
\end{minipage}
 \\
 \begin{minipage}{0.33\textwidth}
\begin{overpic}[width=1.0\textwidth]{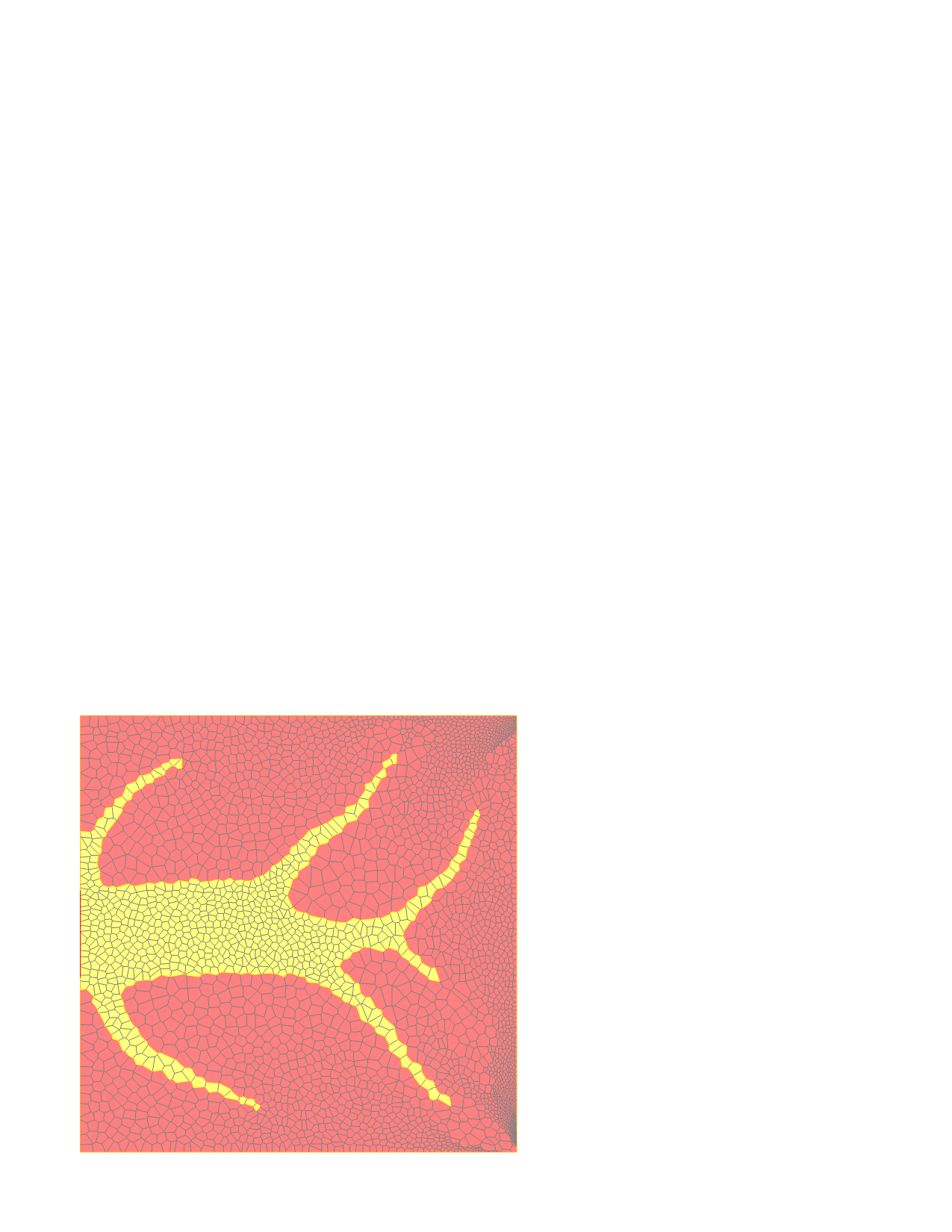}
\put(0,3){\fcolorbox{black}{white}{$n=51$}}
\end{overpic}
\end{minipage}
 & 
 \begin{minipage}{0.33\textwidth}
\begin{overpic}[width=1.0\textwidth]{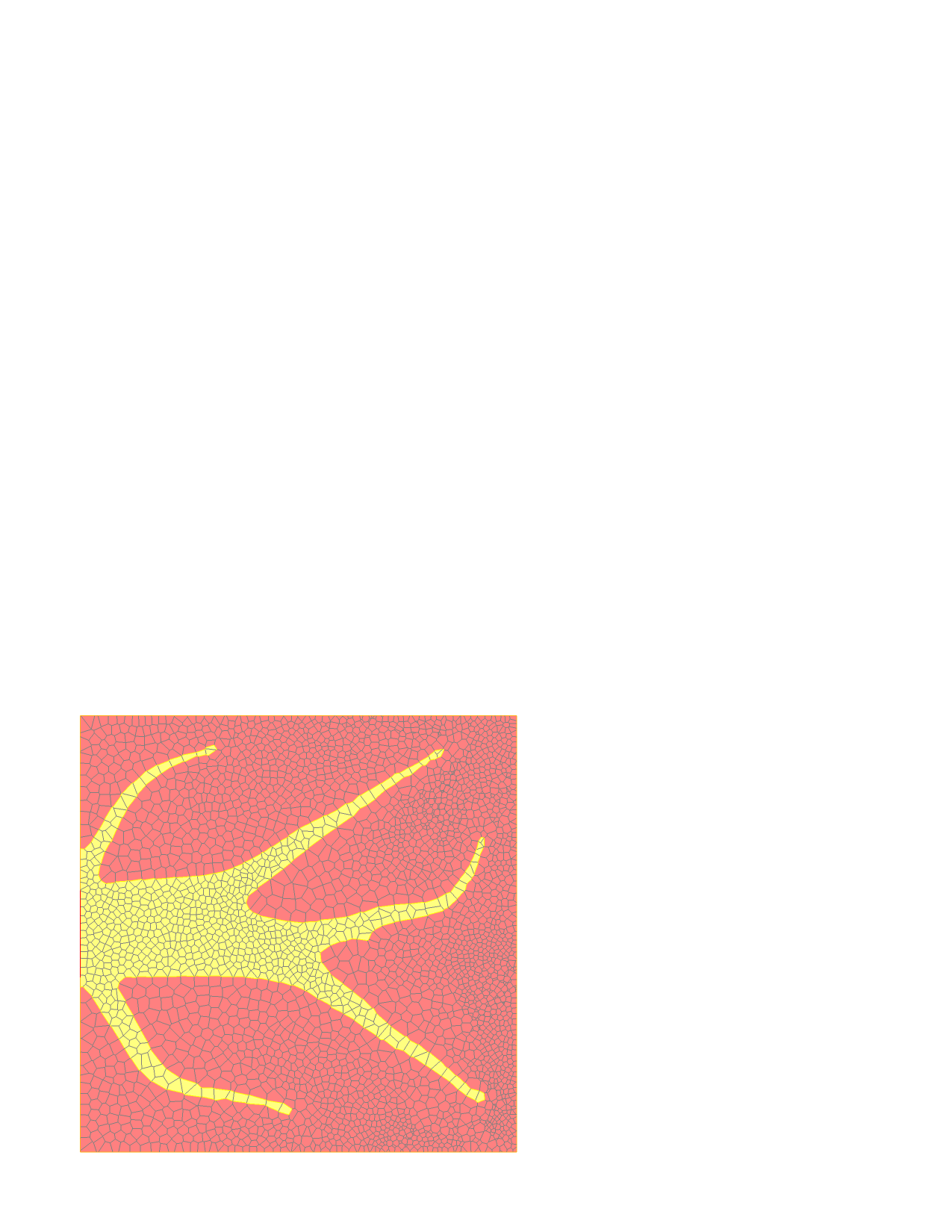}
\put(0,3){\fcolorbox{black}{white}{$n=151$}}
\end{overpic}
\end{minipage} &
\begin{minipage}{0.33\textwidth}
\begin{overpic}[width=1.0\textwidth]{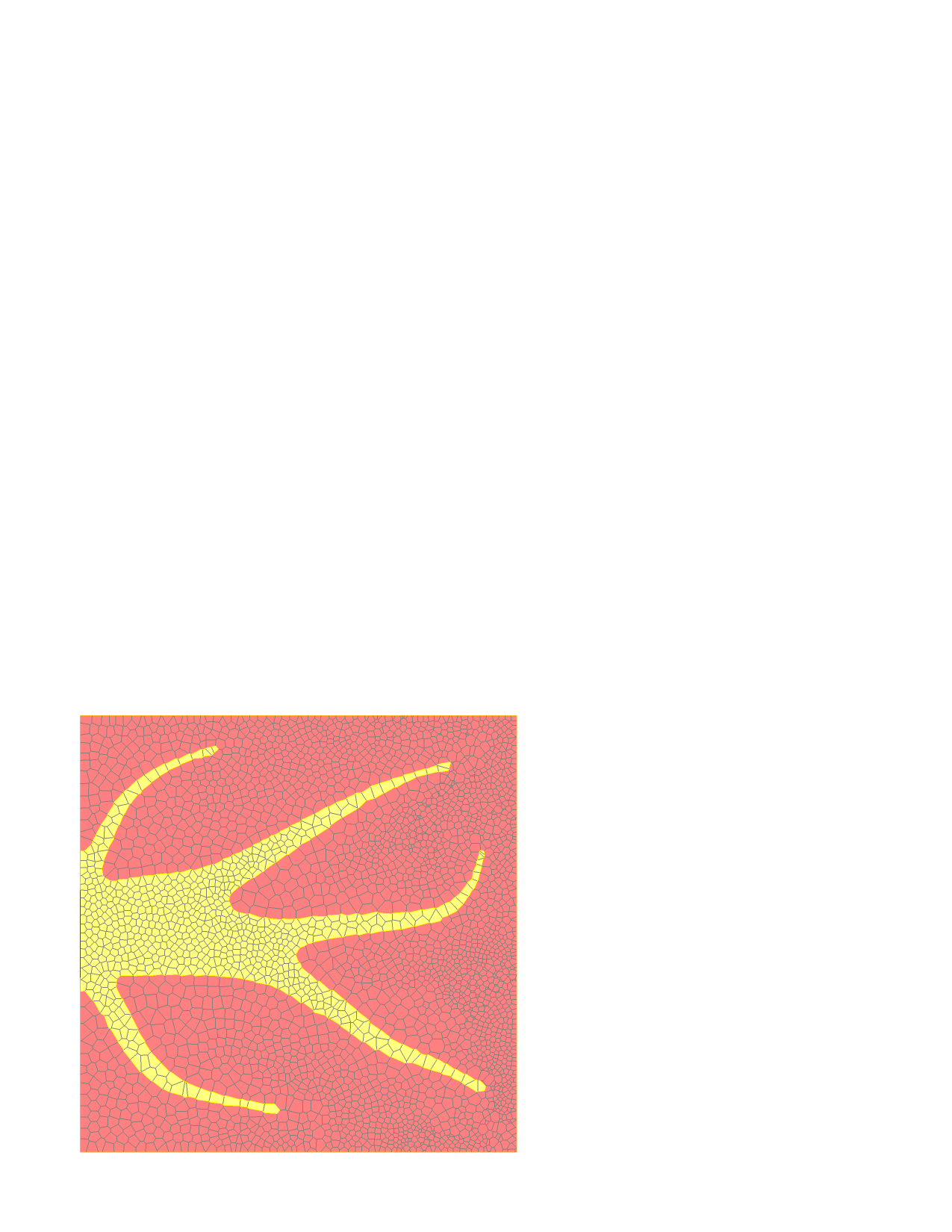}
\put(0,3){\fcolorbox{black}{white}{$n=250$}}
\end{overpic}
\end{minipage}
\end{tabular}
\caption{\it A few intermediate design $\Omega^n$ in the solution of the two-phase conductivity optimization problem of \cref{sec.numconduc2phase}.}
\label{fig.conduc2phaseres}
\end{figure}

\begin{figure}[!ht]
\centering
\includegraphics[width=0.8\textwidth]{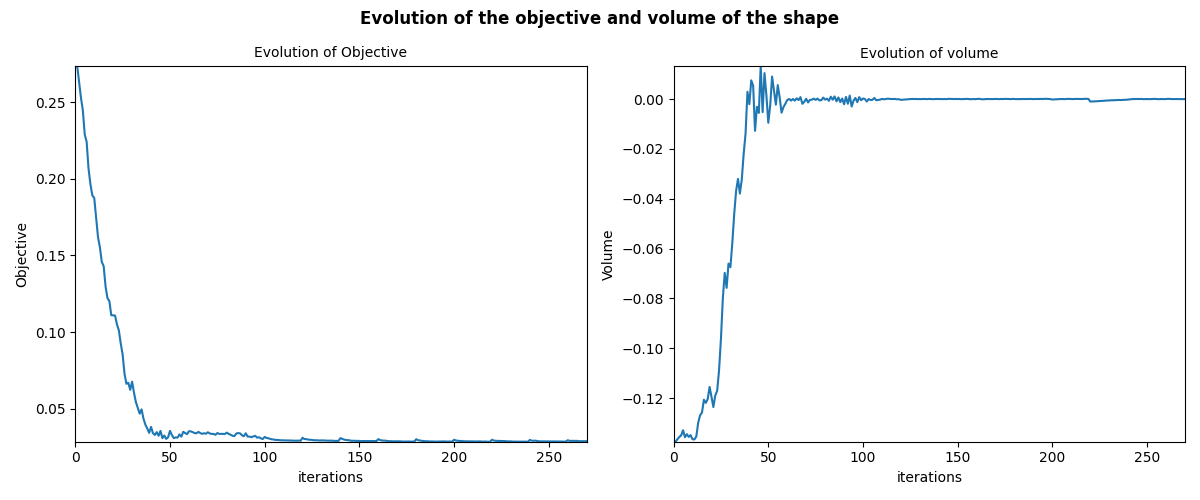}
\caption{\it Evolution of the objective $T(\Omega)$ and constraint $G(\Omega) = \Vol(\Omega)-V_T$ during  the solution of the two-phase conductivity optimization problem of \cref{sec.numconduc2phase}.}
\label{fig.conduc2phasehisto}
\end{figure}

\subsection{Optimization of the shape of an elastic cantilever beam}\label{sec.numcanti}

\noindent The examples in this section take place in the physical context of mechanical structures presented in \cref{sec.elas}. The considered shapes $\Omega$ are enclosed in a fixed computational box $D$ with size $2 \times 1$; they are fixed on the left-hand side $\Gamma_D$ of the boundary $\partial D$, 
and surface loads $\g : \Gamma_N \to \R^2$ are applied on a region $\Gamma_N$ at the right-hand side of $\partial D$. The remaining region $\Gamma := \partial \Omega \setminus (\overline{\Gamma_D} \cup \overline{\Gamma_N})$ is the only one which is subject to optimization, see \cref{fig.setex} (b). Omitting body forces for simplicity, the displacement $\u_\Omega : \Omega \to \R^d$ of the shape is the solution to the linear elasticity system: 
\begin{equation}\label{eq.elasnum}
\left\{
\begin{array}{cl}
- \dv(Ae(\u_\Omega)) = \bz & \text{in }\Omega, \\
\u_\Omega = \bz & \text{on } \Gamma_D, \\
Ae(\u_\Omega) \n = \g & \text{on } \Gamma_N, \\
Ae(\u_\Omega) \n = \bz& \text{on } \Gamma. 
\end{array}
\right.
\end{equation}
In this situation, we consider the following shape optimization problem: 
\begin{equation}\label{eq.mincompelas}
\min\limits_{\Omega \subset D} \:C(\Omega) \text{ s.t. }\Vol(\Omega) = V_T,
\end{equation}
where $C(\Omega)$ is the compliance \cref{eq.complianceElas} of $\Omega$ and the volume target $V_T$ is set to $0.7$.\par\medskip

To address this problem, we consider both shape optimization strategies presented in \cref{sec.Lagrefvol}. 
In a first experiment, we rely on the ``free boundary'' approach: the shape is discretized as a diagram $\bVsp$, see \cref{def.modLag}. The $N=3000$ cells of this diagram are consistently endowed with the same measure $\Vol(\Omega)/N$. At each stage of the optimization process, the derivative of the objective function $C(\Omega)$ with respect to the vertices $\q$ of this diagram is calculated via automatic differentiation, and the sensitivity of this function with respect to the seed points $\s$ of the diagram is inferred along the lines of \cref{sec.derver}, while the cell measures $\bnu$ are kept fixed. Concurrently, we periodically decrease the measures of all cells by a fixed amount until the desired volume constraint $V_T$ is attained.  
The optimized design and a few intermediate shapes resulting from this procedure are depicted in \cref{fig.cantiresfree}.  Remarkably, the topology of the shape changes dramatically in the course of the evolution, while no topological derivative is involved in the process. Holes spontaneously appear in mechanically relevant locations inside the structure, pretty much in the same manner as the drastic topological changes occurring in the example of \cref{sec.nummcf} were nevertheless relevant from the optimization viewpoint. This observation confirms that the notion of sensitivity with respect to the domain associated to our representation of shapes via diagrams of the form $\bVsp$ contains a much richer amount of information when compared to more classical boundary variation algorithms.
One drawback of this approach is the difficulty to control the connectedness of the shape when optimizing the volume of the cells by means of the strategy of \cref{sec.vertoseeds}, which is why the heuristic volume update strategy outlined above is preferred to the use of the formulas for derivative with respect to volume fractions. 

\begin{figure}[!ht]
\centering
\begin{tabular}{ccc}
\begin{minipage}{0.49\textwidth}
\begin{overpic}[width=1.0\textwidth]{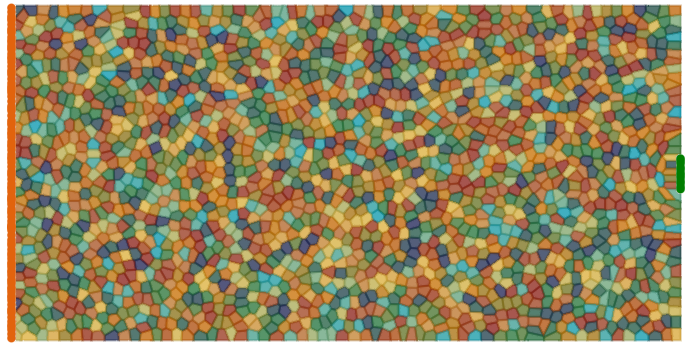}
\put(0,-3){\fcolorbox{black}{white}{$n=0$}}
\end{overpic}
\end{minipage}
 & 
 \begin{minipage}{0.49\textwidth}
\begin{overpic}[width=1.0\textwidth]{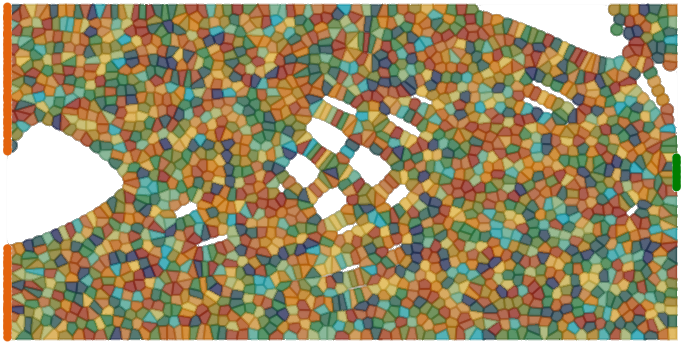}
\put(0,-3){\fcolorbox{black}{white}{$n=200$}}
\end{overpic}
\end{minipage} 
\\
\\
\begin{minipage}{0.49\textwidth}
\begin{overpic}[width=1.0\textwidth]{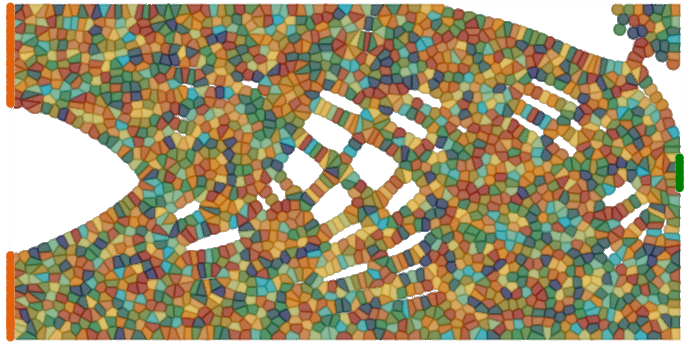}
\put(0,-3){\fcolorbox{black}{white}{$n=600$}}
\end{overpic}
\end{minipage} & 
\begin{minipage}{0.49\textwidth}
\begin{overpic}[width=1.0\textwidth]{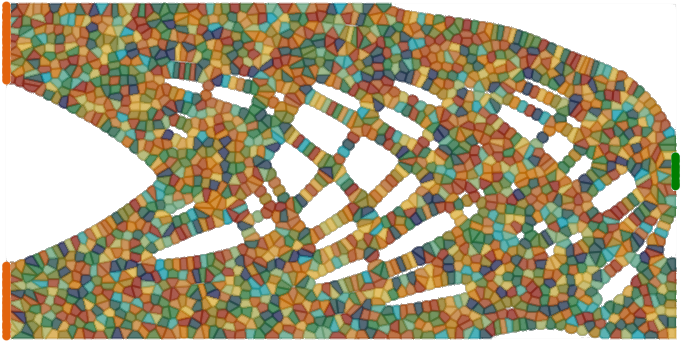}
\put(0,-3){\fcolorbox{black}{white}{$n=1000$}}
\end{overpic}
\end{minipage}
 \\ 
 \\
 \begin{minipage}{0.49\textwidth}
\begin{overpic}[width=1.0\textwidth]{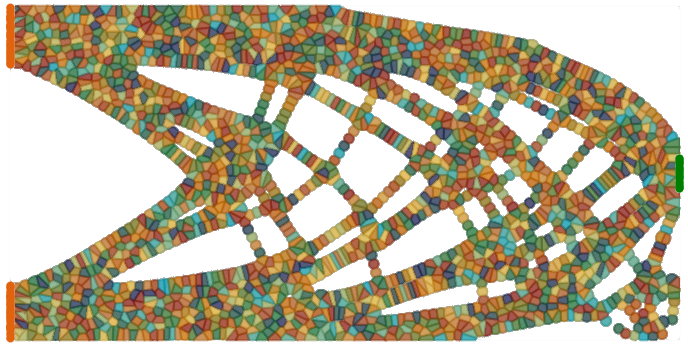}
\put(0,-3){\fcolorbox{black}{white}{$n=1500$}}
\end{overpic}
\end{minipage} 
&
\begin{minipage}{0.49\textwidth}
\begin{overpic}[width=1.0\textwidth]{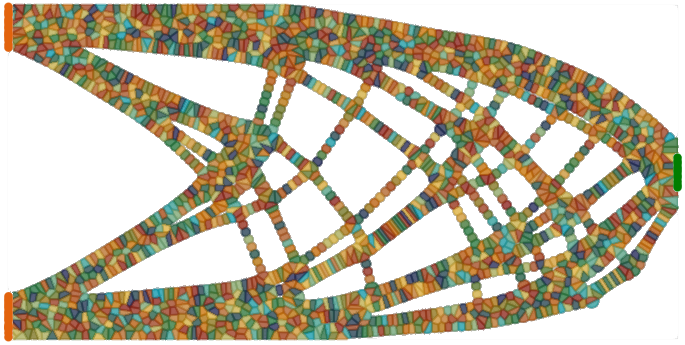}
\put(0,-3){\fcolorbox{black}{white}{$n=2000$}}
\end{overpic}
\end{minipage}
\end{tabular}
\caption{\it Various iterates in the solution of the cantilever optimization problem of \cref{sec.numcanti} using a discretization of shapes by modified diagrams \cref{eq.decompOm,eq.defVipsi}.}
\label{fig.cantiresfree}
\end{figure}

We conduct a second experiment in the same physical context, where we rely on the two-phase diagram approach for the representation of the shape $\Omega$: the total computational domain $D$ is equipped with a classical Laguerre diagram $\bLag$, and the shape $\Omega$ is defined as a subcollection of the cells of this diagram, see \cref{def.classLag,eq.decompOmSubset}. 
In this setting, we follow the ``optimize-then-discretize'' approach of \cref{sec.derver} to calculate the sensitivity of $C(\Omega)$ with respect to the vertices $\q$ of the diagram (although automatic differentiation could be used as in the previous examples), before expressing this information in terms of the seed points $\s$ and cell measures $\bnu$ of the diagram, along the lines of \cref{sec.vertoseeds}. This practice relies on the continuous formula for the shape derivative of $C(\Omega)$. This result is fairly classical in the literature (again, see e.g. \cite{allaire2020survey}) and we limit ourselves with the statement of the result.
 
 \begin{proposition}
 The shape derivative of the compliance $C(\Omega)$ reads, for any vector field $\btheta$ vanishing on $\overline{\Gamma_D} \cup \overline{\Gamma_N}$:
 \begin{equation*}
  C^\prime(\Omega)(\btheta) = -\int_\Omega \dv (\btheta) Ae(\u_\Omega) : e(\u_\Omega) \:\d \x  
  +4 \mu \int_\Omega ( \nabla \u_\Omega \nabla \btheta ) : e(\u_\Omega) \:\d \x 
 + 2\lambda \int_\Omega \tr(\nabla \u_\Omega \nabla \btheta ) \dv(\u_\Omega) \:\d \x.
\end{equation*}
 \end{proposition}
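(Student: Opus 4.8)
The plan is to exploit the self-adjoint structure of the compliance so as to bypass the computation of the material derivative of the state $\u_\Omega$. The key observation is that $\u_\Omega$ is the unique minimizer over $H^1_{\Gamma_D}(\Omega)^d$ of the elastic energy $E_\Omega(\v) := \frac12 \int_\Omega Ae(\v):e(\v)\,\d\x - \int_{\Gamma_N}\g\cdot\v\,\d s$, and that at this minimizer $\int_\Omega Ae(\u_\Omega):e(\u_\Omega)\,\d\x = \int_{\Gamma_N}\g\cdot\u_\Omega\,\d s$; hence $E_\Omega(\u_\Omega) = -\frac12 C(\Omega)$, i.e. $C(\Omega) = -2\min_{\v}E_\Omega(\v)$. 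Body forces are omitted here, in accordance with the numerical setting \cref{eq.elasnum}, so no $\f$-term will survive.

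First I would transport the energy on $\Omega_\btheta$ back onto the fixed domain $\Omega$ through the change of variables $\x = (\Id + \btheta)(\y)$. Writing $F := \Id + \nabla\btheta$ and $J := \det F$, the chain rule turns the strain of a transported field into $e_\btheta(\w) := \text{sym}(\nabla\w\, F^{-1})$, and since $\btheta$ vanishes on $\overline{\Gamma_D}\cup\overline{\Gamma_N}$ both the constraint space $H^1_{\Gamma_D}(\Omega)^d$ and the surface load integral over $\Gamma_N$ are left unchanged by the transport. This yields $C(\Omega_\btheta) = -2\min_{\w\in H^1_{\Gamma_D}(\Omega)^d}\widetilde E(\btheta,\w)$ with $\widetilde E(\btheta,\w) = \frac12\int_\Omega Ae_\btheta(\w):e_\btheta(\w)\,J\,\d\y - \int_{\Gamma_N}\g\cdot\w\,\d s$, that is, a minimization over a fixed function space of an integrand depending smoothly on $\btheta$.

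The second step is to differentiate this value function at $\btheta=\bz$ via the envelope theorem (differentiation under the $\min$): because the minimizer is unique and depends continuously on $\btheta$, the derivative of $\btheta\mapsto\min_\w\widetilde E(\btheta,\w)$ equals the partial derivative $\frac{\partial\widetilde E}{\partial\btheta}(\bz,\u_\Omega)$, the state being frozen at $\u_\Omega$. It then remains to expand the integrand to first order using $J = 1 + \dv\btheta + \o(\btheta)$ and $F^{-1} = \Id - \nabla\btheta + \o(\btheta)$, whence $e_\btheta(\u_\Omega) = e(\u_\Omega) - \text{sym}(\nabla\u_\Omega\nabla\btheta) + \o(\btheta)$. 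Expanding $Ae_\btheta(\u_\Omega):e_\btheta(\u_\Omega)\,J$ and using the symmetry of $A$ and of the tensor $Ae(\u_\Omega)$ (which lets me drop the symmetrization in the contractions) gives $C^\prime(\Omega)(\btheta) = -\int_\Omega\dv(\btheta)\,Ae(\u_\Omega):e(\u_\Omega)\,\d\x + 2\int_\Omega Ae(\u_\Omega):(\nabla\u_\Omega\nabla\btheta)\,\d\x$. Substituting $Ae(\u_\Omega) = 2\mu\, e(\u_\Omega) + \lambda\,\dv(\u_\Omega)\,\I$ into the last term and recalling $\tr(e(\u_\Omega)) = \dv(\u_\Omega)$ splits it into the announced $4\mu$ and $2\lambda$ contributions.

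The main obstacle is the rigorous justification of the envelope step, namely the differentiability of the value function $\btheta\mapsto\min_\w\widetilde E(\btheta,\w)$ and the legitimacy of freezing the state at $\u_\Omega$. This rests on the uniform coercivity and smooth $\btheta$-dependence of the transported bilinear form, guaranteed by the smallness condition $||\btheta||_{\Winfty}<1$, which yield uniqueness together with a uniform continuity bound on the minimizer $\w_\btheta$; once this is in place, the remaining expansions are routine multilinear algebra. This analytical subtlety is classical, and one may alternatively reach the same volume form through C\'ea's Lagrangian method, which would avoid invoking the envelope theorem altogether.
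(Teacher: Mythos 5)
The paper does not actually prove this proposition: it states the formula and defers to the literature (``This result is fairly classical\dots and we limit ourselves with the statement of the result''), so there is no internal argument to compare yours against. Assessed on its own terms, your proof is correct and complete in outline. The identity $C(\Omega)=-2\min_{\v}E_\Omega(\v)$ follows from testing \cref{eq.varfelas} with $\v=\u_\Omega$; the transport onto the fixed domain is legitimate precisely because $\btheta$ vanishes on $\overline{\Gamma_D}\cup\overline{\Gamma_N}$, so both the constraint space and the load integral on $\Gamma_N$ are unaffected; and the Danskin/envelope step, with the state frozen at $\u_\Omega$, combined with the expansions of $\det(\Id+\nabla\btheta)$ and $(\Id+\nabla\btheta)^{-1}$, gives $C^\prime(\Omega)(\btheta)=-\int_\Omega\dv(\btheta)\,Ae(\u_\Omega):e(\u_\Omega)\:\d\x+2\int_\Omega Ae(\u_\Omega):(\nabla\u_\Omega\nabla\btheta)\:\d\x$; substituting $Ae(\u_\Omega)=2\mu\,e(\u_\Omega)+\lambda\,\dv(\u_\Omega)\,\I$ and $\tr(e(\u_\Omega))=\dv(\u_\Omega)$ reproduces the stated $4\mu$ and $2\lambda$ terms exactly. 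Two remarks. First, your omission of body forces is consistent with the context in which the proposition is invoked (the system \cref{eq.elasnum} of \cref{sec.numcanti} has $\f=\bz$), but the statement as written is then tied to that setting: with $\f\neq\bz$ the same argument goes through and produces an additional volume term involving $\f$ and its gradient, analogous to the $-\int_D\dv(f\btheta)\,p_\Omega\:\d\x$ term in \cref{prop.sdthermics}. Second, the envelope step is indeed the only point needing analytic care, and your justification (uniform coercivity of the transported bilinear form for $\lVert\btheta\rVert_{\Winfty}<1$, differentiability of the minimizer, and cancellation of the state-variation term because $\u_\Omega$ is a critical point of the energy) is the standard and adequate one; C\'ea's Lagrangian method, which the cited reference \cite{allaire2020survey} employs and which also covers non-self-adjoint objectives such as the stress functional of \cref{sec.stbr}, would reach the same volume form.
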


 Note that the compliance $C(\Omega)$ has the convenient property to make the adjoint state $\bp_\Omega$ equal to the state function $\u_\Omega$ (up to a sign).
 
 We apply our optimization \cref{algo.lagevol} to the solution of this problem. Actually, acknowledging that the retained discretization of shapes causes the boundary to be a little rough, we add a very small penalization to the objective function $C(\Omega)$ of the problem by the perimeter functional $\Per(\Omega)$. The featured Laguerre diagrams contain on average 7000 cells, and the computation proceeds in 200 iterations, for a total time of about 80 min. A few iterates of the optimization process are depicted on \cref{fig.cantires}, and the associated convergence histories are represented on \cref{fig.cantihisto}. The optimization path is much smoother in this second experiment as in the former one, which can be explained by the fact that variations of the measures of the cell do not incur non differentiability of the objective function so easily as in the ``free boundary'' context. Indeed, holes cannot emerge naturally inside the bulk structure; this leaves much less room for one of the conditions \cref{eq.Gen0,eq.Gen1,eq.Gen2,eq.Gen3,eq.Gen4,eq.Gen5,eq.Gen6} for the differentiability of the vertices of the diagram with respect to seed points and cell measures to become violated. However, one drawback of this approach is admittedly that the total computational domain $D$ has to be equipped with a diagram, thus resulting in an increase in computational burden, especially in the perspective of the 3d extension of this work.

\begin{figure}[!ht]
\centering
\begin{tabular}{ccc}
\begin{minipage}{0.49\textwidth}
\begin{overpic}[width=1.0\textwidth]{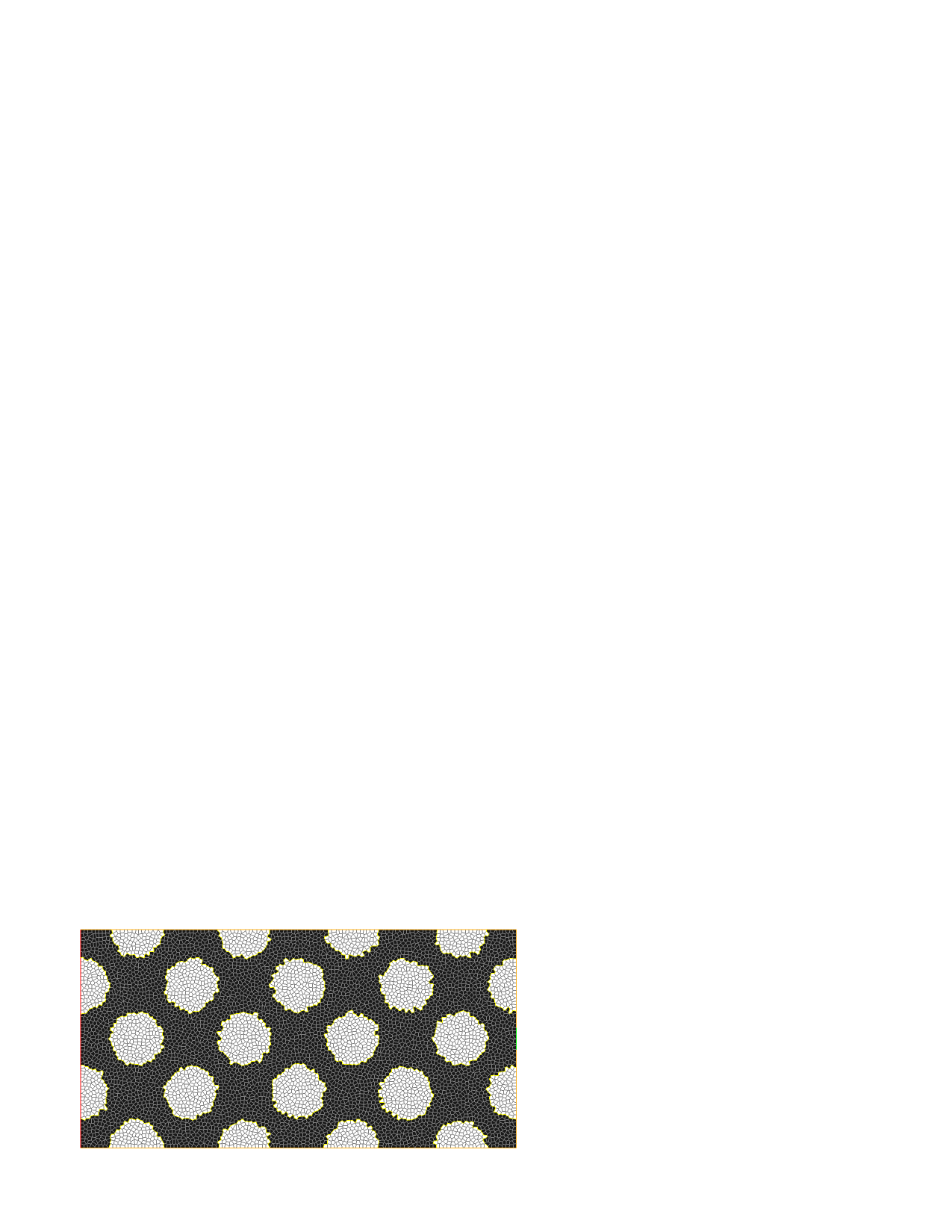}
\put(0,-3){\fcolorbox{black}{white}{$n=0$}}
\end{overpic}
\end{minipage}
 & 
 \begin{minipage}{0.49\textwidth}
\begin{overpic}[width=1.0\textwidth]{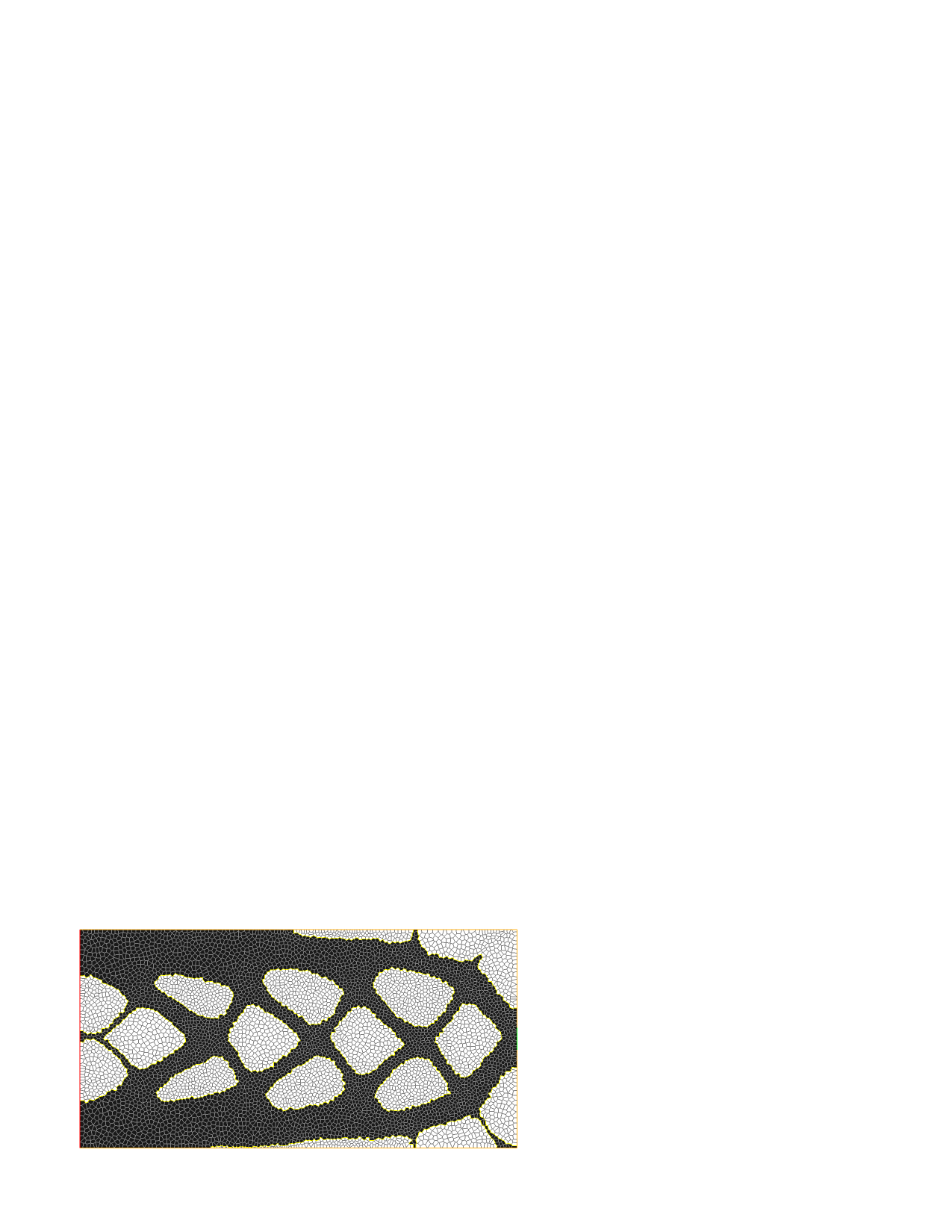}
\put(0,-3){\fcolorbox{black}{white}{$n=20$}}
\end{overpic}
\end{minipage} 
\\
\\
\begin{minipage}{0.49\textwidth}
\begin{overpic}[width=1.0\textwidth]{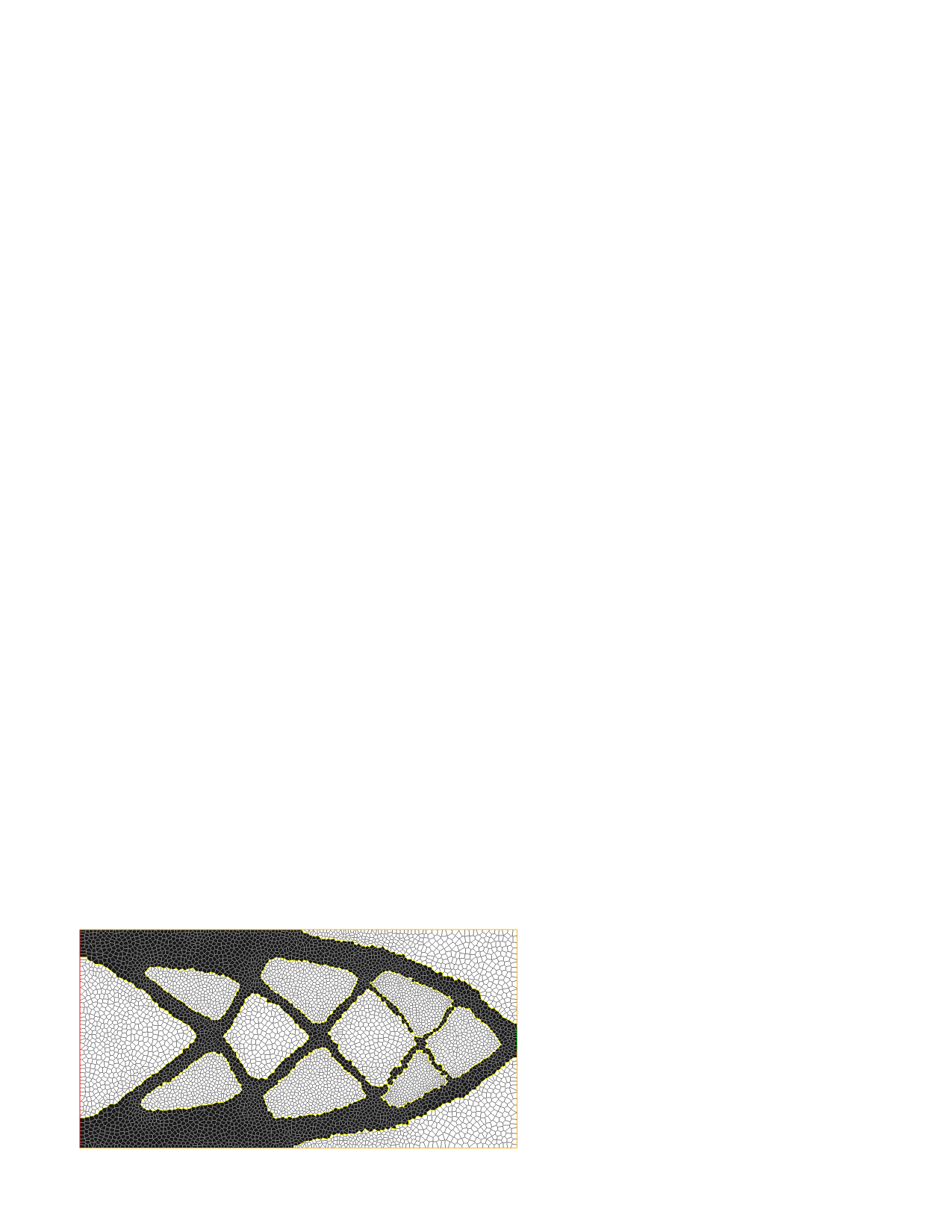}
\put(0,-3){\fcolorbox{black}{white}{$n=41$}}
\end{overpic}
\end{minipage} & 
\begin{minipage}{0.49\textwidth}
\begin{overpic}[width=1.0\textwidth]{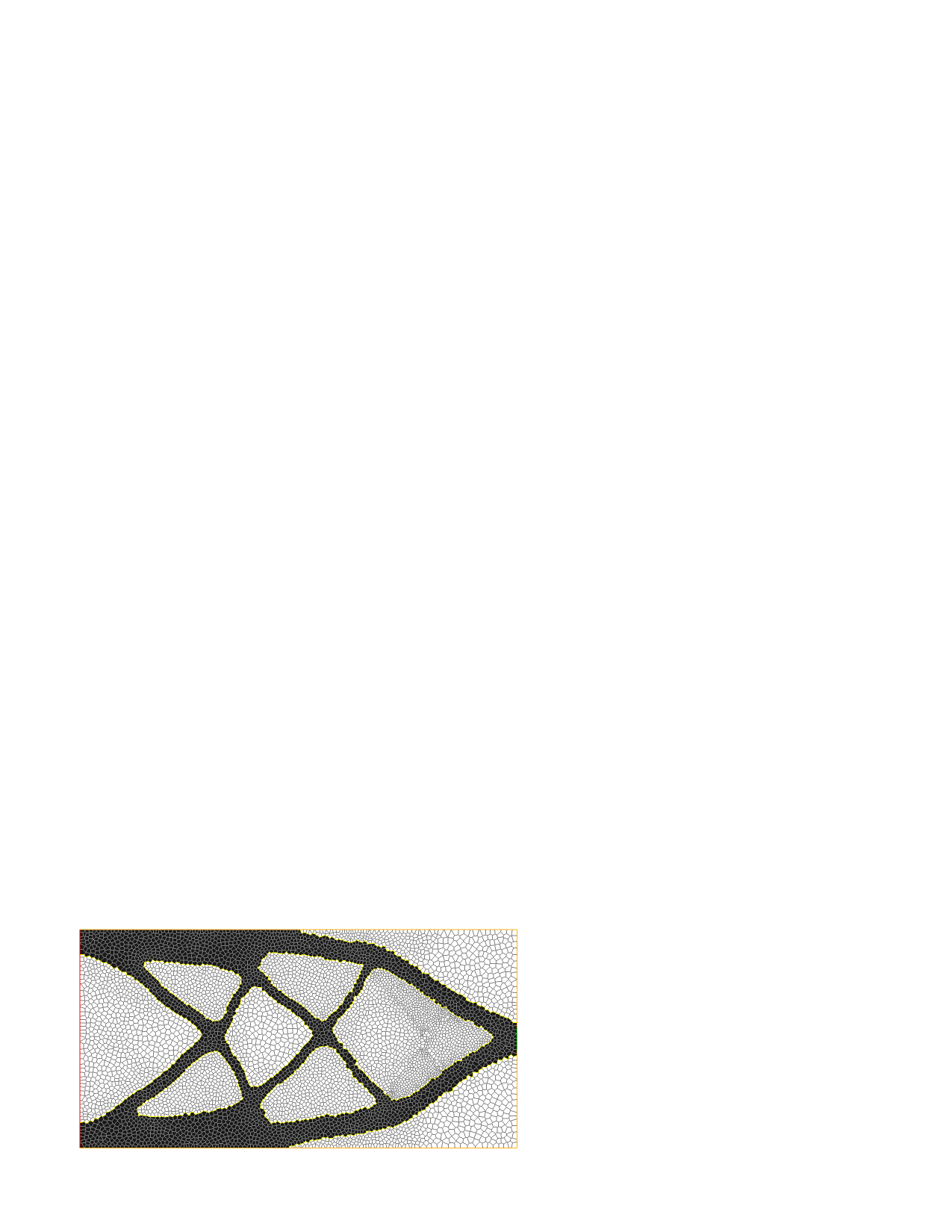}
\put(0,-3){\fcolorbox{black}{white}{$n=55$}}
\end{overpic}
\end{minipage}
 \\ 
 \\
 \begin{minipage}{0.49\textwidth}
\begin{overpic}[width=1.0\textwidth]{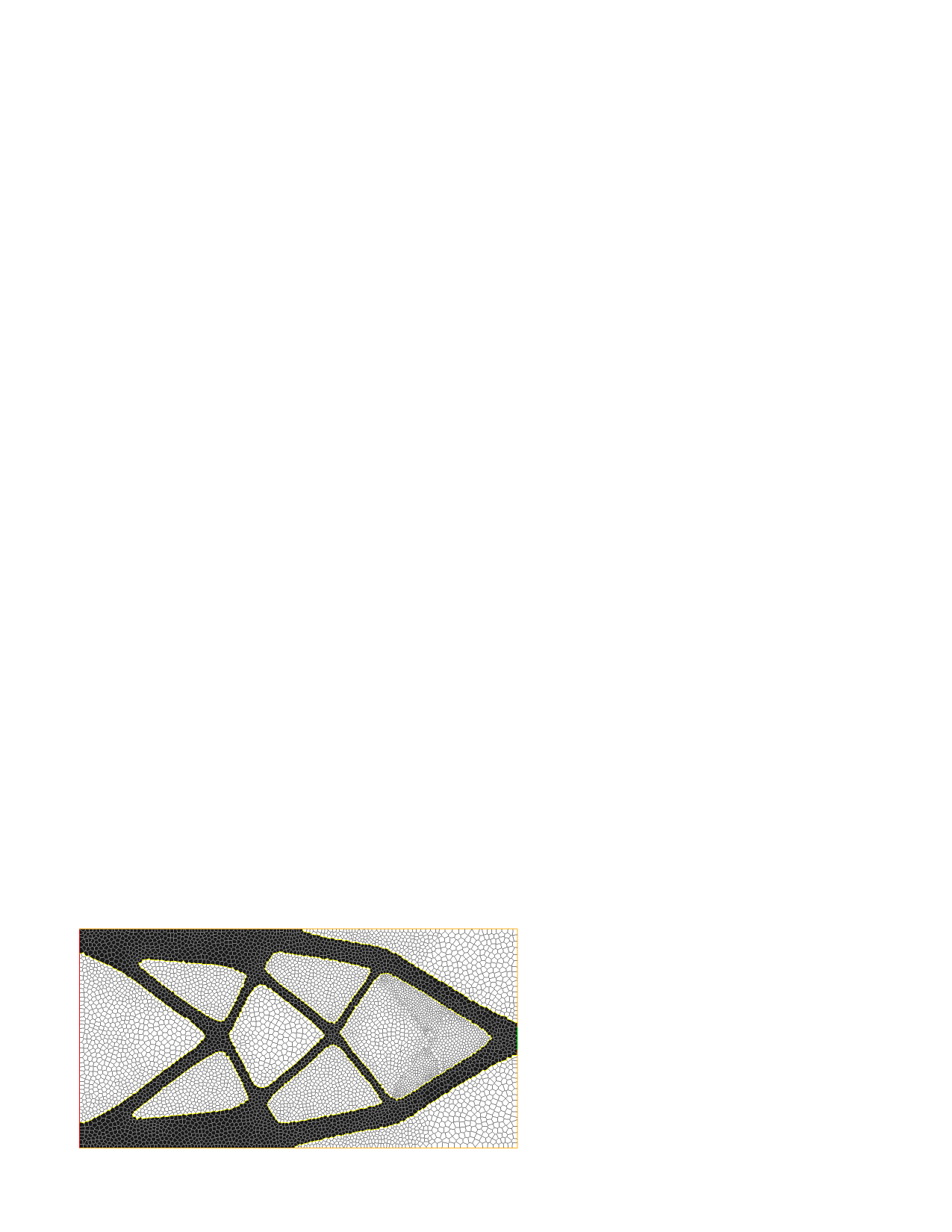}
\put(0,-3){\fcolorbox{black}{white}{$n=75$}}
\end{overpic}
\end{minipage} 
&
\begin{minipage}{0.49\textwidth}
\begin{overpic}[width=1.0\textwidth]{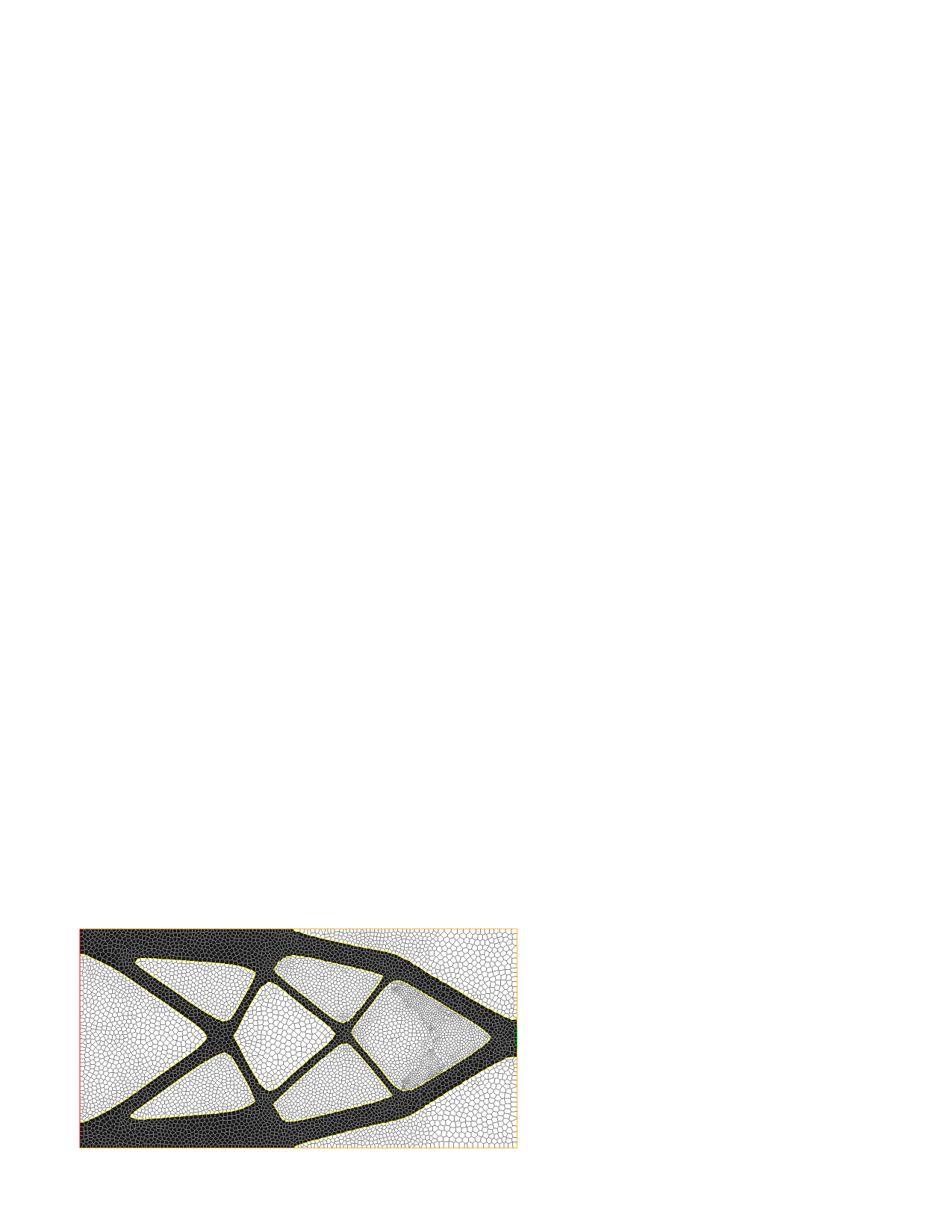}
\put(0,-3){\fcolorbox{black}{white}{$n=150$}}
\end{overpic}
\end{minipage}
\end{tabular}
\caption{\it Various iterates in the solution of the cantilever optimization problem of \cref{sec.numcanti} via a discretization of shapes using two-phase Laguerre diagrams of the computational domain $D$.}
\label{fig.cantires}
\end{figure}

\begin{figure}[!ht]
\centering
\includegraphics[width=0.8\textwidth]{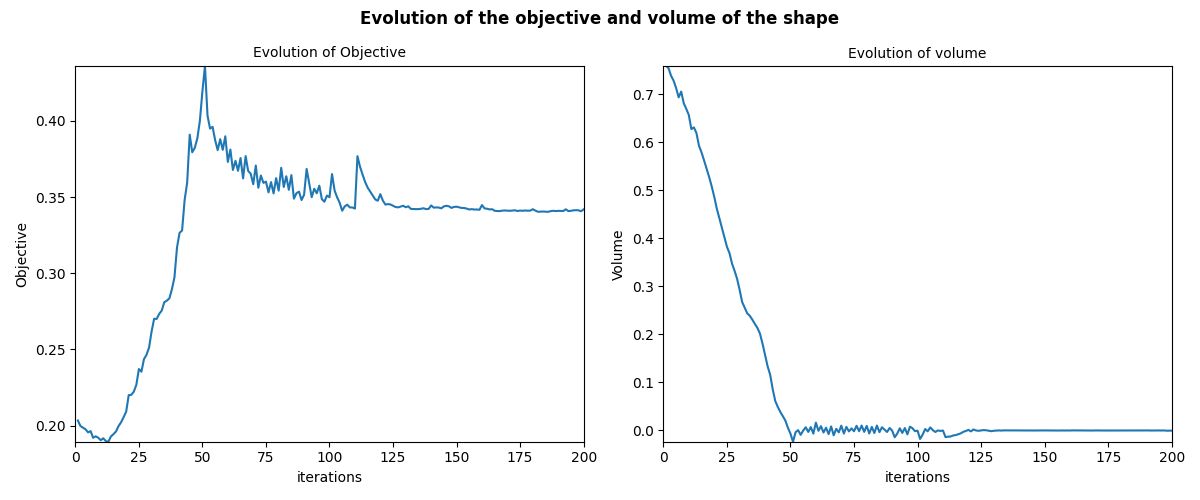}
\caption{\it Evolution of the objective (mechanical compliance) and volume during the solution of the cantilever optimization problem of \cref{sec.numcanti}.}
\label{fig.cantihisto}
\end{figure}

\subsection{Optimization of the shape of a bridge by combination of shape and topological derivatives}\label{sec.exbr}

\noindent This section deals with the optimization of another type of linearly elastic structures, namely a two-dimensional bridge.
The considered shapes $\Omega$ are enclosed in a box $D$ with size $2\times 1.5$; they are clamped on a region $\Gamma_D$ around their lower-left corner, and the vertical displacement is prevented on another region $\Gamma_S$ around their lower-right corner. 
A unit vertical load $\g = (0,-1)$ is applied on a region $\Gamma_N$ at the middle of their lower side, see \cref{fig.setex} (c). 

In this example, we again minimize the compliance of the structure under a volume constraint, i.e. \cref{eq.mincompelas} is solved, with the volume target $V_T = 0.7$.
We rely on a discretization of the shape $\Omega$ by two-phase Laguerre diagrams of $D$, see \cref{def.classLag,eq.decompOmSubset}, and we calculate the sensitivity of the compliance $C(\Omega)$ with respect to the defining parameters $\s$ and $\bnu$ of the diagram by means of the ``optimize-then-discretize'' approach presented in \cref{sec.derver}. 
We also add another ingredient with respect to the workflow of the previous section: we periodically use the topological derivative to try and drill a tiny hole inside $\Omega$ in an optimal way, see \cref{rem.topder}. In this perspective, let us recall the following result about the topological derivative of the compliance functional in two space dimensions, see \cite{garreau2001topological,novotny2012topological}. 

\begin{theorem}
Let $d = 2 $, and let $\Omega \subset \R^2$ be a bounded, Lipschitz domain. The elastic compliance $C(\Omega)$ defined in \cref{eq.complianceElas} has a topological derivative at every point $\x \in \Omega$, which reads: 
$$ \d_T C(\Omega)(\x)  = \frac{\pi(\lambda + 2\mu)}{2\mu(\lambda+\mu)} \Big( 4\mu Ae(\u_\Omega) : e(\u_\Omega) + (\lambda-\mu) \tr(Ae(\u_\Omega)) \tr(e(\u_\Omega)) \Big)(\x). $$ 
\end{theorem}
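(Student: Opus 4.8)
The plan is to follow the now-classical route for computing topological derivatives in linear elasticity by matched asymptotic expansions, see \cite{garreau2001topological,novotny2012topological}, exploiting the self-adjoint structure of the compliance. Since the adjoint state associated to $C(\Omega)$ coincides with $\u_\Omega$ up to a sign, the compliance is simply (twice) the elastic energy at equilibrium, $C(\Omega) = \int_\Omega Ae(\u_\Omega):e(\u_\Omega)\,\d\x$, so the sensitivity analysis reduces to studying the variation of this energy under nucleation of a small traction-free hole $B(\x,r)$.

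First I would set up the perturbed problem: the displacement $\u_{\Omega_{\x,r}} \in H^1_{\Gamma_D}(\Omega_{\x,r})^d$ solves \cref{eq.elas} on the perforated domain $\Omega_{\x,r} = \Omega \setminus \overline{B(\x,r)}$, supplemented by the homogeneous Neumann condition $Ae(\u_{\Omega_{\x,r}})\n = \bz$ on $\partial B(\x,r)$. Writing $C(\Omega_{\x,r}) - C(\Omega)$ in energy form, the perturbation is localized near $\x$, and as $r \to 0$ the leading contribution depends only on the strain state of the background solution at the point, namely $E := e(\u_\Omega)(\x)$ (equivalently the stress $\Sigma := Ae(\u_\Omega)(\x)$). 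This is made rigorous by rescaling coordinates around $\x$ at scale $r$, which maps a neighborhood of the hole onto the exterior of the unit disk and reveals an inner boundary layer governed by the exterior elasticity problem with uniform far-field strain $E$.

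Next I would solve this exterior problem explicitly. In two-dimensional isotropic elasticity the equilibrium field in the exterior of a circular traction-free hole subject to a prescribed uniform far-field stress $\Sigma$ admits a closed-form (Kirsch-type) solution, from which the polarization tensor $\mathbb{P}$ of the disk can be read off. The leading-order energy variation then takes the form of a fixed quadratic form evaluated at the far-field quantities, $\d_T C(\Omega)(\x) = \mathbb{P}\Sigma : E$ up to the normalizing factor $\pi r^2 = \pi r^d$ coming from the area of the hole. Substituting the explicit isotropic polarization tensor and reducing the contraction with $A\xi = 2\mu\xi + \lambda\,\tr(\xi)\I$ yields, after routine algebra, the stated combination $4\mu\, Ae(\u_\Omega):e(\u_\Omega) + (\lambda-\mu)\tr(Ae(\u_\Omega))\tr(e(\u_\Omega))$ together with the prefactor $\frac{\pi(\lambda+2\mu)}{2\mu(\lambda+\mu)}$.

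The main obstacle is the rigorous justification of the asymptotic expansion rather than the formal computation: one must establish that the remainder is genuinely $\o(r^d)$, which requires energy estimates controlling the difference between the true perturbed displacement and its two-scale approximation (background field plus rescaled exterior corrector), uniformly up to the hole boundary. The standard tools here are a truncation of the domain at an intermediate scale, trace and Poincar\'e inequalities on annuli with $r$-dependent constants, and the decay of the exterior corrector at infinity. The explicit evaluation of the polarization tensor and the final tensorial simplification, although lengthy, are routine, and I would defer them to the cited references \cite{garreau2001topological,novotny2012topological}, verifying only the coefficient bookkeeping.
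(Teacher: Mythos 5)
The paper does not prove this statement: it is recalled verbatim from the literature, with the proof delegated to \cite{garreau2001topological,novotny2012topological}. Your outline is precisely the standard argument from those references — self-adjointness reducing the compliance to the elastic energy, rescaling to an exterior problem around the unit disk, the Kirsch-type explicit solution yielding the polarization tensor, and energy estimates for the $\o(r^d)$ remainder — so it is the "same approach" in the only meaningful sense available here, and the formula you would obtain matches the stated one. The only caveat is that your write-up, like the paper, ultimately defers the two genuinely hard steps (the explicit polarization tensor and the uniform remainder estimates) to the cited works rather than carrying them out.
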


Starting from an initial shape $\Omega^0$ with trivial topology, see \cref{fig.brres}, we apply $200$ iterations of our shape and topology optimization \cref{algo.lagevol} (augmented with the aforementioned use of topological derivatives). On average, the considered Laguerre diagrams contain about 7000 cells, and the total computation takes about 80 min. A few intermediate shapes arising during the resolution are depicted in \cref{fig.brres}, and the convergence history is provided in \cref{fig.brhisto}. Although the initial shape has a very poor topology, the shape develops very non trivial features in the course of the optimization process, thanks to the use of topological derivatives. The final, optimized design is very reminiscent of those obtained in the same context in e.g. \cite{dapogny2022tuto}.  

\begin{figure}[!ht]
\centering
\begin{tabular}{ccc}
\begin{minipage}{0.33\textwidth}
\begin{overpic}[width=1.0\textwidth]{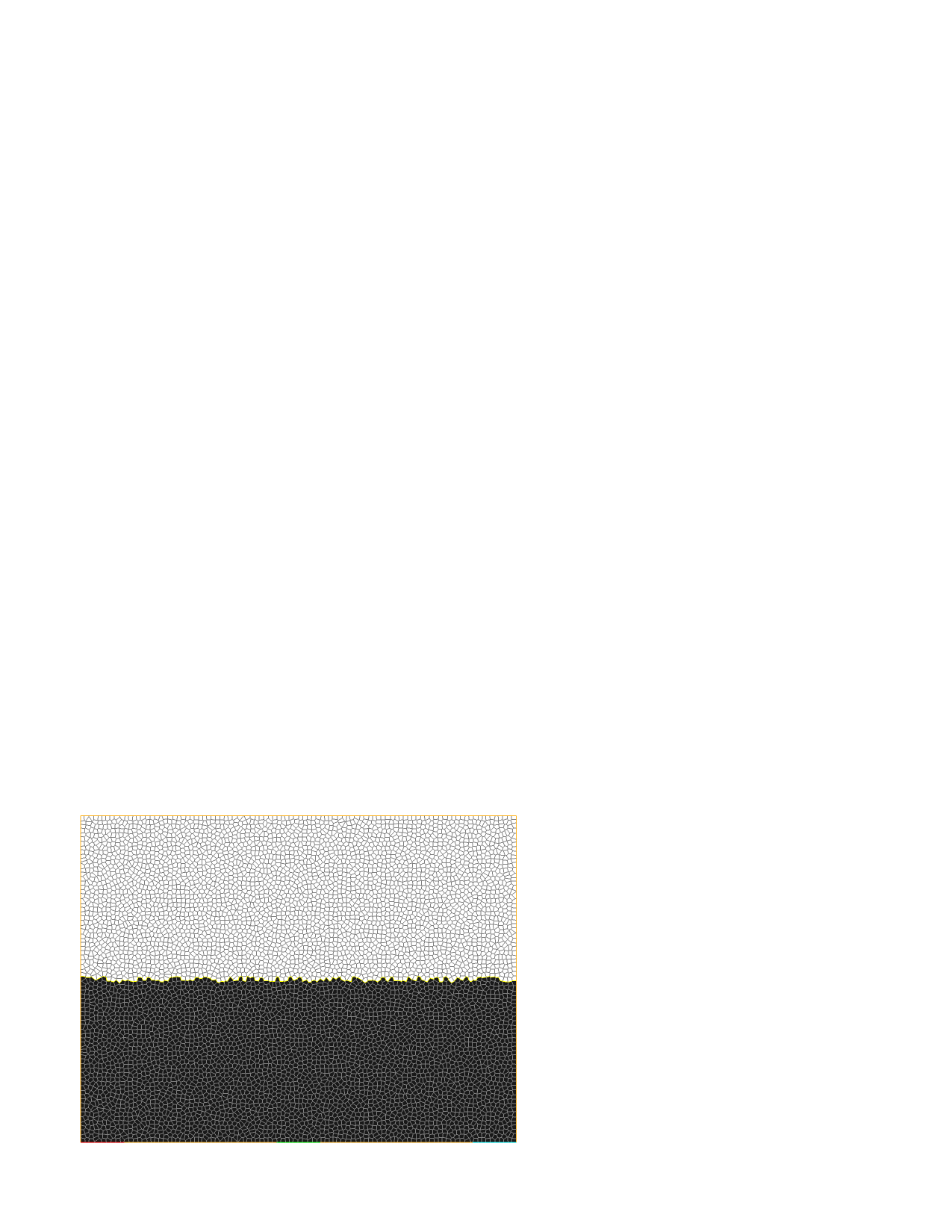}
\put(0,-5){\fcolorbox{black}{white}{$n=0$}}
\end{overpic}
\end{minipage}
 & 
 \begin{minipage}{0.33\textwidth}
\begin{overpic}[width=1.0\textwidth]{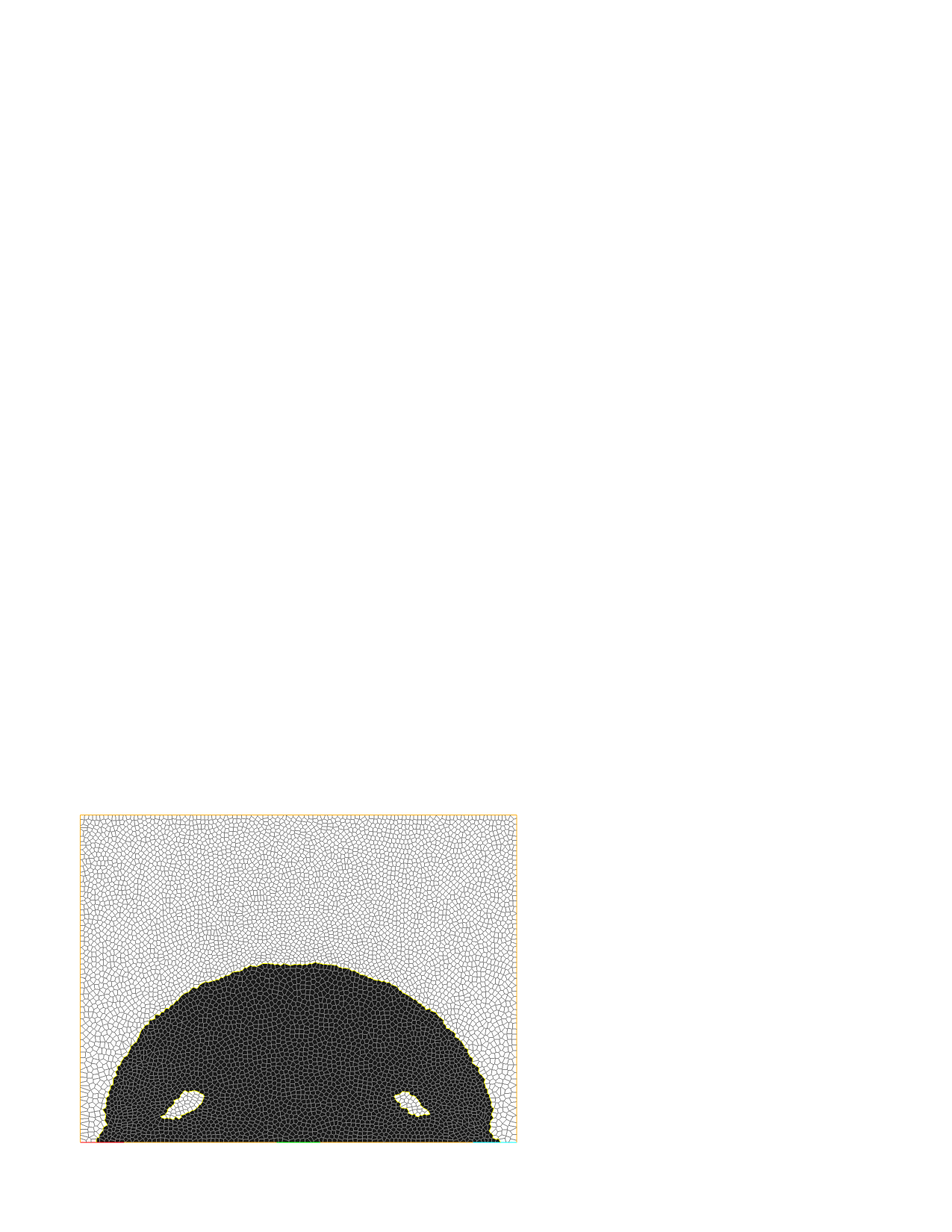}
\put(0,-5){\fcolorbox{black}{white}{$n=22$}}
\end{overpic}
\end{minipage} 
&
\begin{minipage}{0.33\textwidth}
\begin{overpic}[width=1.0\textwidth]{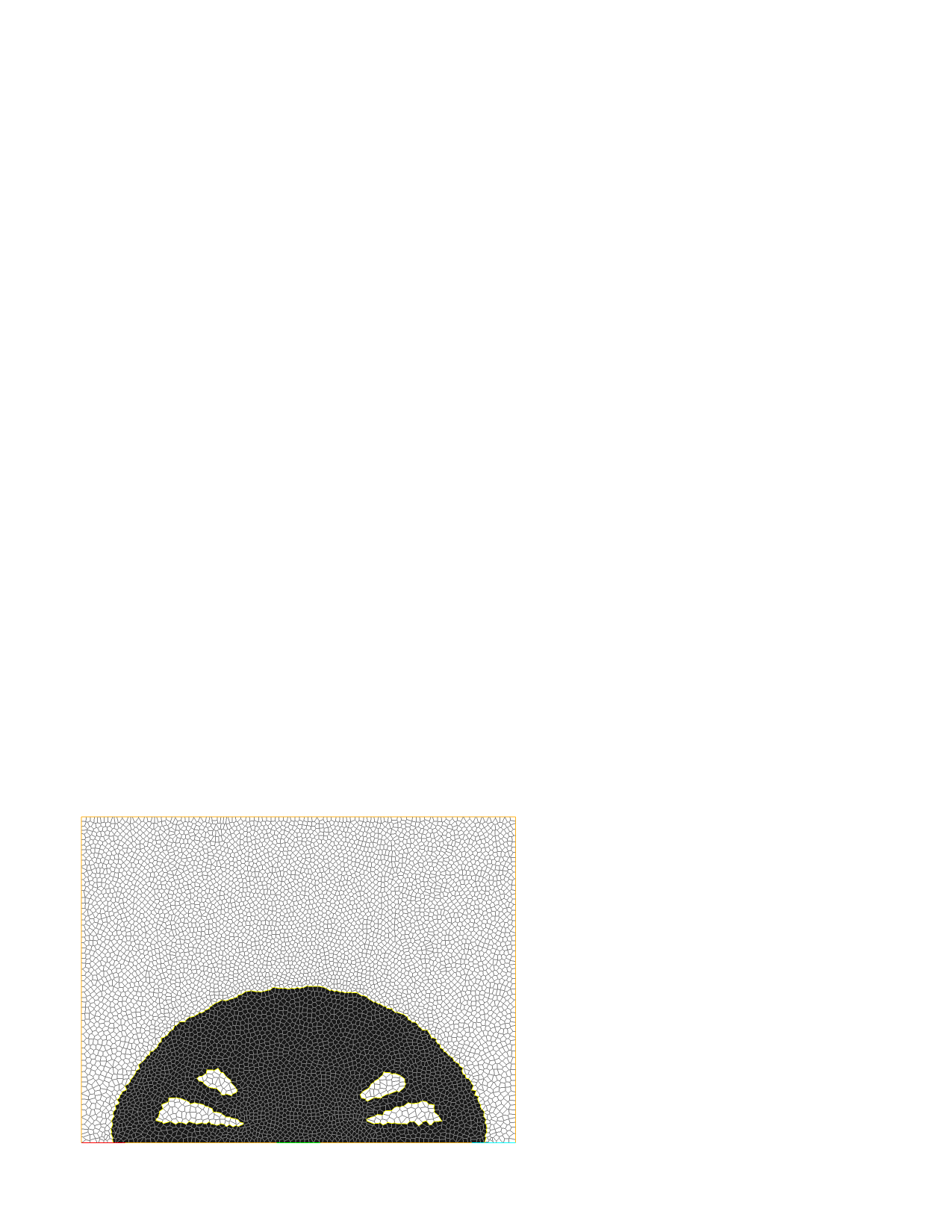}
\put(0,-5){\fcolorbox{black}{white}{$n=39$}}
\end{overpic}
\end{minipage}\\ 
\\
\begin{minipage}{0.33\textwidth}
\begin{overpic}[width=1.0\textwidth]{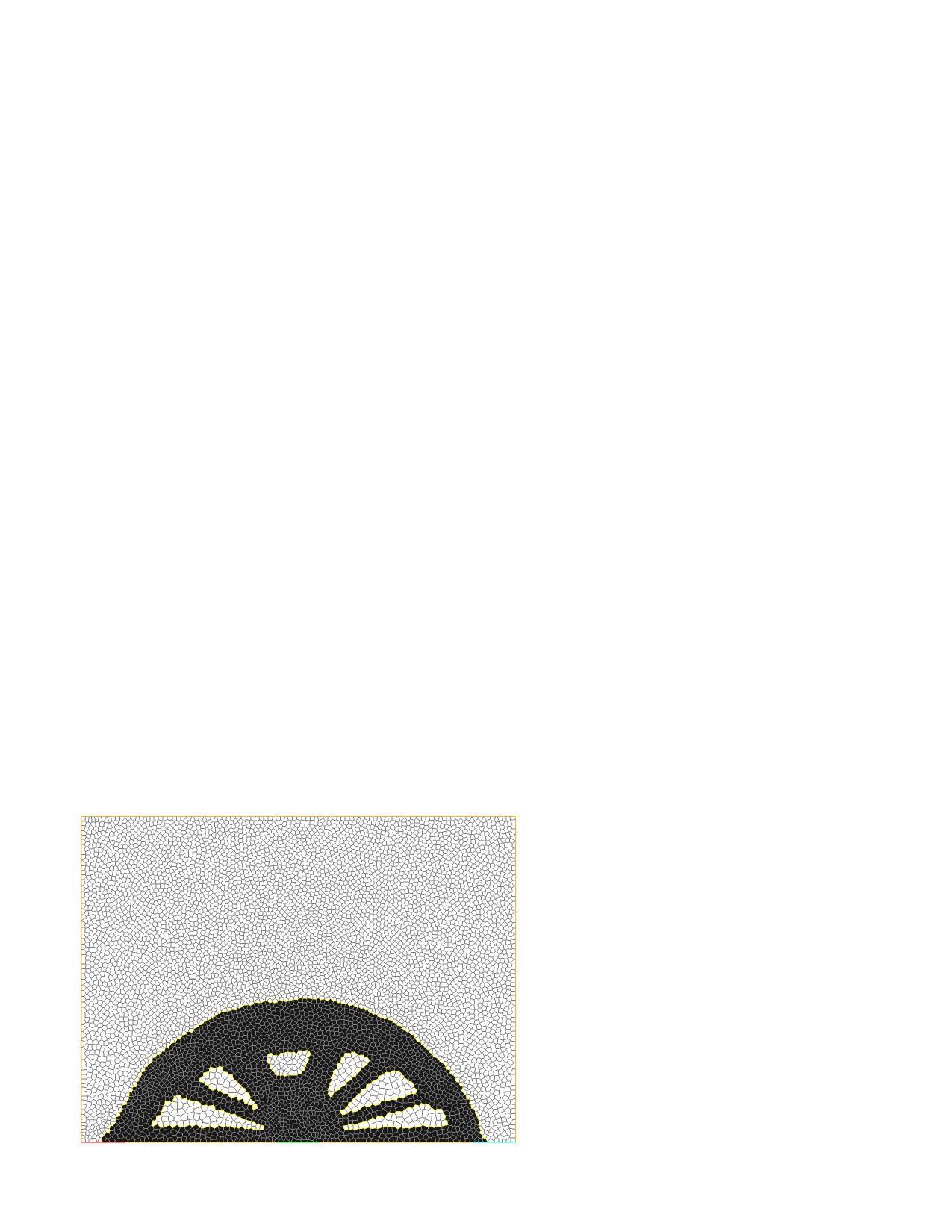}
\put(0,-5){\fcolorbox{black}{white}{$n=55$}}
\end{overpic}
\end{minipage}
 & 
 \begin{minipage}{0.33\textwidth}
\begin{overpic}[width=1.0\textwidth]{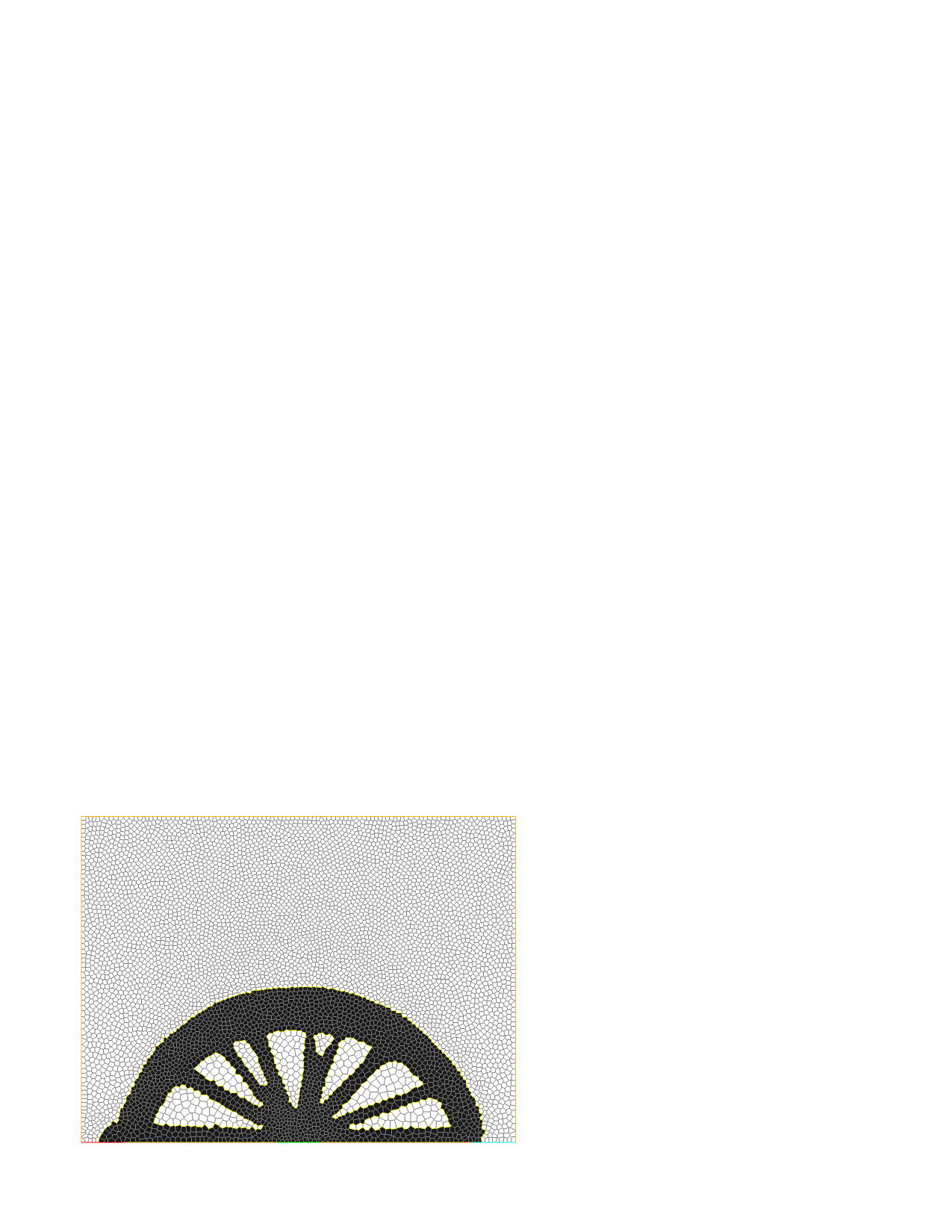}
\put(0,-5){\fcolorbox{black}{white}{$n=75$}}
\end{overpic}
\end{minipage} 
&
\begin{minipage}{0.33\textwidth}
\begin{overpic}[width=1.0\textwidth]{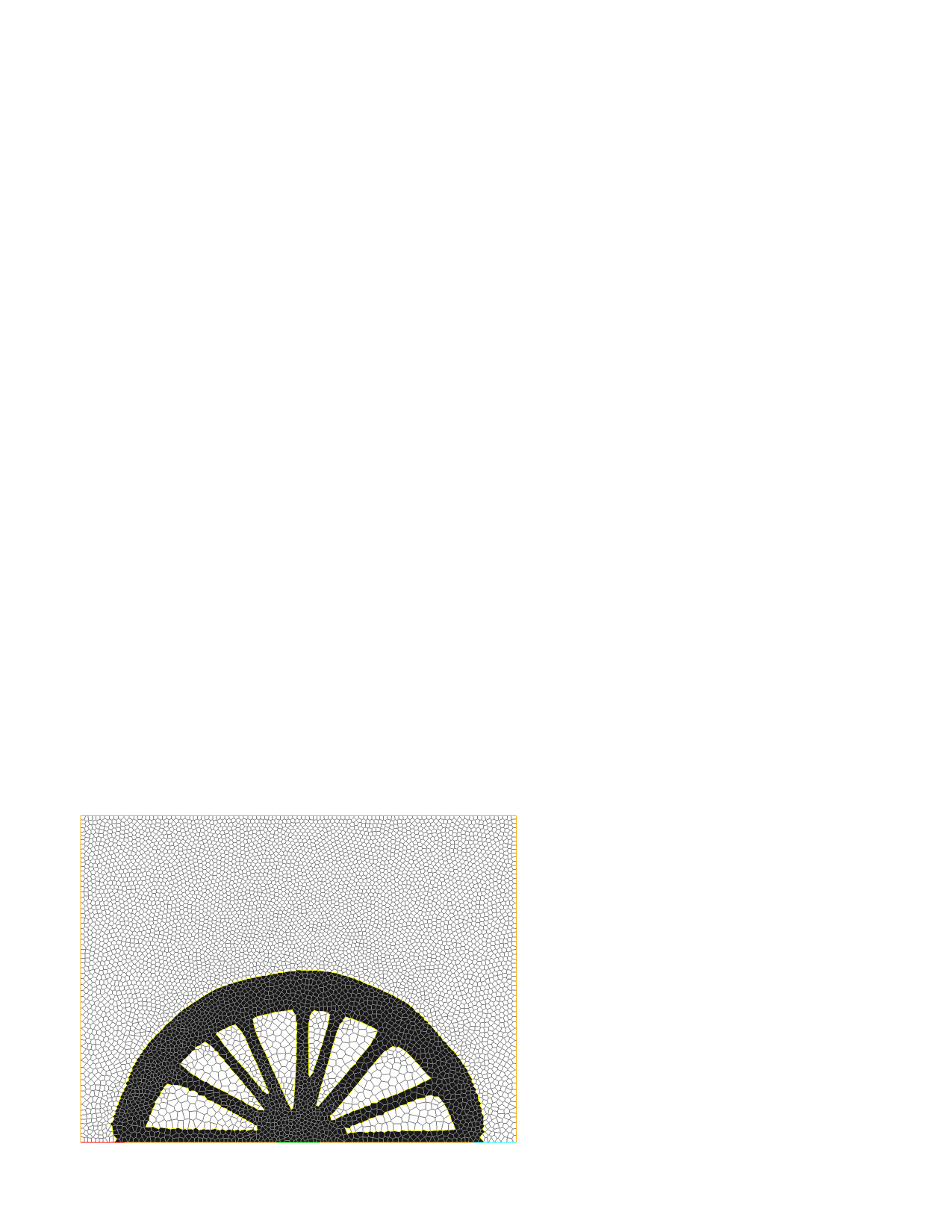}
\put(0,-5){\fcolorbox{black}{white}{$n=200$}}
\end{overpic}
\end{minipage}\\

\end{tabular}
\caption{\it Various iterates in the solution of the bridge optimization problem of \cref{sec.exbr}.}
\label{fig.brres}
\end{figure}

\begin{figure}[!ht]
\centering
\includegraphics[width=0.8\textwidth]{figures/histobr}
\caption{\it Evolution of the objective (mechanical compliance) and volume during the solution of the bridge optimization problem of \cref{sec.exbr}.}
\label{fig.brhisto}
\end{figure}

\subsection{Minimization of the stress within a bridge structure}\label{sec.stbr}

\noindent Our last numerical example deals with the minimization of the stress within a two-dimensional self-sustaining bridge, which is depicted in \cref{fig.setex} (d): the shapes are contained in a square-shaped box $D$ with size $1 \times 1$; they are clamped on their bottom side $\Gamma_D$, and a uniform vertical load $\g = (0,-1)$ is applied on their upper side $\Gamma_N$, which stands for the deck of the structure. 
Omitting body forces, the displacement $\u_\Omega : \Omega \to \R^2$ of the structure is characterized by the boundary value problem \cref{eq.elasnum}. 

In this setting, we aim to minimize an integral quantity of the stress within the structure, i.e. we solve: 
\begin{equation}\label{eq.pbminSOm}
\min\limits_\Omega \: S(\Omega) \text{ s.t. }\Vol(\Omega) = V_T.
\end{equation}
In this formulation, the stress function $S(\Omega)$ is defined by:
$$ S(\Omega) = \int_\Omega  \lvert\lvert \sigma(\u_\Omega) \lvert\lvert^2 \:\d \x, \text{ where } \sigma(\u_\Omega) := Ae(\u_\Omega),$$
and the volume target $V_T$ is set to $0.8$. 

We address the solution of \cref{eq.pbminSOm} with the two-phase Laguerre diagram discretization presented in \cref{sec.Lagrep} and used in the previous examples \cref{sec.numcanti,sec.exbr}. Here again, the sensitivities of the objective function with respect to the seed points $\s$ and cell measures $\bnu$ defining the diagrams at stake are calculated by the optimize-then-discretize approach presented in \cref{sec.derver}.
From the implementation viewpoint, the main difference between this example and those addressed in the previous \cref{sec.numcanti,sec.exbr} is that the optimization problem is no longer self-adjoint, i.e. the shape derivative of $S(\Omega)$ involves an adjoint state $\p_\Omega$ which is not easily related to $\u_\Omega$,
as revealed by the following result, about which we refer again to e.g. \cite{allaire2020survey}.

\begin{proposition}
The function $S(\Omega)$ is shape differentiable at any smooth enough shape $\Omega$, and its derivative reads, for any vector field $\btheta$ vanishing on $\overline{\Gamma_D} \cup \overline{\Gamma_N}$: 
\begin{multline}\label{eq.pOmSOm}
 S^\prime(\Omega)(\btheta) = \int_\Omega \dv(\btheta) Ae(\u_\Omega) : e(\p_\Omega) \:\d \x - 2\mu \int_\Omega \Big( (\nabla \u_\Omega \nabla \btheta) : e(\p_\Omega) + (\nabla \p_\Omega \nabla \btheta) : e(\u_\Omega) \Big) \:\d \x\\
  -\lambda \int_\Omega \Big(\tr (\nabla \u_\Omega \nabla \btheta ) \dv(\p_\Omega) +  \tr (\nabla \p_\Omega \nabla \btheta ) \dv(\u_\Omega)   \Big) \:\d \x,
 \end{multline}
where the adjoint state $\p_\Omega$ is the solution to the following variational problem:
\begin{equation*}
\text{Search for } \p_\Omega \in H^1_{\Gamma_D}(\Omega)^2 \text{ s.t. } \forall \v \in H^1_{\Gamma_D}(\Omega)^2, \quad
\int_\Omega Ae(\p_\Omega) : e(\v) \:\d \x = - 2 \int_\Omega Ae(\u_\Omega) : Ae(\v) \:\d \x.
\end{equation*}
\end{proposition}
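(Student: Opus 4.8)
The plan is to obtain the volume form of $S'(\Omega)(\btheta)$ by the adjoint method, organised through C\'ea's Lagrangian and the method of mapping which transports the perturbed state onto the fixed reference domain $\Omega$; this is the same mechanism that underlies the compliance shape derivative recalled in \cref{sec.numcanti}. First I would introduce the Lagrangian $\mathcal{L}$ defined, for $\u,\p \in H^1_{\Gamma_D}(\Omega)^2$, by
\[
\mathcal{L}(\Omega,\u,\p) := \int_\Omega \lvert\lvert Ae(\u) \lvert\lvert^2 \:\d\x + \int_\Omega Ae(\u):e(\p)\:\d\x - \int_{\Gamma_N} \g\cdot\p \:\d s .
\]
The stationarity condition $\frac{\partial \mathcal{L}}{\partial \p}(\Omega,\u,\p)(\v)=0$ for all $\v$ recovers the variational formulation \cref{eq.varfelas} of \cref{eq.elasnum} (with $\f=\bz$), so that $\u=\u_\Omega$; and $\frac{\partial \mathcal{L}}{\partial \u}(\Omega,\u,\p)(\v) = 2\int_\Omega Ae(\u):Ae(\v)\:\d\x + \int_\Omega Ae(\v):e(\p)\:\d\x = 0$ for all $\v$ yields, upon using the symmetry of $A$, exactly the adjoint problem for $\p_\Omega$ stated in the proposition. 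Since the last two terms of $\mathcal{L}$ cancel at the state, $S(\Omega)=\mathcal{L}(\Omega,\u_\Omega,\p_\Omega)$, and the shape derivative equals the partial derivative of $\mathcal{L}$ with respect to the domain, the arguments $\u_\Omega$ and $\p_\Omega$ being frozen.

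To evaluate this partial derivative, I would parametrise $\Omega_\btheta=(\Id+\btheta)(\Omega)$ and pull each integral back to $\Omega$ through $\phi_\btheta:=\Id+\btheta$. The volume element picks up the Jacobian $\jac:=\det(\I+\nabla\btheta)$, while every occurrence of a strain $e(\cdot)$ is replaced by the transported strain built from $\nabla(\cdot)\,(\I+\nabla\btheta)^{-1}$. Differentiating at $\btheta=\bz$ and using $\left.\frac{d}{d\btheta}\jac\right|_{\btheta=\bz}=\dv(\btheta)$ together with $\left.\frac{d}{d\btheta}(\I+\nabla\btheta)^{-1}\right|_{\btheta=\bz}=-\nabla\btheta$, two mechanisms are at play: the Jacobian produces the terms carrying $\dv(\btheta)$, and the transformation of the gradient turns the derivative of the transported strains of $\u_\Omega$ and $\p_\Omega$ into $-\tfrac12\big(\nabla\u_\Omega\nabla\btheta + (\nabla\u_\Omega\nabla\btheta)^T\big)$ and the analogous expression in $\p_\Omega$. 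Substituting the constitutive law $A\xi = 2\mu\xi + \lambda\tr(\xi)\I$ and using the symmetry of $Ae(\u_\Omega)$ and $Ae(\p_\Omega)$ to replace the symmetrised products by $\nabla\u_\Omega\nabla\btheta$ and $\nabla\p_\Omega\nabla\btheta$, the contractions against $e(\p_\Omega)$ and $e(\u_\Omega)$ and the trace terms separate into the $2\mu$ and $\lambda$ contributions, while the Jacobian terms assemble into $\int_\Omega \dv(\btheta)\,Ae(\u_\Omega):e(\p_\Omega)\:\d\x$, giving the volume form of $S'(\Omega)(\btheta)$.

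The main obstacle is twofold. On the rigorous side, the formal C\'ea manipulation must be justified: one has to establish that $\Omega\mapsto\u_\Omega$ (and hence $\p_\Omega$) is shape differentiable, i.e. that the transported state $\u_\Omega\circ\phi_\btheta$ admits a material derivative in $H^1_{\Gamma_D}(\Omega)^2$. This follows from the implicit function theorem applied to the transported variational problem, whose bilinear form is coercive uniformly for $\btheta$ small in $\Winfty$; it is precisely this differentiability that legitimates freezing $\u_\Omega$, $\p_\Omega$ and discarding their material derivatives, which are eliminated by the state and adjoint equations. On the computational side, the genuine difficulty — and the point where this functional departs from the self-adjoint compliance — is the careful bookkeeping of every contribution: because $\lvert\lvert Ae(\u_\Omega)\lvert\lvert^2$ is not the energy bilinear form, the explicit domain-dependence of the cost itself contributes terms alongside those coming from the adjoint-eliminated state, and all of them must be tracked through the tensor algebra before collecting the $2\mu$, $\lambda$ and $\dv(\btheta)$ pieces; the boundary term $\int_{\Gamma_N}\g\cdot\p$ is inert here because $\btheta$ vanishes on $\overline{\Gamma_D}\cup\overline{\Gamma_N}$ and $\Gamma_N$ is fixed.
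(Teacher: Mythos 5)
The paper offers no proof of this proposition; it simply points to the literature (e.g. Allaire's survey), so your choice of the C\'ea Lagrangian combined with the method of mappings is exactly the expected route, and your identification of the adjoint problem is correct: stationarity of $\mathcal{L}$ in $\u$ gives $\int_\Omega Ae(\p):e(\v)\,\d\x = -2\int_\Omega Ae(\u_\Omega):Ae(\v)\,\d\x$ after using the symmetry of $A$, and the state equation makes the last two terms of $\mathcal{L}$ cancel so that $S(\Omega)=\mathcal{L}(\Omega,\u_\Omega,\p)$ for every $\p$. Your justification of the formal manipulation via the material derivative and uniform coercivity is also the standard and adequate one.

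However, your final assembly step does not close, and the gap is precisely at the point you yourself flag as the "genuine difficulty." Transporting $\int_{\Omega_\btheta}\lvert\lvert Ae(\u)\lvert\lvert^2\,\d\x$ and differentiating produces, after the adjoint eliminates the material derivative $e(\dot\u)$, the two \emph{explicit} contributions
\begin{equation*}
\int_\Omega \dv(\btheta)\,\lvert\lvert Ae(\u_\Omega)\lvert\lvert^2\,\d\x \;-\; 2\int_\Omega Ae(\u_\Omega): A\Bigl(\tfrac12\bigl(\nabla\u_\Omega\nabla\btheta+(\nabla\u_\Omega\nabla\btheta)^T\bigr)\Bigr)\,\d\x ,
\end{equation*}
in addition to the partial shape derivative of the bilinear term $\int_\Omega Ae(\u):e(\p)\,\d\x$, which by itself accounts for every term displayed in \cref{eq.pOmSOm}. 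These extra contributions do not cancel against anything in the Lagrangian and do not reduce to the $2\mu$ and $\lambda$ pieces you list (expanding them gives $-4\mu\int_\Omega Ae(\u_\Omega):(\nabla\u_\Omega\nabla\btheta)\,\d\x - 2\lambda\int_\Omega \tr(\nabla\u_\Omega\nabla\btheta)\,\tr(Ae(\u_\Omega))\,\d\x$, which has no counterpart in the stated formula). So your write-up is internally inconsistent: you correctly observe that "the explicit domain-dependence of the cost itself contributes terms," but then assert that the Jacobian terms assemble into $\int_\Omega\dv(\btheta)\,Ae(\u_\Omega):e(\p_\Omega)\,\d\x$ alone. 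To complete the proof you must either exhibit why these contributions vanish or are absorbed (they are not, in general), or accept that the computation lands on a formula containing them — in which case the discrepancy lies with the displayed statement rather than with your method, and the proof should say so explicitly rather than silently dropping the terms.
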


We apply 250 iterations of our shape and topology optimization \cref{algo.lagevol} to solve the problem \cref{eq.pbminSOm}. The average number $N$ of cells of the considered Laguerre diagrams is 8000, and the total computation takes about 3 hours. We note that the calculation of the adjoint state $\p_\Omega$ requires the solution to the boundary value problem \cref{eq.pOmSOm} for $\p_\Omega$ at each iteration of the process, in addition to that of \cref{eq.elasnum}, which explains the significant increase in computational burden observed in this case.
A few snapshots of the optimization process are reported on \cref{fig.stbrres}. Again, the shape dramatically changes topology in the course of the solution process.  

\begin{figure}[!ht]
\centering
\begin{tabular}{cc}
 \begin{minipage}{0.45\textwidth}
\begin{overpic}[width=1.0\textwidth]{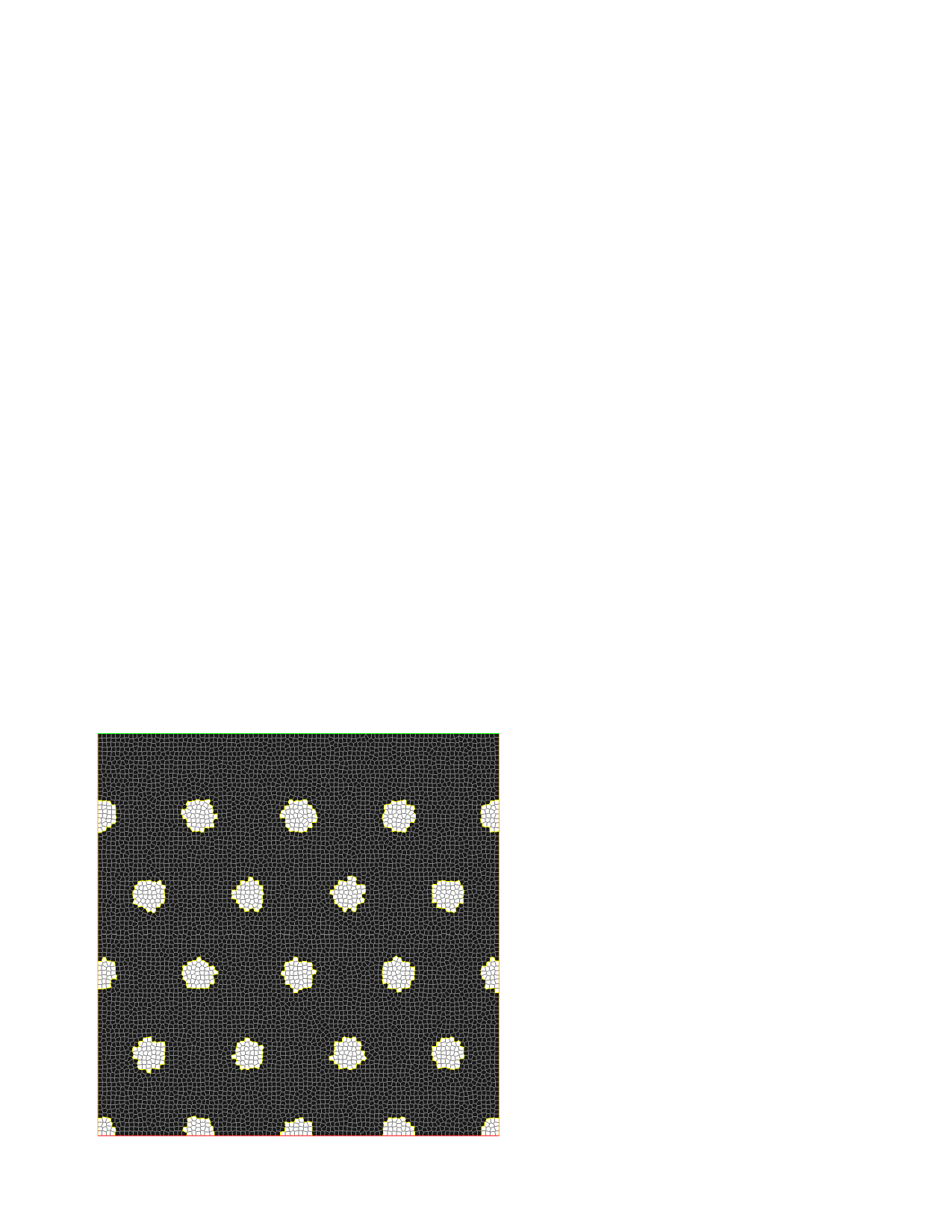}
\put(0,-3){\fcolorbox{black}{white}{$n=0$}}
\end{overpic}
\end{minipage} 
&
\begin{minipage}{0.45\textwidth}
\begin{overpic}[width=1.0\textwidth]{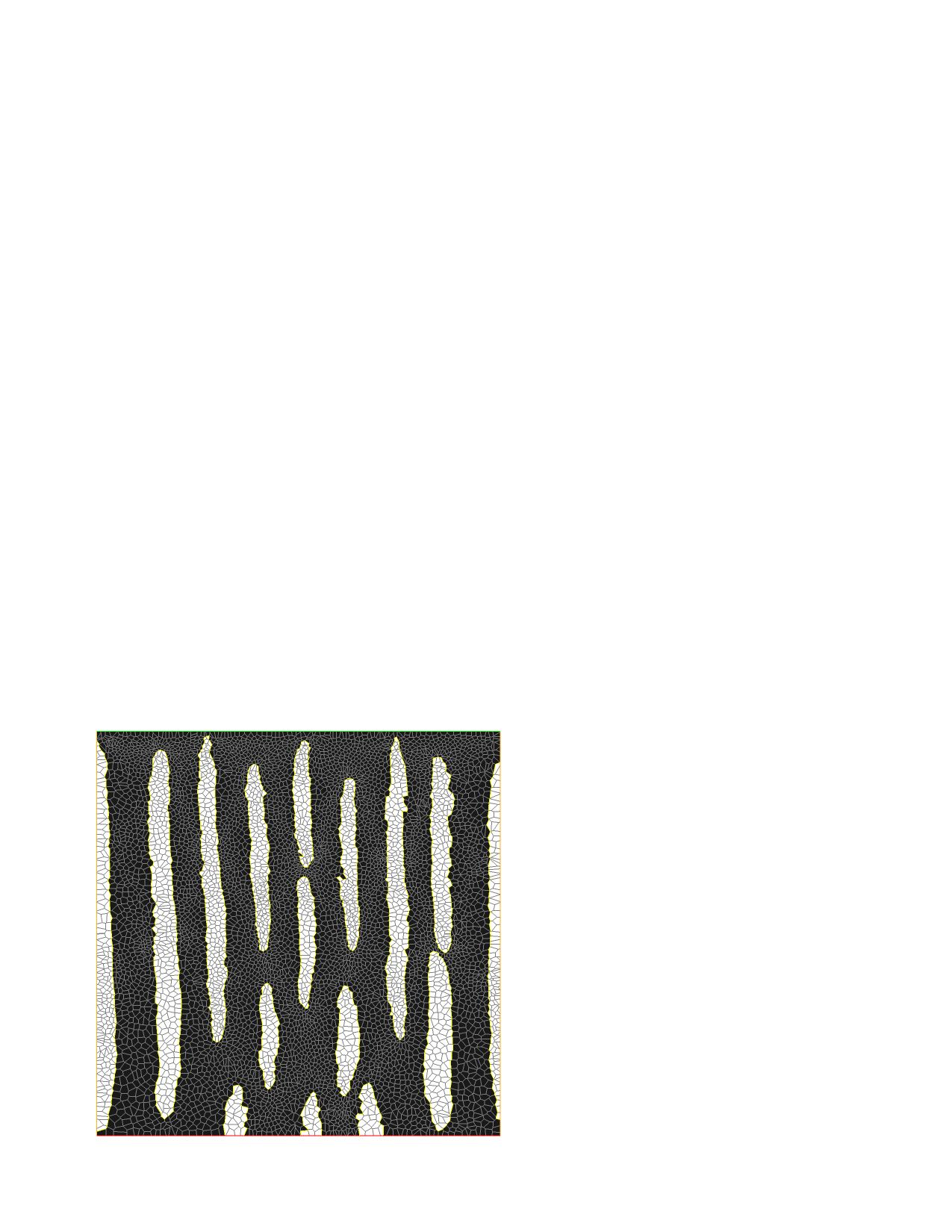}
\put(0,-3){\fcolorbox{black}{white}{$n=34$}}
\end{overpic}
\end{minipage}\\ 
\\
\begin{minipage}{0.45\textwidth}
\begin{overpic}[width=1.0\textwidth]{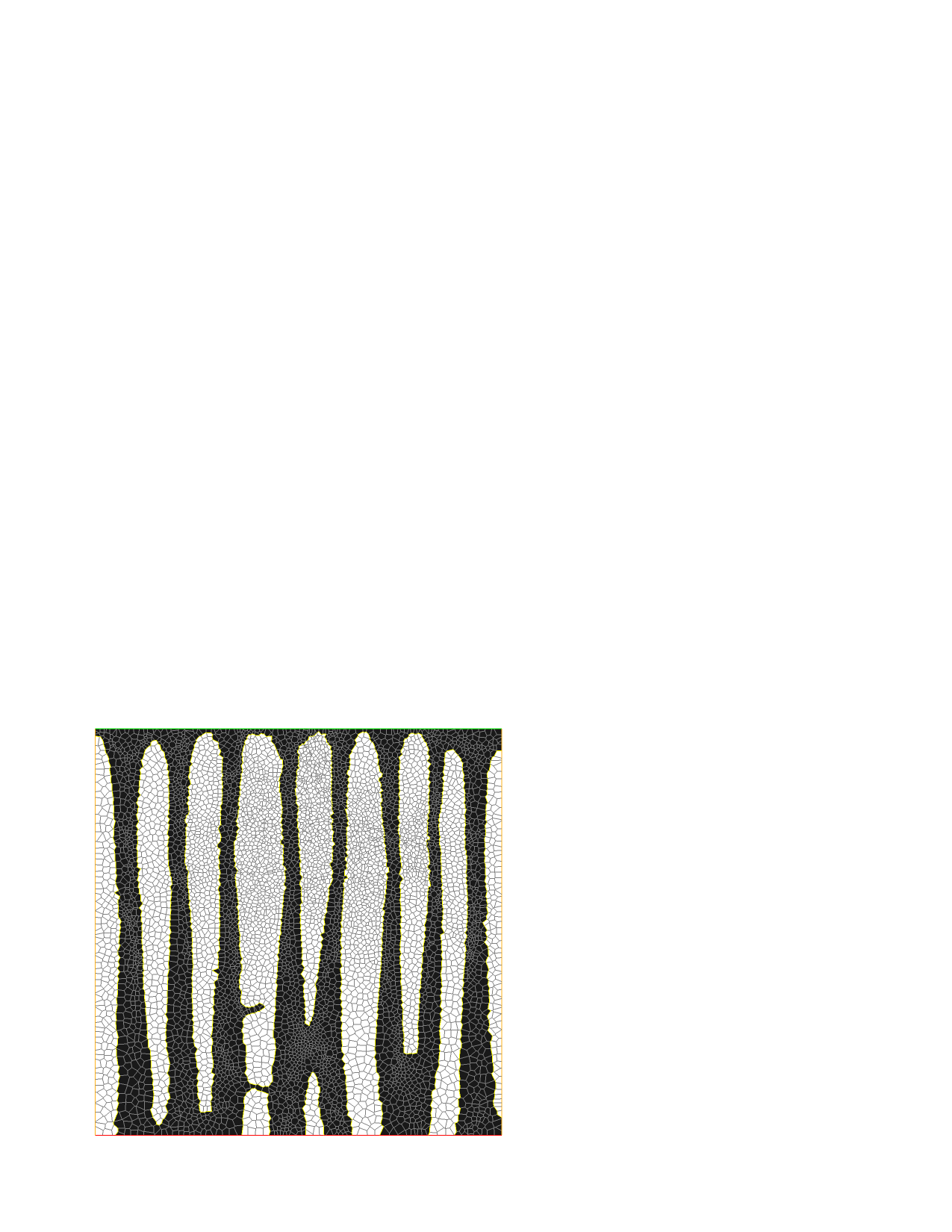}
\put(0,-3){\fcolorbox{black}{white}{$n=81$}}
\end{overpic}
\end{minipage}
 & 
 \begin{minipage}{0.45\textwidth}
\begin{overpic}[width=1.0\textwidth]{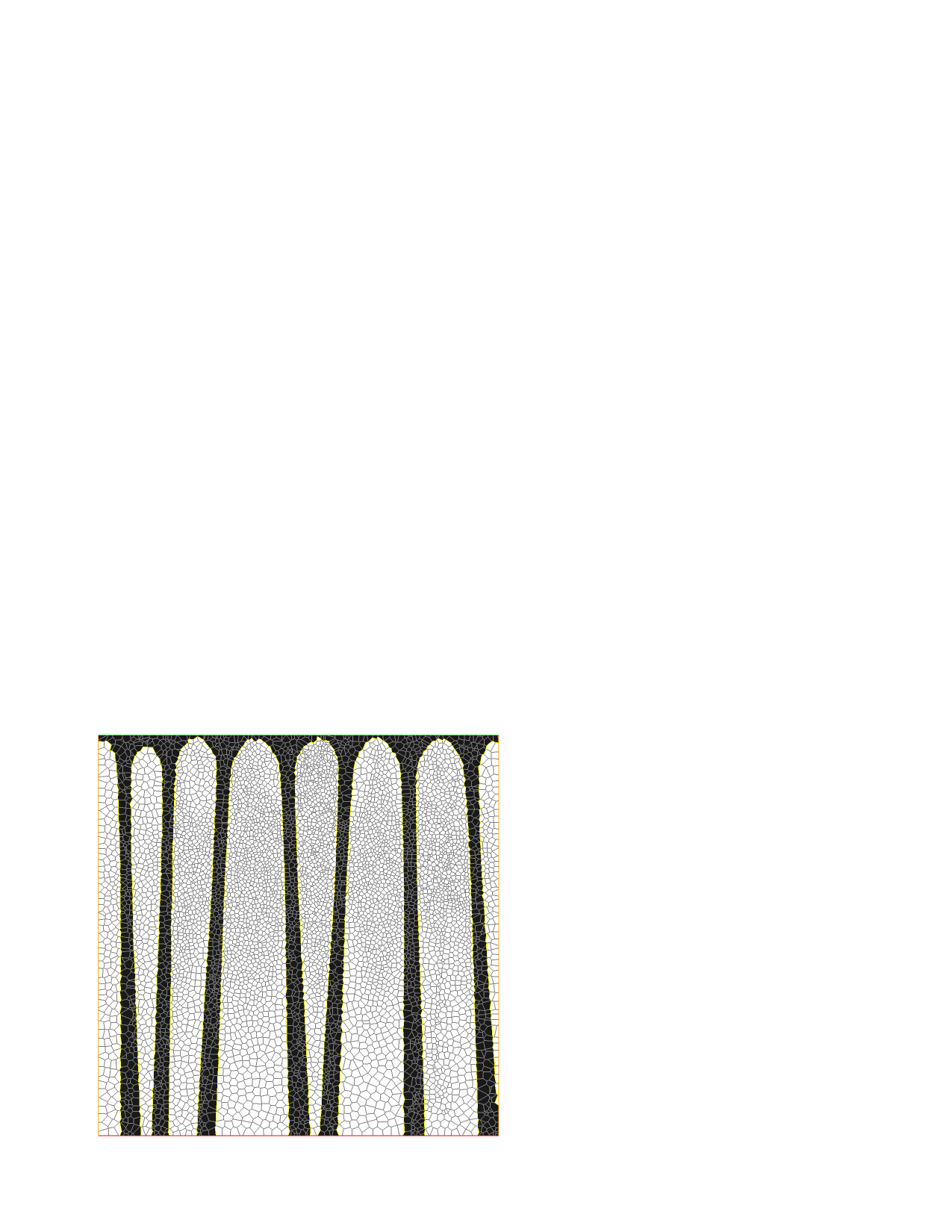}
\put(0,-3){\fcolorbox{black}{white}{$n=250$}}
\end{overpic}
\end{minipage} \\ 
\end{tabular}
\caption{\it Various iterates in the solution of the stress minimization problem within a self-sustaining bridge of \cref{sec.stbr}.}
\label{fig.stbrres}
\end{figure}

\section{Conclusions and perspectives}\label{sec.concl}

\noindent In this article, we have presented a novel numerical framework for optimal design, inspired by modern concepts from fields so diverse as optimal transport, computational geometry, and shape and topology optimization. The key feature of our method is a consistent body-fitted representation of the shape, in terms of a (modified version of a) Laguerre diagram. The latter is parametrized in terms of the seed points and the measures of its cells, as permitted by deep results from optimal transport theory. The evolution of the shape through the iterations of the process is driven by the shape gradients of the objective and constraint functionals, which are suitably expressed in terms of these defining variables of Laguerre diagrams. Our numerical strategy is Lagrangian in nature, since the evolution of the shape is tracked via the motion of the seed points and weights of the diagram. However, dramatic updates can be accounted for in a robust fashion, including topological changes, since the diagram is never deformed itself -- it is rediscovered from the updated seed points and cell measures. 
This framework can generally handle complex physical situations, featuring multiple phases, and we hope that it can serve to other applications. Remarkably, it features a description of a wide variety of shapes by a relatively small number of parameters, which pleads in favor of the use of such representation in connection with reduced basis methods, or of the use of neural networks for a parametrization of shapes by such a diagram, and the realization of its optimization with machine learning techniques. 

For the sake of simplicity, we have focused our presentation and numerical illustrations on the case of two space dimensions, although the underlying theory holds true in 3d, and the critical numerical ingredients of the method (notably, the computation of Laguerre diagrams) are already available in this context. In future work, we ambition to adapt this numerical methodology to this theoretically similar, albeit numerically much more involved (and much richer) 3d context.

On the longer term, and from a more theoretical viewpoint, it would be fascinating to appraise more rigorously the 
new type of shape evolution featured by this method. Indeed, the examples of \cref{sec.nummcf,sec.numcanti} have demonstrated how different the induced notion of sensitivity with respect to the design is from more ``classical'' concepts such as shape and topological derivatives: we suspect that these issues are somehow related to the idea of linearized optimal transport, see \cref{rem.linearOT}.
\par\bigskip

\noindent \textbf{Acknowledgements.} The work of C.D. and E.O. is partially supported by the projects ANR-18-CE40-0013 SHAPO and ANR-22-CE46-0006 StableProxies, financed by the French Agence Nationale de la Recherche (ANR).
This work was completed while C.D. was visiting the Laboratoire Jacques-Louis Lions from Sorbonne Universit\'es, whose hospitality is gratefully acknowledged.
E.O. was also supported by the ``Institut Universitaire de France".
The work of B.L. is supported by the COSMOGRAM-launchpad Inria Action Exploratoire grant.

\appendix

\section{Calculation and differentiation of the coordinates of the vertices of a diagram with respect to its seed points and weights}\label{app.verseeds}

\noindent In this appendix, we detail the calculation of the positions of the vertices $\q = \left\{\q_j \right\}_{j=1,\ldots,M} \in \R^{dM}$ of the diagram $\bVsp$ in \cref{eq.decompOm} 
from the datum of its seed points $\s$ and weights $\bpsi$. We also investigate the calculation of their derivatives with respect to $\s$ and $\bpsi$, 
which is a central ingredient in the procedure of \cref{sec.vertoseeds} for expressing differentiation of functions with respect to the positions $\q$ of the vertices of the diagram in terms of the generating seed points $\s$ and weights $\bpsi$. 
These formulas are admittedly not new, but their rigorous exposition and establishment are not so easily found in the literature.
Our presentation focuses on the case $d=2$, but similar analyses could be conducted in three space dimensions.\par\medskip

Let us start with the following general result. 
\begin{proposition}\label{prop.derver}
Let $\s \in \R^{dN}$ and $\bpsi \in \R^N$ be sets of seed points and weights satisfying the genericity assumptions \cref{eq.Gen0,eq.Gen1,eq.Gen2,eq.Gen3,eq.Gen4,eq.Gen5,eq.Gen6} and
let  $\q = \left\{\q_j \right\}_{j=1,\ldots,M} \in \R^{dM}$ denote the vertices of the diagram $\bVsp$.
Then for $\widehat \s \in \R^{dN}$ and $\widehat \bpsi \in \R^N$ small enough, the perturbed diagram $\mathbf{V}(\s + \widehat \s, \bpsi + \widehat \bpsi)$ has the same neighbour relations as $\bVsp$. 

Moreover, there exists a mapping $\m : \R^{dN}_{\s} \times \R^N_{\bpsi} \to \Winfty$ which is differentiable in a neighborhood of $(\bz,\bz)$ such that:
$$ \forall i =1,\ldots, N, \quad \m(\widehat \s, \widehat \bpsi) \Big( V_i(\s,\bpsi) \Big) = V_i(\s + \widehat \s, \bpsi + \widehat \bpsi),$$
and 
\begin{equation*}
 \text{The vertices of the diagram } \mathbf{V}(\s + \widehat \s, \bpsi + \widehat \bpsi)
  \text{ are exactly the points } \m(\widehat \s, \widehat \bpsi) (\q_j), \:\: j=1,\ldots,M.  
  \end{equation*}
\end{proposition}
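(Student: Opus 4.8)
The plan is to treat each vertex of the diagram $\bVsp$ as the solution of a small, nondegenerate system of equations in the unknown position, whose coefficients depend smoothly on $(\s,\bpsi)$, and to let the implicit function theorem do the work. Recall from \cref{propdef.Lag} that the boundary of each cell $V_i(\s,\bpsi)$ consists of straight edges $\be_{ij}$ (shared with the neighbours $j\in\calN_i$), circular arcs $\calC_i$ carried by the circle $\partial B(\s_i,\psi_i^{1/2})$, and boundary segments $\calL_i\subset\partial D$, and that the vertices $\q$ are the endpoints of these pieces. First I would classify the vertices according to the genericity conditions \cref{eq.Gen0,eq.Gen1,eq.Gen2,eq.Gen3,eq.Gen4,eq.Gen5,eq.Gen6}. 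In $d=2$ there are essentially four types: (a) a triple point where three cells $V_i,V_j,V_k$ meet, cut out by the two affine equations $|\x-\s_i|^2-\psi_i=|\x-\s_j|^2-\psi_j=|\x-\s_k|^2-\psi_k$; (b) a point of $\partial\Omega$ where two cells $V_i,V_j$ meet the void $V_0$, lying on the two circles $\partial B(\s_i,\psi_i^{1/2})$ and $\partial B(\s_j,\psi_j^{1/2})$; (c) a point of $\partial D$ meeting two cells; and (d) a point of $\partial D$ meeting one cell and the void. In each case the position is characterised by exactly two scalar equations $F(\q;\s,\bpsi)=\bz$ (for types (c),(d) one of them encodes membership of the smooth curve $\partial D$), and $F$ depends smoothly on $(\s,\bpsi)$.

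The heart of the argument is that the genericity conditions render the partial Jacobian $\tfrac{\partial F}{\partial \q}$ invertible at each vertex, i.e. the two defining curves cross transversally. For a triple point of type (a) this amounts to the two lines $\{|\x-\s_i|^2-\psi_i=|\x-\s_j|^2-\psi_j\}$ and $\{|\x-\s_i|^2-\psi_i=|\x-\s_k|^2-\psi_k\}$, which are orthogonal to $\s_i\s_j$ and $\s_i\s_k$, being non-parallel — exactly the non-alignment of $\s_i,\s_j,\s_k$ supplied by \cref{eq.Gen1} — while \cref{eq.Gen3} bounds the number of incident cells by three, making the classification exhaustive. For type (b), transversality of the two circles is the non-degeneracy of the triangle $\q\s_i\s_j$, which is \cref{eq.Gen2}. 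The smoothness of $\partial D$ at boundary vertices \cref{eq.Gen4}, together with the non-orthogonality conditions \cref{eq.Gen5,eq.Gen6}, supplies the analogous transversal crossing with $\partial D$ for types (c),(d), and \cref{eq.Gen0} guarantees that every cell has nonempty interior so the expected combinatorial picture genuinely occurs. The implicit function theorem then yields, for each $j$, a smooth map $(\widehat\s,\widehat\bpsi)\mapsto\q_j(\widehat\s,\widehat\bpsi)$ defined near the origin and reducing to $\q_j$ there. Since the strict inequalities selecting the correct branch and the active constraints persist under small perturbation, the combinatorial type is locally constant; in particular the neighbour relations $\calE$ are unchanged, which is the first assertion.

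With the vertices moving smoothly and the combinatorial structure frozen, I would then build the transformation $\Id+\m(\widehat\s,\widehat\bpsi)$ (the statement identifying the transformation with its displacement field $\m(\widehat\s,\widehat\bpsi)\in\Winfty$, with $\m(\bz,\bz)=\bz$) cell by cell and glue. On a shared edge or arc I would use a canonical parametrization anchored at its two endpoint vertices: map each $\be_{ij}$ affinely in arclength onto $\be_{ij}$ of the perturbed diagram, and map each arc $\calC_i$ onto the perturbed arc by the angular parametrization from $\s_i$, rescaled to the perturbed centre, radius and endpoints — so straight edges go to straight edges, arcs to arcs, and both incident cells produce the same boundary map. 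I would extend this boundary map into each cell by coning from an interior reference point, e.g. the centroid $\c_i$ (interior by convexity), sending $\c_i$ to its perturbed position. The per-cell maps agree on shared boundaries by the consistent parametrization, so they patch into a global homeomorphism taking $V_i(\s,\bpsi)$ to $V_i(\s+\widehat\s,\bpsi+\widehat\bpsi)$; the void $V_0$ and the exterior are treated identically, arranging $\m\equiv\bz$ far from $\overline D$ so that $\m$ has compact support. Each per-cell map is Lipschitz (smooth away from the cone point, Lipschitz up to it on a bounded star-shaped set), hence $\m(\widehat\s,\widehat\bpsi)\in\Winfty$, and since the vertices, centroids and parametrizations depend smoothly on $(\widehat\s,\widehat\bpsi)$, the map $(\widehat\s,\widehat\bpsi)\mapsto\m(\widehat\s,\widehat\bpsi)$ is differentiable into $\Winfty$ near the origin. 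By construction it carries $\q_j$ to the perturbed vertex, giving the final assertion.

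The genuinely delicate part is twofold. Analytically, the whole statement rests on transversality at every vertex, so the crux is to verify case by case that the hypotheses \cref{eq.Gen0,eq.Gen1,eq.Gen2,eq.Gen3,eq.Gen4,eq.Gen5,eq.Gen6} really translate into invertibility of $\tfrac{\partial F}{\partial\q}$; this is routine but is precisely where all the genericity assumptions are consumed. Geometrically, the awkward point is the circular arcs: a naive piecewise-affine interpolation does not send arcs to arcs, and the per-cell similarity centred at $\s_i$ that maps circle $i$ correctly fails to move the shared endpoint vertices to the positions dictated by the implicit function theorem on the neighbouring circle. Reconciling \emph{arcs to arcs} with \emph{consistent shared vertices and a uniform Lipschitz bound} is the real content of the construction, which the endpoint-anchored canonical parametrization is designed to resolve.
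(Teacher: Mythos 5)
Your treatment of the second half of the statement — classifying the vertices into types, checking case by case that the genericity conditions make the $2\times 2$ Jacobian $\partial F/\partial \q$ invertible, invoking the implicit function theorem, and then gluing a global deformation that is affine/parametrized on the boundary pieces and extended into each cell — is essentially the paper's argument (the paper extends by barycentric coordinates rather than by coning from centroids, and is terser about the arcs, but this is the same construction). Your matching of hypotheses to cases ($\s_i,\s_j,\s_k$ non-aligned for triple points, $\q,\s_i,\s_j$ non-aligned for cell--cell--void vertices, the non-orthogonality conditions at $\partial D$) is also the one the paper uses.

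The genuine gap is in the first assertion, the preservation of the neighbour relations. Your argument is purely local: the implicit function theorem at each \emph{existing} vertex, plus persistence of the strict inequalities active \emph{there}, shows that the combinatorial picture near each existing vertex is frozen, hence that existing adjacencies survive. It does not rule out the \emph{creation} of new adjacencies: two cells $V_i(\s,\bpsi)$ and $V_j(\s,\bpsi)$ that are not neighbours could, after an arbitrarily small perturbation, acquire a tiny common edge (with two brand-new vertices) at a location far from every vertex of the unperturbed diagram, and nothing in a vertex-by-vertex IFT analysis sees this. Excluding it requires a separate global argument. The paper does this by contradiction and compactness: if there were perturbations $(\widehat\s^n,\widehat\bpsi^n)\to(\bz,\bz)$ creating a new edge between cells $i$ and $j$, one extracts subsequences so that the two endpoints of that edge converge; since $V_i(\s,\bpsi)\cap V_j(\s,\bpsi)$ is not an edge, the two limit points coincide, and the resulting single point lies on the boundaries of four distinct cells, contradicting \cref{eq.Gen3}. (For the persistence of existing adjacencies the paper also argues slightly differently, via the continuity of the optimal value of a constrained max--min program characterizing adjacency, but your version of that direction is fine.) You should add the non-creation argument explicitly; without it the claim that $\calE$ is unchanged — on which the well-definedness of your per-cell gluing silently relies — is not established.

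A minor remark: the statement only requires $\m(\widehat\s,\widehat\bpsi)$ to carry each cell onto its perturbed counterpart and each vertex to the corresponding perturbed vertex; it does not require arcs to be mapped onto arcs. So the difficulty you flag as "the real content of the construction" is self-imposed — a consistent piecewise parametrization of the boundary pieces anchored at the (smoothly moving) endpoints, extended into the interiors, suffices, which is exactly what the paper does.
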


\begin{proof}
At first, we prove that $i\neq j \in\left\{ 1,\ldots,N\right\}$ are the indices of neighbor cells in the diagram $\bVsp$ if and only if 
the cells $i$ and $j$ are neighbors in $\mathbf{V}(\s + \widehat \s, \bpsi + \widehat \bpsi)$ for small enough perturbations $\widehat \s \in \R^{dN}$ and $\widehat \bpsi \in \R^N$.
For simplicity, we only deal with the case where the edge $\be_{ij}$ between the cells $V_i(\s,\bpsi)$ and $V_j(\s,\bpsi)$ in $\bVsp$ lies entirely in the open set $D$, the treatment of the situation where it intersects $\partial D$ being similar.  

To achieve our purpose, let us first observe that $V_i(\s,\bpsi)$ and $V_j(\s,\bpsi)$ are neighbors in the diagram $\bVsp$ if and only if the following property holds true: 
$$\exists \x \in D \text{ s.t. } \left\{\begin{array}{cl}
\lvert \x - \s_i \lvert^2 - \psi_i =  \lvert \x - \s_j \lvert^2 - \psi_j,&\\
\lvert \x - \s_i \lvert^2 - \psi_i <  \lvert \x - \s_k \lvert^2 - \psi_k, & \text{for each } k \notin \left\{i,j\right\}. 
\end{array}
\right.$$
In turn, this is equivalent to the positivity of the optimal value
$$\max\limits_{\x} \left\{ \min\limits_{k \notin \left\{i,j\right\}} \left\{ \lvert \x - \s_k \lvert^2 - \psi_k - \Big(  \lvert \x - \s_i \lvert^2 - \psi_i \Big) \right\}, \: \x \in D, \:\: \lvert \x - \s_i \lvert^2 - \psi_i =  \lvert \x - \s_j \lvert^2 - \psi_j\right\}.$$
We now invoke the continuity of the optimal value of a constrained optimization program with respect to perturbations of its parameters, see e.g. Prop. 4.4 in \cite{bonnans2013perturbation}, whose assumptions are satisfied in the present case.
Hence, if the cells indexed by $i \neq j$ are neighbors in the diagram $\bVsp$, then they are also neighbors in the diagram $\mathbf{V}(\s + \widehat \s, \bpsi + \widehat \bpsi)$ for small enough $\widehat \s \in \R^{dN}$, $\widehat \bpsi\in \R^N$.
 Note that a similar argument shows that if $\bVsp$ satisfies \cref{eq.Gen0,eq.Gen1,eq.Gen2,eq.Gen3,eq.Gen4,eq.Gen5,eq.Gen6}, then so does   $\mathbf{V}(\s + \widehat \s, \bpsi + \widehat \bpsi)$.
 
Conversely, let us now prove that there exists $\e >0$ such that, for any perturbations $\widehat{\s}\in \R^{dN}$, $\widehat{\bpsi}\in \R^N$ with $\lvert \widehat{\s} \lvert + \lvert \widehat{\bpsi} \lvert < \e$, the diagram $\mathbf{V}(\s + \widehat{\s}, \bpsi + \widehat{\bpsi)})$ cannot have neighbor cells that are not already neighbors in $\bVsp$. 
To achieve this, we proceed by contradiction, assuming that there exist sequences $\widehat{\s}^n \in \R^{dN}$, $\widehat{\bpsi}^n \in \R^N$ converging to $\bzero$ and index sequences $i^n \neq j^n \in \left\{1,\ldots,N \right\}$ such that the cells with indices $i^n$, $j^n$ are neighbors in $\mathbf{V}(\s + {\widehat \s}^n, \bpsi + \widehat{\bpsi}^n)$ but not in $\bVsp$. Actually, since the number $N$ of seed points is finite, we may extract a subsequence (still labeled by $^n$) along which these indices $i$, $j$ are independent of $n$.
Let then $\a^n$ and $\b^n$ denote the vertices of the edge between the cells $i$ and $j$ in $\mathbf{V}(\s + \widehat \s^n, \bpsi + \widehat \bpsi^n)$, 
and let $k^n,l^n \notin \left\{i,j \right\}$ denote the (distinct) indices such that $\a^n$ (resp. $\b^n$) is at the intersection between the cells $i$, $j$ and $k^n$ (resp. $l^n$) in $\mathbf{V}(\s + \widehat \s^n, \bpsi + \widehat \bpsi^n)$. 
Up to extraction of another subsequence, we may assume that these indices do not depend on $n$, and we denote them by $k\neq l$. 
We may also assume that $\a^n$ and $\b^n$ converge to limiting positions $\a^*$ and $\b^*$, respectively. Both points belong to the boundaries $\partial V_i(\s,\bpsi)$ and $\partial V_j(\s,\bpsi)$. Since $V_i(\s,\bpsi)$ and $V_j(\s,\bpsi)$ are not neighbors in $\bVsp$, one must then have $\a^* = \b^*$ and this single vertex is at the intersection between the four cells $V_i(\s,\bpsi)$, $V_j(\s,\bpsi)$, $V_k(\s,\bpsi)$ and $V_l(\s,\bpsi)$. This contradicts Assumption \cref{eq.Gen1}, and the statement that $i$ and $j$ are not neighbors in $\bVsp$ is absurd.

At this point, we have proved that for $\widehat \s \in \R^{dN}$, $\widehat \bpsi\in \R^N$ small enough, the perturbed diagram  $\mathbf{V}(\s + \widehat \s, \bpsi + \widehat \bpsi)$ has the same neighbor relations as $\bVsp$. 
Now, each cell $V_i(\s + \widehat \s, \bpsi + \widehat \bpsi)$ is a closed, convex subset of $\overline D$ whose non degenerate edges are completely characterized by their endpoints.
The calculations below show that all the vertices of this diagram behave as differentiable functions in a neighborhood of $(\s,\bpsi)$. 
We may then define $\m(\widehat \s,\widehat \bpsi)$ as an affine function in restriction to every edge of the cells $V_i(\s,\bpsi)$, 
and then extend the latter into $D$ as a whole by barycentric coordinates extension. This induces a $\Winfty$ mapping $\m(\widehat \s,\widehat \bpsi)$ which is a differentiable of the variables $(\s,\bpsi)$ in the neighborhood of $(\bz,\bz) \in \R^{dN} \times \R^N$.
\end{proof}

We provide the missing ingredients of the above proof by showing that the vertices of the diagram $\bVsp$ are smooth functions of the seed points $\s$ and weights $\bpsi$ when they are slight perturbations of reference values satisfying \cref{eq.Gen0,eq.Gen1,eq.Gen2,eq.Gen3,eq.Gen4,eq.Gen5,eq.Gen6}. Meanwhile, we provide  explicit characterizations of their derivatives, which are useful in particular in the numerical implementation of the methods presented in \cref{sec.deropt}. 
We proceed in the case $d=2$ for simplicity, but a similar analysis could be conducted when $d=3$, up to an increased level of tediousness. We distinguish between several types of vertices, which are illustrated on \cref{fig.Lagvert}.
\par\medskip 

\begin{figure}
\centering
\begin{tabular}{ccc}
\begin{minipage}{0.3\textwidth}
\begin{overpic}[width=1.0\textwidth]{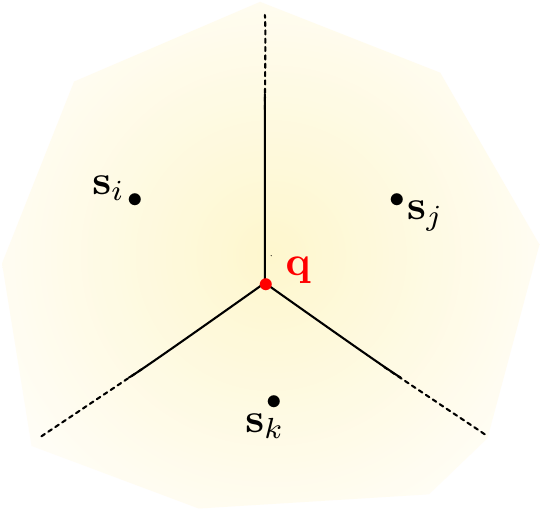}
\put(0,3){\fcolorbox{black}{white}{$a$}}
\end{overpic}
\end{minipage}&
\begin{minipage}{0.39\textwidth}
\begin{overpic}[width=1.0\textwidth]{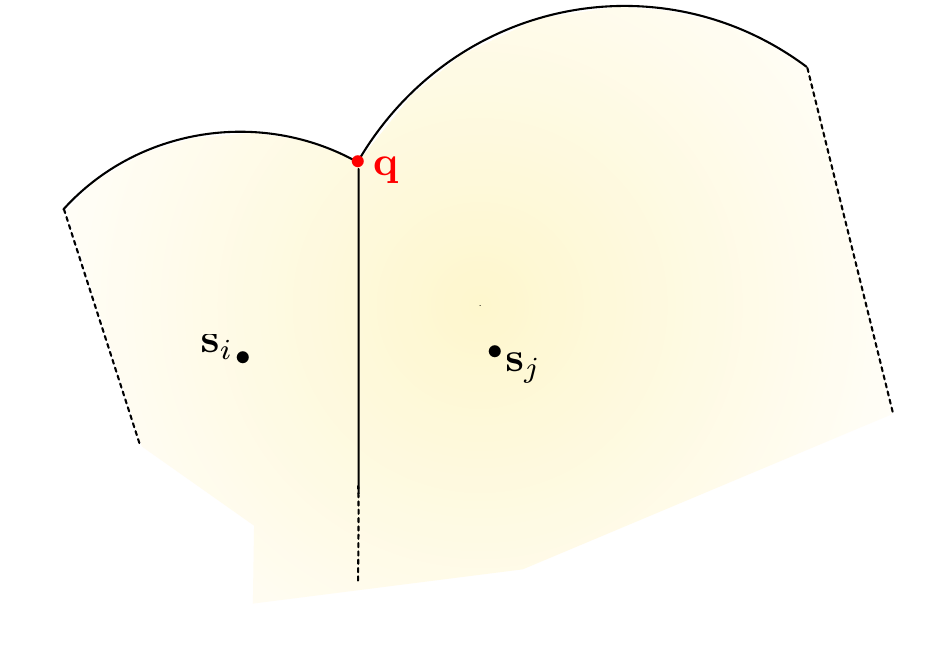}
\put(0,3){\fcolorbox{black}{white}{$b$}}
\end{overpic}
\end{minipage}& 
\begin{minipage}{0.345\textwidth}
\begin{overpic}[width=1.0\textwidth]{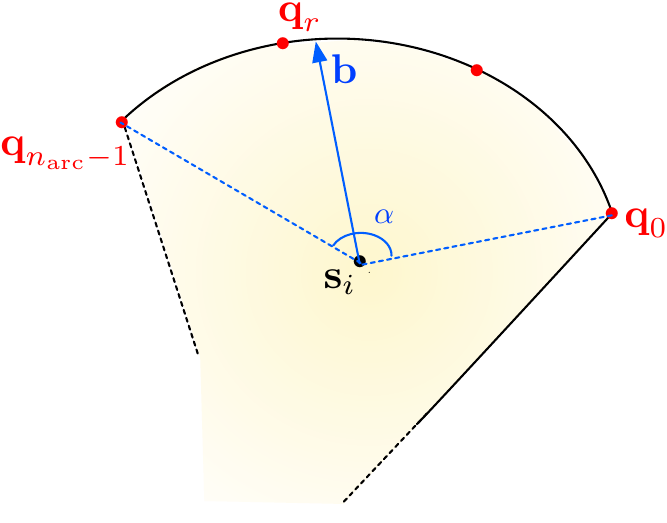}
\put(0,3){\fcolorbox{black}{white}{$c$}}
\end{overpic}
\end{minipage}
\end{tabular}\par\medskip

\begin{tabular}{cc}
\begin{minipage}{0.3\textwidth}
\begin{overpic}[width=1.0\textwidth]{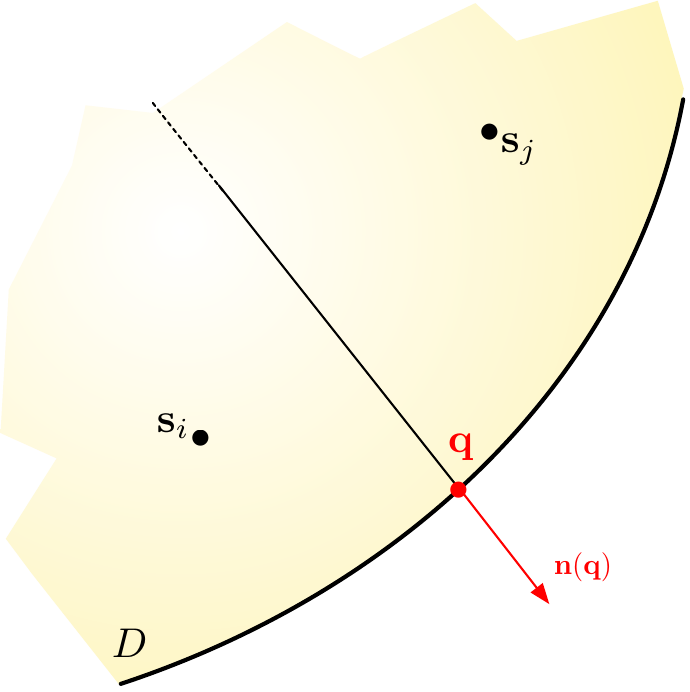}
\put(0,3){\fcolorbox{black}{white}{$d$}}
\end{overpic}
\end{minipage}&
\begin{minipage}{0.39\textwidth}
\begin{overpic}[width=1.0\textwidth]{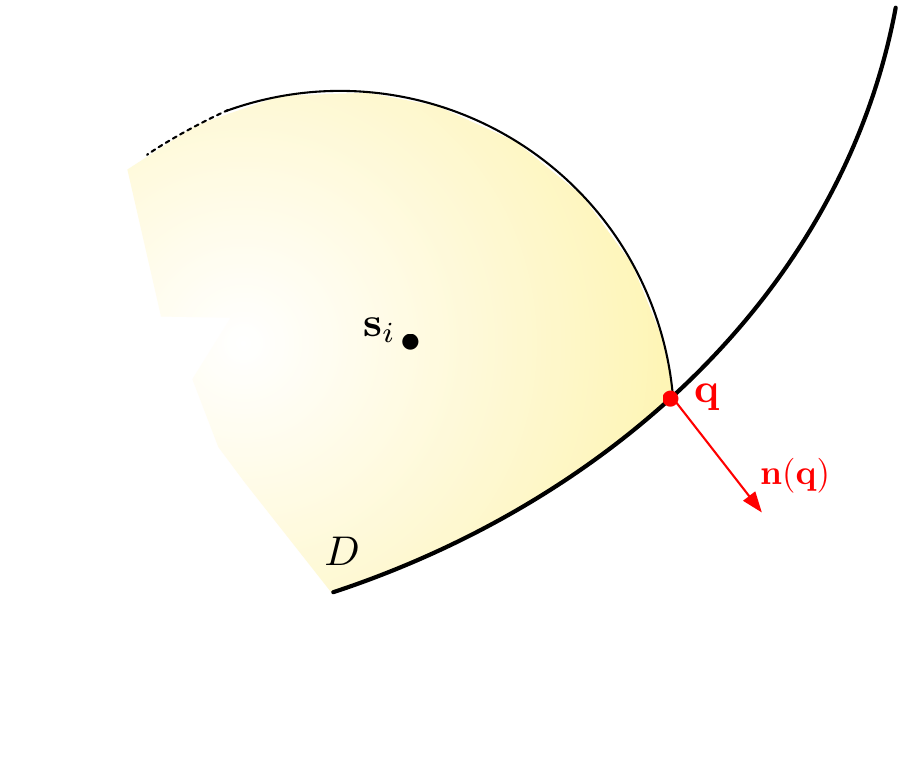}
\put(0,3){\fcolorbox{black}{white}{$e$}}
\end{overpic}
\end{minipage}
\end{tabular}
\caption{\it Different categories of vertices in a diagram $\bVsp$: (a) Case of a vertex $\q$ at the intersection between $3$ cells; (b) Case of a vertex between $2$ cells and the void phase; (c) Case of a vertex resulting from the discretization of an exterior circle; (d) Case of a vertex at the intersection between $2$ cells and the boundary $\partial D$ of the computational domain $D$; (e) Case of a vertex at the intersection between one cell, the void phase, and $\partial D$.}
\label{fig.Lagvert}
\end{figure}

\noindent \textit{Case 1: Vertices at the intersection of three cells.} 
Let $\q \in \R^2$ be the vertex at the intersection between three given cells $V_i(\s,\bpsi)$, $V_j(\s,\bpsi)$ and $V_k(\s,\bpsi)$, for some distinct indices $i,j,k \in \left\{1,\ldots,N\right\}$.  
This vertex is characterized by the following relations:
$$ 
\left\{
\begin{array}{ccc}
| \q - \s_i| ^2 - \psi_i &=& |\q - \s_j|^2 - \psi_j \\[0.4em]
| \q - \s_i| ^2 - \psi_i &=& |\q - \s_k|^2 - \psi_k, 
\end{array}
\right.
$$
which rewrites: 
$$ 
\left\{
\begin{array}{ccc}
 \langle \q , \s_j - \s_i \rangle &=& \frac12(|\s_j|^2 - |\s_i|^2 - (\psi_j-\psi_i))\\[0.4em]
 \langle \q , \s_k - \s_i \rangle &=& \frac12(|\s_k|^2 - |\s_i|^2 - (\psi_k-\psi_i)). 
\end{array}
\right.
$$
Invoking Assumptions \cref{eq.Gen1,eq.Gen2}, this system uniquely determines $\q$ as a smooth function of $\s_i$, $\s_j$, $\s_k$ and $\psi_i$, $\psi_j$, $\psi_k$, 
a dependence which is not made explicit for notational simplicity. 
Taking derivatives in the previous system then leads to the following $2 \times 2$ linear systems for the sensitivities $\left[ \nabla_{\s_i} \q \right]\in \R^{2\times 2}$ and $\frac{\partial \q}{\partial \psi_i} \in \R^2$:
$$ \left( \begin{array}{c}
 \s_j-\s_i \\
 \hline 
\s_k-\s_i
\end{array}
\right)\left[ \nabla_{\s_i} \q\right] = \left( \begin{array}{c}
\q-\s_i \\
 \hline 
\q-\s_i
\end{array}
\right), \text{ and }
 \left( \begin{array}{c}
 \s_j-\s_i \\
 \hline 
\s_k-\s_i
\end{array}
\right)
\frac{\partial \q}{\partial \psi_i}= \left( \begin{array}{c}
\frac12\\[0.2em]
\frac12
\end{array}
\right),
$$
which are invertible on account of Assumption \cref{eq.Gen1}. Similar expressions hold true for the derivatives $\left[ \nabla_{\s_j} \q \right]$, $\left[ \nabla_{\s_k} \q \right]$ and $\frac{\partial \q}{\partial \psi_j}$, $\frac{\partial \q}{\partial \psi_k}$. \par\medskip

\noindent \textit{Case 2: Vertices at the intersection of two cells and void.} 
Let now $\q \in \R^2$ be one vertex at the intersection between the cells $V_i(\s,\bpsi)$, $V_j(\s, \bpsi)$ ($i\neq j$) and the void phase $V_0(\s,\bpsi)$.  
Then, $\q$ satisfies the following system of equations:
$$ 
\left\{
\begin{array}{ccc}
 \langle \q , \s_j - \s_i \rangle &=& \frac12(|\s_j|^2 - |\s_i|^2 - (\psi_j-\psi_i))\\ 
|\q - \s_i|^2 &=& \psi_i.
\end{array}
\right.
$$
Note that there may be several such vertices, and so the above system may accordingly have multiple solutions.
Assumption \cref{eq.Gen2} and the implicit function theorem together imply that $\q$ behaves as a smooth function of $\s_i$, $\s_j$, $\psi_i$ and $\psi_j$ -- actually, any solution to this system induces a smooth branch of solutions for small perturbations of these parameters.
Taking derivatives, it follows:
$$ \left( \begin{array}{c}
 \s_j-\s_i \\
 \hline 
\q-\s_i
\end{array}
\right)\left[\nabla_{\s_i} \q\right] = \left( \begin{array}{c}
\q-\s_i \\
 \hline 
\q - \s_i
\end{array}
\right), \text{ and }
 \left( \begin{array}{c}
 \s_j-\s_i \\
 \hline 
\q-\s_i
\end{array}
\right)\frac{\partial \q}{\partial \psi_i}= \left( \begin{array}{c}
\frac12\\[0.2em]
\frac12
\end{array}
\right).
$$
These systems are invertible on account of Assumption \cref{eq.Gen2}. Again, similar expressions hold for the derivatives $\left[\nabla_{\s_j}\q \right]$ and $\frac{\partial \q}{\partial \psi_j}$.\par\medskip

\noindent \textit{Case 3: Vertices corresponding to the discretization of a circular arc.} 
Consider a circular arc pertaining to the boundary of the cell $V_i(\s,\bpsi)$, and let $\q_r$, $r=0, \ldots, \narc$ be the vertices corresponding to its discretization, enumerated counterclockwise, see \cref{sec.geomcomp}.
Let $\alpha \in (0,2\pi)$ be the angle from $\s_i \q_0$ to $\s_i \q_{\narc}$, which is a smooth function of $\s_i$, $\q_0$ and $\q_{\narc}$, 
and, in turn, as revealed by the previous item, a smooth function of the seed points $\s$ and weights $\bpsi$. 
More precisely, it holds: 
$$ \cos \alpha = \left\langle \frac{\s_i\q_0}{\lvert \s_i \q_0\lvert} , \frac{\s_i\q_{\narc}}{\lvert \s_i \q_{\narc}\lvert} \right\rangle, \:\: \sin \alpha = \det \left( \frac{\s_i\q_0}{\lvert \s_i \q_0\lvert} , \frac{\s_i\q_{\narc}}{\lvert\s_i \q_{\narc}\lvert}\right),$$
whence we infer $\alpha\in (0,2\pi)$ via the formula 
$$ \alpha =  2\text{acot} \left( \frac{1+\cos \alpha}{\sin \alpha}\right)  = \pi - 2\text{atan} \left( \frac{1+\cos \alpha}{\sin \alpha}\right).$$
Then, let us introduce the vector $\b = \s_i \q_0^\perp := (-(s_{i,2}-q_{0,2}),s_{i,1}-q_{0,1})$, 
where $\u^\perp := (-u_2,u_1)$ denotes the $90^{\circ}$ counterclockwise rotate of a vector $\u = (u_1,u_2) \in \R^2$. 
Each vertex $\q_r$ ($r=1,\ldots,\narc-1$) is determined by the following relation: 
$$ \s_i \q_r = \cos (t_r \alpha) \s_i \q_0 + \sin (t_r \alpha) \b, \text{ where } t_r= \frac{r}{\narc}.$$
The differentiability of $\q_r$ and the closed form expressions of its derivatives with respect to $\s$ and $\bpsi$ are straightforward consequences of this formula.\par\medskip

Since the cells of the diagram $\bVsp$ are restricted to the bounded computational domain $D \subset \R^2$, two additional situations should be considered, which are depicted on \cref{fig.Lagvert} (e) (f).
To address them, it is convenient to introduce a level set function $\phi : \R^2 \to \R$ for $D$, that is
$$ \forall \x \in \R^2, \quad \left\{
\begin{array}{cl}
\phi(\x) < 0 & \text{if } \x \in D, \\
\phi(\x) = 0 & \text{if } \x \in \partial D, \\
\phi(\x) > 0 & \text{otherwise}.
\end{array}\right.
 $$
Then, the unit normal vector $\n(\x)$ to $\partial D$ pointing outward $D$ is given by $\n(\x) = \frac{\nabla \phi(\x)}{\lvert \nabla \phi(\x)\vert}$ for $\d s$ a.e. $\x \in \partial D$,
see e.g. \cite{osher2006level,sethian1999level} about general features of such a level set representation of domains. 
 \par\medskip
 \noindent \textit{Case 2b: Vertices corresponding to the intersection between two cells and $\partial D$.} 
Let $\q \in \R^2$ be one vertex at the intersection between the cells $V_i(\s,\bpsi)$, $V_j(\s,\bpsi)$ and $\partial D$.
Then the following system of equations holds: 
\begin{equation*}\label{eq.2ptdD} 
\left\{
\begin{array}{ccc}
\lvert  \q - \s_i \lvert ^2 - \psi_i &=& \lvert \q - \s_j \lvert ^2 - \psi_j \\ 
\phi(\q)&=& 0. 
\end{array}
\right.
\end{equation*}
As in the previous items, the combination of Assumptions \cref{eq.Gen4,eq.Gen5} with the implicit function theorem shows that this $\q$ behaves as a smooth function of the parameters $\s$ and $\bpsi$ when the latter are slightly perturbed from reference values satisfying \cref{eq.Gen0,eq.Gen1,eq.Gen2,eq.Gen3,eq.Gen4,eq.Gen5,eq.Gen6}. 
In the meantime, taking derivatives in the above equations yields immediately the following systems:
$$ \left( \begin{array}{c}
 \s_j-\s_i \\
 \hline 
\n(\q)
\end{array}
\right)\left[ \nabla_{\s_i} \q\right] = \left( \begin{array}{c}
\q-\s_i \\
 \hline 
\bz
\end{array}
\right), \text{ and }
 \left( \begin{array}{c}
 \s_j-\s_i \\
 \hline 
\n(\q)
\end{array}
\right)\frac{\partial \q}{\partial \psi_i}= \left( \begin{array}{c}
\frac12\\
0
\end{array}
\right),
$$
which are invertible on account of \cref{eq.Gen4,eq.Gen5}. Similar relations allow to characterize the derivatives $\left[\nabla_{\s_j}\q \right]$ and $\frac{\partial \q}{\partial \psi_j}$.\par\medskip

 \noindent \textit{Case 3b: Vertices at the intersection between one exterior circle and $\partial D$.} 
Let $\q \in \R^2$ be one vertex lying at the intersection between the cell $V_i(\s,\bpsi)$ ($i=1,\ldots,N$), the void phase $V_0(\s,\bpsi)$ and $\partial D$.
Then,  
$$ 
\left\{
\begin{array}{ccl}
| \q - \s_i| ^2 - \psi_i &=& 0 \\ 
\phi(\q)&=& 0 ,\\ 
\end{array}
\right.
$$
Using \cref{eq.Gen0,eq.Gen1,eq.Gen2,eq.Gen3,eq.Gen4,eq.Gen5,eq.Gen6} and the implicit function theorem, we see that this system characterizes a smooth function of $\s$ and $\bpsi$. In the meantime,
taking derivatives into these equations, we obtain the following invertible systems:
$$ \left( \begin{array}{c}
 \q-\s_i \\
 \hline 
\n(\q)
\end{array}
\right)\left[ \nabla_{\s_i} \q\right] = \left( \begin{array}{c}
\q-\s_i \\
 \hline 
\bz
\end{array}
\right), \text{ and }
 \left( \begin{array}{c}
 \q-\s_i \\
 \hline 
\n(\q)
\end{array}
\right)\frac{\partial \q_r}{\partial \psi_i}= \left( \begin{array}{c}
\frac12\\
0
\end{array}
\right).
$$



\section{Second order derivatives of the Kantorovic functional with respect to seeds and weights}\label{app.derF}

\noindent Let $\s \in \R^{dN}$, $\bpsi \in \R^N$ be generic seed points and weights, in the sense that \cref{eq.Gen0,eq.Gen1,eq.Gen2,eq.Gen3,eq.Gen4,eq.Gen5,eq.Gen6} hold true. Let also $\bnu = \left\{ \nu_1,\ldots, \nu_N \right\}$ be a vector of prescribed measures. Restricting to the case of two space dimensions $d=2$ for simplicity, this section details the calculation of the partial
derivatives of the derivative $\bF : \R^{dN}_{\s} \times \R^N_{\bnu} \times \R^N_{\bpsi} \to \R^N$ of the Kantorovic functional introduced in \cref{prop.psi}: 
$$ \bF(\s,\bnu,\bpsi) =  \Big(\nu_i- |V_i(\s,\bpsi) | \Big)_{i=1,\ldots,N}.$$
Let us recall that this result is useful for a variety of purposes in our framework: 
on the one hand, it is the key ingredient of the Newton-Raphson algorithm implemented for the resolution of \cref{eq.volPsi}, see \cref{sec.seedstoLag}.
On the other hand, it is needed in the procedure of \cref{sec.deropt} for expressing the derivative of a quantity with respect to the vertices $\q$ of the diagram in terms of the defining seed points $\s$ and weights $\bpsi$.  
Again, the results of this appendix are not completely new, see 
e.g.  \cite{de2012blue,merigot2021optimal,nivoliers2013approximating} where fairly similar calculations are conducted in a formal fashion.
The calculation of the derivatives of $\bF$ with respect to the weights $\bpsi$ is also achieved in \cite{merigot2021optimal}, and in \cite{de2019differentiation} in a more general context. Here, we present an elementary mixture of arguments used in the aforementioned references. \par\medskip

Let us introduce a few additional notations, which are illustrated on \cref{fig.Lagmovseed}:
\begin{itemize}
\item For any index $i=1,\ldots,N$ and any neighbor $j \in \calN_i$ of $i$, 
we recall that $\be_{ij}$ is the line segment $V_i(\s,\bpsi) \cap  V_j(\s,\bpsi)$, and we denote by $\m_{ij}$ the midpoint of $\be_{ij}$.
\item For $i=1,\ldots,N$, we denote by $\calC_{i,r}$, $r=1,\ldots,n_{c,i}$ the circular arcs composing the boundary of the cell $V_i(\s,\bpsi)$; the value $n_{c,i}=0$ indicates that the cell is not on the boundary of the associated shape $\Omega$. We also let $\calC_i := \bigcup_{r=1}^{n_{c,i}} \calC_{i,r}$. 
\item For $i=1,\ldots,N$ and $r= 1,\ldots,n_{c,i}$, the endpoints of the circular arc $\calC_{i,r}$ are denoted by $\q_{i,r}^0$, $\q_{i,r}^1$ when the latter is oriented counterclockwise, and $\theta_{i,r} \in (0,2\pi)$ is its angular aperture.
\item For $i=1,\ldots,N$, we denote by $\n_i$ the unit normal vector field to $\partial V_i(\s,\bpsi)$, pointing outward $V_i(\s,\bpsi)$. This latter is given for $\d s$-almost every point $\x \in \partial V_i(\s,\bpsi)$ by the following expressions:
$$\n(\x) = \left\{\begin{array}{cl}
\frac{\s_j-\s_i}{\lvert \s_j-\s_i\lvert} & \text{if }\x \in \be_{ij} \text{ for some } j \in \calN_i,\\
\frac{\x-\s_i}{\lvert\x-\s_i\lvert} & \text{if } \x \in \calC_i.
\end{array}
\right.$$
\end{itemize}
The result of interest in this appendix is the following.

\begin{proposition}\label{prop.dFi}
Let $\s = \left\{ \s_i \right\}_{i=1,\ldots,N} \in \R^{dN}$ and $\bpsi = \left\{ \psi_i\right\}_{i=1,\ldots,N} \in \R^N$ be sets of seed points and weights satisfying the genericity Assumptions \cref{eq.Gen0,eq.Gen1,eq.Gen2,eq.Gen3,eq.Gen4,eq.Gen5,eq.Gen6}. 
Then, for all $i=1,\ldots,N$, the function $\bF$ is differentiable at $(\s,\bnu,\bpsi)$.
It holds, for $i=1,\ldots,N$:
$$ \nabla_{\s_i} F_i(\s,\bnu,\bpsi) = -\sum\limits_{j \in N_i} \frac{|\be_{ij}|}{|\s_j - \s_i |}( \m_{ij} - \s_i) - \sum\limits_{r=1}^{n_{c,i}}  (\q_{i,r}^0-\q_{i,r}^1)^\perp , \:\: \text{ and for } j \neq i \:\:\:  \nabla_{\s_j} F_i(\s,\bnu,\bpsi) = \frac{|\be_{ij}|}{|\s_j - \s_i |}( \m_{ij} - \s_i),$$
where $\u^\perp := (-u_2,u_1)$ is the 90$^\circ$ counterclockwise rotate of a vector $\u = (u_1,u_2)\in \R^2$.
Besides,
$$ \frac{\partial F_i} {\partial \psi_i} (\s,\bnu,\bpsi) =   -\frac{1}{2} \left ( \sum\limits_{j \in N_i}  \frac{\lvert \be_{ij} \lvert }{|\s_j - \s_i|}\right)  - \sum\limits_{r=1}^{n_{c,i}} \theta_{i,r}, \:\: \text{ and for } j \neq i \:\:\:  \frac{\partial F_i} {\partial \psi_j} (\s,\bnu,\bpsi)  =  \frac{1}{2} \frac{\lvert \be_{ij}\lvert}{|\s_j - \s_i |}.$$
\end{proposition}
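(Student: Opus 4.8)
The plan is to reduce every derivative to the differentiation of a single cell area. Since $\bF(\s,\bnu,\bpsi)=\bigl(\nu_i-|V_i(\s,\bpsi)|\bigr)_{i=1,\ldots,N}$ and the $\nu_i$ are mere parameters, each partial derivative of $F_i$ is the opposite of the corresponding partial derivative of $|V_i(\s,\bpsi)|$, so it suffices to differentiate $(\s,\bpsi)\mapsto|V_i(\s,\bpsi)|$. The central tool is the transport (Hadamard--Reynolds) identity: when $\partial V_i$ moves smoothly with scalar normal velocity $V_n$, one has $\frac{\d}{\d t}|V_i|=\int_{\partial V_i}V_n\,\d s$. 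Its use is legitimated by \cref{prop.derver}: for generic $(\s,\bpsi)$ the combinatorial structure of the diagram (the neighbour relations and the lists of edges and arcs bounding each cell) is locally constant, and every vertex depends differentiably on $(\s,\bpsi)$; hence $V_i$ is a smoothly varying Lipschitz domain and the first-order motion of its corners contributes nothing to the area rate.

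Next I would compute the normal velocity of each smooth piece of $\partial V_i$ by implicit differentiation of its defining equation, following the decomposition of \cref{propdef.Lag}: $\partial V_i$ splits into the straight edges $\be_{ij}$ ($j\in\calN_i$), the circular arcs $\calC_{i,r}$, and segments lying on the fixed boundary $\partial D$, the last of which do not move and therefore contribute $0$. On an edge $\be_{ij}$, defined by $g(\x):=|\x-\s_i|^2-\psi_i-|\x-\s_j|^2+\psi_j=0$, one has $\nabla_\x g=2(\s_j-\s_i)$, so the outward unit normal is $\n_i=(\s_j-\s_i)/|\s_j-\s_i|$ and $|\nabla_\x g|=2|\s_j-\s_i|$; the velocity induced by a perturbation of a parameter $t$ is $V_n=-\partial_t g/|\nabla_\x g|$. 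On an arc $\calC_{i,r}$, defined by $h(\x):=|\x-\s_i|^2-\psi_i=0$, likewise $\n_i=(\x-\s_i)/|\x-\s_i|$ and $|\nabla_\x h|=2\psi_i^{1/2}$.

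Feeding each parameter into these expressions then produces the formulas after elementary integrations. For $\partial/\partial\psi_i$ one gets $\partial_t g=-1$ on every edge and $\partial_t h=-1$ on every arc, hence the constant normal velocities $1/(2|\s_j-\s_i|)$ and $1/(2\psi_i^{1/2})$; integrating over $\be_{ij}$ and over $\calC_{i,r}$ (whose length is $\psi_i^{1/2}\theta_{i,r}$) yields the edge and arc terms, while for $\partial/\partial\psi_j$ with $j\neq i$ only $\be_{ij}$ responds, with $\partial_t g=+1$. For $\nabla_{\s_j}$ with $j\neq i$, again only $\be_{ij}$ moves, with $\partial_t g=2(\x-\s_j)\cdot\widehat{\s}_j$, and the identity $\int_{\be_{ij}}\x\,\d s=|\be_{ij}|\,\m_{ij}$ collapses the line integral onto the midpoint $\m_{ij}$, giving the stated midpoint expression. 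For $\nabla_{\s_i}$ every edge moves through the $|\x-\s_i|^2$ term (with $\partial_t g=-2(\x-\s_i)\cdot\widehat{\s}_i$, integrating again to $\m_{ij}$), and every arc translates rigidly, so that $V_n=\widehat{\s}_i\cdot\n_i$.

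The step I expect to be the main obstacle is the arc contribution to $\nabla_{\s_i}F_i$: after the rigid-translation argument it reduces to the vector integral $\int_{\calC_{i,r}}\n_i\,\d s$, and its evaluation hinges on the geometric identity $\int_{\calC_{i,r}}\n_i\,\d s=(\q_{i,r}^0-\q_{i,r}^1)^\perp$, where $\u^\perp:=(-u_2,u_1)$. I would obtain this by writing $\n_i\,\d s=(\d q_2,-\d q_1)$ along the arc and integrating between the endpoints $\q_{i,r}^0$ and $\q_{i,r}^1$, so that the integral telescopes to the $90^\circ$ rotation of the chord. Beyond this identity, the only real care needed is the bookkeeping of which boundary pieces of $V_i$ actually respond to each of $\s_i$, $\s_j$, $\psi_i$, $\psi_j$; once that is fixed the remaining computations are routine one-dimensional integrations.
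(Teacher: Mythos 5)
Your proposal is correct and follows essentially the same route as the paper's proof in \cref{app.derF}: both reduce each entry of $\nabla\bF$ to the derivative of a single cell area, express that derivative as the boundary integral of the normal velocity of $\partial V_i(\s,\bpsi)$ (the paper packages this as an explicit deformation field $\m(\widehat\s,\widehat\bpsi)$ from \cref{prop.derver} combined with \cref{eq.Volprime}, you as the level-set kinematic identity $V_n=-\partial_t g/\lvert\nabla_\x g\lvert$, which is the same computation), and both evaluate the resulting line integrals with the midpoint identity on the edges $\be_{ij}$ and the chord-rotation identity $\int_{\calC_{i,r}}\n_i\,\d s=(\q_{i,r}^0-\q_{i,r}^1)^\perp$ on the arcs. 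One caveat: carried to the end, your computation yields $\nabla_{\s_j}F_i=\frac{\lvert\be_{ij}\lvert}{\lvert\s_j-\s_i\lvert}(\m_{ij}-\s_j)$ and an arc contribution $-\frac12\sum_{r}\theta_{i,r}$ to $\partial F_i/\partial\psi_i$, which agree with the computation actually performed in the paper's appendix (and with a direct check on two half-plane cells) but not literally with the proposition as printed, so you should not claim to recover ``the stated expression'' verbatim.
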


\begin{figure}
\centering
\begin{tabular}{cc}
\begin{minipage}{0.59\textwidth}
\begin{overpic}[width=1.0\textwidth]{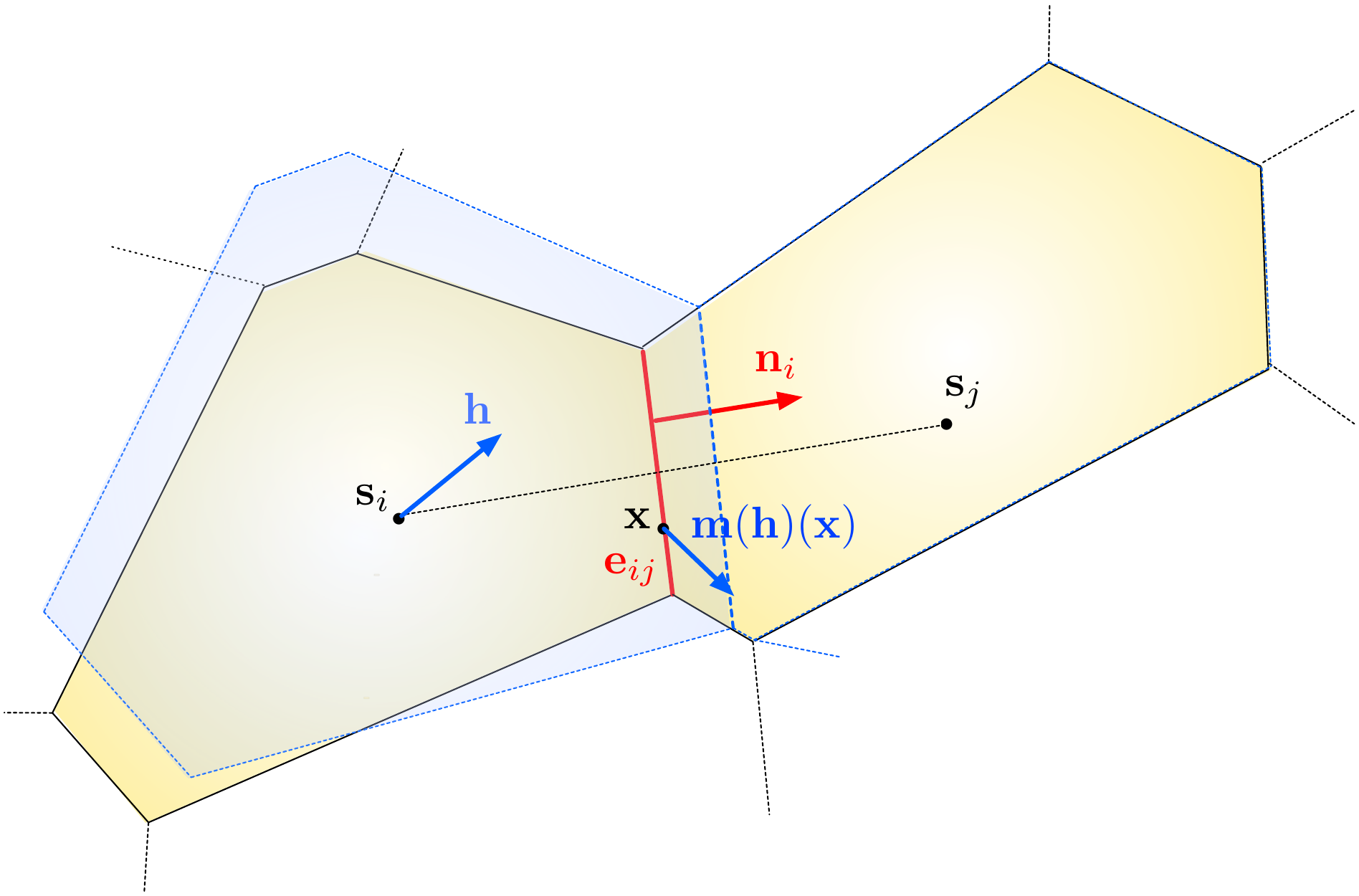}
\put(0,3){\fcolorbox{black}{white}{$a$}}
\end{overpic}
\end{minipage}&
\begin{minipage}{0.41\textwidth}
\begin{overpic}[width=1.0\textwidth]{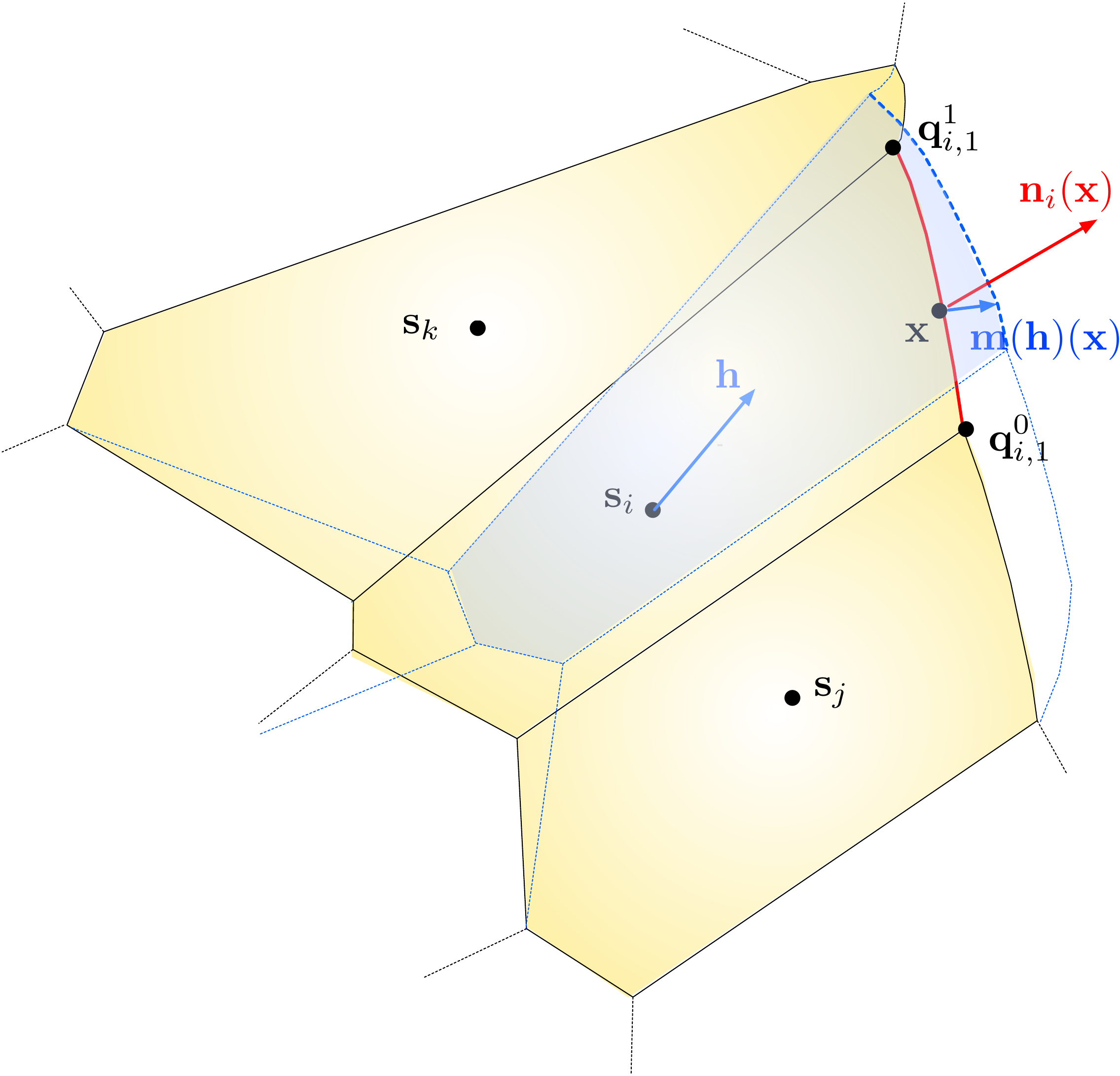}
\put(0,3){\fcolorbox{black}{white}{$b$}}
\end{overpic}
\end{minipage}
\end{tabular}
\caption{\it Deformation of (a) An interior cell; (b) A cell containing at least one circular arc in the diagram $\bVsp$, induced by a small perturbation of the corresponding seed point, as considered in \cref{app.derF}.}
\label{fig.Lagmovseed}
\end{figure}

\begin{proof}
Judging from the definition \cref{eq.implicitPsi} of the function $\bF(\s,\bnu,\cdot)$, the proof boils down to the calculation of the derivatives of the measures 
$$A_i(\s,\bpsi) := \lvert V_i(\s,\bpsi) \lvert, \quad i=1,\ldots,N$$ 
with respect to the seed points $\s \in \R^{dN}$ and weights $\bpsi \in \R^N$ of the diagram $\bVsp$. 
The main idea builds on the conclusion of \cref{prop.derver}, whereby their exists $\e >0$ such that for any perturbation $(\widehat{\s},\widehat{\bpsi}) \in \R^{dN}_{\s} \times \R^N_{\bpsi}$ with $\lvert \widehat{\s}\lvert + \lvert\widehat{\bpsi}\lvert < \e$, the diagram $\mathbf{V}(\s + \widehat{\s}, \bpsi + \widehat{\bpsi})$ is obtained from the reference diagram $\bVsp$ by application of a ``smooth'' deformation 
$$\Id +\m(\widehat{\s},\widehat{\bpsi}), \text{ where } \m(\widehat{\s},\widehat{\bpsi}) \in W^{1,\infty}(\R^2,\R^2).$$
The derivative of $(\widehat{\s},\widehat{\bpsi}) \mapsto A_i(\s+\widehat{\s},\bpsi + \widehat{\bpsi})$ thus results from the combination of the chain rule with the formula \cref{eq.Volprime} for the derivative of the volume of a domain with respect to arbitrary perturbations of its boundary.\par\medskip

We first apply this strategy to calculate the derivative of $A_i(\s,\bpsi)$ with respect to $\s_i$, for a given index $i = 1,\ldots, N$. For any vector $\h \in \R^2$, let $\s(\h) := (\s_1,\ldots,\s_{i-1} , \s_i + \h , \s_{i+1} , \ldots, \s_N)$ be the collection of seed points resulting from a translation $\h$ of the $i^{\text{th}}$ element of $\s$. 
As we have recalled, when $\lvert \h \lvert < \e$, the cell $V_i(\s(\h),\bpsi )$ is of the form 
\begin{equation}\label{eq.pertVish}
 V_i(\s(\h), \bpsi) = (\Id + \m(\h)) \Big( V_i(\s,\bpsi) \Big),
 \end{equation}
for a certain differentiable mapping $B(\bzero,\e) \ni \h \mapsto \m(\h) \in W^{1,\infty}(\R^2,\R^2)$. 
The combination of the chain rule with formula \cref{eq.Volprime} yields: 
\begin{equation}\label{eq.dAidsi}
\begin{array}{>{\displaystyle}cc>{\displaystyle}l}
 \frac{\partial A_i}{\partial \s_i}(\s,\bpsi)(\h) &=&  \int_{\partial V_i(\s,\bpsi)} \left\langle \frac{\partial \m}{\partial \h}(\bzero)(\h)(\x), \n_i(\x)\right\rangle \:\d s(\x) \\[1em]
 &=& \sum\limits_{j \in N_i} \int_{\be_{ij}} \left\langle \frac{\partial \m}{\partial \h}(\bzero)(\h)(\x) , \n_i(\x)\right\rangle \:\d s(\x) + \sum\limits_{r=1}^{n_{c,i}} \int_{\calC_{i,r}} \left\langle\frac{\partial \m}{\partial \h}(\bzero)(\h)(\x), \n_i(\x)\right\rangle  \:\d s(\x), \\[1em]
 \end{array}
\end{equation}
and we now proceed to glean information about the derivative $\R^2 \ni \h \mapsto \frac{\partial \m}{\partial \h}(\bzero)(\h) \in W^{1,\infty}(\R^2,\R^2)$.

Let $j \in \calN_i$, and $\x$ be an arbitrary point on the edge $\be_{ij}$. For any perturbation $\h \in B(\bzero,\e)$, the point $\m(\h)(\x)$ belongs to the intersection of the cells $V_i(\s(\h),\bpsi)$ and $V_j(\s(\h),\bpsi)$, which implies that:
$$ \lvert \x + \m(\h)(\x) - \s_i - \h \lvert^2 -\psi_i =   \lvert \x + \m(\h)(\x) - \s_j \lvert^2  -\psi_j.$$  
A simple calculation then yields:
$$ 2 \langle \m(\h)(\x), \s_j - \s_i \rangle = 2 \langle \x - \s_i,\h \rangle +  \lvert \x - \s_j \lvert ^2 - \lvert \x-\s_i \lvert^2 -(\psi_j - \psi_i) + \o(\h), $$
and so: 
$$ \left\langle \frac{\partial \m}{\partial \h}(\bz)(\h)(\x) , \s_j - \s_i \right\rangle =   \langle \x - \s_i , \h \rangle.$$
As a result, we have proved the expression
\begin{equation}\label{eq.dmdhjNi}
\forall \x \in \be_{ij}, \quad \left\langle \frac{\partial \m}{\partial \h}(\bz)(\h)(\x) ,\n_{i}  \right\rangle =   \left\langle  \frac{\x - \s_i}{\vert \s_j - \s_i \lvert} , \h \right\rangle.
\end{equation}

Let now $\x$ be a point on one of the circular arcs $\calC_{i,r}$, $r=1,\ldots,n_{c,i}$. Again, the definition of $\calC_{i,r}$ readily implies that, for any perturbation vector $\lvert \h \lvert < \e$, we have:
$$\lvert \x + \m(\h)(\x) - \s_i - \h \lvert^2 = \psi_i.$$
Hence, we obtain 
$$ \left\langle \m(\h)(\x), \x- \s_i \rangle =   \langle \x- \s_i , \h \right\rangle - \frac{1}{2} |\x-\s_i|^2 + \frac{1}{2} \psi_i + \o(\h),$$
and so:
\begin{equation}\label{eq.dmdhjCir}
\forall \x \in \calC_{i,r}, \quad \left\langle \frac{\partial \m}{\partial \h}(\bz)(\h)(\x) , \n_i(\x) \right\rangle =  \left\langle \frac{\x - \s_i}{|\x- \s_i|}, \h \right\rangle.
\end{equation}

Eventually, combining \cref{eq.dAidsi} with \cref{eq.dmdhjNi,eq.dmdhjCir}, we obtain:
$$\begin{array}{>{\displaystyle}cc>{\displaystyle}l}
 \frac{\partial A_i}{\partial \s_i}(\s,\bpsi)(\h) 
 &=&  \left\langle \sum\limits_{j \in \calN_i} \int_{\be_{ij} }\frac{\x - \s_i}{|\s_j - \s_i|} \:\d s(\x), \h \right\rangle + \left\langle \sum\limits_{r=1}^{n_{c,i}} \int_{\calC_{i,r}}\frac{\x - \s_i}{|\x- \s_i|} \:\d s(\x), \h \right\rangle \\[1em]
 &=&  \left\langle \sum\limits_{j \in \calN_i} \frac{\lvert\be_{ij} \lvert}{|\s_j - \s_i|} (\m_{ij} - \s_i), \h \right\rangle + \left\langle \sum\limits_{r=1}^{n_{c,i}} (\q_{i,r}^0-\s_i)^\perp - (\q_{i,r}^1-\s_i)^\perp , \h \right\rangle,
 \end{array}$$
where the second line follows from an elementary calculation. This yields the desired formula. \par\medskip

Let us now apply the same strategy to calculate the derivative of $A_i(\s,\bpsi)$ with respect to the weight $\psi_i$. A perturbation of $\psi_i$ of the form $\psi_i+h$ for $\lvert h \lvert < \e$ incurs a deformation of the $i^{\text{th}}$ cell of the diagram of the form: 
$$ V_i(\s, \bpsi + h\be_i) = (\Id + \m(h))\Big(V_i(\s,\bpsi)\Big),$$
for some differentiable mapping $B(0,\e) \ni h \mapsto \m(h) \in W^{1,\infty}(\R^2,\R^2)$. Again, the combination of \cref{eq.Volprime} with the chain rule yields: 
\begin{equation}\label{eq.dAidpsii}
\begin{array}{>{\displaystyle}cc>{\displaystyle}l}
 \frac{\partial A_i}{\partial \psi_i}(\s,\bpsi)(h) &=&  \int_{\partial V_i(\s,\bpsi)} \left\langle \frac{\partial \m}{\partial h}(0)(h)(\x), \n_i(\x)\right\rangle \:\d s(\x) \\[1em]
 &=& \sum\limits_{j \in N_i} \int_{\be_{ij}} \left\langle \frac{\partial \m}{\partial h}(0)(h)(\x) , \n_i(\x)\right\rangle \:\d s(\x) + \sum\limits_{r=1}^{n_{c,i}} \int_{\calC_{i,r}} \left\langle\frac{\partial \m}{\partial h}(0)(h)(\x), \n_i(\x)\right\rangle  \:\d s(\x), \\[1em]
 \end{array}
\end{equation}
and we now aim to characterize the derivative $\R \ni h \mapsto \frac{\partial \m}{\partial h}(0)(h) \in W^{1,\infty}(\R^2,\R^2)$.

To this end, let us first consider an index $j \in \calN_i$, and let $\x \in \be_{ij}$ be given. For any $h \in B(0,\e)$, it holds:
$$ |\x + \m(h)(\x) - \s_i |^2 -\psi_i - h =   |\x + \m(h)(\x) - \s_j |^2  -\psi_j,$$
and so 
$$ \langle  \m(h)(\x) , \s_j - \s_i \rangle = \frac{h}{2} + |\x - \s_j|^2 - |\x-\s_i|^2 -(\psi_j - \psi_i) + \o(h).$$  
We then obtain the following formula involving the derivative of the mapping $m$:
$$ \forall \x \in \be_{ij}, \quad \left\langle \frac{\partial \m}{\partial h}(0)(h)(\x) , \s_j - \s_i \right\rangle =  \frac{h}{2},$$
which rewrites:
\begin{equation}\label{eq.dmdhjNipsi}
\forall \x \in \be_{ij}, \quad \left\langle \frac{\partial \m}{\partial h}(0)(h)(\x) , \n_i(\x) \right\rangle =  \frac{h}{2} \frac{1}{\lvert \s_j - \s_i\lvert}.
\end{equation}

Let us now consider a point $\x$ located on one of the circular arcs $\calC_{i,r}$, $r=1,\ldots,n_{c,i}$. It holds, for all $h \in B(0,\e)$: 
$$ |\x + m(h)(\x) - \s_i  |^2 = \psi_i + h,$$
and so: 
\begin{equation}\label{eq.dmdhjCirpsi}
\left\langle \frac{\partial \m}{\partial h}(0)(h), \n_i(\x) \right\rangle = \frac{1}{2|\x-\s_i |} = \frac{1}{2} \psi_i^{-1/2}.
\end{equation}

Eventually, combining \cref{eq.dAidpsii} with \cref{eq.dmdhjNipsi,eq.dmdhjCirpsi}, we arrive at:
$$\begin{array}{>{\displaystyle}cc>{\displaystyle}l}
 \frac{\partial A_i}{\partial \psi_i}(\s,\bpsi)(h) 
 &=&  \left\langle \sum\limits_{j \in \calN_i} \int_{\be_{ij} }\frac{1}{2} \frac{1}{\lvert \s_j - \s_i\lvert} \:\d s(\x), h \right\rangle + \left\langle \sum\limits_{r=1}^{n_{c,i}} \int_{\calC_{i,r}} \frac{1}{2} \psi_i^{-1/2} \:\d s(\x), h \right\rangle \\[1em]
 &=&  \left\langle \frac12\sum\limits_{j \in \calN_i} \frac{\lvert\be_{ij} \lvert}{|\s_j - \s_i|}, h \right\rangle + \left\langle \frac12\sum\limits_{r=1}^{n_{c,i}} \theta_{i,r}, h \right\rangle,
 \end{array}$$
which yields the desired formula.\par\medskip

The expressions of the partial derivatives of the mapping $(\s,\bpsi) \mapsto A_i(\s,\bpsi)$ with respect to the position $\s_j$ and weight $\psi_j$ of a neighboring index $j \in \calN_i$ are obtained in a completely similar fashion, and we omit the details for brevity. The proof of \cref{prop.dFi} is therefore complete.
\end{proof}

\begin{remark}\label{rem.diffverArea}
The differentiability of the mapping $\bpsi \mapsto K(\s,\bnu,\bpsi)$ -- which essentially involves the areas $\lvert V_i(\s,\bpsi) \lvert$ of the cells of the diagram $\bVsp$ -- holds true under weaker assumptions than the collection \cref{eq.Gen0,eq.Gen1,eq.Gen2,eq.Gen3,eq.Gen4,eq.Gen5,eq.Gen6}, which implies the differentiability of all the individual vertices of the diagram, see for instance \cite{de2019differentiation,merigot2021optimal}. To keep the presentation elementary and self-contained, we limit ourselves with the presented results, which are sufficient for our purpose.
\end{remark}

\begin{corollary}\label{cor.irred}
Let $\s \in \R^{dN}$ and $\bpsi \in \R^N$ be collections of seed points and weights 
satisfying \cref{eq.Gen0,eq.Gen1,eq.Gen2,eq.Gen3,eq.Gen4,eq.Gen5,eq.Gen6}. 
Then, $\left[\nabla_{\bpsi} \bF(\s,\bnu,\bpsi) \right]$ is a negative definite $N \times N$ matrix, which is therefore invertible. 
\end{corollary}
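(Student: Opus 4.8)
\noindent The starting point is to read off the entries of the $N\times N$ matrix $H := \left[\nabla_{\bpsi}\bF(\s,\bnu,\bpsi)\right]$ from \cref{prop.dFi}. Since $\bF_i = \nu_i - \lvert V_i(\s,\bpsi)\lvert$, one has $H_{ij} = \frac{\partial F_i}{\partial \psi_j}$; writing $w_{ij} := \frac{\lvert \be_{ij}\lvert}{\lvert \s_j - \s_i\lvert} \geq 0$ for $j \in \calN_i$ and $c_i := \sum_{r=1}^{n_{c,i}} \theta_{i,r} \geq 0$, the proposition gives
\begin{equation*}
H_{ii} = -\tfrac12 \sum_{j \in \calN_i} w_{ij} - c_i, \qquad H_{ij} = \tfrac12 w_{ij} \:\: (j \in \calN_i), \qquad H_{ij} = 0 \:\: \text{otherwise}.
\end{equation*}
The first observation I would make is that $w_{ij} = w_{ji}$, since $\lvert \be_{ij}\lvert$ and $\lvert \s_i - \s_j\lvert$ are symmetric in $i,j$; hence $H$ is symmetric, and it suffices to prove that $\bx \mapsto \bx^T(-H)\bx$ is positive definite on $\R^N$.

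\noindent Next I would compute this quadratic form explicitly. Grouping the diagonal and off-diagonal contributions and symmetrizing the edge sum by means of $w_{ij} = w_{ji}$, a direct manipulation yields, for every $\bx = (x_1,\ldots,x_N) \in \R^N$,
\begin{equation*}
\bx^T(-H)\bx = \frac14 \sum_{(i,j) \in \calE} w_{ij}(x_i - x_j)^2 + \sum_{i=1}^N c_i \, x_i^2 .
\end{equation*}
Both terms are manifestly nonnegative, which already shows that $-H$ is positive semidefinite (equivalently, that $K(\s,\bnu,\cdot)$ is concave in $\bpsi$, as expected). This is the routine part of the argument.

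\noindent The crux is the strict positivity. Assume $\bx^T(-H)\bx = 0$. Since neighboring cells share an edge of positive length by \cref{propdef.Lag}, one has $w_{ij} > 0$ for every $(i,j) \in \calE$; the vanishing of the first sum therefore forces $x_i = x_j$ for all neighboring pairs, i.e. $\bx$ is constant on each connected component of the cell adjacency graph $(\{1,\ldots,N\},\calE)$. The vanishing of the second sum forces $x_i = 0$ for every cell carrying at least one circular arc (those with $c_i > 0$). To conclude $\bx = \bzero$, I would show that each connected component of the adjacency graph contains such a cell: the union of the cells of a component is a connected subregion of $\overline\Omega$ whose boundary cannot lie entirely on $\partial D$, for otherwise it would be a full connected component of $D$, contradicting the connectedness of $D$ together with $\lvert V_0(\s,\bpsi)\lvert = \lvert D\lvert - \sum_i \nu_i > 0$ (guaranteed by \cref{eq.assumnu}). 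Hence the boundary of the component meets the interface $\partial \Omega$ along a circular arc, so some cell $i$ of the component has $c_i > 0$ and thus $x_i = 0$; by constancy, $\bx$ vanishes on that whole component. Running this over all components gives $\bx = \bzero$, whence $-H$ is positive definite and $H$ is negative definite, in particular invertible.

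\noindent The main obstacle is precisely this last topological step, namely upgrading the algebraic kernel condition into the geometric statement that no connected cluster of cells can ``float'' without a circular-arc portion on its boundary. I expect the clean way to dispatch it is to invoke that $\Omega$ is strictly contained in $D$ (so that the void phase $V_0$ has positive measure) together with the connectedness of $D$, rather than to reason componentwise about the full structure of $\partial\Omega$.
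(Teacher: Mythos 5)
Your proof is correct, but it takes a genuinely different route from the paper's. The paper's own argument is a three-line affair: since no cell is empty the weights are positive, the matrix is asserted to be strictly diagonally dominant with negative diagonal, and invertibility plus definiteness follow from the classical Gershgorin/Levy--Desplanques argument. You instead work with the quadratic form directly, recognizing $-\left[\nabla_{\bpsi}\bF\right]$ as a weighted graph Laplacian of the cell-adjacency graph augmented by the nonnegative anchoring terms $c_i$ coming from the circular arcs, proving semidefiniteness at once and then killing the kernel by a topological step (every component of the adjacency graph must expose a circular arc, since otherwise the union of its cells would be relatively clopen in the connected domain $D$, contradicting $\lvert V_0(\s,\bpsi)\lvert>0$). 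Your route is longer but arguably more robust: row $i$ is \emph{strictly} dominant only when $c_i>0$, i.e.\ only when cell $i$ carries a circular arc, which fails for interior cells, so the paper's dominance claim should really be read as weak diagonal dominance that is strict in at least one row per irreducible block --- and the geometric input needed to certify that strictly dominant row is exactly the connectivity argument you spell out. Two cosmetic points: the positivity of $\lvert V_0(\s,\bpsi)\lvert$ that you invoke is part of hypothesis \cref{eq.Gen0} of the corollary rather than \cref{eq.assumnu} (which concerns the target measures $\bnu$, not assumed here), and your factor $\tfrac14$ in the Dirichlet energy is correct only under the paper's convention that $\calE$ contains both ordered pairs $(i,j)$ and $(j,i)$, which is indeed how \cref{propdef.Lag} defines it.
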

\begin{proof}
The above assumptions imply in particular that the diagram $\bVsp$ contains no empty cell, so that the weights $\psi_i$ are positive for $i=1,\ldots,N$ (see Step 2 in the next \cref{app.existunique} where the same argument is used).
Then, the matrix $\left[\nabla_{\bpsi} \bF(\s,\bnu,\bpsi) \right]$ is strictly diagonal dominant, and by a classical argument, it is invertible. 
Since its diagonal entries are negative, the claim follows. 
\end{proof}

\begin{remark}
In the case where the void phase $V_0(\s,\bpsi)$ is empty, $\left[\nabla_{\bpsi} \bF(\s,\bnu,\bpsi) \right]$ has a one-dimensional kernel, 
which reflects the fact that the collection of weights $\bpsi$ realizing a given measure constraint for each cell is unique up to addition of a common real number to all the $\psi_i$. 
\end{remark}

\section{Proof of \cref{prop.psi}}\label{app.existunique}

\noindent The proof is decomposed into four steps. \par\medskip

\noindent \textit{Step 1: We prove that the function $\R^N \ni \bpsi \mapsto K(\s,\bnu,\bpsi) \in \R$ defined by \cref{eq.defK} is concave, differentiable, and we calculate its derivative.} \par\medskip

\noindent 
For $\x \in X$, we denote for convenience $\s_0 \equiv \s_0(\x) = \x$. 
For a given weight vector $\bpsi \in \R^N$, let us define the measurable mappings $I_{\bpsi}: X \to \left\{0,\ldots,N\right\}$ and $T_{\bpsi} : X \to X$ by:
$$ I_{\bpsi}(\x) = \argmin\limits_{i=0,\ldots,N} \Big( |\x - \s_i|^2 - \psi_i\Big), \text{ and } T_{\bpsi}(x) = \s_{I_{\bpsi}(\x)},$$
where we recall the convention $\psi_0 = 0$. 
Note that the points $\x \in X$ where the above minimum is not uniquely attained form a set of Lebesgue measure $0$, so that $I_{\bpsi}$ and $T_{\bpsi}$ are well-defined a.e. on $X$.
This definition immediately implies that, for any measurable mapping $I : X \to \left\{0,\ldots,N\right\}$, it holds:
$$ |\x - \s_{I_{\bpsi}(\x)}|^2 - \psi_{I_{\bpsi}(\x)} \leq |\x - \s_{I(\x)}|^2 - \psi_{I(\x)} \text{ for a.e. } \x \in X.$$
It follows that, for any weight vectors $\bpsi, \bvphi \in \R^N$: 
\begin{equation}\label{eq.ineqKproof}
\begin{array}{>{\displaystyle}cc>{\displaystyle}l}
 K(\s , \bnu, \bvphi) &=& \int_D \Big( |\x - \s_{I_\bvphi(\x)}(\x)| - \varphi_{I_\bvphi(\x)} \Big) \:\d \x + \sum\limits_{i=1}^N \nu_i \varphi_i \\
  &\leq& \int_D \Big( |\x - \s_{I_\bpsi(\x)}(\x)| - \varphi_{I_{\bpsi(\x)}} \Big) \:\d \x + \sum\limits_{i=1}^N \nu_i \varphi_i \\
 &=& \sum\limits_{i=0}^N \int_{V_i(\s,\bpsi)}   \left(|\x - \s_i |^2 - \varphi_i  \right) \d \x + \sum\limits_{i=1}^N \nu_i \varphi_i \\[1.5em]
 &=& K(\s,\bnu, \bpsi) + \langle \bF(\s,\bnu,\bpsi) , \bvphi - \bpsi \rangle, 
 \end{array} 
 \end{equation}
 where we recall that the components of $\bF(\s, \bnu,\bpsi) \in \R^N$ are defined by:
 \begin{equation}\label{eq.defFsphi}
  F_i(\s,\bnu,\bpsi) :=  \nu_i- |V_i(\s,\bpsi) |, \quad i=1,\ldots,N.
  \end{equation}
Hence, the superdifferential $\partial_{\bpsi} K(\s,\bnu,\bpsi)$ of the mapping $\bvphi \mapsto K(\s,\bnu,\bvphi)$ at an arbitrary weight vector $\bpsi \in \R^N$ contains $\bF(\s,\bnu,\bpsi) \in \R^N$,
and it is in particular non empty. 
On the other hand, the inequality \cref{eq.ineqKproof} expresses $K(\s,\bnu,\cdot)$ as the minimum of a collection of affine functions:
$$ K(\s,\bnu,\bvphi) = \min\limits_{\bpsi \in \R^N} \Big( K(\s,\bnu,\bpsi) +  \langle \bF(\s,\bnu,\bpsi ),  \bvphi - \bpsi \rangle \Big), \quad \bvphi \in \R^N.$$
This discussion shows that the mapping $\bpsi \mapsto K(\s,\bnu,\bpsi)$ is concave, and thus differentiable at a.e. $\bpsi \in \R^N$; besides, its gradient at any point $\bpsi$ where it is differentiable equals $\nabla_\bpsi K(\s,\bnu,\bpsi) = \bF(\s,\bnu,\bpsi)$, see \cite{rockafellar1970convex}, Th. 25.4.  \par\medskip

The $\calC^1$ character of $\bpsi \mapsto K(\s,\bnu,\bpsi)$ follows from the same argument as in the proof of Th. 40 in \cite{merigot2021optimal}.
At first, a simple application of the Lebesgue dominated convergence theorem shows that the mapping $\bpsi \to \bF(\s,\bnu,\bpsi)$ is continuous on $\R^N$. 
On the other hand, according to Th. 25.6 in \cite{rockafellar1970convex} the superdifferential $\partial_\bpsi K(\s,\bnu,\bpsi)$ at any vector $\bpsi \in \R^N$ reads: 
$$ \partial_\bpsi K(\s,\bnu,\bpsi) = \left\{ \lim\limits_{n \to \infty} \g^n, \:\: \g^n = \nabla_\bpsi K (\s,\bnu,\bpsi^n), \:\: \bpsi^n \xrightarrow{n\to \infty} \bpsi \text{ and } K(\s,\bnu,\cdot) \text{ is differentiable at } \bpsi^n \right\}.$$
Now, the continuity of $\bF(\s,\bnu,\cdot)$ implies that, for any sequence $\bpsi^n \to \bpsi$ such that $K(\s,\bnu,\cdot)$ is differentiable at $\bpsi^n$, the unique element $\bF(\s,\bnu,\bpsi^n)= \nabla_\bpsi K (\s,\bnu,\bpsi^n)$ in $\partial_{\bpsi} K(\s,\bnu,\bpsi^n)$ converges to $\bF(\s,\bnu,\bpsi)$ as $n \to \infty$. 
The superdifferential $\partial_\bpsi K(\s,\bnu,\bpsi)$ is thus reduced to a single point, and so $K(\s,\bnu,\cdot)$ is differentiable on $\R^N$, with continuous derivative $\bF(\s,\bnu,\cdot)$. 
This completes the first step in the proof of \cref{prop.psi}; in passing, we have proved that the maximizers $\bpsi \in \R^N$ in \cref{eq.maxK} are characterized by the equation 
$$ \bF(\s,\bnu,\bpsi) = \bzero.$$
\par\medskip

 \noindent \textit{Step 2: We prove the existence of a weight vector $\bpsi^*$ satisfying \cref{eq.volPsi}.}\par\medskip
 
 \noindent To this end, we adapt the argument of the proof of Corollary 39 in \cite{merigot2021optimal}. 
 This starts with a few remarks: 
 \begin{itemize}
 \item For any $\bpsi \in \R^N$ and $i=1,\ldots,N$, the mapping $ \R \ni t \mapsto \lvert V_i(\s,\bpsi + t \be_i) \lvert$ is non decreasing. 
 \item For any $\bpsi \in \R^N$, and $i,j= 1,\ldots,N$, $i\neq j$, the mapping $ \R \ni t \mapsto \lvert V_j(\s,\bpsi + t \be_i) \lvert$ is non increasing. 
 \item If the cell $V_i(\s,\bpsi)$ is non empty, then the weight $\psi_i$ must satisfy
 $ \psi_i \geq 0$, since any point $\x \in V_i(\s,\bpsi)$ satisfies in particular $ \lvert \x-\s_i \lvert^2 - \psi_i \leq 0$. 
 \item In the same spirit, if $V_i(\s,\bpsi)$ has positive Lebesgue measure, one has $\psi_i >0$.
 \item If the cell $V_0(\s,\bpsi)$ in \cref{eq.V0} is non empty, it holds: 
 $$ \forall i =1,\ldots, N, \quad \psi_i \leq  \diam(D)^2.$$
 \end{itemize}

Now, let $\bnu \in \R^N$ be a vector satisfying \cref{eq.assumnu}, and let $\calK \subset \R^N$ be the set defined by
$$ \calK = \Big\{ \bpsi \in \R^N \text{ s.t. } |V_i(\s,\bpsi)| \leq \nu_i \text{ for all } i =1,\ldots, N \Big\}.$$ 
According to the previous observations, for all $\bpsi \in \calK$, it holds $\lvert V_0(\s ,\bpsi) \lvert  \: \geq \nu_0 >0$, and so
$ \calK \subset [0,\diam(D)^2]^N$.  Moreover, since the mapping $\bF(\s,\bnu,\cdot)$ is continuous, $\calK$ is closed, and it is therefore compact. 
Let us then consider one solution $\bpsi^*$ to the following maximization problem:
\begin{equation}\label{eq.maxW}
 \max\limits_{\bpsi \in \calK}\: W(\bpsi), \text{ where } W(\bpsi) = \sum\limits_{i=1}^N \psi_i.
 \end{equation}
 Assume that there exists $i\in \left\{1,\ldots,N\right\}$ such that $\lvert V_i(\s,\bpsi^*)\lvert < \nu_i$. Then by the continuity of $F(\s,\bnu,\cdot)$, there exists $t >0$ such that 
 $ \lvert V_i(\s,\bpsi^* + t \be_i)\lvert < \nu_i$, and by the foregoing observations, one also has:
$$ \lvert V_j(\s,\bpsi^* + t \be_i) \lvert \: \leq  \:  \lvert V_j(\s,\bpsi^* ) \lvert \: \leq \: \nu_j \text{ for } j = 1,\ldots,N, \:\:  j \neq i. $$
 Hence, the vector $(\bpsi^* + t \be_i)$ also belongs to $\calK$ and $W(\bpsi^* + t \be_i) > W(\bpsi^*)$,
 which contradicts the definition of $\bpsi^*$. We have thus proved that any maximizer $\bpsi^*$ in \cref{eq.maxW} satisfies the desired property:
 $$ \lvert V_i (\s, \bpsi^*) \lvert = \nu_i, \text{ for } i =1,\ldots,N.$$
 \par\medskip 
 
 \noindent \textit{Step 3: We prove the uniqueness of $\bpsi^*$ under the genericity assumptions \cref{eq.Gen0,eq.Gen1,eq.Gen2,eq.Gen3,eq.Gen4,eq.Gen5,eq.Gen6}.}  \par\medskip

\noindent We have seen in \cref{cor.irred} that under the assumptions \cref{eq.Gen0,eq.Gen1,eq.Gen2,eq.Gen3,eq.Gen4,eq.Gen5,eq.Gen6}, 
the mapping $\bpsi \mapsto \bF(\s,\bnu,\bpsi)$ is differentiable at $\bpsi^*$, and that the derivative $\left[\nabla_\bpsi \bF(\s,\bnu,\bpsi^*) \right]$ is an invertible $N \times N$ matrix.

Let us assume that there exists another weight vector $\bvphi^*$ satisfying \cref{eq.volPsi}. Then $\bpsi^*$ and $\bvphi^*$ are both maximizers of the concave Kantorovic functional $K(\s,\bnu,\cdot)$, and so, for any $t\in (0,1)$, it holds:
$$ \bF(\s, \bnu,(1-t)\bpsi^* + t\bvphi^*) = \bF(\s,\bnu,\bpsi^*) = \bzero.$$ 
Rearranging the above equality, dividing both sides by $t$, then letting $t$ tend to $0$, we obtain:
$$\left[\nabla_\bpsi \bF(\s,\bnu,\bpsi^*) \right](\bvphi^* - \bpsi^*) = \bzero, $$
which proves that $\bpsi^*=\bvphi^*$, as expected.
\par\medskip

\noindent \textit{Step 4: We use duality to conclude that \cref{eq.Tpsi} is the optimal transport mapping between $\mu$ and $\nu$.}  \par\medskip

\noindent Let us recall the well-known expression of the dual maximization problem attached to the Kantorovic minimization problem \cref{eq.OTKanto}:
\begin{equation}\label{eq.OTD}
\tag{\textcolor{gray}{OT-D}}
 \max\limits_{\psi \in \calC(X)} \left( \int_{X} \min\limits_{\y \in X} \Big( c(\x,\y) - \psi(\y)\Big) \:\d \mu(\y) + \nu_0 \frac{1}{|D|} \int_X \psi(\y) \:\d \y + \sum\limits_{i=1}^N \nu_i \psi_i\right),
 \end{equation}
 see e.g. \cite{santambrogio2015optimal}.
The weak duality phenomenon reads, in the present situation:
\begin{equation}\label{eq.wkD}
 \max \: \cref{eq.OTD} \leq \min \: \cref{eq.OTKanto}.
 \end{equation}
Note that actually, strong duality holds, i.e. $\max \: \cref{eq.OTD} =\min \: \cref{eq.OTKanto}$, see Th. 1.39 in \cite{santambrogio2015optimal}. We shall not need this fact, which will actually turn out to be a by-product of our analysis. 

 Let $\bpsi \in \R^N$  be the weight vector supplied by the second step, guaranteeing the equality \cref{eq.volPsi}, 
and let $T_\bpsi$ be the mapping defined in \cref{eq.Tpsi}; we recall from Step 2 that $\psi_i >0$ for all $i=1,\ldots, N$ and we retain the convention $\psi_0 = 0$. By construction, $(T_\bpsi)_\# \mu = \nu$, so that $(\Id, T_\bpsi)_\# \mu$ is an admissible transport plan in the problem \cref{eq.OTKanto}; hence:
$$ \min \: \cref{eq.OTKanto} \leq \int_X c(\x,T_\bpsi(\x)) \:\d \mu(\x).$$
On the other hand, let us define the (discontinuous) function $\widetilde\psi : X \to \R$ by 
$$ \forall \y \in X, \quad \widetilde \psi(\y) = \left\{
\begin{array}{cl}
\psi_i & \text{if } \y = \s_i, \: i=1,\ldots,N, \\
\psi_0 = 0 & \text{otherwise}.
\end{array}
\right.$$
It follows from the definition of $T_\bpsi$ that:
$$\forall \x \in X, \quad c(\x,T_\bpsi(\x)) - \widetilde \psi(T_\bpsi(\x))= \min\limits_{\y \in X} \Big(c(\x,\y) - \widetilde \psi(\y)\Big).$$
Integrating this identity, we obtain:
\begin{equation}\label{eq.idproofopOTmap}
 \int_X c(\x,T_\bpsi(\x)) \:\d \mu(\x) - \sum\limits_{i=1}^N \nu_i \psi_i = \int_X \min\limits_{\y \in X} \Big(c(\x,\y) - \widetilde \psi(\y)\Big) \:\d \mu(\x).
 \end{equation}
We now approximate the discontinuous function $\widetilde \psi$ in the last term in the above right-hand side.  
To this end, let us first introduce the step function $\chi_\e$ defined for $\e >0$ small enough by
$$ \forall \y \in X, \quad \chi_\e(\y) = \left\{
\begin{array}{cl}
\psi_i & \text{if } \lvert \y-\s_i \lvert < \e \text{ for some } i=1,\ldots, N, \\
0 & \text{otherwise}.
\end{array}
\right.$$ 
We also introduce one function $\zeta_\e \in \calC(X)$ satisfying the properties:
$$  \left\{
\begin{array}{cl}
\zeta_\e(\y) =\psi_i & \text{if } \lvert \y-\s_i \lvert < \frac{\e}{2} \text{ for } i=1,\ldots, N, \\
0 \leq \zeta_\e(\y) \leq \psi_i & \text{if } \frac{\e}{2} \leq \lvert \y-\s_i \lvert < \e \text{ for } i=1,\ldots, N, \\
0 & \text{otherwise},
\end{array}
\right.
$$
i.e. $\zeta_\e$ is a smoothed version of $\widetilde \psi$, whose support is contained in a collection of balls of radius $\e$ around the $\s_i$, $i=1,\ldots,N$; 
the latter can easily be constructed thanks to mollifiers.  
Then, it holds:
$$ \min\limits_{\y \in X} \Big(c(\x,\y) - \chi_\e(\y) \Big) \leq \min\limits_{\y \in X} \Big(c(\x,\y) - \zeta_\e(\y) \Big) \leq \min\limits_{\y \in X} \Big(c(\x,\y) - \widetilde\psi(\y) \Big).$$
On the other hand, the first term in the above left-hand side can be estimated as:
$$ \begin{array}{ccl}
\min\limits_{\y \in X} \Big(c(\x,\y) - \chi_\e(\y) \Big) & \geq & \min\Big(0,\min\limits_{i=1,\ldots,N \atop \h \in B(0,\e)} \Big( c(\x, \s_i + \h) - \psi_i\Big)  \Big) \\
&=&  \min\Big(0,\min\limits_{i = 1,\ldots,N \atop \h \in B(0,\e)} \Big( c(\x - \h, \s_i ) - \psi_i\Big)\Big) \\
&\geq &  \min\Big(0,\min\limits_{i = 1,\ldots,N } \Big( c(\x, \s_i ) - \psi_i\Big)\Big)  - L\e \\
& = & \min\limits_{\y \in X} \Big(c(\x,\y) - \widetilde\psi(\y) \Big)  - L\e,
\end{array}$$
where $L$ is a Lipschitz constant for the cost $c(\x,\y) = \lvert \x- \y\lvert^2$ on $X\times X$, and we have used the particular difference structure of the latter when passing from the first to the second line. 
It follows that:
$$ \min\limits_{\y \in X} \Big(c(\x,\y) - \widetilde\psi(\y) \Big)   \leq \min\limits_{\y \in X} \Big(c(\x,\y) -  \zeta_\e(\y)\Big) + L\e, $$
and so, returning to \cref{eq.idproofopOTmap}:
$$ \begin{array}{>{\displaystyle}cc>{\displaystyle}l}
 \int_X c(\x,T_\bpsi(\x)) \:\d \mu(\x)  &\leq& \int_X \min\limits_{\y \in X} \Big(c(\x,\y) -  \zeta_\e(\y)\Big) \:\d \mu(\x)  + \sum\limits_{i=1}^N \nu_i \psi_i + L \e \\
 &\leq& \int_X \min\limits_{\y \in X} \Big(c(\x,\y) -  \zeta_\e(\y)\Big) \:\d \mu(\x)  + \nu_0 \frac{1}{|D|} \int_X \zeta_\e(\y) \:\d \y + \sum\limits_{i=1}^N \nu_i \psi_i + L \e \\
 &\leq& \max \: \cref{eq.OTD}+ L\e.
 \end{array} $$
 Since $\e$ is arbitrary, we may let $\e$ tend to $0$ in the above inequality; recalling the weak duality inequality \cref{eq.wkD}, we have thus proved that $T_\bpsi$ is indeed an optimal transport mapping.
 
 The uniqueness of such an optimal transport mapping is a classical fact in optimal transport theory, see e.g. Th. 1.17 in \cite{santambrogio2015optimal}.  
This concludes the proof of \cref{prop.psi}.

\section{Implementation details of the Virtual Element Method}\label{app.VEM}

\noindent In this section, we describe the practical implementation of the Virtual Element Method for the solution of the conductivity equation \cref{eq.lapVEM} and of the linear elasticity system \cref{eq.VEMelassys}.

\subsection{The case of the conductivity equation}\label{app.VEMlap}

\noindent Slipping into the notation of \cref{sec.VEMlap} and closely following \cite{sutton2017virtual}, we now provide a little details about the computation of the entries of the local stiffness matrix $K^E \in \R^{n^E \times n^E}$ defined by
$$ \forall i,j = 1,\ldots,n, \quad K^E_{ij} = \widetilde{a}^E(\zeta_i,\zeta_j),$$
 where the discrete bilinear form $\widetilde{a}^E(u,v)$ reads
 $$\forall u ,v \in \calW(E), \quad \widetilde{a}^E(u,v) = a^E(\pi_C u,\pi_C v) + \widetilde{s}^E(u-\pi_{\calP} u , v- \pi_{\calP} v),$$
 and the basis $\left\{ \zeta_i \right\}_{i=1,\ldots,n^E}$ of $\calW(E)$ is characterized by \cref{eq.defzetai}. 
As discussed in \cref{sec.VEMlap}, this formula induces a decomposition of $K^E$ as the sum of two contributions:
\begin{equation*}
 K^E = P^E + \widetilde{S}^E, \text{ where }
P^E_{ij} := a^E(\pi_C \zeta_i, \pi_C \zeta_j) \text{ and } \widetilde{S}^E_{ij} = \widetilde{s}^E(\zeta_i-\pi_{\calP} \zeta_i , \zeta_j- \pi_{\calP} \zeta_j).
\end{equation*}

The cornerstone of the calculation of both matrices is therefore the calculation of the matrix $\Pi \in \R^{3\times n^E}$ of the projection operator $\pi_{\calP}$ onto affine functions, expressed in the bases $\left\{ \zeta_i \right\}_{i=1,\ldots,n^E}$ of $\calW(E)$ and $\left\{m_\alpha \right\}_{\alpha=1,2,3}$ of $\calP(E)$, i.e. 
the $i^{\text{th}}$ column of $\Pi$ gathers the entries of $\pi_{\calP}\zeta_i$ in the basis $m_\alpha$: 
$$ \pi_\calP\zeta_i(\x) = \sum\limits_{\alpha = 1}^3 \Pi_{\alpha i} m_\alpha(\x), \quad i=1,\ldots,n^E, \quad \x \in E.$$
By using the following characterization of $\pi_{\calP}$, 
$$\overline{\pi_{\calP} \zeta_i} = \overline{\zeta_i} = \frac{1}{n^E} \text{ and } a^E(\pi_\calP \zeta_i, m_\alpha) = a^E(\zeta_i,m_\alpha), \:\: \alpha = 2,3,$$
we obtain the identity:
\begin{equation}\label{eq.GPiB}
 \widetilde G \Pi = B,
 \end{equation}
where
\begin{itemize}
\item The matrix $\widetilde G \in \R^{3\times 3}$ is defined by: 
$$\widetilde G_{11} = 1 \text{ and } \widetilde G_{12} = \widetilde G_{13} = 0,$$
as well as: 
$$\widetilde G_{\alpha,\beta} = \int_E \nabla m_\alpha \cdot \nabla m_\beta \:\d \x \text{ for } \alpha=2,3 \text{ and } \beta=1,2,3.$$
\item The matrix $B \in \R^{3\times n^E}$ is given by:
$$ B_{1i} = \frac{1}{n^E}, \text{ and } B_{\alpha i} = \int_E \nabla \zeta_i \cdot \nabla m_\alpha \:\d \x  \text{ for } i=1,\ldots,n^E \text{ and } \alpha= 2,3.$$
The entries of $B$ can be calculated in closed form by using the facts that $\nabla m_\alpha$ is constant on $E$ and that the basis functions $\zeta_i$ are affine on each edge of $\partial E$. Indeed, Green's formula implies that, for $\alpha=2,3$: 
$$ B_{\alpha, i} = \left( \int_E \nabla \zeta_i \:\d \x \right) \cdot \nabla m_\alpha = \left( \int_{\partial E} \zeta_i \n \:\d \ell \right)  \cdot \nabla m_\alpha = \frac12 \lvert \widehat \be_i \lvert \n_{\widehat \be_i} \cdot \nabla m_\alpha.$$
\end{itemize}

Hence, the system \cref{eq.GPiB} features the explicit matrix $\widetilde G$ and right-hand side $B$, and it can be solved for the matrix $\Pi$.
The practical implementation is further simplified thanks to the introduction of
the matrix $D \in \R^{n^E\times 3}$ corresponding to the expression of the polynomials $m_\alpha$, $\alpha =1,2,3$ in the basis $\left\{\zeta_i\right\}_{i=1,\ldots,n^E}$ of $\calW(E)$, that is: 
$$ m_\alpha(\x) = \sum\limits_{i=1}^{n^E} D_{i \alpha} \zeta_i(\x), \quad \x \in E.$$
The coefficients of this matrix read, by definition of the $\zeta_i$:
$$ D_{i\alpha} = m_\alpha(\q_i^E),$$
and so
$$ D_{i1} = 1, \text{, } D_{i2} = q^E_{i,1}-\overline{q_1^E}, \text{ and } D_{i3} = q^E_{i,2}-\overline{q_2^E}  \text{ for } i=1,\ldots,n^E.$$
The definitions of the matrices $\widetilde G$, $B$ and $D$ yield the following relation:
$$ \widetilde{G} = BD.$$
Indeed, elementary calculations lead to:
$$ \widetilde G_{11} = 1 = \sum\limits_{i=1}^{n^E} B_{1i}D_{i1}, \:\: \widetilde G_{1\beta} = 0 = \sum\limits_{i=1}^{n^E} B_{1i}D_{i\beta} \text{ for } \beta = 2,3, $$
and likewise, for $\alpha=2,3$, 
$$ \widetilde G_{\alpha\beta} = \int_{E} \nabla m_\alpha \cdot \nabla m_\beta \:\d \x = \sum\limits_{i=1}^{n^E} D_{i\beta} \int_{E} \nabla m_\alpha \cdot \nabla \zeta_i \:\d \x = \sum\limits_{i=1}^{n^E} D_{i\beta} B_{\alpha i}.$$

Once the matrix $\Pi$ is calculated, $P^E$ is inferred from the formula:
\begin{equation}\label{eq.calcPE} P^E = \gamma \Pi^T G \Pi, \text{ where } G \text{ is the matrix with entries } G_{\alpha \beta} = \int_E \nabla m_\alpha \cdot \nabla m_\beta \:\d \x, \quad \alpha, \beta = 1,2,3.
\end{equation}
Likewise, $\widetilde{S}^E$ is obtained as:
\begin{equation}\label{eq.calcStE}
\widetilde{S}^E= \alpha^E \Big( \I - D\Pi \Big) ^T \Big( \I - D\Pi \Big) , \text{ where } \alpha^E = 1,
\end{equation}
and $\I$ is the identity matrix with size $n^E \times n^E$.\par\medskip

Summarizing, the computations of the matrices $P^E$ and $S^E$ (and thus $K^E$) proceed as follows: 
\begin{enumerate}
\item Assemble the matrices $B$ and $D$; 
\item Calculate $\widetilde G = BD \in \R^{3\times 3}$; 
\item Compute $\Pi \in \R^{3\times n^E}$ as the solution to the matrix system 
$$ \widetilde G \Pi = B.$$
\item The matrice $P^E$ and $\widetilde{S}^E$ are inferred from the formulas \cref{eq.calcPE,eq.calcStE}.
\end{enumerate}

Eventually, the entries $F^E_i$ of the local force vector $F^E \in \R^{n^E}$ are simply approximated as: 
$$ F^E_{i} \:\: = \:\: \int_E f \zeta_i \:\d \x \:\: \approx\:\: f(\overline{\q^E})\int_E \zeta_i \:\d \x  \:\: \approx \:\: f(\overline{\q^E}) \frac{\lvert E\lvert}{n^E}, \quad i= 1,\ldots,n^E. $$

\begin{remark}\label{rem.dpdproj}
An important consequence of the above analysis is that the projection $\pi_{\calP} \zeta$ of a function $\zeta \in \calW_{\calT}$ onto affine functions depends only on the values $\zeta(\q_i^E)$ ($i=1,\ldots,n^E$) of $\zeta$ at the vertices of the considered element $E$
\end{remark}

\subsection{The case of the linear elasticity system}\label{app.VEMelas}

\noindent The present section deals with the linear elasticity system \cref{eq.VEMelassys}. 
Relying on \cite{berbatov2021guide,gain2014virtual}, we briefly describe its numerical solution by the Virtual Element Method.

As emphasized in \cref{sec.mechcomp,app.VEMlap}, the most critical operation in this perspective is the calculation of the local stiffness matrix $K^E \in \R^{2n^E \times 2n^E}$ defined by
$$ \forall i,j = 1,\ldots,2n^E, \quad K^E_{ij} = \widetilde{a}^E(\bzeta_i,\bzeta_j),$$ 
where $\widetilde{a}^E$ is given by:
$$\forall u,v \in \calW(E), \quad \widetilde{a}^E(\u,\v) = a^E(\pi_C \u,\pi_C \v) + \widetilde{s}^E(\u-\pi_{\calP} \u , \v- \pi_{\calP} \v),$$
and the basis functions $\bzeta_i$, $i=1,\ldots, 2n^E$ are characterized by \cref{eq.defvphi1,eq.defvphi2}. 
Again, we decompose $K^E$ as:
\begin{multline*}
 K^E = P^E + \widetilde{S}^E, \text{ where } P^E, \widetilde{S}^E\in \R^{2n^E \times 2n^E} \text{ are the matrices }\\ 
P^E_{ij} := a^E(\pi_C \bzeta_i, \pi_C \bzeta_j) \text{ and } \widetilde{S}^E_{ij} := \widetilde{s}^E(\bzeta_i-\pi_{\calP} \bzeta_i , \bzeta_j- \pi_{\calP} \bzeta_j).
\end{multline*}
Let $W_C \in \R^{2n^E \times 3}$ be the transpose of the matrix of $\pi_{C}$ in the bases $\left\{ \bzeta_i \right\}_{i=1,\ldots,2n^E}$ and $\left\{ \c_\alpha \right\}_{\alpha=1,2,3}$, i.e. the $i^{\text{th}}$ line of $W_C$ gathers the coordinates of the projection $\pi_C \bzeta_i$ over the functions 
$\c_1,\c_2,\c_3$ in \cref{eq.defci}:
$$ \forall i = 1,\ldots , 2n^E, \quad \pi_C \bzeta_i (\x)= \sum\limits_{\beta=1}^3 ({W_C})_{i \beta} \c_\beta(\x), \quad \x \in E.$$
Likewise, let $W_R \in \R^{2n^E \times 3}$ be the transpose of the matrix of $\pi_{R}$ in the bases $\left\{ \bzeta_i \right\}_{i=1,\ldots,2n}$ and $\left\{ \r_\alpha \right\}_{\alpha=1,2,3}$:
$$ \forall i = 1,\ldots , 2n^E, \quad \pi_R \bzeta_i (\x)= \sum\limits_{\beta=1}^3 ({W_R})_{i\beta} \r_\beta(\x).$$
With these notations, the first block $P^E \in \R^{2n^E\times 2n^E}$ of $K^E$ reads,
$$P^E_{ij} = a^E(\pi_C \bzeta_i, \pi_C\bzeta_j) = \sum\limits_{\alpha, \beta=1}^{3} (W_C)_{i\alpha} (W_C)_{j\beta}\:a^E(\c_\alpha,\c_\beta) ,$$
and so, in matrix form:
\begin{equation}\label{eq.PEelas}
P^E= W_C D W_C^T, \text{ where } D \in \R^{3 \times 3} \text{ is defined by } D_{\alpha \beta} = a^E (\c_\alpha,\c_\beta) = \int_E Ae(\c_\alpha) : e(\c_\beta) \:\d \x.
\end{equation}
An elementary calculation yields:
$$ D  =  |E|
\left( 
\begin{array}{ccc}
2\mu + \lambda & \lambda & 0 \\
\lambda & 2\mu + \lambda & 0 \\
0 & 0 &4\mu
\end{array}
\right) ,$$
and we shall detail below how to calculate the matrices $W_C$ and $W_R$ in practice. \par\medskip

As far as the stabilizing block $\widetilde{S}^E$ of $K^E$ is concerned, we need to calculate the matrices $P_C$ and $P_R \in \R^{2n^E \times 2n^E}$ of the projections $\pi_C : \calW(E) \to \calW(E)$ and $\pi_R: \calW(E) \to \calW(E)$
in the basis $\left\{\bzeta_i\right\}_{i=1,\ldots,2n^E}$. 
To this end, let us introduce the matrix $N_C \in \R^{2n^E\times 3}$, whose $\alpha^{\text{th}}$ column ($\alpha=1,2,3$) contains the coordinates of $\c_\alpha$ over the basis $\left\{\bzeta_i\right\}$, that is:
$$ \forall \alpha =1,2, 3, \quad \c_\alpha(\x) = \sum\limits_{i=1}^{2n^E} (N_C)_{i\alpha} \bzeta_i(\x), \quad \x \in E.$$  
With these notations, it holds, for $j=1,\ldots,2n^E$:
$$\begin{array}{>{\displaystyle}cc>{\displaystyle}l}
 \pi_C \bzeta_j &=& \sum\limits_{\alpha = 1}^3 (W_C)_{j\alpha} \c_\alpha\\
 &=&  \sum\limits_{i=1}^{2n^E}  \left( \sum\limits_{\alpha = 1}^3 (N_C)_{i\alpha}  (W_C)_{j\alpha}\right) \bzeta_i
 \end{array} 
 $$
 and so, $P_C  \in \R^{2n^E \times 2n^E}$ reads:
$$ P_C = N_C W_C^T.$$
Likewise, let $N_R \in \R^{2n^E\times 3}$ be the matrix whose $\alpha^{\text{th}}$ column ($\alpha=1,2,3$) contains the coordinates of $\r_\alpha$ in the basis $\bzeta_i$, that is:
$$ \forall \alpha =1,2, 3, \quad \r_\alpha(\x) = \sum\limits_{i=1}^{2n^E} (N_R)_{i\alpha} \bzeta_i(\x), \quad \x \in E;$$
it holds:
\begin{equation}\label{eq.PRelas}
P_R = N_R W_R^T.
\end{equation}
Finally, the matrix $P_P \in \R^{2n^E\times 2n^E}$ of the projection $\pi_{\calP}$ over the space  $\calP(E)$ of affine functions reads:
\begin{equation}\label{eq.PPelas}
 P_P = P_R + P_C.
 \end{equation}
With these notations, $\widetilde{S}^E$ can be computed via the following formula:
\begin{equation}\label{eq.SEelas}
S^E  = \alpha^E (\I - P_P)^T (\I-P_P),
\end{equation}
where $\I$ is the $2n^E \times 2n^E$ identity matrix. 

It follows from the formulas \cref{eq.PEelas,eq.PRelas,eq.PPelas,eq.SEelas} that the basic ingredients of the practical implementation of the Virtual Element Method dedicated to the solution of the linear elasticity system \cref{eq.elas} are the assembly of the matrices  $N_C, N_R, W_C, W_R$, and the calculation of the coefficient $\alpha^E$; we now detail these operations. \par\medskip

\noindent \textit{Assembly of the matrices $W_C, W_R$} \par\medskip

\noindent For $i=1,\ldots, n^E$, let us introduce the vector $\a_i \in \R^2$ defined by
$$ \a_i = \frac{|\hat \be_i | }{2 |E| } \n_{\hat \be_i}, $$
where we recall the notation $\n_{\hat \be_i}$ in \cref{eq.nehat}.

Let us first consider the matrix $W_R \in \R^{2n^E \times 3}$; for $i=1,\ldots,n^E$ and $\x \in E$, we have
$$ \pi_R \bzeta_{2i-1}(\x) = \overline{\bzeta_{2i-1}} + \langle \omega(\bzeta_{2i-1}) \rangle \left( \begin{array}{c}
-(x_2 - \overline{q_2^E})\\
x_1 - \overline{q_1^E}
\end{array}
\right),
$$
where obviously, $\overline{\bzeta_{2i-1}} = \frac{1}{n^E} \r_1$ and a simple calculation shows that:
$$\begin{array}{>{\displaystyle}cc>{\displaystyle}l}
 \langle \omega(\bzeta_{2i-1}) \rangle &=&- \frac{1}{2|E|} \int_E  \frac{\partial}{\partial x_2} (\bzeta_{2i-1})_1\:\d \x \\[1em]
 &=&- \frac{1}{2|E|} \int_{\partial E}  (\bzeta_{2i-1})_1 n_2 \:\d \ell \\[1em]
 &=& -\frac{1}{4|E|} (|\be_{i-1} | n_{\be_{i-1}} + |\be_i| n_{\be_i} )_2 \\[1em]
 &=&- \frac{1}{|E|} \frac{|\hat \be_i| }{4} (n_{\widehat{\be_{i}}})_2 \\[1em]
 &=& -\frac12 (a_i)_2.
 \end{array}
 $$
Hence, it follows:
$$  \pi_R \bzeta_{2i-1}(\x) =  \frac{1}{n^E} \r_1(\x) -  \frac12 (a_i)_2 \: \r_3(\x).$$
A similar calculation yields:
$$  \pi_R \bzeta_{2i}(\x) =  \frac{1}{n^E} \r_2(\x) +  \frac12 (a_i)_1  \: \r_3(\x).$$

Likewise, considering the entries of $W_C$, we obtain:
$$  \pi_C \bzeta_{2i-1}(\x) =  (a_i)_1 \c_1(\x) +  \frac12 (a_i)_2  \:  \c_3(\x), \text{ and }  \pi_C \bzeta_{2i}(\x) =  (a_i)_2 \c_2(\x)  + \frac12 (a_i)_1\: \c_3(\x).$$
Summarizing, the matrices $W_R$ and $W_C$ read, in the respective bases $\left\{ \r_\alpha \right\}_{\alpha=1,2,3}$, \:\: $\left\{\bzeta_{i}\right\}_{i=1,\ldots,2n^E}$ and $\left\{ \c_\alpha \right\}_{j=1,2,3}$, \:\: $\left\{\bzeta_{i}\right\}_{i=1,\ldots,2n^E}$:
$$
W_R = \left( \begin{array}{ccc}
\vdots & \vdots & \vdots \\
\\
\frac{1}{n^E}\:\:\: &\:\:\: 0 \:\:\:& \:\:\: -\frac{1}{2}(a_i)_2 \\[2mm]
0 \:\:\: & \:\:\: \frac{1}{n^E}\:\:\:& \:\:\:  \frac{1}{2}(a_i)_1 \\
\\
\vdots & \vdots & \vdots 
\end{array} 
\right), \text{ and } 
W_C = \left( \begin{array}{ccc}
\vdots & \vdots & \vdots \\
\\
(a_i)_1\:\:\: & \:\:\: 0  \:\:\:&  \:\:\: \frac{1}{2}(a_i)_2 \\[2mm]
0  \:\:\: &  \:\:\: (a_i)_2  \:\:\:&  \:\:\:  \frac{1}{2}(a_i)_1 \\
\\
\vdots & \vdots & \vdots 
\end{array} 
\right).
$$\par\medskip

\noindent \textit{Assembly of the matrices $N_C, N_R$} \par\medskip

It follows from the definition \cref{eq.defvphi1,eq.defvphi2} of the functions $\bzeta_i$ that:
$$ \c_\alpha(\x) = \sum\limits_{i=1}^{n^E} (c_\alpha (\q_i^E))_1 \: \bzeta_{2i-1}(\x) +  \sum\limits_{i=1}^{n^E} (c_\alpha(\q_i^E))_2 \: \bzeta_{2i}(\x), \quad \x \in E.$$
Hence, a simple calculation based on the explicit form \cref{eq.defci} of the basis functions $\c_j$ reveals that: 
$$
N_C = \left( \begin{array}{ccc}
\vdots & \vdots & \vdots \\
\\
(c_1(\q_i^E))_1\:\:\: &\:\:\: (c_2(\q_i^E))_1 \:\:\:& \:\:\:(c_3(\q_i^E))_1 \\[2mm]
(c_1(\q_i^E))_2 \:\:\: & \:\:\: (c_2(\q_i^E))_2\:\:\:& \:\:\:  (c_3(\q_i^E))_2 \\
\\
\vdots & \vdots & \vdots 
\end{array} 
\right) =   
\left( \begin{array}{ccc}
\vdots & \vdots & \vdots \\
\\
q_{i,1}^E - \overline{q_1^E}\:\:\: & \:\:\: 0 \:\:\:& \:\:\:q_{i,2}^E - \overline{q_2^E} \\[2mm]
0 \:\:\: & \:\:\: q_{i,2}^E - \overline{q_2^E}\:\:\:& \:\:\: q_{i,1}^E -\overline{q_1^E} \\
\\
\vdots & \vdots & \vdots 
\end{array} 
\right) .
$$ 
Similarly, the following expression of $N_R$ is derived: 
$$N_R = \left( \begin{array}{ccc}
\vdots & \vdots & \vdots \\
\\
1\:\:\: &\:\:\: 0 \:\:\:& \:\:\: -q_{i,2}^E - \overline{q_2^E} \\[2mm]
0 \:\:\: & \:\:\: 1\:\:\:& \:\:\: q_{i,1}^E - \overline{q_1^E} \\
\\
\vdots & \vdots & \vdots 
\end{array} 
\right) . $$\par\medskip

\noindent \textit{Calculation of $\alpha^E$}\par\medskip
\noindent As we have mentioned, the coefficient $\alpha^E$ is chosen so as to ensure the stability of the method. 
The idea consists in taking $\alpha^E$ such that the block $\left\{ s^E(\c_k, \c_l)\right\}_{k,l=1,2,3}$, 
corresponding to the application of the stabilizing form $\widetilde{s}^E$ to constant strain fields,
scale like $\left\{a^E(\c_k,\c_l)\right\}_{k,l=1,2,3}$, which is exactly the matrix $D$, when the mesh $\calT$ is refined. 
Hence, we require that
$ \widetilde{s}^E(\c_k,\c_l) = \alpha^E (N_C^T N_C)_{kl}$ be comparable with $D_{kl}$,
which motivates the choice
$$ \alpha^E = \frac{\tr(D)}{\tr(N_C^TN_C)}.$$

Eventually, let us mention that the approximation of the local force vector $F^E \in \R^{2n^E}$ whose entries equal
$$ F^E_i = \int_E \f \cdot \bzeta_i \:\d \x + \int_{\partial E \cap \Gamma_N} \g \cdot \bzeta_i \:\d \ell , \quad i =1,\ldots,2n^E$$ 
is realized in a completely similar fashion as in the case of the conductivity equation, see \cref{app.VEMlap}.

\subsection{Addition of a term of order $0$}\label{sec.L2proj}

\noindent For the sake of simplicity and without loss of generality, let us return to the setting of the conductivity equation \cref{eq.lapVEM} of \cref{sec.conduc}.
In multiple applications, starting from the solution of eigenvalue problems of the form \cref{eq.evlap} (see \cref{sec.exev} for a numerical example), the numerical implementation requires to calculate the  mass matrix $M_{\calT} \in \R^{M \times M}$ in addition to the stiffness matrix $K_{\calT}$ in \cref{eq.KTgen}. This matrix $M_{\calT}$ is defined by:
$$ \forall k,l = 1,\ldots,M, \quad M_{\calT,kl} = \int_\Omega \varphi_k \varphi_l \:\d \x,$$
where the basis $\left\{ \varphi_k \right\}_{k=1,\ldots,M}$ of the virtual element space $\calW_{\calT}$ is that characterized by \cref{eq.vphikql}.
Arguing as in \cref{sec.genpresVEMlap}, $M_{\calT}$ is assembled from the local contributions $M^E \in \R^{n^E \times n^E}$ attached to the individual elements $E \in \calT$, defined by:
\begin{equation}\label{eq.ME}
M^E_{ij} = \int_E \zeta_i \zeta_j \:\d \x.
\end{equation}
To achieve this computation, we introduce the $L^2$ projector $\pi_{\calP}^0 : \calW(E) \to \calP(E)$ over $\calP(E)$, defined by
$$ \forall m \in \calP(E), \quad \int_E (\pi_{\calP}^0 \zeta) m \:\d \x = \int_E \zeta m \:\d \x.$$
Unfortunately, this operator cannot be expressed in closed form, as the last term in the above right-hand side cannot be readily computed from the degrees of freedom of the function $\zeta$.

To overcome this issue, we leverage an elegant observation from \cite{ahmad2013equivalent} whereby a slight modification of the local space $\calW(E)$ in \cref{sec.localspaceVEMlap}, which is completely transparent in the numerical implementation, enables this computation at no additional cost. 
Let us first define the larger space $\widetilde\calW(E)$ of local functions attached to $E$ by: 
$$\widetilde\calW(E) = \Big\{ \zeta: E \to \R, \:\: \zeta \text{ is affine on each edge of } \partial E, \:\: - \Delta \zeta \text{ is an affine function on } E \Big\}.$$
We note that the projection operator $\pi_{\calP} : \calW(E) \to \calP(E)$ introduced in \cref{eq.piPlap} can actually be extended as $\pi_{\calP} : \widetilde{\calW}(E) \to \calP(E)$; 
actually, for a given function $\zeta \in \widetilde{\calW}(E)$, the same calculations as in the previous \cref{app.VEMlap} reveal that $\pi_{\calP} \zeta$ can still be calculated from the sole values of $\zeta$ at the vertices of $E$. 
We now introduce the subspace $\calZ(E) \subset \widetilde{\calW}(E)$ defined by:
\begin{multline}\label{eq.defZE}
\calZ(E) = \Big\{ \zeta: E \to \R, \:\: \zeta \text{ is affine on each edge of } \partial E, \:\: - \Delta \zeta \text{ is affine on } E, \text{ and } \\
\forall m \in \calP(E), \:\: \int_E \zeta m \:\d \x = \int_E (\pi_{\calP} \zeta ) m  \: \d \x\Big\}.
 \end{multline}

The key remark about $\calZ(E)$ is that its elements can be characterized by the exact same degrees of freedom as those of the space $\calW(E)$. Before proceeding, let us provide a simple technical fact:
\begin{lemma}\label{lem.isoLapP1}
The mapping $L:= \pi_{\calP}^0 \circ \Delta^{-1} : \calP(E) \to \calP(E)$ is an isomorphism,
where $\Delta$ stands for the Laplace operator on $E$ with homogeneous Dirichlet boundary conditions.
\end{lemma}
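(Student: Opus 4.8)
The plan is to exploit the fact that $\calP(E)$ is a finite-dimensional vector space (of dimension $3$ in two space dimensions), so that the linear endomorphism $L = \pi_{\calP}^0 \circ \Delta^{-1}$ is an isomorphism as soon as it is injective. I would therefore reduce the statement to the verification that $\Ker L = \{\bzero\}$, which turns the problem into a short energy estimate rather than an explicit inversion.

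To carry this out, I would first unravel the two operators involved. Suppose $m \in \calP(E)$ satisfies $Lm = 0$, and set $u := \Delta^{-1} m$, i.e. the unique solution of $\Delta u = m$ in $E$ with $u = 0$ on $\partial E$. The hypothesis $Lm = \pi_{\calP}^0 u = 0$ means, by the very definition of the $L^2$ projector $\pi_{\calP}^0$ onto $\calP(E)$, that $\int_E u\, p \:\d \x = 0$ for every $p \in \calP(E)$. The crucial observation is that $m$ itself is an affine function, hence $m \in \calP(E)$, so it is a legitimate test function in this orthogonality relation; choosing $p = m$ yields $\int_E u\, m \:\d \x = 0$.

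The key computation is then a single integration by parts: since $m = \Delta u$, one has $\int_E u\, m \:\d \x = \int_E u \,\Delta u \:\d \x = -\int_E \lvert \nabla u \lvert^2 \:\d \x$, the boundary term $\int_{\partial E} u \frac{\partial u}{\partial n} \:\d s$ vanishing because $u = 0$ on $\partial E$. The vanishing of the left-hand side forces $\int_E \lvert \nabla u \lvert^2 \:\d \x = 0$, whence $\nabla u \equiv 0$; combined with the homogeneous Dirichlet condition this gives $u \equiv 0$, and therefore $m = \Delta u = 0$. This establishes injectivity, and the finite-dimensionality of $\calP(E)$ promotes it to the claimed isomorphism.

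I do not anticipate any genuine obstacle here: the argument is essentially a one-line Dirichlet-energy estimate. The only points deserving a moment of care are the bookkeeping of the sign convention in the definition of $\Delta^{-1}$ (which merely flips the sign of the energy and is harmless) and the remark that $m$ may legitimately be used as a test function in the projection identity, which is what closes the loop. Both are routine, so the proof should be entirely elementary.
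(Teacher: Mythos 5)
Your proof is correct and follows essentially the same route as the paper's: reduce to injectivity by finite-dimensionality, use that $m\in\calP(E)$ may be tested against the projection identity to get $\int_E u\,m\:\d\x=0$, and conclude by the Dirichlet-energy identity that $u$, hence $m$, vanishes. The only cosmetic difference is that the paper works directly with the weak formulation of $\Delta^{-1}$ (so the sign issue you flag never appears), whereas you integrate by parts explicitly; both are equivalent.
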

\begin{proof}
Since $\calP(E)$ is a finite-dimensional space, it is enough to prove that $L$ is injective. 
Let then $m \in \calP(E)$ be such that $Lm = 0$, and let $q = \Delta^{-1} m \in H^1_0(E)$ be the function defined by
$$ \forall w \in H^1_0(E), \quad \int_E \nabla q \cdot \nabla w \:\d \x = \int_E mw \:\d \x.$$
It follows:
$$ 0 =\int_E Lm \: m \:\d \x = \int_E q \: m \:\d \x = \int_E \lvert \nabla q \lvert^2 \:\d \x.$$
Hence, $q =0$, and so $m=0$, as expected.
\end{proof}

The result of interest is now the following.
\begin{lemma}\label{lem.basisZE}
For all $i=1,\ldots,n^E$, there exists a unique function $\zeta_i \in \calZ(E)$ such that
\begin{equation}\label{eq.zetacalZ}
\zeta_i(\q_j^E) =
 \left\{
 \begin{array}{cl}
 1 & \text{if } i = j, \\
 0 & \text{otherwise}.
 \end{array}
 \right.
 \end{equation}
\end{lemma}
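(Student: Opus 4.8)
The statement asserts that the space $\calZ(E)$ defined in \cref{eq.defZE} admits a nodal basis, i.e. for each vertex $\q_i^E$ there is a unique element of $\calZ(E)$ taking the value $1$ at $\q_i^E$ and $0$ at the other vertices. The plan is to build such a function by correcting an element of the familiar space $\calW(E)$ so that the extra constraint in \cref{eq.defZE} (that the $L^2$ inner product against affine functions agrees with that of its projection $\pi_{\calP}\zeta$) holds, while leaving the boundary values untouched. The key structural observation is that functions in $\widetilde{\calW}(E)$ are determined by two independent pieces of data: their (affine) trace on $\partial E$ and the affine function $-\Delta\zeta$ on the interior. Modifying the interior datum changes the function only by an element of $H^1_0(E)$, hence does not disturb the nodal values.

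\emph{First} I would fix $i$ and let $\zeta_i^0 \in \calW(E)$ be the harmonic nodal function of \cref{eq.defzetai}, so $-\Delta \zeta_i^0 = 0$ and $\zeta_i^0(\q_j^E)=\delta_{ij}$. Any candidate $\zeta_i \in \calZ(E)$ with the same nodal values must have the same affine trace on $\partial E$, so it differs from $\zeta_i^0$ by a function $q \in H^1_0(E)$ of the form $q = \Delta^{-1} m$ for some affine $m \in \calP(E)$; writing $\zeta_i = \zeta_i^0 + \Delta^{-1} m$ reduces the problem to finding the correct $m$. \emph{Second}, I would impose the defining constraint of $\calZ(E)$: for all $r \in \calP(E)$,
\begin{equation*}
\int_E (\zeta_i^0 + \Delta^{-1} m)\, r \:\d\x = \int_E \pi_{\calP}(\zeta_i^0 + \Delta^{-1} m)\, r \:\d \x.
\end{equation*}
Since $\Delta^{-1} m \in H^1_0(E)$ vanishes on $\partial E$ and has $-\Delta(\Delta^{-1}m)$ affine, the calculations of \cref{app.VEMlap} show that $\pi_{\calP}$ depends only on vertex values, whence $\pi_{\calP}(\Delta^{-1}m)=0$ and $\pi_{\calP}\zeta_i = \pi_{\calP}\zeta_i^0$. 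The constraint thus becomes, for all $r \in \calP(E)$,
\begin{equation*}
\int_E (\Delta^{-1} m)\, r \:\d\x = \int_E (\pi_{\calP}\zeta_i^0 - \zeta_i^0)\, r \:\d\x,
\end{equation*}
i.e. $\pi_{\calP}^0(\Delta^{-1} m) = \pi_{\calP}^0(\pi_{\calP}\zeta_i^0 - \zeta_i^0)$ as elements of $\calP(E)$.

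\emph{Third}, I would invoke \cref{lem.isoLapP1}: the operator $L = \pi_{\calP}^0 \circ \Delta^{-1} : \calP(E) \to \calP(E)$ is an isomorphism, so the equation $L m = \pi_{\calP}^0(\pi_{\calP}\zeta_i^0 - \zeta_i^0)$ has a unique solution $m \in \calP(E)$. This $m$ produces exactly one $\zeta_i := \zeta_i^0 + \Delta^{-1} m$, which by construction lies in $\widetilde{\calW}(E)$, satisfies the membership condition of \cref{eq.defZE}, and retains the nodal values \cref{eq.zetacalZ}; uniqueness follows since any two solutions differ by $\Delta^{-1}(m-m')$ with $L(m-m')=0$, forcing $m=m'$ by injectivity. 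The main obstacle is bookkeeping the interplay between the two projectors, $\pi_{\calP}$ (the energy/affine projector, computable from vertex data) and $\pi_{\calP}^0$ (the $L^2$ projector, not directly computable) — in particular verifying cleanly that $\pi_{\calP}$ annihilates the interior correction $\Delta^{-1}m$, which is what lets the whole argument collapse onto the single invertible linear map $L$.
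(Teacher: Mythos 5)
Your proof is correct and follows essentially the same route as the paper's: both parametrize the candidate by its affine Laplacian $m=\Delta\zeta_i$, observe that the $H^1_0(E)$ correction $\Delta^{-1}m$ is invisible to $\pi_{\calP}$ (so $\pi_{\calP}\zeta_i=\pi_{\calP}\zeta_i^0$), reduce the membership constraint of $\calZ(E)$ to the linear equation $Lm=\pi_{\calP}\zeta_i^0-\pi_{\calP}^0\zeta_i^0$, and conclude existence and uniqueness from \cref{lem.isoLapP1}. The only cosmetic difference is that you set up the bijection $m\mapsto\zeta_i^0+\Delta^{-1}m$ up front, whereas the paper first derives uniqueness from a hypothetical solution and then reads off existence from the resulting boundary value problem.
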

\begin{proof}
Let $i \in \left\{1,\ldots,n^E\right\}$ be an arbitrary index;
we first assume that there exists one function $\zeta \in \calZ(E)$
satisfying \cref{eq.zetacalZ}, and we prove that it is necessarily unique. To this end, let $m := \Delta \zeta \in \calP(E)$, and let us introduce the unique function $\widetilde{\zeta}_i\in \calW(E)$ such that $\widetilde{\zeta}_i(\q_j^E) = 1$ if $i=j$ and $0$ otherwise, see \cref{sec.localspaceVEMlap}. 
As noted in \cref{rem.dpdproj}, the functions $\pi_{\calP}\zeta$ and $\pi_{\calP}\widetilde{\zeta}_i$ coincide.
Then, introducing the basis $\left\{m_\alpha\right\}_{\alpha=1,2,3}$ of $\calP(E)$ defined in \cref{sec.localspaceVEMlap}, the difference $r = \zeta - \widetilde{\zeta}_i$ satisfies:
$$ 
\begin{array}{>{\displaystyle}cc>{\displaystyle}l}
r=0 \text{ on } \partial E, \:\: \Delta r = m, \text{ and for } \alpha=1,2,3, \quad \int_E r m_\alpha \:\d \x &=& \int_E \zeta m_\alpha \:\d \x - \int_E \widetilde{\zeta_i} m_\alpha \:\d \x,\\[1em]
&=& \int_E (\pi_{\calP}\zeta - \pi^0_{\calP}\widetilde{\zeta_i}) m_\alpha \:\d \x \\[1em]
&=& \int_E (\pi_{\calP}\widetilde{\zeta_i} - \pi^0_{\calP}\widetilde{\zeta_i}) m_\alpha \:\d \x,
\end{array}$$
where the second line follows from the definition \cref{eq.defZE} of $\calZ(E)$ and the final line is a consequence of the equality between $\pi_{\calP}\zeta$ and $\pi_{\calP}\widetilde{\zeta}_i$.
It follows from this discussion and the previous \cref{lem.isoLapP1} that:
$$ \Delta\zeta = L^{-1}\Big(\pi_{\calP}\widetilde{\zeta_i} - \pi^0_{\calP}\widetilde{\zeta_i} \Big).$$
Hence, $\zeta$ is the solution to the boundary value problem:
$$ 
\left\{
\begin{array}{cl}
-\Delta \zeta = -L^{-1}(\pi_{\calP}\widetilde{\zeta_i} - \pi^0_{\calP}\widetilde{\zeta_i}) & \text{in } E, \\
\zeta = \widetilde{\zeta}^i & \text{on } \partial E,
\end{array}
\right.
$$
so that the function $\zeta$ satisfying \cref{eq.zetacalZ} is necessarily unique. 

Conversely, defining $\zeta$ by the above formula immediately yields one function $\zeta \in \calZ(E)$ satisfying \cref{eq.zetacalZ}, which terminates the proof.
\end{proof}

Let us summarize the foregoing discussion:
\begin{itemize}
\item The functions pertaining to $ \calW(E)$ and in $\calZ(E)$ can both be characterized by their values at the vertices of $E$:
for any collection of values $\left\{v_i \right\}_{i=1,\ldots,n^E} \in \R^{n^E}$, there exist unique functions $w \in  \calW(E)$ and $z \in \calZ(E)$ such that:
$$ w(\q_i^E) = v_i \text{ and } z(\q_i^E) = v_i.$$
Both functions $w$ and $z$ are different, but, as noted in \cref{rem.dpdproj}, their projections onto affine functions coincide:
$$ \pi_{\calP} z = \pi_{\calP}w.$$
\item By definition, the projectors $\pi_{\calP}$ and $\pi_{\calP}^0$ coincide on the space $\calZ(E)$:
$$ \forall \zeta \in \calZ(E), \quad \pi_{\calP}^0 \zeta = \pi_{\calP} \zeta.$$
\end{itemize}

Introducing the basis $\left\{ \zeta_i \right\}_{i=1,\ldots,n^E}$ of the local space $\calZ(E)$ defined in \cref{lem.basisZE}, the local mass matrix $M^E$ expressed in terms of this basis reads as in \cref{eq.ME}. 
Like in the previous sections, we may approximate this matrix as:
$$ \forall i,j = 1,\ldots,n^E \quad M^E_{ij} = \int_E (\pi_{\calP}^0 \zeta_i )( \pi_{\calP}^0 \zeta_j )\:\d \x + \int_E (\zeta_i - \pi_{\calP}^0 \zeta_i )(\zeta_j - \pi_{\calP}^0 \zeta_j) \:\d \x  .$$
Here, the first contribution can be calculated exactly on account of the previous observations; 
as for the second one, it can be  replaced by a stabilization term, in the same spirit as in \cref{sec.localspaceVEMlap}.

\bibliographystyle{siam}
\bibliography{./genbib.bib}

\begin{thebibliography}{100}

\bibitem{ahmad2013equivalent}
{\sc B.~Ahmad, A.~Alsaedi, F.~Brezzi, L.~D. Marini, and A.~Russo}, {\em
  Equivalent projectors for virtual element methods}, Computers \& Mathematics
  with Applications, 66 (2013), pp.~376--391.

\bibitem{allain2015efficient}
{\sc B.~Allain, J.-S. Franco, and E.~Boyer}, {\em An efficient volumetric
  framework for shape tracking}, in Proceedings of the IEEE Conference on
  Computer Vision and Pattern Recognition, 2015, pp.~268--276.

\bibitem{allaire2002shape}
{\sc G.~Allaire}, {\em Shape optimization by the homogenization method},
  vol.~146, Springer Science \& Business Media, 2002.

\bibitem{allaire2011topology}
{\sc G.~Allaire, C.~Dapogny, and P.~Frey}, {\em Topology and geometry
  optimization of elastic structures by exact deformation of simplicial mesh},
  Comptes Rendus Mathematique, 349 (2011), pp.~999--1003.

\bibitem{allaire2013mesh}
\leavevmode\vrule height 2pt depth -1.6pt width 23pt, {\em A mesh evolution
  algorithm based on the level set method for geometry and topology
  optimization}, Structural and Multidisciplinary Optimization, 48 (2013),
  pp.~711--715.

\bibitem{allaire2014shape}
\leavevmode\vrule height 2pt depth -1.6pt width 23pt, {\em Shape optimization
  with a level set based mesh evolution method}, Computer Methods in Applied
  Mechanics and Engineering, 282 (2014), pp.~22--53.

\bibitem{allaire2020survey}
{\sc G.~Allaire, C.~Dapogny, and F.~Jouve}, {\em Shape and topology
  optimization}, in Geometric partial differential equations, part II, A.
  Bonito and R. Nochetto eds., Handbook of Numerical Analysis, vol. 22,
  (2021), pp.~1--132.

\bibitem{allaire2004structural}
{\sc G.~Allaire, F.~Jouve, and A.-M. Toader}, {\em Structural optimization
  using sensitivity analysis and a level-set method}, Journal of computational
  physics, 194 (2004), pp.~363--393.

\bibitem{allaire2007conception}
{\sc G.~Allaire and M.~Schoenauer}, {\em Conception optimale de structures},
  vol.~58, Springer, 2007.

\bibitem{amstutz2021introduction}
{\sc S.~Amstutz}, {\em An introduction to the topological derivative},
  Engineering Computations,  (2021).

\bibitem{antonietti2017virtual}
{\sc P.~F. Antonietti, M.~Bruggi, S.~Scacchi, and M.~Verani}, {\em On the
  virtual element method for topology optimization on polygonal meshes: A
  numerical study}, Computers \& Mathematics with Applications, 74 (2017),
  pp.~1091--1109.

\bibitem{antonietti2022virtual}
{\sc P.~F. Antonietti, L.~B. da~Veiga, and G.~Manzini}, {\em The Virtual
  Element Method and its Applications}, vol.~31, Springer Nature, 2022.

\bibitem{antunes2012numerical}
{\sc P.~R. Antunes and P.~Freitas}, {\em Numerical optimization of low
  eigenvalues of the dirichlet and neumann laplacians}, Journal of Optimization
  Theory and Applications, 154 (2012), pp.~235--257.

\bibitem{aurenhammer1987power}
{\sc F.~Aurenhammer}, {\em Power diagrams: properties, algorithms and
  applications}, SIAM Journal on Computing, 16 (1987), pp.~78--96.

\bibitem{beirao2014hitchhiker}
{\sc L.~Beir{\~a}o~da Veiga, F.~Brezzi, L.~D. Marini, and A.~Russo}, {\em The
  hitchhiker's guide to the virtual element method}, Mathematical models and
  methods in applied sciences, 24 (2014), pp.~1541--1573.

\bibitem{belhachmi2006shape}
{\sc Z.~Belhachmi, D.~Bucur, G.~Buttazzo, and J.-M. Sac-Ep{\'e}e}, {\em Shape
  optimization problems for eigenvalues of elliptic operators}, ZAMM-Journal of
  Applied Mathematics and Mechanics/Zeitschrift f{\"u}r Angewandte Mathematik
  und Mechanik: Applied Mathematics and Mechanics, 86 (2006), pp.~171--184.

\bibitem{bendsoe2013topology}
{\sc M.~P. Bendsoe and O.~Sigmund}, {\em Topology optimization: theory,
  methods, and applications}, Springer Science \& Business Media, 2013.

\bibitem{berbatov2021guide}
{\sc K.~Berbatov, B.~Lazarov, and A.~Jivkov}, {\em A guide to the finite and
  virtual element methods for elasticity}, Applied Numerical Mathematics, 169
  (2021), pp.~351--395.

\bibitem{berggren2009unified}
{\sc M.~Berggren}, {\em A unified discrete--continuous sensitivity analysis
  method for shape optimization}, in Applied and Numerical Partial Differential
  Equations: Scientific Computing in Simulation, Optimization and Control in a
  Multidisciplinary Context, Springer, 2009, pp.~25--39.

\bibitem{birgin2023sensitivity}
{\sc E.~Birgin and T.~Menezes}, {\em Sensitivity analysis and tailored design
  of minimization diagrams}, Mathematics of Computation, 92 (2023),
  pp.~2715--2768.

\bibitem{birgin2024reconstruction}
{\sc E.~G. Birgin, A.~Laurain, and D.~R. Souza}, {\em Reconstruction of voronoi
  diagrams in inverse potential problems},  (2024).

\bibitem{blank2012phase}
{\sc L.~Blank, H.~Garcke, L.~Sarbu, T.~Srisupattarawanit, V.~Styles, and
  A.~Voigt}, {\em Phase-field approaches to structural topology optimization},
  in Constrained optimization and optimal control for partial differential
  equations, Springer, 2012, pp.~245--256.

\bibitem{bonnans2013perturbation}
{\sc J.~F. Bonnans and A.~Shapiro}, {\em Perturbation analysis of optimization
  problems}, Springer Science \& Business Media, 2013.

\bibitem{boots2009spatial}
{\sc B.~Boots, K.~Sugihara, S.~N. Chiu, and A.~Okabe}, {\em Spatial
  tessellations: concepts and applications of Voronoi diagrams}, John Wiley \&
  Sons, 2009.

\bibitem{borrvall2003topology}
{\sc T.~Borrvall and J.~Petersson}, {\em Topology optimization of fluids in
  stokes flow}, International journal for numerical methods in fluids, 41
  (2003), pp.~77--107.

\bibitem{bourdin_chambolle_2003}
{\sc B.~Bourdin and A.~Chambolle}, {\em Design-dependent loads in topology
  optimization}, ESAIM: Control, Optimisation and Calculus of Variations, 9
  (2003), pp.~19--48.

\bibitem{bourne2015centroidal}
{\sc D.~P. Bourne and S.~M. Roper}, {\em Centroidal power diagrams, lloyd's
  algorithm, and applications to optimal location problems}, SIAM Journal on
  Numerical Analysis, 53 (2015), pp.~2545--2569.

\bibitem{DBLP:journals/cj/Bowyer81}
{\sc A.~Bowyer}, {\em Computing dirichlet tessellations}, Comput. J., 24
  (1981), pp.~162--166.

\bibitem{bucur2015free}
{\sc D.~Bucur and P.~Freitas}, {\em A free boundary approach to the faber-krahn
  inequality}, Geometric and Computational Spectral Theory, 700 (2015),
  pp.~73--86.

\bibitem{burger2003framework}
{\sc M.~Burger}, {\em A framework for the construction of level set methods for
  shape optimization and reconstruction}, Interfaces and Free boundaries, 5
  (2003), pp.~301--329.

\bibitem{cangiani2014hp}
{\sc A.~Cangiani, E.~H. Georgoulis, and P.~Houston}, {\em hp-version
  discontinuous galerkin methods on polygonal and polyhedral meshes},
  Mathematical Models and Methods in Applied Sciences, 24 (2014),
  pp.~2009--2041.

\bibitem{Cebula2014}
{\sc A.~Cebula and D.~Taler}, {\em Finite Volume Method in Heat Conduction},
  Springer Netherlands, 2014, pp.~1645--1658.

\bibitem{chi2020virtual}
{\sc H.~Chi, A.~Pereira, I.~F. Menezes, and G.~H. Paulino}, {\em Virtual
  element method (vem)-based topology optimization: an integrated framework},
  Structural and Multidisciplinary Optimization, 62 (2020), pp.~1089--1114.

\bibitem{chin2021scaled}
{\sc E.~B. Chin and N.~Sukumar}, {\em Scaled boundary cubature scheme for
  numerical integration over planar regions with affine and curved boundaries},
  Computer Methods in Applied Mechanics and Engineering, 380 (2021), p.~113796.

\bibitem{christiansen2014topology}
{\sc A.~N. Christiansen, M.~Nobel-J{\o}rgensen, N.~Aage, O.~Sigmund, and J.~A.
  B{\ae}rentzen}, {\em Topology optimization using an explicit interface
  representation}, Structural and Multidisciplinary Optimization, 49 (2014),
  pp.~387--399.

\bibitem{ciarlet2002finite}
{\sc P.~G. Ciarlet}, {\em The finite element method for elliptic problems},
  vol.~40, Siam, 2002.

\bibitem{coatleven2021principles}
{\sc J.~Coatl{\'e}ven}, {\em Principles of a network element method}, Journal
  of Computational Physics, 433 (2021), p.~110197.

\bibitem{coatleven2023network}
\leavevmode\vrule height 2pt depth -1.6pt width 23pt, {\em Network element
  methods for linear elasticity}, Comptes Rendus. M{\'e}canique, 351 (2023),
  pp.~1--26.

\bibitem{da2013virtual}
{\sc L.~B. Da~Veiga, F.~Brezzi, and L.~D. Marini}, {\em Virtual elements for
  linear elasticity problems}, SIAM Journal on Numerical Analysis, 51 (2013),
  pp.~794--812.

\bibitem{dapogny2022tuto}
{\sc C.~Dapogny and F.~Feppon}, {\em Shape optimization using a level set based
  mesh evolution method: an overview and tutorial}, \texttt{Hal} preprint:
  \texttt{https://hal.archives-ouvertes.fr/hal-03881641},  (2022).

\bibitem{dassi2023vem++}
{\sc F.~Dassi}, {\em Vem++, a c++ library to handle and play with the virtual
  element method}, arXiv preprint arXiv:2310.05748,  (2023).

\bibitem{de2012blue}
{\sc F.~De~Goes, K.~Breeden, V.~Ostromoukhov, and M.~Desbrun}, {\em Blue noise
  through optimal transport}, ACM Transactions on Graphics (TOG), 31 (2012),
  pp.~1--11.

\bibitem{de2015power}
{\sc F.~de~Goes, C.~Wallez, J.~Huang, D.~Pavlov, and M.~Desbrun}, {\em Power
  particles: an incompressible fluid solver based on power diagrams}, ACM
  Transactions on Graphics, 34 (2015), pp.~Art--No.

\bibitem{de2006velocity}
{\sc F.~De~Gournay}, {\em Velocity extension for the level-set method and
  multiple eigenvalues in shape optimization}, SIAM journal on control and
  optimization, 45 (2006), pp.~343--367.

\bibitem{de2019differentiation}
{\sc F.~De~Gournay, J.~Kahn, and L.~Lebrat}, {\em Differentiation and
  regularity of semi-discrete optimal transport with respect to the parameters
  of the discrete measure}, Numerische Mathematik, 141 (2019), pp.~429--453.

\bibitem{delalande2021quantitative}
{\sc A.~Delalande and Q.~Merigot}, {\em Quantitative stability of optimal
  transport maps under variations of the target measure}, arXiv preprint
  arXiv:2103.05934,  (2021).

\bibitem{delfour2011shapes}
{\sc M.~C. Delfour and J.-P. Zol{\'e}sio}, {\em Shapes and geometries: metrics,
  analysis, differential calculus, and optimization}, SIAM, 2011.

\bibitem{Demidov2019}
{\sc D.~Demidov}, {\em Amgcl: An efficient, flexible, and extensible algebraic
  multigrid implementation}, Lobachevskii J. of Math., 40 (2019), pp.~535--546.

\bibitem{despres2024lagrangian}
{\sc B.~Despr{\'e}s}, {\em Lagrangian vorono{\"\i} meshes and particle dynamics
  with shocks}, Computer Methods in Applied Mechanics and Engineering, 418
  (2024), p.~116427.

\bibitem{DBLP:journals/tog/EdelsbrunnerM90}
{\sc H.~Edelsbrunner and E.~P. M{\"{u}}cke}, {\em Simulation of simplicity: a
  technique to cope with degenerate cases in geometric algorithms}, {ACM}
  Trans. Graph., 9 (1990), pp.~66--104.

\bibitem{feng2022cellular}
{\sc F.~Feng, S.~Xiong, Z.~Liu, Z.~Xian, Y.~Zhou, H.~Kobayashi, A.~Kawamoto,
  T.~Nomura, and B.~Zhu}, {\em Cellular topology optimization on differentiable
  voronoi diagrams}, arXiv preprint arXiv:2204.10313,  (2022).

\bibitem{feppon2020null}
{\sc F.~Feppon, G.~Allaire, and C.~Dapogny}, {\em Null space gradient flows for
  constrained optimization with applications to shape optimization}, ESAIM:
  Control, Optimisation and Calculus of Variations, 26 (2020), p.~90.

\bibitem{fernandes2024level}
{\sc R.~Fernandes, E.~Georgoulis, and A.~Paganini}, {\em Level-set shape
  optimization via polytopic discontinuous galerkin methods},  (2024).

\bibitem{ferro2023level}
{\sc N.~Ferro, S.~Micheletti, N.~Parolini, S.~Perotto, M.~Verani, and P.~F.
  Antonietti}, {\em Level set-fitted polytopal meshes with application to
  structural topology optimization}, arXiv preprint arXiv:2309.11389,  (2023).

\bibitem{frey2007mesh}
{\sc P.~J. Frey and P.-L. George}, {\em Mesh generation: application to finite
  elements}, ISTE, 2007.

\bibitem{gaburro2020high}
{\sc E.~Gaburro, W.~Boscheri, S.~Chiocchetti, C.~Klingenberg, V.~Springel, and
  M.~Dumbser}, {\em High order direct arbitrary-lagrangian-eulerian schemes on
  moving voronoi meshes with topology changes}, Journal of Computational
  Physics, 407 (2020), p.~109167.

\bibitem{gain2014virtual}
{\sc A.~L. Gain, C.~Talischi, and G.~H. Paulino}, {\em On the virtual element
  method for three-dimensional linear elasticity problems on arbitrary
  polyhedral meshes}, Computer Methods in Applied Mechanics and Engineering,
  282 (2014), pp.~132--160.

\bibitem{galindo2010molecular}
{\sc S.~Galindo-Torres and D.~Pedroso}, {\em Molecular dynamics simulations of
  complex-shaped particles using voronoi-based spheropolyhedra}, Physical
  Review E, 81 (2010), p.~061303.

\bibitem{gardini2018virtual}
{\sc F.~Gardini and G.~Vacca}, {\em Virtual element method for second-order
  elliptic eigenvalue problems}, IMA Journal of Numerical Analysis, 38 (2018),
  pp.~2026--2054.

\bibitem{garreau2001topological}
{\sc S.~Garreau, P.~Guillaume, and M.~Masmoudi}, {\em The topological
  asymptotic for pde systems: the elasticity case}, SIAM journal on control and
  optimization, 39 (2001), pp.~1756--1778.

\bibitem{giacomini2017volumetric}
{\sc M.~Giacomini, O.~Pantz, and K.~Trabelsi}, {\em Volumetric expressions of
  the shape gradient of the compliance in structural shape optimization}, arXiv
  preprint arXiv:1701.05762,  (2017).

\bibitem{glowinski1998shape}
{\sc R.~Glowinski and J.~He}, {\em On shape optimization and related issues},
  in Computational Methods for Optimal Design and Control: Proceedings of the
  AFOSR Workshop on Optimal Design and Control Arlington, Virginia 30
  September--3 October, 1997, Springer, 1998, pp.~151--179.

\bibitem{gould1994introduction}
{\sc P.~L. Gould and Y.~Feng}, {\em Introduction to linear elasticity},
  Springer, 1994.

\bibitem{gunzburger2002perspectives}
{\sc M.~D. Gunzburger}, {\em Perspectives in flow control and optimization},
  SIAM, 2002.

\bibitem{henrot2006extremum}
{\sc A.~Henrot}, {\em Extremum problems for eigenvalues of elliptic operators},
  Springer Science \& Business Media, 2006.

\bibitem{henrot2018shape}
{\sc A.~Henrot and M.~Pierre}, {\em Shape Variation and Optimization}, EMS
  Tracts in Mathematics Vol. 28, 2018.

\bibitem{Hestenes1952}
{\sc M.~R. Hestenes and E.~Stiefel}, {\em {Methods of conjugate gradients for
  solving linear systems}}, J Res NIST, 49 (1952), pp.~409--436.

\bibitem{hiptmair2015comparison}
{\sc R.~Hiptmair, A.~Paganini, and S.~Sargheini}, {\em Comparison of
  approximate shape gradients}, BIT Numerical Mathematics, 55 (2015),
  pp.~459--485.

\bibitem{jasak2000application}
{\sc H.~Jasak and H.~Weller}, {\em Application of the finite volume method and
  unstructured meshes to linear elasticity}, International journal for
  numerical methods in engineering, 48 (2000), pp.~267--287.

\bibitem{DBLP:journals/corr/KitagawaMT16}
{\sc J.~Kitagawa, Q.~M{\'{e}}rigot, and B.~Thibert}, {\em A newton algorithm
  for semi-discrete optimal transport}, CoRR, abs/1603.05579 (2016).

\bibitem{laurain2021analysis}
{\sc A.~Laurain}, {\em Analysis and application of a lower envelope method for
  sharp-interface multiphase problems}, arXiv preprint arXiv:2112.02401,
  (2021).

\bibitem{levy2015numerical}
{\sc B.~L{\'e}vy}, {\em A numerical algorithm for $ l\_ $\{$2$\}$ $
  semi-discrete optimal transport in 3d}, ESAIM: Mathematical Modelling and
  Numerical Analysis-Mod{\'e}lisation Math{\'e}matique et Analyse
  Num{\'e}rique, 49 (2015), pp.~1693--1715.

\bibitem{DBLP:journals/cad/Levy16}
{\sc B.~L{\'{e}}vy}, {\em Robustness and efficiency of geometric programs: The
  predicate construction kit {(PCK)}}, Computer-Aided Design, 72 (2016),
  pp.~3--12.

\bibitem{levy2024monge}
{\sc B.~L{\'e}vy, Y.~Brenier, and R.~Mohayaee}, {\em Monge amp$\backslash$ere
  gravity: from the large deviation principle to cosmological simulations
  through optimal transport}, arXiv preprint arXiv:2404.07697,  (2024).

\bibitem{levy2010p}
{\sc B.~L{\'e}vy and Y.~Liu}, {\em L p centroidal voronoi tessellation and its
  applications}, ACM Transactions on Graphics (TOG), 29 (2010), pp.~1--11.

\bibitem{levy2018notions}
{\sc B.~L{\'e}vy and E.~L. Schwindt}, {\em Notions of optimal transport theory
  and how to implement them on a computer}, Computers \& Graphics, 72 (2018),
  pp.~135--148.

\bibitem{li2023explicit}
{\sc M.~Li, J.~Hu, W.~Chen, W.~Kong, and J.~Huang}, {\em Explicit topology
  optimization of conforming voronoi foams}, arXiv preprint arXiv:2308.04001,
  (2023).

\bibitem{lions1971optimal}
{\sc J.~Lions}, {\em Optimal control of systems governed by partial
  differential equations}, Grundlehren der mathematischen Wissenschaften,
  Springer-Verlag, 1971.

\bibitem{lloyd1982least}
{\sc S.~Lloyd}, {\em Least squares quantization in pcm}, IEEE transactions on
  information theory, 28 (1982), pp.~129--137.

\bibitem{levy2022partial}
{\sc B.~Lévy}, {\em Partial optimal transport for a constant-volume lagrangian
  mesh with free boundaries}, Journal of Computational Physics, 451 (2022),
  p.~110838.

\bibitem{levy2024largescalesemidiscreteoptimaltransport}
\leavevmode\vrule height 2pt depth -1.6pt width 23pt, {\em Large-scale
  semi-discrete optimal transport with distributed voronoi diagrams}, 2024.

\bibitem{mantegazza2011lecture}
{\sc C.~Mantegazza}, {\em Lecture notes on mean curvature flow}, vol.~290,
  Springer Science \& Business Media, 2011.

\bibitem{merigot2020quantitative}
{\sc Q.~M{\'e}rigot, A.~Delalande, and F.~Chazal}, {\em Quantitative stability
  of optimal transport maps and linearization of the 2-wasserstein space}, in
  International Conference on Artificial Intelligence and Statistics, PMLR,
  2020, pp.~3186--3196.

\bibitem{merigot2021optimal}
{\sc Q.~Merigot and B.~Thibert}, {\em Optimal transport: discretization and
  algorithms}, in Handbook of Numerical Analysis, vol.~22, Elsevier, 2021,
  pp.~133--212.

\bibitem{mohammadi2010applied}
{\sc B.~Mohammadi and O.~Pironneau}, {\em Applied shape optimization for
  fluids}, Oxford university press, 2010.

\bibitem{murat1976controle}
{\sc F.~Murat and J.~Simon}, {\em Sur le contr{\^o}le par un domaine
  g{\'e}om{\'e}trique}, Pr\'e-publication du Laboratoire d'Analyse
  Num\'erique,(76015),  (1976).

\bibitem{nealen2006physically}
{\sc A.~Nealen, M.~M{\"u}ller, R.~Keiser, E.~Boxerman, and M.~Carlson}, {\em
  Physically based deformable models in computer graphics}, in Computer
  graphics forum, vol.~25, Wiley Online Library, 2006, pp.~809--836.

\bibitem{grebenkov2013geometrical}
{\sc B.-T. Nguyen}, {\em Geometrical structure of laplacian eigenfunctions},
  siam REVIEW, 55 (2013), pp.~601--667.

\bibitem{nguyen2019level}
{\sc S.~H. Nguyen and H.-G. Kim}, {\em Level set based shape optimization using
  trimmed hexahedral meshes}, Computer Methods in Applied Mechanics and
  Engineering, 345 (2019), pp.~555--583.

\bibitem{nguyen2022novel}
{\sc S.~H. Nguyen, D.~Sohn, and H.-G. Kim}, {\em A novel hr-adaptive mesh
  refinement scheme for stress-constrained shape and topology optimization
  using level-set-based trimmed meshes}, Structural and Multidisciplinary
  Optimization, 65 (2022), p.~71.

\bibitem{nikakhtar2022optimal}
{\sc F.~Nikakhtar, R.~K. Sheth, B.~L{\'e}vy, and R.~Mohayaee}, {\em Optimal
  transport reconstruction of baryon acoustic oscillations}, Physical Review
  Letters, 129 (2022), p.~251101.

\bibitem{nikakhtar2024displacement}
{\sc F.~Nikakhtar, R.~K. Sheth, N.~Padmanabhan, B.~L{\'e}vy, and R.~Mohayaee},
  {\em Displacement field analysis via optimal transport: Multi-tracer approach
  to cosmological reconstruction}, arXiv preprint arXiv:2403.11951,  (2024).

\bibitem{nivoliers2013approximating}
{\sc V.~Nivoliers and B.~L{\'e}vy}, {\em Approximating functions on a mesh with
  restricted voronoi diagrams}, in Computer Graphics Forum, vol.~32, Wiley
  Online Library, 2013, pp.~83--92.

\bibitem{novotny2012topological}
{\sc A.~A. Novotny and J.~Soko{\l}owski}, {\em Topological derivatives in shape
  optimization}, Springer Science \& Business Media, 2012.

\bibitem{osher2006level}
{\sc S.~Osher and R.~Fedkiw}, {\em Level set methods and dynamic implicit
  surfaces}, vol.~153, Springer Science \& Business Media, 2006.

\bibitem{osher2001level}
{\sc S.~J. Osher and F.~Santosa}, {\em Level set methods for optimization
  problems involving geometry and constraints: I. frequencies of a two-density
  inhomogeneous drum}, Journal of Computational Physics, 171 (2001),
  pp.~272--288.

\bibitem{osserman1978isoperimetric}
{\sc R.~Osserman}, {\em The isoperimetric inequality}, Bulletin of the American
  Mathematical Society, 84 (1978), pp.~1182--1238.

\bibitem{oudet2004numerical}
{\sc {\'E}.~Oudet}, {\em Numerical minimization of eigenmodes of a membrane
  with respect to the domain}, ESAIM: Control, Optimisation and Calculus of
  Variations, 10 (2004), pp.~315--330.

\bibitem{peyre2019computational}
{\sc G.~Peyr{\'e}, M.~Cuturi, et~al.}, {\em Computational optimal transport:
  With applications to data science}, Foundations and Trends in Machine
  Learning, 11 (2019), pp.~355--607.

\bibitem{pironneau1982optimal}
{\sc O.~Pironneau}, {\em Optimal shape design for elliptic systems}, Springer,
  1982.

\bibitem{plessix2006review}
{\sc R.-E. Plessix}, {\em A review of the adjoint-state method for computing
  the gradient of a functional with geophysical applications}, Geophysical
  Journal International, 167 (2006), pp.~495--503.

\bibitem{polya1951isoperimetric}
{\sc G.~P{\'o}lya and G.~Szeg{\"o}}, {\em Isoperimetric inequalities in
  mathematical physics}, no.~27, Princeton University Press, 1951.

\bibitem{qu2022power}
{\sc Z.~Qu, M.~Li, F.~De~Goes, and C.~Jiang}, {\em The power particle-in-cell
  method}, ACM Transactions on Graphics (TOG), 41 (2022), pp.~1--13.

\bibitem{rayleigh1896theory}
{\sc J.~W. S.~B. Rayleigh}, {\em The theory of sound}, vol.~2, Macmillan, 1896.

\bibitem{rockafellar1970convex}
{\sc R.~T. Rockafellar}, {\em Convex analysis}, vol.~18, Princeton university
  press, 1970.

\bibitem{san2011designing}
{\sc G.~San-Vicente-Otamendi}, {\em Designing deformable models of soft tissue
  for virtual surgery planning and simulation using the Mass-Spring Model}, PhD
  thesis, Universidad de Navarra, 2011.

\bibitem{santambrogio2015optimal}
{\sc F.~Santambrogio}, {\em Optimal transport for applied mathematicians},
  Birk\"auser, 2015.

\bibitem{saye2011voronoi}
{\sc R.~I. Saye and J.~A. Sethian}, {\em The voronoi implicit interface method
  for computing multiphase physics}, Proceedings of the National Academy of
  Sciences, 108 (2011), pp.~19498--19503.

\bibitem{saye2012analysis}
\leavevmode\vrule height 2pt depth -1.6pt width 23pt, {\em Analysis and
  applications of the voronoi implicit interface method}, Journal of
  Computational Physics, 231 (2012), pp.~6051--6085.

\bibitem{sethian1999level}
{\sc J.~A. Sethian}, {\em Level set methods and fast marching methods: evolving
  interfaces in computational geometry, fluid mechanics, computer vision, and
  materials science}, vol.~3, Cambridge university press, 1999.

\bibitem{sethian2000structural}
{\sc J.~A. Sethian and A.~Wiegmann}, {\em Structural boundary design via level
  set and immersed interface methods}, Journal of computational physics, 163
  (2000), pp.~489--528.

\bibitem{DBLP:conf/compgeom/Shewchuk96}
{\sc J.~R. Shewchuk}, {\em Robust adaptive floating-point geometric
  predicates}, in Proceedings of the Twelfth Annual Symposium on Computational
  Geometry, Philadelphia, PA, USA, May 24-26, 1996, 1996, pp.~141--150.

\bibitem{sokolowski1999topological}
{\sc J.~Sokolowski and A.~Zochowski}, {\em On the topological derivative in
  shape optimization}, SIAM Journal on Control and Optimization, 37 (1999),
  pp.~1251--1272.

\bibitem{sokolowski1992introduction}
{\sc J.~Sokolowski and J.-P. Zol{\'e}sio}, {\em Introduction to shape
  optimization}, Springer, 1992.

\bibitem{sommariva2009gauss}
{\sc A.~Sommariva and M.~Vianello}, {\em Gauss--green cubature and moment
  computation over arbitrary geometries}, Journal of Computational and Applied
  Mathematics, 231 (2009), pp.~886--896.

\bibitem{sorgente2024mesh}
{\sc T.~Sorgente, S.~Berrone, S.~Biasotti, G.~Manzini, M.~Spagnuolo, and
  F.~Vicini}, {\em Mesh optimization for the virtual element method: How small
  can an agglomerated mesh become?}, arXiv preprint arXiv:2404.11484,  (2024).

\bibitem{sorgente2022role}
{\sc T.~Sorgente, S.~Biasotti, G.~Manzini, and M.~Spagnuolo}, {\em The role of
  mesh quality and mesh quality indicators in the virtual element method},
  Advances in Computational Mathematics, 48 (2022), p.~3.

\bibitem{springel2010pur}
{\sc V.~Springel}, {\em E pur si muove: Galilean-invariant cosmological
  hydrodynamical simulations on a moving mesh}, Monthly Notices of the Royal
  Astronomical Society, 401 (2010), pp.~791--851.

\bibitem{springel2011hydrodynamic}
\leavevmode\vrule height 2pt depth -1.6pt width 23pt, {\em Hydrodynamic
  simulations on a moving voronoi mesh}, arXiv preprint arXiv:1109.2218,
  (2011).

\bibitem{sutton2017virtual}
{\sc O.~J. Sutton}, {\em The virtual element method in 50 lines of matlab},
  Numerical Algorithms, 75 (2017), pp.~1141--1159.

\bibitem{takezawa2010shape}
{\sc A.~Takezawa, S.~Nishiwaki, and M.~Kitamura}, {\em Shape and topology
  optimization based on the phase field method and sensitivity analysis},
  Journal of Computational Physics, 229 (2010), pp.~2697--2718.

\bibitem{talischi2010polygonal}
{\sc C.~Talischi, G.~H. Paulino, A.~Pereira, and I.~F. Menezes}, {\em Polygonal
  finite elements for topology optimization: A unifying paradigm},
  International journal for numerical methods in engineering, 82 (2010),
  pp.~671--698.

\bibitem{temam2005mathematical}
{\sc R.~Temam and A.~Miranville}, {\em Mathematical modeling in continuum
  mechanics}, Cambridge University Press, 2005.

\bibitem{villani2009optimal}
{\sc C.~Villani}, {\em Optimal transport: old and new}, vol.~338, Springer,
  2009.

\bibitem{wang2003level}
{\sc M.~Y. Wang, X.~Wang, and D.~Guo}, {\em A level set method for structural
  topology optimization}, Computer methods in applied mechanics and
  engineering, 192 (2003), pp.~227--246.

\bibitem{wang2013linear}
{\sc W.~Wang, D.~Slepcev, S.~Basu, J.~A. Ozolek, and G.~K. Rohde}, {\em A
  linear optimal transportation framework for quantifying and visualizing
  variations in sets of images}, International journal of computer vision, 101
  (2013), pp.~254--269.

\bibitem{journals/cj/Watson81}
{\sc D.~Watson}, {\em Computing the n-dimensional delaunay tessellation with
  application to voronoi polytopes}, Comput. J., 24 (1981), pp.~167--172.

\bibitem{xin2016centroidal}
{\sc S.-Q. Xin, B.~L{\'e}vy, Z.~Chen, L.~Chu, Y.~Yu, C.~Tu, and W.~Wang}, {\em
  Centroidal power diagrams with capacity constraints: Computation,
  applications, and extension}, ACM Transactions on Graphics (TOG), 35 (2016),
  pp.~1--12.

\bibitem{zaitzeff2019voronoi}
{\sc A.~Zaitzeff, S.~Esedoglu, and K.~Garikipati}, {\em On the voronoi implicit
  interface method}, SIAM Journal on Scientific Computing, 41 (2019),
  pp.~A2407--A2429.

\end{thebibliography}

\end{document}